\definecolor{dkblue}{RGB}{1,31,91} 
\newtheorem{thm}{Theorem}[chapter]
\newtheorem{prop}[thm]{Proposition}
\newtheorem{lemma}[thm]{Lemma}
\theoremstyle{definition}
\newtheorem{defn}[thm]{Definition}
\theoremstyle{remark}
\newtheorem{remark}[thm]{Remark}
\numberwithin{section}{chapter}
\numberwithin{equation}{chapter}
\newcommand{\pv}{\text{pv}}
\newcommand{\R}{\mathbb R}
\newcommand{\T}{\mathbb T}
\newcommand{\md}{\mathcal{D}}
\newcommand{\mR}{\mathcal{R}}
\newcommand{\mS}{\mathcal{S}}
\newcommand{\mH}{\mathcal{H} }
\newcommand{\real}{\text{Re}\hspace{0.05cm}}
\newcommand{\imag}{\text{Im}\hspace{0.05cm}}
\newcommand{\bfcn}{b}
\newcommand{\sonesplit}{\mathcal{A}}
\newcommand{\stwosplit}{\mathcal{B}}
\newcommand{\fzerone}{\mathcal{F}^{0,1}}
\newcommand{\fzeronenu}{\mathcal{F}^{0,1}_\nu}
\newcommand{\fsone}{\dot{\mathcal{F}}^{s,1}}
\newcommand{\fsonenu}{\dot{\mathcal{F}}^{s,1}_\nu}
\newcommand{\fonesonenu}{\dot{\mathcal{F}}^{1+s,1}_\nu}
\newcommand{\fhone}{\dot{\mathcal{F}}^{\frac12,1}}
\newcommand{\fhonenu}{\dot{\mathcal{F}}^{\frac12,1}_\nu}
\newcommand{\fthreehonenu}{\dot{\mathcal{F}}^{\frac32,1}_\nu}
\newcommand{\fthreehone}{\dot{\mathcal{F}}^{\frac32,1}}
\newcommand{\fsevenhone}{\dot{\mathcal{F}}^{\frac72,1}}
\newcommand{\foneone}{\dot{\mathcal{F}}^{1,1}}
\newcommand{\thetazerone}{\|\theta\|_{\mathcal{F}^{0,1}}}
\newcommand{\thetazeronenu}{\|\theta\|_{\mathcal{F}^{0,1}_\nu}}
\newcommand{\thetathreesonenu}{\|\theta\|_{\dot{\mathcal{F}}^{3+s,1}_\nu}}
\newcommand{\thetaonesonenu}{\|\theta\|_{\dot{\mathcal{F}}^{1+s,1}_\nu}}
\newcommand{\thetasonenu}{\|\theta\|_{\dot{\mathcal{F}}^{s,1}_\nu}}
\newcommand{\thetaonenu}{\|\theta\|_{\dot{\mathcal{F}}^{1,1}_\nu}}
\newcommand{\thetatwosonenu}{\|\theta\|_{\dot{\mathcal{F}}^{2+s,1}_\nu}}
\newcommand{\thetahonenu}{\|\theta\|_{\dot{\mathcal{F}}^{\frac12,1}_\nu}}
\newcommand{\thetaoneone}{\|\theta\|_{\dot{\mathcal{F}}^{1,1}}}
\newcommand{\thetatwoone}{\|\theta\|_{\dot{\mathcal{F}}^{2,1}}}
\newcommand{\thetatwoonenu}{\|\theta\|_{\dot{\mathcal{F}}^{2,1}_\nu}}
\newcommand{\thetasevenhonenu}{\|\theta\|_{\dot{\mathcal{F}}^{\frac72,1}_\nu}}
\newcommand{\tildefzero}{\tilde{\mathcal{F}}^{0,1}}
\newcommand{\tildefs}{\tilde{\mathcal{F}}^{s,1}}
\newcommand{\sign}{\text{sgn}}
\newcommand{\eqdef}{\overset{\mbox{\tiny{def}}}{=}}
\newcommand{\dissconstant}{\mathcal{D}}
\newcommand{\fsoneplusnu}{\dot{\mathcal{F}}^{(s-1)^+,1}_\nu}
\newcommand{\crconstant}{C_{\mR}}
\newcommand{\timeintE}{\mathcal{E}}
\newcommand{\diffusint}{\mathcal{C}}
\begin{document}

\frontmatter

\title[Global regularity for gravity unstable Muskat bubbles]{Global regularity for gravity unstable Muskat bubbles}



\author[F. Gancedo]{Francisco Gancedo$^\dagger$}
\address{$^\dagger$Departamento de An\'{a}lisis Matem\'{a}tico $\&$ IMUS, Universidad de Sevilla, C/ Tarfia s/n, Campus Reina Mercedes, 41012 Sevilla, Spain.  
(\href{https://orcid.org/0000-0002-9006-8804}{https://orcid.org/0000-0002-9006-8804})}
\email{\href{mailto:fgancedo@us.es}{fgancedo@us.es}}


\author[E. Garc\'ia-Ju\'arez]{Eduardo Garc\'ia-Ju\'arez$^{\ddagger}$}
\address{$^\ddagger$Departament de Matemàtiques i Informàtica, Universitat de Barcelona, 
Gran Via de les Corts Catalanes, 08007 Barcelona, Spain.
(\href{https://orcid.org/0000-0003-1022-0835}{https://orcid.org/0000-0003-1022-0835})}
\email{\href{mailto:egarciajuarez@ub.edu}{egarciajuarez@ub.edu}}
\address{\textit{Former address:} Department of Mathematics, University of Pennsylvania, Philadelphia, PA 19104, USA.
}


\author[N. Patel]{Neel Patel$^*$}
\address{$^*$Department of Mathematics, University of Michigan, Ann Arbor, East Hall, 530 Church Street 48109, Ann Arbor, Michigan, USA 48109, USA. }
\email{\href{mailto:neeljp@umich.edu}{neeljp@umich.edu} }


\author[R. M. Strain]{Robert M. Strain$^{\mathsection}$}
\address{$^\mathsection$Department of Mathematics, University of Pennsylvania, Philadelphia, PA 19104, USA. 
(\href{https://orcid.org/0000-0002-1107-8570}{https://orcid.org/0000-0002-1107-8570})
}
\email{\href{mailto:strain@math.upenn.edu}{strain@math.upenn.edu}}


\date{16 Sep 2020}

\subjclass[2010]{Primary: 35A01, 35D30, 35D35, 35Q35, 35Q86. }


\keywords{Fluid interface, Muskat problem, Global regularity, Bubble, Equilibria, Unstable, Viscosity jump, Surface tension.}

\dedicatory{Acknowledgements: FG and EGJ were partially supported by the grant MTM2017-89976-P (Spain). FG, EGJ and NP were partially supported by the ERC through the Starting Grant project H2020-EU.1.1.-639227. EGJ was partially supported by the ERC Starting Grant ERC-StG-CAPA-852741. NP was partially supported by AMS-Simons Travel Grants, which are administered by the American Mathematical Society with support from the Simons Foundation.  RMS was partially supported by the NSF grants DMS-1764177 and DMS-2055271 (USA).  The authors thank the referee for their careful reading of the manuscript.}

\begin{abstract}
    In this paper, we study the dynamics of fluids in porous media governed by Darcy's law: the Muskat problem. We consider the setting of two immiscible fluids of different densities and viscosities under the influence of gravity in which one fluid is completely surrounded by the other. This setting is gravity unstable because along a portion of the interface, the denser fluid must be above the other. Surprisingly, even without capillarity, the circle-shaped bubble is a steady state solution moving with vertical constant velocity determined by the density jump between the fluids. Taking advantage of our discovery of this steady state, we are able to prove global in time existence and uniqueness of dynamic bubbles of nearly circular shapes under the influence of surface tension. We prove this global existence result for low regularity initial data.  Moreover, we prove that these solutions are instantly analytic and decay exponentially fast in time to the circle.
\end{abstract}

\maketitle

\tableofcontents


\mainmatter

\chapter{Introduction}

This paper studies the dynamics of fluid drops or bubbles immersed in another fluid filling a porous media under the action of gravity. This process is governed by the classical Darcy's law 
\begin{equation}\label{Darcy}
\frac{\mu(x,t)}{\kappa(x,t)} u(x,t)=-\nabla p(x,t)-g(0,\rho(x,t)), 
\end{equation} 
where  $u$ is the velocity of the fluid, $p$ is the pressure, $\rho$ is the density and $\mu$ is the viscosity of the fluid. Above $x\in\mathbb{R}^2$ and $t\geq 0$.  Here the medium is assumed to be homogeneous so  that the permeability $\kappa(x,t)=\kappa \ge0$ is constant, as is the gravitational acceleration $g>0$. While Darcy's law was first derived experimentally \cite{Darcy56}, it can be rigorously obtained through homogenization \cite{Tartar89,Hornung97}. This physical scenario is mathematically analogous to the evolution of an incompressible flow in a Hele-Shaw cell \cite{Gancedo2017} where the fluid is set inside two parallel plates that are close enough together so that the resulting dynamics are two dimensional. In particular, the results in this paper can be applied to the Hele-Shaw problem.

The presence of two immiscible fluids is modeled by taking the viscosity $\mu$ and the density $\rho$ as piece-wise constant functions:
\begin{equation}\label{patchsolution}
\mu(x,t)=\left\{\begin{array}{rl}
\mu^1,& x\in D(t),\\
\mu^2,& x\in \mathbb{R}^2\smallsetminus \overline{D(t)},
\end{array}\right.
\quad 
\rho(x,t)=\left\{\begin{array}{rl}
\rho^1,& x\in D(t),\\
\rho^2,& x\in \mathbb{R}^2\smallsetminus \overline{D(t)},
\end{array}\right.
\end{equation}
where $D(t)$ is a simply connected bounded domain, namely, the \textit{bubble}. Thus, there is a sharp interface between the fluids, moving with the flow, which we assume to be incompressible:
\begin{equation}\label{incom}
\nabla \cdot u(x,t)=0.
\end{equation}
We consider the physically relevant case where surface tension at the free boundary is taken into consideration. The Laplace-Young's formula then states that \cite{Hou94}:
\begin{equation}\label{surfacetension}
p^1(x,t)-p^2(x,t)=\sigma K(x,t),\hspace{1cm}x\in\partial D(t),
\end{equation}
where $K(x,t)$ denotes the curvature of the curve $\partial D(t)$, $\sigma>0$ is the constant surface tension coefficient and $p^1(x,t)$, $p^2(x,t)$ are the limits of the pressure at $x$ from inside and outside, respectively.
We are then dealing with the Muskat problem, where the main mathematical interest is to study the dynamics of the free boundary $\partial D(t)$, especially between water and oil \cite{Muskat34}. 
It is remarkable that the evolution equation for the free boundary is well-defined even though the velocity is not continuous. The discontinuity in the velocity is due to the density, viscosity and pressure jumps. But the interface evolution is dictated only by the normal velocity, which is continuous by the incompressibility condition. 

In this sense it is indeed possible to obtain a self-evolution equation for the interface $\partial D(t)$ which is called the contour evolution system.   This system is equivalent to the Eulerian-Lagrangian formulation \eqref{Darcy}, \eqref{patchsolution}, \eqref{incom}, \eqref{surfacetension} understood in a weak sense.
Due to the irrotationality of the velocity in each domain $D(t)$, the vorticity is concentrated on the interface $\partial D(t)$. That is, the vorticity is given by a delta distribution as follows
$$
\nabla^\perp\cdot u(x,t)=\omega(\alpha,t)\delta(x=z(\alpha,t)),
$$
where $\omega(\alpha,t)$ is the amplitude of the vorticity and $z(\alpha,t)$ is a parameterization of $\partial D(t)$ with
$$
\partial D(t)=\{z(\alpha,t)=(z_1(\alpha,t), z_2(\alpha,t)):\, \alpha\in[-\pi,\pi]\}.
$$
The Biot-Savart law then yields that
\begin{equation*}
u(x,t)=\frac1{2\pi} \pv\int_{-\pi}^\pi \frac{(x-z(\beta,t))^\perp}{|x-z(\beta,t)|^2}\omega(\beta,t)d\beta,\hspace{1cm}x\neq z(\beta,t),
\end{equation*}
and taking limits in the normal direction to $z(\alpha,t)$ one finds
\begin{equation*}
\begin{aligned}
u_1(z(\alpha,t),t)&=BR(z,\omega)(\alpha,t)-\frac12\frac{\omega(\alpha,t)}{|\partial_{\alpha}z(\alpha,t)|^2}\partial_{\alpha}z(\alpha,t),\\
u_2(z(\alpha,t),t)&=BR(z,\omega)(\alpha,t)+\frac12\frac{\omega(\alpha,t)}{|\partial_{\alpha}z(\alpha,t)|^2}\partial_{\alpha}z(\alpha,t),
\end{aligned}
\end{equation*}
where $BR$ is the Birkhoff-Rott integral that is given by
\begin{equation}\label{eqBR}
BR(z,\omega)(\alpha,t)=\frac1{2\pi} \pv\int_{-\pi}^\pi \frac{(z(\alpha,t)-z(\beta,t))^\perp}{|z(\alpha,t)-z(\beta,t)|^2}\omega(\beta,t)d\beta.
\end{equation}
Taking the dot product with $\partial_\alpha z$ in the above equations for $u_1$ and $u_2$ and subtracting one from the other, one then finds that the vorticity strength is given by the jump in the tangential velocity
\begin{equation*}
\omega(\alpha,t)=\left(u_2(z(\alpha,t),t)-u_1(z(\alpha,t),t)\right)\cdot\partial_{\alpha}z(\alpha,t).
\end{equation*}
Then using Darcy's law \eqref{Darcy} yields the non-local implicit identity
\begin{multline}\label{eqOmega}
\omega(\alpha,t)=2 A_\mu |\partial_{\alpha}z(\alpha,t)| \md(z,\omega)(\alpha,t) +2A_\sigma\partial_{\alpha}K(z(\alpha,t))-2A_\rho\partial_{\alpha}z_2(\alpha,t),
\end{multline}
where
\begin{equation}\label{Dz}
\md(z,\omega)(\alpha,t)=-BR(z,\omega)(\alpha,t)\cdot\frac{\partial_{\alpha}z(\alpha,t)}{|\partial_{\alpha}z(\alpha,t)|},
\end{equation}
and
\begin{equation}\label{AmuAsigmaArho}
A_\mu=\frac{\mu_2-\mu_1}{\mu_2+\mu_1}, \quad 
A_{\sigma}=\frac{\kappa \sigma}{\mu_2+\mu_1}, \quad 
A_\rho = g\kappa \frac{\rho_2-\rho_1}{\mu_2+\mu_1}.
\end{equation}
Further, in \eqref{eqOmega} the curvature is given by
\begin{equation}\label{curvature}
	K(\alpha,t)=\frac{\partial_{\alpha}z(\alpha,t)^\perp\cdot\partial_{\alpha}^2z(\alpha,t)}{|\partial_{\alpha}z(\alpha,t)|^3}.
\end{equation}
Since the fluids are immiscible, the interface is just advected by the normal velocity of the fluid flow:
\begin{equation*}
z_t(\alpha,t)\cdot \partial_\alpha z(\alpha,t)^\perp=BR(z,\omega)(\alpha,t)\cdot \partial_\alpha z(\alpha,t)^\perp.
\end{equation*}
Therefore a tangential velocity $T(z(\alpha,t))$ can be introduced to change the parametrization of the interface, without altering its shape.  Let $\partial_\alpha z(\alpha,t)=z_\alpha(\alpha,t)$. Then we denote the unit tangent and normal vectors by
	\begin{equation}\label{vectors}
	\tau(\alpha,t)=\frac{z_\alpha(\alpha,t)}{|z_\alpha(\alpha,t)|},\quad n(\alpha,t)=\frac{z_\alpha(\alpha,t)^\perp}{|z_\alpha(\alpha,t)|}.
	\end{equation}
Without changing the shape of the interface we can replace the above equation by 	
\begin{equation}\label{eqcurve}
z_t(\alpha,t)=\big(BR(z,\omega)(\alpha,t)\cdot n(\alpha,t)\big)n(\alpha,t)+T(z(\alpha,t))\tau(\alpha,t).
\end{equation}
Therefore we have a closed system of equations for the the contour evolution system with (\ref{eqcurve}), (\ref{eqBR}), (\ref{eqOmega}), and  (\ref{Dz}).

Given its origins in petrochemical engineering and its mathematical equivalence with Hele-Shaw flows \cite{SaffmanTaylor58}, the Muskat problem has long attracted a lot of attention from physics \cite{Bear72,Saffman86}. Mathematically, the Muskat problem poses many challenges, since even the well-posedness of the problem is not always guaranteed. Indeed, when one neglects surface tension, the well-posedness depends on the  Rayleigh-Taylor condition (which is also called the Saffman-Taylor condition for the Muskat problem). If the fluids have different densities, this condition requires the denser fluid to be below the less dense fluid. When this condition is satisfied, i.e., in the stable setting \cite{Ambrose04}, local-in-time existence for large initial data is known for both density and viscosity jump cases in 2d and 3d \cite{CG07,CCG11, CCG13,CGS16}
for subcritical spaces
\cite{CGSV17,Mat16,AlazardOneFluid2019,AlazardLazar2019,NguyenPausader2019}.
However, finite time singularities can arise even from these stable configurations. As a matter of fact, the Muskat problem was the first incompressible model where blow-up was proved starting with well-posed initial data \cite{CCFGL12,CCFG13,GG14,CCFG16}.

From the previous considerations, it is an important question to determine under which conditions the solution exists and remains regular globally in time.  For the non-surface tension case, the global existence in the stable setting was first obtained  for small enough initial data in subcritical norms, allowing both density and viscosity jumps \cite{SCH04,CG07,EM11, CGS16} and later for some critical norms \cite{BSW14,CGSV17}. Very recently, global well-posedness results appeared that allow initial data of \textit{medium} size in critical spaces, meaning initial data explicitly bounded independent of any parameter: first only for the density jump case \cite{CCGS13,CCGRS16,Cam17,Cam20}, and later extended to the density-viscosity jump case \cite{GGPS19}.  In particular in \cite{GGPS19} there is a medium-size bound for the initial data that is independent of any parameter of the system when $|A_\mu|=1$, and that value of the bound for the initial data is improved when $|A_\mu|<1$. In all these results, the magnitude of the slope of the first derivative appears as a crucial quantity. However, this restriction is removed in \cite{CordobaLazar2018,GancedoLazar2020} by assuming smallness in the critical $L^2$ based Sobolev norm.

 On the other hand, in the unstable scenario, the problem is ill-posed in all Sobolev spaces $H^s$, $s>0$ \cite{GGPS19}, unless surface tension is taken into account. In that case, surface tension controls the instabilities at large scales, giving well-posedness. Classical results for this scenario can be found in \cite{DuchonRobert1984,Chen93,ES97}. See the recent work \cite{NguyenST2019} for low
  regularity initial data, and  \cite{1905.05370}
 for weak solutions
constructed by interpreting the Muskat problem as a gradient flow in a product Wasserstein space. Unstable scenarios are known \cite{Ott97} which exhibit exponential growth locally in time of low order norms \cite{GHS07}, and finger shaped unstable stationary solutions were also studied \cite{EM11}. In particular, Rayleigh-Taylor stable solutions with surface tension converge to the solution without surface tension \cite{Ambrose2014} with optimal decay rate or low regularity \cite{FlynnNguyen2020}. Here, this is not the case as the scenario we deal with is Rayleigh-Taylor unstable. 
Recently, while writing this paper, unstable fluid layers have been proved to exist globally in time for initial near flat configurations \cite{GG-BS2019}.
 
In this paper, we aim to improve the understanding of the effects produced by the surface tension for bubble-shaped interfaces. In particular, we consider the movement of fluid bubbles under the effect of gravity in another fluid with both different densities and viscosities. This is a highly unstable situation, as the Rayleigh-Taylor condition cannot hold for a closed curve. The function that provides the Rayleigh-Taylor condition has mean zero in this scenario. Moreover, as one expects, we will show that a less dense bubble moves upwards. But this means that on the top part of the interface, the less viscous fluid may push the more viscous one and the denser one is on top of the lighter one: both classic conditions in the linear Rayleigh-Taylor analysis are violated here. So that in our scenario, gravity effects make hard to find global-in-time control.
Previous results dealing with this setting \cite{CP93,YT11,YT12} assumed no gravity force (i.e., $g=0$ or no density jump) and required small initial data in high regularity spaces (such as $H^r$ for $r\geq 4$).

 We show here that even without surface tension, circle shaped curves are steady state solutions evolving vertically due to gravity. Furthermore, this surprising state in this unstable configuration allows to find global-in-time existence for capillarity bubbles. We will show that if the initial interface of a bubble is close to a circle with respect to a  constant depending on the dimensionless constants
$$|A_\mu|\quad\mbox{and}\quad  \frac{R^2|A_\rho|}{A_\sigma},\quad\mbox{with}\quad \pi R^2=|D(0)|,$$
 then the solution exists globally in time and, moreover, it becomes instantly analytic. In particular, in our proof it is possible to compute the explicit numerical condition that the initial data must satisfy. It is interesting to notice that only two quantities are involved, where the second represents the ratio between gravity force per length and surface tension, 
$$\frac{|A_{\rho}|R^2}{A_\sigma}=\frac{gR^2|\rho_2-\rho_1|}{\sigma}.$$
We will also show that these bubbles converge exponentially fast in time to a circle that moves vertically with constant velocity equal to $A_\rho$ (upwards if $A_\rho>0$). Due to the incompressibility condition, the area of the bubble is preserved during the process.
We give precise statements of these results in Chapter \ref{MainResults}. In next section, we provide the contour equations we use throughout the paper.

Note that the parameterization that is used in for instance \cite{CG07,CCGS13,CCGRS16,Cam17,GGPS19} is difficult to use in our scenario (those results are close to a horizontal line while in contrast the results in this paper are close to a circle) because our system in general sends the solution to a nearby circular steady state, and not to the one that we linearize around. The steady state that the solution converges to is determined by the dynamics and, without another conservation law that may not exist, then the limit can not be predicted by the initial data alone. In particular, the standard parametrization for star-shape bubbles given by
\begin{equation*}
    z(\alpha,t)=R(1+f(\alpha,t))(\cos{(\alpha)},\sin{(\alpha)})
\end{equation*}
does not do a good job of describing the nearby circular steady states, and therefore it is hard to use in this context. In particular, unless the center is the origin, which one cannot know \textit{a priori}, circles parametrized in this form do not have a simple expression.
From the analytical point of view, 
looking at the decay on the Fourier side, it is easy to find that there is no dissipation at the linear level for the $\pm 1$ Fourier coefficients, which corresponds to the fact that the center of a circle parametrized in this way is given by the $\pm 1$ Fourier coefficients of $f$. At the nonlinear level, these Fourier coefficients are present and mixed in the evolution together with the rest of them, making it difficult to control globally in time. In order to handle this issue we reparametrize the interface getting a tangent vector to the curve with length independent of the parameter $\alpha$  so that $\partial_\alpha |z_\alpha|=0$  \cite{Hou94}. Therefore, the system can be reformulated in terms of the angle formed between the tangent and the horizontal, $\alpha+\vartheta(\alpha,t)$, and the length of the curve as follows
$$
z_\alpha(\alpha,t)=\frac{L(t)}{2\pi}(\cos(\alpha+\vartheta(\alpha,t)),\sin(\alpha+\vartheta(\alpha,t))).
$$
The main unknown to control in this setting is $\vartheta$. 

In this parametrization circles correspond to a constant value of $\vartheta$. 
The evolution  of the zero frequency of $\vartheta$ is decoupled from the rest.
While the $\pm 1$ are also neutral in this formulation, the simple compatibility condition
$$
\int_{-\pi}^\pi z_\alpha(\alpha,t)d\alpha=0
$$
used in \cite{YT11,YT12}, allows us to control the $\pm 1$ Fourier coefficients of $\vartheta$ in terms of the higher modes.   For the higher Fourier modes we can use the dissipation due to surface tension. All the frequencies, together with the initial condition, determine the evolution of the center of the bubble.

On the other hand, in the analysis done around circles, it is possible to check that the Fourier coefficients of different frequencies interact together in the evolution, even at the linear level. If the ratio $|A_\rho|R^2/A_\sigma$  between gravity and surface tension forces is large, it is not straightforward how to take advantage of the dissipation. Thus it is not clear how to obtain the global-in-time result in terms only of the size of the initial data and not upon the size of the parameters. In order to obtain a global result that does not rely on the size of the physical parameters of the problem, we preform a transformation in Fourier space of the infinite-dimensional nonlinear system and we prove that this transformation \textit{diagonalizes} the linear system so that our result holds for any size of the physical parameters.  In particular, we show that it is possible to obtain explicitly the size of the smallness constant. Finally, the analysis we perform is for low regularity initial data, ($z_0(\alpha)\in C^{1,\frac12}(\T)$), allowing unbounded  initial curvature and providing instant (analytic) smoothing.

\section{Outline} The rest of the paper is structured as follows.  In Chapter \ref{formulation}, we explain the contour dynamics formulation of the system of equations for the interface and we derive the full linearization.  Chapter \ref{MainResults} records the notation that will be used in the rest of the paper and explains the main theorem proving global existence, uniqueness and exponential large time decay.  Then Chapter \ref{IFTSection} gives the proof of the implicit function theorem to obtain the implicit relation between $\hat{\theta}(\pm 1)$ and the higher Fourier modes.
In Chapter \ref{sec:FourierRandS} we prove the Fourier multiplier estimates for the operators $\mathcal{R}$ and $\mathcal{S}$.
Chapter \ref{secw} proves the a priori estimates on the vorticity strength $\omega$.
In Chapter \ref{secanalytic} we use all the previous estimates to prove the global existence and instant analyticity of solutions.
Lastly in Chapter \ref{sec:uniqueness} we explain the proof of uniqueness.

\chapter{Contour dynamics formulation}\label{formulation}

In this chapter we introduce the contour evolution equations that will be used throughout the paper. We suppress the dependence in $t$ for clarity of notation. We note that in the introduction it was convenient to introduce the system using vector notation.  However in the rest of the paper, we will study the equation using complex notation.    In Section \ref{sec:ComplexVector} we explain some complex notation used in the rest of this paper.  In Section \ref{subsec:parametrization}, we rewrite the equations (\ref{eqcurve}), (\ref{eqBR}), (\ref{eqOmega}), (\ref{Dz}) in terms of the length of the curve and the angle of the tangent vector \cite{Hou94}.  Then in Section \ref{subsec:evolutionSystem} we derive an equivalent expression for the evolution of the length of the curve.  
In Section \ref{sec:FourierTransCalc}, we explain the calculations that we will use involving the Fourier transform in our further decompositions.
Lastly in Section \ref{sec:linearization}  we decompose the equations into linear and nonlinear parts.  In particular, the calculation of the expression for the linearized operator is given in Proposition \ref{linearfourier}.

\section{Complex notation and vector notation}\label{sec:ComplexVector}
In particular given $x=(x_1, x_2)$ and $y=(y_1, y_2)$ in vector notation and given $z=x_1+ix_2$ and $w=y_1+iy_2$ in complex notation, then the inner product is expressed as
\begin{equation}\notag
    x\cdot y = x_1 y_1 + x_2 y_2 = \real(\overline{z} w).    
\end{equation}
Here $\overline{z} = x_1 - i x_2$ is the complex conjugate.  Similarly in two dimensions in vector notation we can write  $x \wedge y \eqdef x_1 y_2 - x_2 y_1$ in vector notation and this is equal to $\imag{(\overline{x}y)}$ in complex notation.  Then for a vector the perpendicular is $x^\perp = (-x_2, x_1)$, and in complex notation the perpendicular is $iz= -x_2 + i x_1$.  We will use the complex notation in most of the rest of the paper.

\section{Parametrization}\label{subsec:parametrization}

Now we define $\vartheta(\alpha)$ so
that $\alpha+\vartheta(\alpha)$ is the angle formed between the tangent to the curve and the horizontal. In complex notation, this means that
\begin{equation}\label{param}
    z_\alpha(\alpha)=|z_\alpha(\alpha)|e^{i(\alpha+\vartheta(\alpha))}.
\end{equation}
In this formulation the normal and tangential vectors from \eqref{vectors} are 
\begin{equation*}
    n(\alpha)=ie^{i(\alpha+\vartheta(\alpha))}, \quad \tau(\alpha) = e^{i(\alpha+\vartheta(\alpha))}.
\end{equation*}
We will then denote the normal velocity by $U(\alpha)$ with
\begin{equation}\label{U}
    U(\alpha)=\real(\overline{BR}(z,\omega)(\alpha) n(\alpha))=\real(\overline{BR}(\omega)(\alpha) ie^{i(\alpha+\vartheta(\alpha))}),  
\end{equation}
with the Birkhoff-Rott integral \eqref{eqBR} given by
\begin{equation*}
    \overline{BR}(z,\omega)(\alpha)=\frac{1}{2\pi i}\pv\int_{-\pi}^{\pi}\frac{\omega(\beta)}{z(\alpha)-z(\beta)}d\beta.
\end{equation*}
Note that $\overline{BR}(z,\omega)$ is the complex conjugate of \eqref{eqBR} written in complex notation, this holds in particular since $\omega(\beta)$ is seen to be real as in \eqref{omega} below.
These expressions can be written in terms of $z_\alpha(\alpha)$ by noticing that
\begin{equation*}
    z(\alpha)-z(\alpha-\beta)=\int_0^\alpha z_\alpha(\eta)d\eta-\int_0^{\alpha-\beta} z_\alpha(\eta)d\eta =\beta\int_0^1 z_\alpha(\alpha+(s-1)\beta)ds.
\end{equation*}
The equation \eqref{eqcurve} then reads as follows
\begin{equation}\label{eqcurve.theta}
    z_t(\alpha)=U(\alpha)n(\alpha)+T(\alpha)\tau(\alpha).
\end{equation}
Taking a derivative in $\alpha$ and projecting into normal and tangential components, we obtain the evolution equations for $\vartheta(\alpha)$ and $|z_\alpha(\alpha)|$:
\begin{equation}\label{thetaeqaux}
\begin{aligned}
    \vartheta_t(\alpha)&=\frac{1}{|z_\alpha(\alpha)|}\Big(U_\alpha(\alpha)+T(\alpha)(1+\vartheta_\alpha(\alpha))\Big),\\
    |z_\alpha(\alpha)|_t&=T_\alpha(\alpha)-(1+\vartheta_\alpha(\alpha))U(\alpha).
\end{aligned}
\end{equation}
Now, we can choose a tangential velocity $T(\alpha)$ so that the parametrization of $z(\alpha)$ has a tangent vector whose modulus does not depend on $\alpha$. Indeed, we impose
\begin{equation}\label{modul}
    |z_\alpha(\alpha)|=\frac{1}{2\pi}\int_{-\pi}^{\pi}|z_\alpha(\eta)|d\eta=\frac{L(t)}{2\pi},
\end{equation}
where $L(t)$ is the length of the curve at time $t$. We then differentiate in time the equation above and use equation \eqref{thetaeqaux}$_2$ twice to obtain that
\begin{equation}\label{T}
    T(\alpha)=\int_0^{\alpha}(1+\vartheta_\alpha(\eta))U(\eta)d\eta-\frac{\alpha}{2\pi}\int_{-\pi}^{\pi}(1+\vartheta_\alpha(\eta))U(\eta)d\eta+T(0),
\end{equation}
where $T(0)$ simply provides a change of frame in the parametrization. 
Therefore, after substitution of this expression of $T(\alpha)$ into \eqref{thetaeqaux} and using the relation $|z_\alpha(\alpha)|=\frac{L(t)}{2\pi}$, the evolution system in terms of $\vartheta(\alpha)$ and $L(t)$ is the following
\begin{equation}\label{eqcurve3}
\begin{aligned}
    \vartheta_t(\alpha)&=\frac{2\pi}{L(t)} U_\alpha(\alpha)+\frac{2\pi}{L(t)}T(\alpha)(1+\vartheta_\alpha(\alpha)),\\
    L_t(t)&=-\int_{-\pi}^{\pi} (1+\vartheta_\alpha(\alpha))U(\alpha)d\alpha,
\end{aligned}
\end{equation}
where $T(\alpha)$ is defined in \eqref{T}, with $T(0)$ free to choose, 
and $U(\alpha)$ is given by \eqref{U} with
\begin{equation}\label{BR}
    \overline{BR}(\omega)(\alpha)=\frac{1}{ i L(t) }\pv\int_{-\pi}^{\pi}\frac{\omega(\alpha-\beta)}{\int_0^1 e^{i(\alpha+(s-1)\beta)}e^{i\vartheta(\alpha+(s-1)\beta)}ds}\frac{d\beta}{\beta}.
\end{equation}
Recalling the expression of the curvature in terms of the angle using \eqref{curvature}, 
\begin{equation*}
    K(z)(\alpha)=\frac{2\pi}{L(t)}(1+\vartheta_\alpha(\alpha)),
\end{equation*}
the equation for the vorticity strength $\omega(\alpha)$ in \eqref{eqOmega} reads as follows
 \begin{equation}\label{omega}
    \omega(\alpha)=2A_\mu \frac{L(t)}{2\pi} \md(\omega)(\alpha)+2A_\sigma\frac{2\pi}{L(t)} \vartheta_{\alpha\alpha}(\alpha)-2A_\rho\frac{L(t)}{2\pi}\sin{(\alpha+\vartheta(\alpha))},
\end{equation}
 with $\md(z,\omega)(\alpha)$ in \eqref{Dz} given by
\begin{equation}\label{md}
    \md(\omega)(\alpha)=-\real(\overline{BR}(\omega)(\alpha) e^{i(\alpha+\vartheta(\alpha))}).
\end{equation}
In addition, 
we notice that because $|z_\alpha(\alpha)|$ is constant in $\alpha$ and $z(\alpha)$ is a closed curve, then the following constraint must hold
\begin{equation}\label{constraint}
    0=\int_{-\pi}^{\pi} \frac{z_\alpha(\alpha)}{|z_\alpha(\alpha)|} d\alpha=\int_{-\pi}^{\pi} e^{i(\alpha+\vartheta(\alpha))}d\alpha.
\end{equation}
Finally, once the system \eqref{T}-\eqref{md} is solved, one can track the evolution of a single point, say that with $\alpha=0$, by integrating in time \eqref{eqcurve} (notice that the right hand side of \eqref{eqcurve} has been shown to depend only on $z_\alpha$, given by \eqref{param}, \eqref{modul}, and \eqref{eqcurve3}).

\section{Evolution System}\label{subsec:evolutionSystem}
For our purposes, equation \eqref{eqcurve3}$_2$ is not  convenient  to study $L(t)$. Instead, we will make use of the fact that the fluid is incompressible, and thus the volume is preserved.
The volume is given in terms of the curve $z(\alpha)$ by
\begin{equation*}
    V=\frac12\int_{-\pi}^\pi z(\alpha)\wedge  z_\alpha(\alpha)d\alpha,
\end{equation*}
which in complex notation reads as
\begin{equation*}
    V=\frac12\imag\int_{-\pi}^\pi  \overline{z(\alpha)} z_\alpha(\alpha) d\alpha.
\end{equation*}
Since $U(\alpha)$ in \eqref{U} is a total derivative in $\alpha$, the conservation of volume is obtained by simply taking a time derivative in the equation above.
Now, from \eqref{param} and \eqref{modul} we have that
$$
z_\alpha(\alpha)=\frac{L(t)}{2\pi}e^{i(\alpha+\vartheta(\alpha))},
$$
and we can write
\begin{equation*}
    z(\alpha)=z(0)+\int_0^\alpha z_\alpha(\eta)d\eta.
\end{equation*}
Then the conservation of volume  writes as follows
\begin{equation}\label{volumelength}
\begin{aligned}
    V_0&=\pi R^2
    \\
    &=\frac{1}{2}\Big(\frac{L(t)}{2\pi}\Big)^2\imag \Big(\int_{-\pi}^\pi\int_0^\alpha e^{i(\alpha-\eta)}e^{i(\vartheta(\alpha)-\vartheta(\eta))}d\eta d\alpha\Big)\\
    &=\frac{1}{2}\Big(\frac{L(t)}{2\pi}\Big)^2\imag \Big(2\pi i
    +\int_{-\pi}^\pi\int_0^\alpha e^{i(\alpha-\eta)} \sum_{n\geq1}\frac{i^n}{n!}(\vartheta(\alpha)-\vartheta(\eta))^n d\eta d\alpha\Big).
\end{aligned}
\end{equation}
This yields the following equation for $L(t)$:
\begin{equation}\label{Lequation}
\Big(\frac{L(t)}{2\pi}\Big)^2=R^2\Big(1+\frac{1}{2\pi}\imag \int_{-\pi}^\pi\int_0^\alpha e^{i(\alpha-\eta)} \sum_{n\geq1}\frac{i^n}{n!}(\vartheta(\alpha)-\vartheta(\eta))^n d\eta d\alpha\Big)^{-1}.
\end{equation}
This is the equation for $L(t)$ that will be use later in the paper,

Conversely, it is not hard to check that \eqref{Lequation} implies \eqref{eqcurve3}$_2$. In fact, taking a time derivative of \eqref{volumelength}, and assuming $L(t)\neq0$, gives that
\begin{equation*}
    \begin{aligned}
    L'(t)&=\frac{1}{4\pi R^2}\Big(\frac{L(t)}{2\pi}\Big)^3\imag \int_{-\pi}^\pi\int_0^\alpha i e^{i(\alpha-\eta)} e^{i(\vartheta(\alpha)\!-\!\vartheta(\eta))}(\vartheta_t(\alpha)\!-\!\vartheta_t(\eta)) d\eta d\alpha\\
    &=\frac{1}{4\pi R^2}\Big(\frac{L(t)}{2\pi}\Big)^3\imag \int_{-\pi}^\pi i e^{i\alpha} e^{i\vartheta(\alpha)}\vartheta_t(\alpha)\int_0^\alpha  e^{-i\eta} e^{-i\vartheta(\eta)}d\eta d\alpha\\
    &\quad-\frac{1}{4\pi R^2}\Big(\frac{L(t)}{2\pi}\Big)^3\imag \int_{-\pi}^\pi i e^{i\alpha} e^{i\vartheta(\alpha)}\int_0^\alpha  e^{-i\eta} e^{-i\vartheta(\eta)}\vartheta_t(\eta)d\eta d\alpha.
    \end{aligned}
\end{equation*}
Thus writing in the first term $e^{i\alpha} e^{i\vartheta(\alpha)}\vartheta_t(\alpha)=\partial_\alpha\int_0^\alpha e^{i\eta} e^{i\vartheta(\eta)}\vartheta_t(\eta)$ and integrating by parts we obtain that
\begin{equation}\label{auxL}
    \begin{aligned}
    L'(t)=\frac{-1}{2\pi R^2}\Big(\frac{L(t)}{2\pi}\Big)^3\imag \int_{-\pi}^\pi i e^{i\alpha} e^{i\vartheta(\alpha)}\int_0^\alpha  e^{-i\eta} e^{-i\vartheta(\eta)}\vartheta_t(\eta)d\eta d\alpha.
    \end{aligned}
\end{equation}
Using the equation for $\vartheta$ in \eqref{eqcurve3}$_1$, we have
\begin{equation*}
    \begin{aligned}
     \frac{L(t)}{2\pi}&\int_0^\alpha  e^{-i\eta} e^{-i\vartheta(\eta)}\vartheta_t(\eta)d\eta=
    \int_0^\alpha  e^{-i\eta-i\vartheta(\eta)}\big(U_\alpha(\eta)+ T(\eta)(1+\vartheta_\alpha(\eta))\big)
     d\eta\\
     &=e^{-i\alpha-i\vartheta(\alpha)}U(\alpha)-e^{-i\vartheta(0)}U(0)+i\int_0^\alpha  e^{-i\eta-i\vartheta(\eta)}(1+\vartheta_\alpha(\eta))U(\eta)d\eta\\
     &\quad+i\int_0^\alpha  \partial_\eta\big(e^{-i\eta-i\vartheta(\eta)}\big)T(\eta) d\eta.
    \end{aligned}
\end{equation*}
We then integrate by parts once more in the last term, also using \eqref{T}, to obtain
\begin{equation*}
    \begin{aligned}
    i&\int_0^\alpha  \partial_\eta\big(e^{-i\eta-i\vartheta(\eta)}\big)T(\eta) d\eta=i e^{-i\alpha-i\vartheta(\alpha)}T(\alpha)-i e^{-i\vartheta(0)}T(0)\\
    &-i\int_0^\alpha e^{-i\eta-i\vartheta(\eta)}(1+\vartheta_\alpha(\eta))U(\eta)d\eta +\frac{i}{2\pi}\int_{-\pi}^\pi(1+\theta_\alpha(\eta))U(\eta)d\eta \int_0^\alpha e^{-i\eta-i\vartheta(\eta)}d\eta.
    \end{aligned}
\end{equation*}
Substituting into the previous equation we find that
\begin{equation*}
    \begin{aligned}
     \frac{L(t)}{2\pi}&\int_0^\alpha  e^{-i\eta} e^{-i\vartheta(\eta)}\vartheta_t(\eta)d\eta=
    e^{-i\alpha-i\vartheta(\alpha)}U(\alpha)-e^{-i\vartheta(0)}U(0)+i e^{-i\alpha-i\vartheta(\alpha)}T(\alpha)\\
    &-i e^{-i\vartheta(0)}T(0)
     +\frac{i}{2\pi}\int_{-\pi}^\pi(1+\theta_\alpha(\eta))U(\eta)d\eta \int_0^\alpha e^{-i\eta-i\vartheta(\eta)}d\eta.
    \end{aligned}
\end{equation*}
Thus, plugging this back into \eqref{auxL} and using relation \eqref{constraint} gives
\begin{equation*}
    \begin{aligned}
    L'(t)=\frac{-1}{2\pi R^2}\Big(\frac{L(t)}{2\pi}\Big)^2 \Big(\imag\! \int_{-\pi}^\pi \! \!\!e^{i\alpha+i\vartheta(\alpha)}\! \int_0^\alpha\! \! e^{-i\eta-i\vartheta(\eta)} d\eta d\alpha\Big)\!\int_{-\pi}^\pi\!\! (1\!+\!\vartheta_\alpha(\eta))U(\eta)d\eta,
    \end{aligned}
\end{equation*}
which recalling \eqref{volumelength} implies \eqref{eqcurve3}$_2$. 

We will later show (see Section \ref{sec:Lestimate}) that the condition on the initial data will guarantee that $L(t)>0$ for all time, and thus the formulations using \eqref{eqcurve3}$_2$ and \eqref{Lequation} are equivalent.

In summary, the closed system of equations that define the evolution of the Muskat bubble can be expressed by \eqref{eqcurve3}$_1$ and \eqref{Lequation}, together with \eqref{constraint}.  We will study the evolution of this system to prove our main results.

\section{Calculations Involving the Fourier Transform}\label{sec:FourierTransCalc}

In this section we recall basic calculations for the periodic Fourier transform that will be used in the next section.  In particular we define the Fourier transform of a periodic function $g$ with domain $\mathbb{T}=[-\pi, \pi]$ as: 
\begin{equation}\notag 
    \mathcal{F}(g)(k)\eqdef\widehat{g}(k)=\frac{1}{2\pi}\int_{-\pi}^\pi g(\alpha)e^{-i k\alpha}d\alpha,
\end{equation}
and the corresponding Fourier series
\begin{equation}\notag
    g(\alpha)=\sum_{k\in\mathbb{Z}}\widehat{g}(k)e^{i k\alpha}. 
\end{equation}
For later use, we also define the periodic Hilbert transform as 
\begin{equation}\label{defHilbertTransform}
    \mH(g)(\alpha)\eqdef\frac{1}{2\pi}\pv\int_{-\pi}^{\pi}\frac{g(\alpha-\beta)}{\tan{(\beta/2)}}.
\end{equation}
We notice that $\mH(g)(\alpha)=-\frac{1}{2\pi}\pv\int_{-\pi}^{\pi}\frac{g(\alpha+\beta)}{\tan{(\beta/2)}}$.  Adding half of these together one can calculate that $\mH(c)=0$ if $c\in \mathbb{C}$ is a constant and 
$
\mH(g)(\alpha)=-i\sum_{k \ne 0} \sign(k) \widehat{g}(k)e^{i k\alpha}.
$
And further $\mathcal{F}(\mH(g))(k) = -i \sign(k) \widehat{g}(k)$.  Then we define the operator $\Lambda$ using the Fourier transform as 
$
\mathcal{F}(\Lambda g)(k)\eqdef |k| \widehat{g}(k).
$
And we observe that $\mH(g_\alpha )(\alpha)=\sum_{k\in \mathbb{Z}} |k| \widehat{g}(k)e^{i k\alpha} = \Lambda g$.    And furthermore $$\partial_\alpha \mH(g_{\alpha\alpha} )(\alpha)=-\sum_{k\in \mathbb{Z}} |k|^3 \widehat{g}(k)e^{i k\alpha} = -\Lambda^3 g.$$  
Also one can compute by plugging in the Fourier series that
\begin{equation}\label{fourierCalcAlpha}
        \mathcal{F}\Big(\int_0^\alpha g(\eta)d\eta-\frac{\alpha}{2\pi}\int_{-\pi}^\pi g(\eta)d\eta \Big)(k)=\left\{
    \begin{aligned}-\frac{i}{k} \widehat{g}(k),\quad k\neq0,\\
        \sum_{j\neq0}\frac{i}{j}\widehat{g}(j),\quad k=0,
    \end{aligned}\right.
\end{equation}
These calculations will be used in the next section when we take the Fourier transform of the linearization.

\section{Linearization and Nonlinear Expansion}\label{sec:linearization}

We proceed next to decompose the equation for $\vartheta$ in the system \eqref{T}-\eqref{md} into linear and nonlinear parts. We will Taylor expand the nonlinear terms around the zero frequency of $\vartheta(\alpha)$. Define
\begin{equation}\label{theta}
    \theta(\alpha)=\vartheta(\alpha)-\hat{\vartheta}(0).
\end{equation}
Taking into account that
\begin{equation*}
    \int_0^1 e^{i(\alpha+(s-1)\beta)}ds=e^{i\alpha}\frac{1-e^{-i\beta}}{i\beta},
\end{equation*}
we write the denominator of \eqref{BR} as follows
\begin{multline*}
    \int_0^1 e^{i(\alpha+(s-1)\beta)}e^{i\hat{\vartheta}(0)}e^{i\theta(\alpha+(s-1)\beta)}ds=\\
    e^{i\hat{\vartheta}(0)}e^{i\alpha}\frac{1-e^{-i\beta}}{i\beta}\left(e^{-i\alpha}\frac{i\beta}{1-e^{-i\beta}}\int_0^1 e^{i(\alpha+(s-1)\beta)}e^{i\theta(\alpha+(s-1)\beta)}ds-1+1\right).
\end{multline*}
Then after performing a Taylor expansion, \eqref{BR} is given by 
\begin{multline*}
    \overline{BR}(\omega)(\alpha)=\frac{e^{-i\hat{\vartheta}(0)}}{iL(t)} \pv\int_{-\pi}^{\pi}\frac{\omega(\alpha-\beta)}{\beta e^{i\alpha}\frac{1-e^{-i\beta}}{i\beta}}
    \\\cdot\sum_{n\geq 0}\left(1-\frac{i\beta}{1-e^{-i\beta}}\int_0^1 e^{i(s-1)\beta}e^{i\theta(\alpha+(s-1)\beta)}ds\right)^nd\beta.
\end{multline*}
By further Taylor expanding the exponential term, we find that
\begin{multline*}
    \overline{BR}(\omega)(\alpha)=\frac{e^{-i\hat{\vartheta}(0)}e^{-i\alpha}}{iL(t)}\sum_{n\geq 0}\pv\int_{-\pi}^{\pi}\frac{\omega(\alpha-\beta)}{\beta }\frac{(-1)^n(i\beta)^{n+1}}{(1-e^{-i\beta})^{n+1}}\\
    \cdot\Big(\sum_{m\geq 1}\frac{i^m}{m!}\int_0^1 e^{i(s-1)\beta}(\theta(\alpha+(s-1)\beta))^m ds\Big)^nd\beta.
\end{multline*}
We further Taylor expand $e^{i\theta(\alpha)}$.  Then plugging these expansions into \eqref{U} provides the series for $U(\alpha)$
\begin{multline*}
    U(\alpha)=\real\left(\frac{1}{L(t)}\sum_{n,l\geq0}\frac{(i\theta(\alpha))^l}{l!}\pv\int_{-\pi}^{\pi}\frac{\omega(\alpha-\beta)(-1)^n(i\beta)^{n+1}}{\beta(1-e^{-i\beta})^{n+1}}
    \right.\\
    \left.\cdot\left(\sum_{m\geq 1}\frac{i^m}{m!}\int_0^1 e^{i(s-1)\beta}(\theta(\alpha+(s-1)\beta))^m ds\right)^nd\beta\right).
\end{multline*}
For convenience, we introduce the following notation for the operators $\mathcal{R}$ and $\mathcal{S}$.  We first define $\mathcal{R}$:  
\begin{equation}\label{R}
    \mathcal{R}(f)(\alpha)\!=\!\frac{i}{\pi}\pv\!\!\int_{-\pi}^{\pi}\!\!\!\frac{f(\alpha\!-\!\beta)}{\beta}\frac{\beta^2}{(1\!-\!e^{-i\beta})^2}\!\!\int_0^1\!\!e^{i(s-1)\beta}\theta(\alpha\!+\!(s-1)\beta) ds d\beta.
\end{equation}
Above $\mathcal{R}$ is chosen to be a linear function in $\theta$, it corresponds to $l=0$, $n=1$ and $m=1$ in $U(\alpha)$ above. Then, we further define the operator $\mathcal{S}$ to be the nonlinear in $\theta$ terms inside $U(\alpha)$ above:
\begin{multline}\label{S}
    \mathcal{S}(f)(\alpha)=
    \frac{1}{\pi}\sum_{\substack{n,l\geq0 \\ n+l\geq 2}}\frac{(-1)^n i^{l+n+1}(\theta(\alpha))^l}{l!}\pv\int_{-\pi}^{\pi}\frac{f(\alpha-\beta)\beta^{n+1}}{\beta(1-e^{-i\beta})^{n+1}}\\
    \cdot
    \left(\sum_{m\geq 1}\frac{i^m}{m!}\int_0^1 e^{i(s-1)\beta}(\theta(\alpha+(s-1)\beta))^m ds\right)^n d\beta\\
    +\frac{1}{\pi}\pv\int_{-\pi}^{\pi}\frac{f(\alpha-\beta)\beta^{2}}{\beta(1-e^{-i\beta})^{2}}\sum_{m\geq 2}\frac{i^m}{m!}\int_0^1 e^{i(s-1)\beta}(\theta(\alpha+(s-1)\beta))^m ds d\beta.
\end{multline}
The $\mathcal{S}$ operator corresponds to the terms in $U(\alpha)$ above where $n,l \ge 0$ and $n+l\ge 2$ plus the case where $l=0$, $n=1$ and $m\ge 2$. 

For the cases in $U(\alpha)$ where $n=l=0$ and $n=0$, $l=1$ we further notice that
\begin{equation}\label{hilbert}
    \frac{1}{\pi}\pv\int_{-\pi}^{\pi}\frac{f(\alpha-\beta)}{1-e^{-i\beta}}d\beta=-i\mH f(\alpha)+\hat{f}(0),
\end{equation}
where $\mH f$ denotes the periodic Hilbert transform of $f$ as given in \eqref{defHilbertTransform}.  The previous identity \eqref{hilbert} is obtained multiplying above and below by $1-e^{i\beta}$ and using the trigonometric identities
$1-\cos{(\beta)}=2\sin^2{(\beta/2)}$ and  $\sin{(\beta)}=2\sin{(\beta/2)}\cos{(\beta/2)}.
$
Further $\omega(\alpha)$ in \eqref{omega} can be written as an exact derivative, its mean value is zero and therefore $\hat{\omega}(0)=0$. 
These calculations show that we can write the expression inside \eqref{U} as
\begin{equation}\notag
    i\overline{BR}(\omega)(\alpha) e^{i(\alpha+\vartheta(\alpha))}
    =
    \frac{\pi}{L(t)} \left(i \theta(\alpha) \mathcal{H}(\omega) + \mathcal{H}(\omega) +\mathcal{R}(\omega)(\alpha)+\mathcal{S}(\omega)(\alpha)\right).
\end{equation}
Thus, noticing that the term with $n=0$, $l=1$ vanishes in $U(\alpha)$ in \eqref{U} because it is purely imaginary, using the notation above we can write $U(\alpha)$ in the following manner
\begin{equation}\label{Udecomp}
    U(\alpha)=\frac{\pi}{L(t)}\Big(\mH \omega(\alpha)+ \real\hspace{0.05cm}\mathcal{R}(\omega)(\alpha)+ \real\hspace{0.05cm}\mathcal{S}(\omega)(\alpha)\Big).
\end{equation}
Proceeding similarly, in \eqref{md}, one finds that
\begin{equation}\label{Ddecomp}
    \md(\omega)(\alpha)=\frac{-\pi}{L(t)}\Big(\theta(\alpha)\mH \omega(\alpha)+ \imag\hspace{0.05cm}\mathcal{R}(\omega)(\alpha)+ \imag\hspace{0.05cm}\mathcal{S}(\omega)(\alpha)\Big).
\end{equation}
We shall now split all the terms into zero, first or higher order polynomials of $\theta(\alpha)$. First, the vorticity strength \eqref{omega} is split as follows
\begin{equation*}
    \omega(\alpha)=\omega_0(\alpha)+\omega_1(\alpha)+\omega_{\geq2}(\alpha),
\end{equation*}
where
\begin{equation}\label{omegasplit}
\left\{\begin{aligned}
    \omega_0(\alpha)&\!=\!-A_\rho\frac{L(t)}{\pi}\sin{(\alpha+\hat{\vartheta}(0))},\\
    \omega_1(\alpha)&\!=\!A_\mu\frac{L(t)}{\pi}\md_1(\omega_0)(\alpha)\!+\!2A_\sigma\frac{2\pi}{L(t)}\theta_{\alpha\alpha}\!
    \\
    &\hspace{1.6cm}-\!A_\rho\frac{L(t)}{\pi}\cos{(\alpha\!+\!\hat{\vartheta}(0))}\theta(\alpha),\\
    \omega_{\geq2}(\alpha)&\!=\!A_\mu\frac{L(t)}{\pi}\md_{\geq2}(\omega)(\alpha)\!
    \\ &\hspace{1.6cm} -\!A_\rho\frac{L(t)}{\pi}\sin{(\alpha\!+\!\hat{\vartheta}(0))}\sum_{j\geq1}\!\!\frac{(-1)^j(\theta(\alpha))^{2j}}{(2j)!}\\
    &\hspace{1.6cm}-A_\rho\frac{L(t)}{\pi}\cos{(\alpha+\hat{\vartheta}(0))}\sum_{j\geq1}\frac{(-1)^j(\theta(\alpha))^{1+2j}}{(1+2j)!},\\
    \omega_{\geq1} &\!=\! \omega_{1}+\omega_{\geq2}.
\end{aligned}\right.
\end{equation}
Above we used the trigonometric identity
$\sin(a+b) = \sin(a)\cos(b)+\cos(a)\sin(b)$,
as well as the Taylor expansions for sine and cosine. 
Then $\md_1(\omega_0)(\alpha)$ and $\md_{\geq2}(\omega)(\alpha)$ are obtained, in turn, by introducing \eqref{omegasplit} into \eqref{Ddecomp} as follows
\begin{equation*}
    \md(\omega)(\alpha)=\md_1(\omega_0)(\alpha)+\md_{\geq2}(\omega)(\alpha),
\end{equation*}
where
\begin{equation}\label{mdsplit}
\left\{\begin{aligned}
    \md_1(\omega_0)(\alpha)&=\frac{-\pi}{L(t)}\Big(\theta(\alpha)\mH \omega_0(\alpha)+\imag\hspace{0.05cm}\mR(\omega_0)(\alpha)\Big),\\
    \md_{\geq2}(\omega)(\alpha)&=\frac{-\pi}{L(t)}\Big(\theta(\alpha)\mH \omega_{\geq1}(\alpha)+\imag\hspace{0.05cm} \mR(\omega_{\geq1})(\alpha)+\imag\hspace{0.05cm}\mS(\omega)(\alpha)\Big).
\end{aligned}\right.
\end{equation}
Analogously, the splitting for $U(\alpha)$ from \eqref{Udecomp} is
\begin{equation*}
    U(\alpha)=U_0(\alpha)+U_1(\alpha)+U_{\geq2}(\alpha),
\end{equation*}\vspace{-0.5cm}
with
\begin{equation}\label{Usplit}
\left\{\begin{aligned}
    U_0(\alpha)&=\frac{\pi}{L(t)}\mH \omega_0(\alpha),\\
    U_1(\alpha)&=\frac{\pi}{L(t)}\Big(\mH \omega_1(\alpha)+\real\hspace{0.05cm}\mR(\omega_0)(\alpha)\Big),\\
    U_{\geq2}(\alpha)&=\frac{\pi}{L(t)}\Big(\mH \omega_{\geq2}(\alpha)+\real\hspace{0.05cm} \mR(\omega_{\geq1})(\alpha)+\real\hspace{0.05cm}\mS(\omega)(\alpha)\Big).
\end{aligned}\right.
\end{equation}
Recalling the expression for $T(\alpha)$ in \eqref{T}, we find that
\begin{equation*}
    T(\alpha)=T_0(\alpha)+T_1(\alpha)+T_{\geq2}(\alpha),
\end{equation*}
where, using $\int_{-\pi}^{\pi} U_0(\eta) d\eta =0$, we have
\begin{equation}\label{Tsplit}
\left\{\begin{aligned}
    T_0(\alpha)=& T(0)+\int_0^\alpha U_0(\eta)d\eta,\\
    T_1(\alpha)=& \int_0^\alpha  U_1(\eta)d\eta-\frac{\alpha}{2\pi}\int_{-\pi}^{\pi}U_1(\eta)d\eta \\
    &+\int_0^\alpha\theta_\alpha(\eta)U_0(\eta)d\eta-\frac{\alpha}{2\pi}\int_{-\pi}^{\pi}\theta_\alpha(\eta)U_0(\eta)d\eta,\\
    T_{\geq2}(\alpha)=&\int_0^\alpha (1+\theta_\alpha(\eta))U_{\geq2}(\eta)d\eta-\frac{\alpha}{2\pi}\int_{-\pi}^{\pi}(1+\theta_\alpha(\eta))U_{\geq2}(\eta)d\eta\\
    &+\int_0^\alpha\theta_\alpha(\eta)U_1(\eta)d\eta-\frac{\alpha}{2\pi}\int_{-\pi}^{\pi}\theta_\alpha(\eta)U_1(\eta)d\eta.
\end{aligned}\right.
\end{equation}
These are all the splittings that we will use in the following.

We first examine the zero order terms from \eqref{thetaeqaux}$_1$.   The zero order terms on the right side of the equality \eqref{thetaeqaux}$_1$ would be
\begin{equation}\notag
        \Theta(\alpha)=(U_0)_\alpha(\alpha)+T_0(\alpha).
\end{equation}
Now a direct calculation from \eqref{omegasplit} shows that 
\begin{equation}\label{hilbertTcalc}
    \mathcal{H}(\omega_0)(\alpha) 
    = A_\rho\frac{L(t)}{\pi}\cos{(\alpha+\hat{\vartheta}(0))}.
\end{equation}
Then we plug this into \eqref{Usplit}$_1$ and \eqref{Tsplit}$_1$ to obtain
\begin{equation}\label{u0t0}
\left\{
\begin{aligned}
    U_0(\alpha)&=A_\rho \cos{(\alpha+\hat{\vartheta}(0))},\\
     T_0(\alpha)&=A_\rho\sin{(\alpha+\hat{\vartheta}(0))}-A_\rho\sin{\hat{\vartheta}(0)}+T(0).
\end{aligned}\right.
\end{equation}
In particular then the zero order term $\Theta(\alpha)$ does not depend on $\alpha$,
\begin{equation*}
    \Theta(\alpha)=-A_\rho \sin{\hat{\vartheta}(0)}+T(0).
\end{equation*}
Now we choose 
\begin{equation}\label{T0}
    T(0)=A_\rho \sin{\hat{\vartheta}(0)}.
\end{equation}
Thus the parametrization of the circle solution is independent of time (see Proposition \ref{circles} below).  Further $\Theta(\alpha)=0.$

Now we introducing the splittings \eqref{Usplit} and \eqref{Tsplit} into the equation for $\vartheta$ in \eqref{eqcurve3}, we find that
\begin{equation}\label{system}
\left\{
\begin{aligned}
    \vartheta_t(\alpha)&=\frac{2\pi}{L(t)}\Big(\mathcal{L}(\alpha)+N(\alpha)\Big),\\
    \mathcal{L}(\alpha)&=(U_1)_\alpha(\alpha)+T_1(\alpha)+T_0(\alpha)\theta_\alpha(\alpha),\\
    N(\alpha)&=(U_{\geq 2})_\alpha(\alpha)+T_{\geq 2}(\alpha)(1+\theta_\alpha(\alpha))+T_1(\alpha)\theta_\alpha(\alpha).
\end{aligned}\right.
\end{equation}
Now we will expand the linear terms in $\mathcal{L}(\alpha)$ in \eqref{system}.  To do this we first split $U_1(\alpha)$ in \eqref{Usplit} into parts corresponding to the parameters $A_\rho$, $A_\sigma$ and $A_\mu$ respectively as  
\begin{equation*}
    U_1(\alpha)=A_\rho U_{1\rho}(\alpha)+4A_\sigma\frac{\pi^2}{(L(t))^2} U_{1\sigma}(\alpha)+A_\mu A_\rho U_{1\mu}(\alpha),
\end{equation*}
To calculate these terms we plug $\omega_0(\alpha)$ and $\omega_1(\alpha)$ from \eqref{omegasplit} into $U_1(\alpha)$ in \eqref{Usplit} using also $\md_1(\omega_0)(\alpha)$ from \eqref{mdsplit} and \eqref{hilbertTcalc}.  
We obtain
\begin{equation}\label{uterms}
\left\{
\begin{aligned}
    U_{1\rho}(\alpha)&=-\mH\big(\theta(\alpha)\cos{(\alpha+\hat{\vartheta}(0))}\big)-\real\hspace{0.05cm}\mR(\sin{(\alpha+\hat{\vartheta}(0))}),\\
    U_{1\sigma}(\alpha)&=\mH \theta_{\alpha\alpha}(\alpha),\\
    U_{1\mu}(\alpha)&= - \mH \big(\theta(\alpha)\cos{(\alpha+\hat{\vartheta}(0))}\big)+\mH\hspace{0.03cm}\imag\hspace{0.05cm}\mR(\sin{(\alpha+\hat{\vartheta}(0))}).
\end{aligned}\right.
\end{equation}
We analogously write the linear part, $\mathcal{L}(\alpha)$ in \eqref{system}, 
as follows
\begin{equation*}
    \mathcal{L}(\alpha)=A_\rho \mathcal{L}_\rho(\alpha)+4A_\sigma\frac{\pi^2}{(L(t))^2}\mathcal{L}_\sigma(\alpha)+A_\mu A_\rho\mathcal{L}_\mu(\alpha),
\end{equation*}
where
\begin{equation}\label{linearterms}
\left\{
\begin{aligned}
    \mathcal{L}_{\rho}(\alpha)&=(U_{1\rho})_\alpha(\alpha)+\int_0^\alpha U_{1\rho}(\eta)d\eta-\frac{\alpha}{2\pi}\int_{-\pi}^{\pi}U_{1\rho}(\eta)d\eta
    \\
    &\quad+\int_0^{\alpha}\theta_\alpha(\eta)\cos{(\eta+\hat{\vartheta}(0))}d\eta-\frac{\alpha}{2\pi}\int_{-\pi}^{\pi}\theta_\alpha(\eta)\cos{(\eta+\hat{\vartheta}(0))}d\eta\\
    &\quad+\theta_\alpha(\alpha)\sin{(\alpha+\hat{\vartheta}(0))},\\
    \mathcal{L}_\sigma(\alpha)&=-\Lambda^3 \theta(\alpha)+\int_0^\alpha \mH \theta_{\alpha\alpha}(\eta)d\eta,\\
    \mathcal{L}_\mu(\alpha)&=(U_{1\mu})_\alpha(\alpha)+\int_0^\alpha U_{1\mu}(\eta)d\eta.
\end{aligned}\right.
\end{equation}
Here we used that $\partial_\alpha \mathcal{H}\theta_{\alpha\alpha} = -\Lambda^3 \theta(\alpha)$.  We also used that 
$$\int_{-\pi}^{\pi}\mH \theta_{\alpha\alpha}(\eta)d\eta = \int_{-\pi}^{\pi}U_{1\mu}(\eta)d\eta =0$$ since both integrals are of a Hilbert transform and thus have zero value for the zero Fourier frequency.
This completes our decomposition of the equation \eqref{eqcurve} into \eqref{system}.  

In the following, we explain the steady states circles for equation \eqref{eqcurve} using the reformulation of the equations given above.  

\begin{prop}\label{circles}
	A circle of radius $R$, defined by \eqref{circles} and \eqref{modul} with $\vartheta(\alpha)=\widehat{\vartheta}(0)$ constant in time and $L(t)=2\pi R$, is a time-independent solution of \eqref{T}-\eqref{md} with $T(0)$ given by \eqref{T0}.
	It corresponds to the solution of \eqref{eqcurve} given by a circle of radius $R$ moving vertically with velocity $A_\rho$.	
\end{prop}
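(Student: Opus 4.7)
The plan is to verify directly that plugging $\theta(\alpha)\equiv 0$ (equivalently $\vartheta(\alpha)\equiv\hat{\vartheta}(0)$) and $L(t)=2\pi R$ into the contour system produces a consistent time-independent solution, and then to compute $z_t$ from \eqref{eqcurve} to extract the vertical translation velocity $A_\rho$. All of the hard decomposition work has already been done in Section \ref{sec:linearization}; the proof is simply the observation that almost every quantity in the splittings vanishes on $\theta=0$.

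First I would compute the vorticity strength on the candidate solution. Since $\mathcal{R}(\omega_0)$ from \eqref{R} is linear in $\theta$ and $\mathcal{S}(\omega)$ from \eqref{S} is at least quadratic in $\theta$, both operators vanish when $\theta\equiv 0$. Then \eqref{mdsplit} gives $\mathcal{D}_1(\omega_0)=0$ and, together with $\theta_{\alpha\alpha}=0$, \eqref{omegasplit} collapses to
\begin{equation*}
\omega(\alpha)=\omega_0(\alpha)=-2A_\rho R\sin(\alpha+\hat{\vartheta}(0)),
\qquad \omega_1\equiv\omega_{\geq 2}\equiv 0.
\end{equation*}
By the same vanishing of $\mathcal{R}$ and $\mathcal{S}$, the splittings \eqref{Usplit} and \eqref{Tsplit} reduce to $U(\alpha)=U_0(\alpha)$ and $T(\alpha)=T_0(\alpha)$, which by \eqref{u0t0} together with the choice \eqref{T0} of $T(0)$ become
\begin{equation*}
U(\alpha)=A_\rho\cos(\alpha+\hat{\vartheta}(0)),\qquad T(\alpha)=A_\rho\sin(\alpha+\hat{\vartheta}(0)).
\end{equation*}

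Next I would check the two evolution identities. Since $\vartheta_\alpha\equiv 0$, \eqref{thetaeqaux}$_1$ gives $\vartheta_t=|z_\alpha|^{-1}(U_\alpha+T)=|z_\alpha|^{-1}\bigl(-A_\rho\sin(\alpha+\hat{\vartheta}(0))+A_\rho\sin(\alpha+\hat{\vartheta}(0))\bigr)=0$, so $\vartheta$ is indeed time-independent. Equivalently, in the decomposition \eqref{system} one has $\mathcal{L}\equiv 0$ and $N\equiv 0$ on $\theta=0$, confirming consistency with the linear/nonlinear form. For $L(t)$, \eqref{eqcurve3}$_2$ gives $L_t=-\int_{-\pi}^{\pi}U(\alpha)d\alpha=-A_\rho\int_{-\pi}^{\pi}\cos(\alpha+\hat{\vartheta}(0))d\alpha=0$. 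One can also note that the area constraint \eqref{Lequation} is trivially satisfied by $L=2\pi R$ when $\theta\equiv 0$, and that \eqref{constraint} holds since $\int_{-\pi}^{\pi}e^{i(\alpha+\hat{\vartheta}(0))}d\alpha=0$.

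Finally, to read off the motion of the curve itself I would plug $U$ and $T$ into \eqref{eqcurve.theta} and use complex notation from Section \ref{sec:ComplexVector}. With $n=ie^{i(\alpha+\hat{\vartheta}(0))}$ and $\tau=e^{i(\alpha+\hat{\vartheta}(0))}$,
\begin{equation*}
z_t=Un+T\tau=\bigl(iA_\rho\cos(\alpha+\hat{\vartheta}(0))+A_\rho\sin(\alpha+\hat{\vartheta}(0))\bigr)e^{i(\alpha+\hat{\vartheta}(0))}=iA_\rho e^{-i(\alpha+\hat{\vartheta}(0))}e^{i(\alpha+\hat{\vartheta}(0))}=iA_\rho,
\end{equation*}
i.e.\ $z_t\equiv(0,A_\rho)$ in real coordinates, so every point on the curve translates vertically with constant velocity $A_\rho$. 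Since the initial curve defined by \eqref{param}–\eqref{modul} with $\vartheta\equiv\hat{\vartheta}(0)$ and $L=2\pi R$ is a circle of radius $R$, this completes the proof.

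There is no real obstacle here: the only subtle point is recognizing that the particular choice \eqref{T0} of the free constant $T(0)$ is exactly what makes $T_0(\alpha)$ equal to $A_\rho\sin(\alpha+\hat{\vartheta}(0))$ and hence cancels $U_\alpha$ pointwise, so that $\vartheta_t=0$ on the nose (rather than only after integration).
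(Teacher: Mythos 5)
Your proposal is correct and follows essentially the same route as the paper's proof: all linear and higher-order terms in the splittings vanish on $\theta\equiv 0$, the choice \eqref{T0} makes $U_\alpha+T$ cancel pointwise, and $z_t=Un+T\tau=iA_\rho$. The only cosmetic difference is that you evaluate $z_t$ directly at every $\alpha$, whereas the paper evaluates it only at $\alpha=0$ and uses the time-independence of $R\int_0^\alpha e^{i(\eta+\widehat{\vartheta}(0))}d\eta$; both yield the same conclusion.
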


\begin{proof}
For $\vartheta(\alpha)=\widehat{\vartheta}(0)$, all the linear and nonlinear terms in the decompositions \eqref{omegasplit}-\eqref{Tsplit} are zero. Thus, with $L(t)=2\pi R$,  as in \eqref{u0t0} with \eqref{T0} we have
\begin{align*}
    U(\alpha)&=U_0(\alpha)=A_\rho\cos{(\alpha+\widehat{\vartheta}(0))},\\
    T(\alpha)&=T_0(\alpha) = A_\rho\sin{(\alpha+\widehat{\vartheta}(0))}.
\end{align*} 
Both equations in \eqref{eqcurve3} are then trivially satisfied; equation \eqref{eqcurve3}$_1$ is decomposed as  \eqref{system} with $\mathcal{L}(\alpha)=N(\alpha)=0$.	
	
Then we integrate \eqref{param} to obtain
\begin{equation*}
	z(\alpha,t)=z(0,t)+R\int_0^\alpha e^{i(\eta+\widehat{\vartheta}(0))} d\eta.
\end{equation*}
We differentiate the above in time, and then use \eqref{eqcurve.theta} to obtain
\begin{equation*}
\begin{aligned}
  	z_t(\alpha,t)&=z_t(0,t)=U(0,t)n(0,t)+T(0,t)\tau(0,t)\\
	&=A_\rho\cos{(\widehat{\vartheta}(0))}ie^{i\widehat{\vartheta}(0)}+A_\rho\sin{(\widehat{\vartheta}(0))}e^{i\widehat{\vartheta}(0)}=0+i A_\rho.
\end{aligned}	
\end{equation*}	
This completes the proof.
\end{proof}

Next, we compute the Fourier transform of the linearized system. Because the function $\theta(\alpha)$ is real and has zero average, we only need to compute the positive frequencies.

\begin{prop}\label{linearfourier}
	(Linear system in Fourier variables.)
	For $k\geq 1$, $k\neq2$, the Fourier transform of the linear terms \eqref{linearterms} are given by  
	\begin{equation*}
	\begin{aligned}
	    \widehat{\mathcal{L}}(k)&=-A_\sigma \frac{4\pi^2}{L(t)^2}k(k^2\!-\!1)\hat{\theta}(k)\!-\!(1\!+\!A_\mu)A_\rho\frac{(k^2\!-\!1)(k\!+\!1)}{k(k\!+\!2)}e^{-i\hat{\vartheta}(0)}\hat{\theta}(k+1),	
	\end{aligned}
	\end{equation*}
	and for $k=2$,
	\begin{equation*}
	\begin{aligned}
	    \widehat{\mathcal{L}}(2)&=-A_\sigma \frac{4\pi^2}{L(t)^2}6\hat{\theta}(2)+(1-A_\mu)A_\rho\frac{3}{2}\left(\frac34-\log{2}\right)e^{i\hat{\vartheta}(0)}\hat{\theta}(1)\\
	    &\quad-(1+A_\mu)A_\rho\frac{9}{8}e^{-i\hat{\vartheta}(0)}\hat{\theta}(3).
	\end{aligned}
	\end{equation*}
\end{prop}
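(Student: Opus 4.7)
The plan is to Fourier transform each of the three pieces in the decomposition $\mathcal{L} = A_\rho \mathcal{L}_\rho + \tfrac{4 A_\sigma \pi^2}{L(t)^2} \mathcal{L}_\sigma + A_\mu A_\rho \mathcal{L}_\mu$ from \eqref{linearterms} separately, and combine. Write $\nu = \hat{\vartheta}(0)$. The surface-tension piece is immediate from Section~\ref{sec:FourierTransCalc}: since $\mathcal{F}(\Lambda^3 \theta)(k) = |k|^3 \hat{\theta}(k)$ and $\mH\theta_{\alpha\alpha}$ has zero mean, one has $\mathcal{F}(\int_0^\alpha \mH\theta_{\eta\eta}\, d\eta)(k) = -\tfrac{i}{k}(i k^2 \sign(k))\hat{\theta}(k) = |k|\hat{\theta}(k)$ for $k\neq 0$, yielding $\widehat{\mathcal{L}_\sigma}(k) = -k(k^2-1)\hat{\theta}(k)$ for $k\geq 1$ and hence the $\sigma$-term after multiplication by $\tfrac{4 A_\sigma \pi^2}{L(t)^2}$. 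For $\mathcal{L}_\rho$ and $\mathcal{L}_\mu$, the ``elementary'' pieces $\mH(\theta\cos(\alpha+\nu))$, $\theta_\alpha\cos(\alpha+\nu)$, $\theta_\alpha\sin(\alpha+\nu)$ all reduce to explicit linear combinations of $\hat{\theta}(k\pm 1)$ with rational coefficients in $k$ via the mode-shift rule (since $\widehat{\cos(\cdot+\nu)}$ and $\widehat{\sin(\cdot+\nu)}$ are supported at $\pm 1$), combined with $\widehat{\mH g}(k) = -i\sign(k)\hat{g}(k)$, $\widehat{\partial_\alpha g}(k) = ik\hat{g}(k)$, and the antiderivative formula \eqref{fourierCalcAlpha}.

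The main work is the Fourier transform of $\mR(\sin(\cdot+\nu))$. Setting $g = \mR(\sin(\cdot+\nu))$, inserting $\theta(\alpha+(s-1)\beta) = \sum_j \hat{\theta}(j)\, e^{ij\alpha} e^{ij(s-1)\beta}$ into \eqref{R} and expanding $\sin(\alpha-\beta+\nu)$ into complex exponentials yields
\begin{equation*}
\hat{g}(k) = \tfrac{e^{i\nu}}{2\pi}\hat{\theta}(k-1)\, I_+(k) - \tfrac{e^{-i\nu}}{2\pi}\hat{\theta}(k+1)\, I_-(k+2),
\end{equation*}
with
\begin{equation*}
I_\pm(m) = \pv\int_{-\pi}^\pi \frac{\beta\, e^{\mp i \beta}}{(1-e^{-i\beta})^2}\int_0^1 e^{i(s-1)\beta m}\, ds\, d\beta.
\end{equation*}
Using the factorization $(1-e^{-i\beta})^2 = -4 e^{-i\beta}\sin^2(\beta/2)$ together with the distributional identity $\tfrac{1}{\sin^2(\beta/2)} = -2\, \partial_\beta\cot(\beta/2)$ (equivalently, the Fourier expansion $-4\sum_{n\geq 1} n \cos(n\beta)$), and doing the $s$-integral first, one obtains
\begin{equation*}
I_+(m) = i\pi\sign(m),\qquad I_-(m) = \frac{\pi(2-|m-2|)}{im}\qquad (m\neq 0),
\end{equation*}
together with the resonant value $I_-(0) = 2\pi i(1-2\ln 2)$, which follows from the classical identity $\int_0^{\pi/2} u\cot u\, du = \tfrac{\pi}{2}\ln 2$.

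Finally the pieces are assembled using $\widehat{\real g}(k) = \tfrac{1}{2}(\hat{g}(k) + \overline{\hat{g}(-k)})$ and $\widehat{\imag g}(k) = \tfrac{1}{2i}(\hat{g}(k) - \overline{\hat{g}(-k)})$, recalling $\hat{\theta}(-j) = \overline{\hat{\theta}(j)}$ since $\theta$ is real. For $k\geq 1$ with $k\neq 2$, the $\hat{\theta}(k-1)$ coefficient in $\widehat{U_{1\rho}}(k)$ comes entirely from the $-\mH(\theta\cos)$ term (the $-\real \mR(\sin)$ contribution cancels there, since the $\hat\theta(k-1)$ coefficients of $\hat g(k)$ and $\overline{\hat g(-k)}$ are $\tfrac{i}{2}e^{i\nu}$ and $-\tfrac{i}{2}e^{i\nu}$ respectively). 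Multiplied by $i(k^2-1)/k$ this produces $-\tfrac{k^2-1}{2k}e^{i\nu}$, which is exactly cancelled by the $+\tfrac{k^2-1}{2k}e^{i\nu}$ arising from the explicit $\theta_\alpha\cos$ and $\theta_\alpha\sin$ terms in $\mathcal{L}_\rho$. In $\widehat{U_{1\mu}}(k)$ the $\hat{\theta}(k-1)$ coefficient vanishes directly because the $-\mH(\theta\cos)$ and $\mH\imag \mR(\sin)$ pieces cancel. The $\hat{\theta}(k+1)$ coefficients combine to give exactly $-(1+A_\mu)A_\rho \tfrac{(k^2-1)(k+1)}{k(k+2)}e^{-i\nu}$. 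For $k=2$, the resonant value $I_-(0) = 2\pi i(1-2\ln 2)$ enters only through $\overline{\hat{g}(-2)}$; because it contributes to $\widehat{\real g}$ and $\widehat{\imag g}$ with opposite signs, it produces the asymmetric $(1-A_\mu)$ combination on $\hat{\theta}(1)$ with constant $\tfrac{3}{2}(\tfrac{3}{4}-\ln 2)$, while $\hat{\theta}(3)$ keeps the $(1+A_\mu)$ structure with constant $-\tfrac{9}{8}$ (since $I_-(4)=0$, only the $I_+(2)$ and $I_-(0)$ integrals are active). The principal obstacle is the correct PV evaluation of $I_\pm(m)$---a na\"ive power-series expansion of $1/(1-e^{-i\beta})^2$ gives the wrong answer, so the distributional treatment of $1/\sin^2(\beta/2)$ is essential---together with the numerical bookkeeping needed to verify the cancellations yielding the clean final formula.
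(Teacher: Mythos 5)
Your proposal is correct and reaches the stated formulas, including the anomalous $k=2$ case. The skeleton — split $\mathcal{L}$ into the $\rho$, $\sigma$, $\mu$ pieces, reduce via \eqref{fourierCalcAlpha} and \eqref{uterms} to the Fourier transform of $\mR(\sin(\cdot+\hat{\vartheta}(0)))$, then assemble — is the same as the paper's, but your handling of the key oscillatory integrals is genuinely different. The paper first separates $\mR$ into real and imaginary parts, producing the trigonometric integrals $I_1(k)$, $I_2(k)$ of Lemma \ref{lemmaI1I2}, which it evaluates by Abel-summing the geometric expansion of $(1-e^{-i\beta})^{-2}$ and treating $k=2$ by hand via the Clausen value $\int_0^\pi\log|2\sin(\beta/2)|\,d\beta=0$. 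You instead keep the full complex coefficient $\hat g(k)$ and extract real and imaginary parts at the end via $\hat g(k)\pm\overline{\hat g(-k)}$; your $I_+(m)=i\pi\,\sign(m)$ is a Fej\'er-kernel identity, $I_-$ is a difference of Fej\'er integrals, and the $k=2$ anomaly appears transparently as the resonance $I_-(0)$, evaluated by $\int_0^{\pi/2}u\cot u\,du=\tfrac{\pi}{2}\log 2$ (equivalent to the Clausen value). I checked your values of $I_\pm$ and the cancellation bookkeeping: they agree with \eqref{fourierimag}--\eqref{fourierreal}, and your assembled $\widehat{\mathcal{L}}_\rho(k)=-e^{-i\hat{\vartheta}(0)}\tfrac{(k^2-1)(k+1)}{k(k+2)}\hat\theta(k+1)$ for $k\ne 2$ is the correct value (the paper's displayed formula for $\widehat{\mathcal{L}}_\rho$ contains a typo there, writing $e^{i\hat{\vartheta}(0)}\hat\theta(k-1)$, though its final statement is consistent with your answer). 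One quibble: your closing remark that a power-series expansion of $(1-e^{-i\beta})^{-2}$ ``gives the wrong answer'' overstates the obstacle — the paper uses exactly that expansion, made rigorous by substituting $x=\lambda e^{-i\beta}$ and letting $\lambda\to1^-$; what fails without care is only the term-by-term exchange of sum and principal value, not the expansion itself.
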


\begin{proof}
First, we note that, for a general function $f(\alpha)$ and $k\neq0$ we have \eqref{fourierCalcAlpha}.
Therefore, for $k\geq 1$, the Fourier coefficients of
\eqref{linearterms} are given by
\begin{equation}\label{Lfourier}
\begin{aligned}
    \widehat{\mathcal{L}}_\sigma(k)&=-k(k^2-1)\hat{\theta}(k),\\
    \widehat{\mathcal{L}}_\mu(k)&=i\left(k-\frac1{k}\right)\widehat{U}_{1\mu}(k),\\
    \widehat{\mathcal{L}}_\rho(k)
    &=
    \left(k-\frac1{k}\right)
    \left(i\widehat{U}_{1\rho}(k)+\frac{e^{i\hat{\vartheta}(0)}}{2}\hat{\theta}(k-1)-\frac{e^{-i\hat{\vartheta}(0)}}{2}\hat{\theta}(k+1)\right),
\end{aligned}
\end{equation}
so it remains to compute $\widehat{U}_{1\mu}(k)$ and $\widehat{U}_{1\rho}(k)$.
From \eqref{uterms}, we can write 
\begin{align*}
    \widehat{U}_{1\mu}(k)
    =&
    \frac{i}{2}\Big(e^{i\hat{\vartheta}(0)}\hat{\theta}(k\!-\!1)\!
    +\!e^{-i\hat{\vartheta}(0)}\hat{\theta}(k\!+\!1)\Big)\!
    \\
    &-\!i\mathcal{F}\Big(\imag\hspace{0.05cm} \mathcal{R}(\sin{(\alpha\!+\!\hat{\vartheta}(0))})\Big)(k),
\end{align*}
and
\begin{align*}
    \widehat{U}_{1\rho}(k)
    =&\frac{i}2 \Big(e^{i\hat{\vartheta}(0)}\hat{\theta}(k\!-\!1)\!+\!e^{-i\hat{\vartheta}(0)}\hat{\theta}(k\!+\!1)\Big)\!
    \\
    &-\!\mathcal{F}\Big(\real\hspace{0.05cm} \mathcal{R}(\sin{(\alpha\!+\!\hat{\vartheta}(0))})\Big)(k).
\end{align*}
Recalling the expression of $\mathcal{R}$ in \eqref{R}, we have that
\begin{multline*}
    \mathcal{F}\Big(\imag\hspace{0.05cm} \mathcal{R}(\sin{(\alpha+\hat{\vartheta}(0))})\Big)(k)=\\
    \frac1{\pi}\pv\!\int_{-\pi}^{\pi}\int_0^1\!\! \imag\left(\!\frac{i\beta e^{i(s-1)\beta}}{(1-e^{-i\beta})^2}\!\right)\!\mathcal{F}\Big(\theta(\alpha+(s-1)\beta)\sin{(\alpha\!-\!\beta\!+\!\hat{\vartheta}(0))}\Big)(k)ds d\beta.
\end{multline*}
Using that
\begin{equation*}
    \imag\left(\frac{i\beta e^{i(s-1)\beta}}{(1-e^{-i\beta})^2}\right)=-\frac{\beta\cos{(\beta s)}}{4\sin^2{(\beta/2)}},
\end{equation*}
and computing the Fourier transform inside the integral, 
we obtain that
\begin{multline}\notag
    \mathcal{F}\Big(\imag\hspace{0.05cm} \mathcal{R}(\sin\!{(\alpha\!+\!\hat{\vartheta}(0))})\!\Big)\!(k)=\\
    -\frac{ e^{i\hat{\vartheta}(0)}}{2\pi}\hat{\theta}(k\!-\!1)\pv\!\!\int_{-\pi}^{\pi}\!\int_0^1 \!\!\!\frac{\beta\cos{(\beta s)}}{4\sin^2{(\beta/2)}}\sin{((k\!-\!1)(s\!-\!1)\beta\!-\!\beta)}ds d\beta\\
    +\frac{ e^{-i\hat{\vartheta}(0)}}{2\pi}\hat{\theta}(k\!+\!1)\pv\!\!\int_{-\pi}^{\pi}\!\int_0^1 \!\!\!\frac{\beta\cos{(\beta s)}}{4\sin^2{(\beta/2)}}\sin{((k\!+\!1)(s\!-\!1)\beta\!+\!\beta)}ds d\beta.	
\end{multline}
Taking into account that
\begin{equation*}
    \real\left(\frac{i\beta e^{i(s-1)\beta}}{(1-e^{-i\beta})^2}\right)=\frac{\beta\sin{(\beta s)}}{4\sin^2{(\beta/2)}},
\end{equation*}
and proceeding analogously, the following expression is found for the real part: 
\begin{multline}\notag 
    \mathcal{F}\Big(\real\hspace{0.05cm} \mathcal{R}(\sin\!{(\alpha\!+\!\hat{\vartheta}(0))})\!\Big)\!(k)=\\
    -\frac{i e^{i\hat{\vartheta}(0)}}{2\pi}\hat{\theta}(k\!-\!1)
    \pv\!\!\int_{-\pi}^{\pi}\!\int_0^1 \!\!\!\frac{\beta\sin{(\beta s)}}{4\sin^2{(\beta/2)}}\cos{((k\!-\!1)(s\!-\!1)\beta\!-\!\beta)}ds d\beta
    \\
    +\frac{i e^{-i\hat{\vartheta}(0)}}{2\pi}\hat{\theta}(k\!+\!1)\pv\!\!\int_{-\pi}^{\pi}\!\int_0^1 \!\!\!\frac{\beta\sin{(\beta s)}}{4\sin^2{(\beta/2)}}\cos{((k\!+\!1)(s\!-\!1)\beta\!+\!\beta)}ds d\beta.	
\end{multline}
The above integrals are calculated in Lemma \ref{lemmaI1I2} below. Plugging in their values, we have 
\begin{multline}\label{fourierimag}
    \mathcal{F}\Big(\imag\hspace{0.05cm} \mathcal{R}(\sin\!{(\alpha\!+\!\hat{\vartheta}(0))})\!\Big)\!(k)=
    \frac{ e^{-i\hat{\vartheta}(0)}}{2\pi}\hat{\theta}(k\!+\!1)\frac{-k\pi}{2+k} 1_{k \ge 1}\\
    -\frac{ e^{i\hat{\vartheta}(0)}}{2\pi}\hat{\theta}(k\!-\!1)
        \left(-\pi 1_{k \ge 1, k\ne 2}+ \pi \left(\frac{1}{2}-\log 4 \right)1_{k =2} \right),	
\end{multline}
and
\begin{multline}\label{fourierreal}
    \mathcal{F}\Big(\real\hspace{0.05cm} \mathcal{R}(\sin\!{(\alpha\!+\!\hat{\vartheta}(0))})\!\Big)\!(k)=
    -\frac{i e^{i\hat{\vartheta}(0)}}{2\pi}\hat{\theta}(k\!-\!1)\pi \left(\log 4-\frac{3}{2} \right)1_{k =2} 
    \\
    +\frac{i e^{-i\hat{\vartheta}(0)}}{2\pi}\hat{\theta}(k\!+\!1)
    \frac{2\pi}{2+k} 1_{k \ge 1}.	
\end{multline}
We conclude the Fourier transform of $U_{1\mu}$ and $U_{1\rho}$ are given by
\begin{equation*}
    \widehat{U}_{1\mu}(k)=
    \left\{
    \begin{aligned}
        &\frac{i}2e^{-i\hat{\vartheta}(0)}\Big(1+\frac{k}{2+k}\Big)\hat{\theta}(k+1),\qquad\hspace{1.8cm} k=1,3,4,\dots,\\
        &\frac{i}2e^{i\hat{\vartheta}(0)}\Big(\frac32-\log{(4)}\Big)\hat{\theta}(1)+\frac{3i}4e^{-i\hat{\vartheta}(0)}\hat{\theta}(3),\qquad k=2,
    \end{aligned}\right.
\end{equation*}	
and 
\begin{equation*}
    \widehat{U}_{1\rho}(k)=
    \left\{
    \begin{aligned}
        &\frac{i}2e^{i\hat{\vartheta}(0)}\hat{\theta}(k\!-\!1)\!+\!\frac{i}2e^{-i\hat{\vartheta}(0)}\Big(1\!-\!\frac{2}{2\!+\!k}\Big)\hat{\theta}(k\!+\!1),\hspace{0.3cm} k=1,3,4,\dots,\\
        &\frac{i}2e^{i\hat{\vartheta}(0)}\Big(-\frac12+\log{4}\Big)\hat{\theta}(1)+\frac{i}4e^{-i\hat{\vartheta}(0)}\hat{\theta}(3),\qquad\hspace{0.4cm} k=2.
    \end{aligned}\right.
\end{equation*}	
Substituting these expressions into \eqref{Lfourier} gives that
\begin{equation*}
    \widehat{\mathcal{L}}_{\mu}(k)=
    \left\{
    \begin{aligned}
        &-e^{-i\widehat{\vartheta}(0)}\frac{(k^2-1)(k+1)}{k(k+2)}\widehat{\theta}(k+1),\hspace{0.4cm} k=1,3,4,\dots,\\
        &-\frac34\Big(\frac32-\log{4}\Big)e^{i\widehat{\vartheta}(0)}\widehat{\theta}(1)-\frac98e^{-i\widehat{\vartheta}(0)}\widehat{\theta}(3),\qquad\hspace{0.4cm} k=2,
    \end{aligned}\right.
\end{equation*}
and
\begin{equation*}
    \widehat{\mathcal{L}}_{\rho}(k)=
    \left\{
    \begin{aligned}
        &-e^{i\widehat{\vartheta}(0)}\frac{(k^2-1)(k+1)}{k(k+2)}\widehat{\theta}(k-1),\hspace{0.4cm} k=1,3,4,\dots,\\
        &\frac34\Big(\frac32-\log{4}\Big)e^{i\widehat{\vartheta}(0)}\widehat{\theta}(1)-\frac98e^{-i\widehat{\vartheta}(0)}\widehat{\theta}(3),\qquad\hspace{0.4cm} k=2,
    \end{aligned}\right.
\end{equation*}
Finally, adding them according to \eqref{linearterms}, the result follows.
\end{proof}

\begin{lemma}\label{lemmaI1I2}
For $k\in\mathbb{Z}\setminus\{0\}$, define the integrals 
\begin{equation*} 
	I_1(k)=\int_{-\pi}^{\pi}\!\int_0^1\!\frac{\beta\cos{(\beta s)}}{4\sin^2{(\beta/2)}}\sin{((k\!-\!1)(s\!-\!1)\beta\!-\!\beta)}dsd\beta,
\end{equation*}
\begin{equation*}
	I_2(k)=\int_{-\pi}^{\pi}\!\int_0^1\!\frac{\beta\sin{(\beta s)}}{4\sin^2{(\beta/2)}}\cos{((k\!-\!1)(s\!-\!1)\beta\!-\!\beta)}dsd\beta.
\end{equation*}
For $k\geq 1$ and $k\neq 2$,
\begin{equation*}
	I_1(k)=-\pi,\qquad I_2(k)=0,
\end{equation*}
while for $k\leq -1$. 
\begin{equation*}
	I_1(k)=\frac{-k\pi}{2-k},\qquad I_2(k)=\frac{2\pi}{2-k}.
\end{equation*}	
The value $k=2$ is given by
\begin{equation*}
	I_1(2)=\pi\Big(\frac12-\log{4}\Big),\qquad I_2(2)=\pi\Big(\log{4}-\frac32\Big).
\end{equation*}
\end{lemma}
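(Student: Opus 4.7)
The plan is to reduce each double integral to a one-variable integral by performing the $s$-integration first and then to recognize the remaining $\beta$-integrals as classical Fejér-type integrals (plus one non-standard integral that only appears in the exceptional case $k=2$).

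First, I would apply product-to-sum identities. Setting $Y = (k-1)(s-1)\beta - \beta = (k-1)s\beta - k\beta$, the identities $2\cos A \sin B = \sin(A+B) - \sin(A-B)$ and $2\sin A \cos B = \sin(A+B) + \sin(A-B)$ with $A=\beta s$, $B=Y$ give $Y+\beta s = ks\beta - k\beta$ and $Y-\beta s = (k-2)s\beta - k\beta$. Both $I_1(k)$ and $I_2(k)$ then split into the same two pieces with linear $s$-phases, and the elementary primitive $\int_0^1 \sin(as\beta - k\beta)\,ds = \frac{1}{a\beta}\big[\cos(k\beta) - \cos((k-a)\beta)\big]$ applied for $a\in\{k, k-2\}$ (whenever $k\neq 0,2$) yields the compact representation
\begin{equation*}
I_1(k) = \frac{1}{8}\int_{-\pi}^{\pi}\frac{1}{\sin^2(\beta/2)}\left[\frac{\cos(k\beta)-1}{k} + \frac{\cos(k\beta)-\cos(2\beta)}{k-2}\right]d\beta,
\end{equation*}
with the analogous formula for $I_2(k)$ but with a minus sign between the two fractions.

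Second, I would evaluate each piece by the classical Fejér identity $\int_{-\pi}^{\pi}\frac{1-\cos(k\beta)}{\sin^2(\beta/2)}d\beta = 4\pi|k|$, writing the second numerator as $(\cos(k\beta)-1)-(\cos(2\beta)-1)$. Carefully tracking $|k|=k$ when $k\geq 1$ versus $|k|=-k$ when $k\leq -1$, a short algebraic simplification produces $I_1(k)=-\pi$, $I_2(k)=0$ for $k\geq 1$, $k\neq 2$, and $I_1(k)=-k\pi/(2-k)$, $I_2(k)=2\pi/(2-k)$ for $k\leq -1$, exactly as stated.

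Finally, the case $k=2$ must be handled separately because the coefficient $a=k-2$ vanishes, invalidating the primitive above. When $k=2$, the $s$-integral of the second piece is $\int_0^1 \sin(-2\beta)\,ds=-\sin(2\beta)$ (with the opposite sign for $I_2$), producing an extra one-variable integral $\int_{-\pi}^{\pi}\frac{\beta\sin(2\beta)}{\sin^2(\beta/2)}d\beta$ that is not of Fejér type. This is the main obstacle. I would handle it by integration by parts after computing an explicit antiderivative: writing $\sin(2\beta)=\sin(4u)$ with $u=\beta/2$ and using $\sin(4u)=4\sin u\cos u(1-2\sin^2 u)$ gives $\int \sin(2\beta)/\sin^2(\beta/2)\,d\beta = 8\log|\sin(\beta/2)| + 4\cos\beta$. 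The boundary contribution evaluates to $-8\pi$, and the remaining integral involves $\int_{-\pi}^{\pi}\log|\sin(\beta/2)|\,d\beta$, which after the substitution $u=\beta/2$ reduces to the classical value $\int_0^{\pi/2}\log\sin u\,du = -\tfrac{\pi}{2}\log 2$. Combining these with the Fejér piece $-4\pi$ gives $I_1(2)=\pi(\tfrac12 - \log 4)$ and $I_2(2)=\pi(\log 4 - \tfrac32)$, completing the lemma. Everything else is routine bookkeeping; the only nontrivial evaluation is this $\log\sin$ integral that enters only at the resonant frequency $k=2$.
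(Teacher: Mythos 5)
Your proposal is correct, and your intermediate formula is in fact identical to the paper's: after converting the paper's complex-exponential expansion back to cosines, one gets exactly your expression $I_1(k)=\frac{1}{8}\int_{-\pi}^{\pi}\sin^{-2}(\beta/2)\big[\frac{\cos(k\beta)-1}{k}+\frac{\cos(k\beta)-\cos(2\beta)}{k-2}\big]d\beta$, and your $k=2$ treatment (isolating $\int_{-\pi}^{\pi}\beta\sin(2\beta)\sin^{-2}(\beta/2)\,d\beta$, integrating by parts, and using $\int_0^{\pi/2}\log\sin u\,du=-\frac{\pi}{2}\log 2$) is essentially the paper's own computation, which phrases the last value through the Clausen function $Cl_2(\pi)=0$. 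The genuine difference is the evaluation of the generic $\beta$-integrals: the paper formally expands $\sin^{-2}(\beta/2)=-4\sum_{l\geq1}l\,e^{-il\beta}$ and integrates term by term against $e^{\pm ik\beta}$, which forces it to append an Abel-summation justification (insert $\lambda e^{-i\beta}$ with $\lambda<1$ and let $\lambda\to1$); you instead quote the closed-form Fej\'er identity $\int_{-\pi}^{\pi}\frac{1-\cos(k\beta)}{\sin^2(\beta/2)}\,d\beta=4\pi|k|$, which evaluates both fractions in one stroke and makes the $|k|$-versus-$k$ sign bookkeeping (hence the asymmetry between $k\geq1$ and $k\leq-1$) completely transparent. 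Your route buys a shorter, convergence-issue-free argument at the price of invoking a classical kernel identity; the paper's series route is more self-contained but needs the extra limiting argument. One small point to make explicit when writing yours up: your antiderivative $8\log|\sin(\beta/2)|+4\cos\beta$ is singular at $\beta=0$, so the integration by parts for the $k=2$ term should be performed on $[-\pi,-\epsilon]\cup[\epsilon,\pi]$, with the interior boundary terms vanishing like $\epsilon\log\epsilon$ (the original integrand is bounded near $0$, so this is routine); with that caveat your boundary value $-8\pi$ and the final constants $\pi(\tfrac12-\log 4)$ and $\pi(\log 4-\tfrac32)$ all check out.
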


\begin{remark}
Notice that
\begin{equation*} 
	I_1(-k)=-\int_{-\pi}^{\pi}\!\int_0^1\!\frac{\beta\cos{(\beta s)}}{4\sin^2{(\beta/2)}}\sin{((k\!+\!1)(s\!-\!1)\beta\!+\!\beta)}dsd\beta,
\end{equation*}
and 
\begin{equation*}
	I_2(-k)=\int_{-\pi}^{\pi}\!\int_0^1\!\frac{\beta\sin{(\beta s)}}{4\sin^2{(\beta/2)}}\cos{((k\!+\!1)(s\!-\!1)\beta\!+\!\beta)}dsd\beta.
\end{equation*}
Thus Lemma \ref{lemmaI1I2} covers all the integrals in \eqref{fourierimag} and \eqref{fourierreal}.
\end{remark}

\begin{proof}
	Both integrals are computed similarly. We only show the details for $I_1(k)$. 
	First, consider the case $k\neq2$.
    Using complex exponentials, we write the numerator as follows
    \begin{equation*}
        \begin{aligned}
            \cos{(\beta s)} \sin{((k\!-\!1)(s\!-\!1)\beta\!-\!\beta)}&=   \frac{1}{4i}\Big(e^{ik(s-1)\beta}-e^{-i(k(s-1)\beta-2s\beta)}\\
            &\quad+e^{i(k(s-1)\beta-2s\beta)}-e^{-i(k(s-1)\beta)}\Big).   
        \end{aligned}
    \end{equation*}
    Thus integration in $s$ gives that
    \begin{equation*}
        \begin{aligned}
            I_1(k)&\!=\!\frac{-1}{16}\!\int_{-\pi}^{\pi}\!\Big(\frac{1\!-\!e^{-i k\beta}}{k}\!+\!\frac{e^{2i\beta}\!-\!e^{i k\beta}}{k-2}\!+\!\frac{e^{-2i\beta}\!-\!e^{-i k\beta}}{k-2}\!+\!\frac{1\!-\!e^{i k\beta}}{k}\Big)\frac{d\beta}{\sin^2{(\beta/2)}}\\
            &\!= \!\frac{-1}{16}\!\int_{-\pi}^{\pi}\!\Big(\frac{2\!-\!e^{i k\beta}\!-\!e^{-i k\beta}}{k}\!+\!\frac{e^{2i\beta}\!+\!e^{-2i \beta}}{k-2}\!-\!\frac{e^{i k\beta}\!+\!e^{-i k\beta}}{k-2}\Big)\frac{d\beta}{\sin^2{(\beta/2)}}.
        \end{aligned}
    \end{equation*}
We then write the denominator in complex form too
\begin{equation*}
    \begin{aligned}
        \sin^2{(\beta/2)}=\frac{-1}4\big(e^{i\beta/2}-e^{-i\beta/2}\big)^2,
    \end{aligned}
\end{equation*}
and formally expand it
\begin{equation*}
    \begin{aligned}
        \big(\sin{\beta/2)}\big)^{-2}\!=-4e^{-i\beta}\big(1\!-\!e^{-i\beta}\big)^{-2}\!=-4e^{-i\beta}\sum_{l\geq1} l e^{-i(l-1)\beta}\!=\!-4\sum_{l\geq1}l e^{-i l\beta},
    \end{aligned}
\end{equation*}
where we have used that\begin{equation*}
\frac{1}{(1-x)^2}=\sum_{l\geq1} l x^{l-1}.    
\end{equation*}
Therefore, for $k\neq0,2$, we have
 \begin{equation*}
        \begin{aligned}
            I_1(k)\!= \!\frac{1}{4}\sum_{l\geq1}l\!\int_{-\pi}^{\pi}\!\Big(&\frac{2e^{-i l\beta}\!-\!e^{i (k-l)\beta}\!-\!e^{-i (k+l)\beta}}{k}\!+\!\frac{e^{i(2-l)\beta}\!+\!e^{-i(2+l) \beta}}{k-2}\\
            &\quad-\frac{e^{i (k-l)\beta}\!+\!e^{-i( k+l)\beta}}{k-2}\Big)d\beta.
        \end{aligned}
    \end{equation*}
Thus performing the integral in $\beta$ we obtain that
\begin{equation*}
        \begin{aligned}
            I_1(k)&\!= \!\frac{1}{4}\sum_{l\geq1}l\Big(\frac{2\pi}{k}\big(-\delta(k-l)-\delta(k+l)\big)\\
            &\hspace{1.7cm}+\!\frac{2\pi}{k\!-\!2}\big(\delta(2\!-\!l)\!+\!\delta(2\!+\!l)\!-\!\delta(k\!-\!l)\!-\!\delta(k\!+\!l)\big)\Big)\\
            &=\frac{1}{4}\Big(-2\pi \sign(k)+\frac{2\pi}{k-2}2-\frac{2\pi}{k-2}|k|\Big),
        \end{aligned}
    \end{equation*}
	which gives
	\begin{equation*}
        \begin{aligned}
            I_1(k)&\!=\frac{\pi}{2}\Big(\!- \sign(k)\!+\!\frac{2\!-\!|k|}{k\!-\!2}\Big)=\frac{\pi}{2}\frac{-2|k|\!+\!2(\sign(k)\!+\!1)}{k\!-\!2}.
        \end{aligned}
    \end{equation*}
	It follows then that for $k\geq1$ ($k\neq2$),
	\begin{equation*}
	    I_1(k)=-\pi,
	\end{equation*}
    and for $k\leq-1$,
    \begin{equation*}
        I_1(k)=\frac{\pi}2\frac{2k}{k-2}=\pi\frac{|k|}{2+|k|}.
    \end{equation*}
    The above computations can be justified by writing $x=\lambda e^{-i\beta}$ with $0<\lambda<1$. Then, $\big(1-\lambda e^{-i\beta}\big)^{-2}=\sum_{l\geq1} l\lambda^{l-1}e^{-i(l-1)\beta}$ converges uniformly and one can repeat the steps above and take the limit $\lambda\to1$.
    
    Lastly, we deal with the case $k=2$. We first rewrite it as follows
	\begin{equation*}
	\begin{aligned}
	    I_1(2)=\int_{-\pi}^{\pi}\int_0^1\frac{\beta}{8\sin^2{(\beta/2)}}\left(\sin{(2\beta s-2\beta)}-\sin{(2\beta)}\right)ds d\beta,
	\end{aligned}
	\end{equation*}
	so after integration in $s$ we obtain
	\begin{equation*}
	\begin{aligned}
	    I_1(2)=\int_{-\pi}^{\pi}\frac{\beta}{8\sin^2{(\beta/2)}}\left(\frac{-1+\cos{(2\beta)}}{2\beta}-\sin{(2\beta)}\right)ds d\beta.
	\end{aligned}
	\end{equation*}
	Using repeatedly the double angle formula, we find that
	\begin{equation*}
	\begin{aligned}
	    I_1(2)&=-\frac12\int_{-\pi}^{\pi}\cos^2{(\beta/2)}d\beta-\frac12\int_{-\pi}^{\pi}\frac{\beta\cos{(\beta)}\cos{(\beta/2)}}{\sin{(\beta/2)}}d\beta	\\
	    &=-\frac{\pi}2-\frac12\int_{-\pi}^{\pi}\frac{\beta\cos{(\beta)}\cos{(\beta/2)}}{\sin{(\beta/2)}}d\beta,
	\end{aligned}
	\end{equation*}
	which can be further simplified
	\begin{equation*}
	\begin{aligned}
	    I_1(2)&=-\frac{\pi}2+\int_{-\pi}^{\pi}\beta\sin{(\beta/2)}\cos{(\beta/2)}d\beta -\frac12\int_{-\pi}^{\pi}\frac{\beta\cos{(\beta/2)}}{\sin{(\beta/2)}}d\beta\\
	    &=\frac{\pi}2-\frac12\int_{-\pi}^{\pi}\frac{\beta\cos{(\beta/2)}}{\sin{(\beta/2)}}d\beta.
	\end{aligned}
	\end{equation*}
	Integration by parts gives that
	\begin{equation*}
	    I_1(2)=\frac{\pi}2+2\int_0^\pi \log{|\sin{(\beta/2)}|}d\beta.
	\end{equation*}
	This integral above is related to the known \text{Clausen function} \cite{Cl32} of order two (whose value at $\pi$ is zero):
		\begin{equation*}
		0=Cl_2(\pi) =
	    \int_0^\pi \log{|2\sin{(\beta/2)}|}d\beta=
	    \int_0^\pi \log{|\sin{(\beta/2)}|}d\beta+\pi\log{2},
	\end{equation*}
	We thus conclude 
		\begin{equation*}
	    I_1(2)=\frac{\pi}2+2\int_0^\pi \log{|\sin{(\beta/2)}|}d\beta=\frac{\pi}2-2\pi\log{2}=\pi\Big(\frac12-\log{4}\Big).
	\end{equation*}
This completes the proof.
\end{proof}

We notice that in Proposition \ref{linearfourier} the first frequency mode is neutral at the linear level. However, the restriction \eqref{constraint} is an equation that relates this frequency with all the higher ones. Thus, the rough idea is to proceed as follows: apply an implicit function theorem to \eqref{constraint} to solve $\widehat{\theta}(-1)$ and $\widehat{\theta}(1)$ in terms of $\widehat{\theta}(k)$ for $|k|\geq2$; use \eqref{system} to compute $\widehat{\theta}(k)$ for $|k|\geq2$, for which the linear operator provides dissipation when we are able to control the nonlinear terms.  Then we will use \eqref{Lequation} to control $L(t)$ in terms of $\theta(t)$.  Finally we can compute the evolution of the zero frequency $\widehat{\vartheta}(0)$ from \eqref{system}.

\chapter{Notation and main results}\label{MainResults}

We introduce the notation that will be used in the rest of the paper in Section \ref{sec:Notation}.   We will then state the main results in Section \ref{sec:globalTHM}.

\section{Notation}\label{sec:Notation}
We recall the complex vector notation introduced in Section \ref{sec:ComplexVector} and the Fourier transform notation introduced in Section \ref{sec:FourierTransCalc}.  We use $\mathbb{T}=[-\pi,\pi]$ as our domain with periodic boundary conditions. 

We introduce the space $\mathcal{F}^{0,1}$ to denote the Wiener algebra, i.e., the space of absolutely convergent Fourier series. The norm in this space is
\begin{equation}\notag \label{fzerone.def}
\|f\|_{\fzerone}\eqdef\sum_{k\in\mathbb{Z}} |\hat{f}(k)|.
\end{equation}
We analogously introduce the homogeneous spaces $\dot{\mathcal{F}}^{s,1}$ with norm
\begin{equation}\notag \label{fsone.def}
\|f\|_{\fsone}\eqdef\sum_{k\ne 0} |k|^s|\hat{f}(k)|, \quad s\ge 0.
\end{equation}
Further in Chapter \ref{sec:uniqueness} we will use the notation
\begin{equation}\label{max.fcns}
    \|f_{1}, f_{2}, \ldots , f_{k}\|_{\fsone} \eqdef 
    \sum_{j=1}^{k}\|f_{j}\|_{\fsone}.
\end{equation}
Moreover, we will use the spaces of analytic functions $\dot{\mathcal{F}}^{s,1}_\nu$ where these norms include exponential weights as follows:
\begin{equation}\label{fzeroonenunorm}
\|f\|_{\fzeronenu}\eqdef\sum_{k\in\mathbb{Z}}e^{\nu(t)|k|}|\hat{f}(k)|,
\end{equation}
and
\begin{equation}\label{fsonenorm}
\|f\|_{\fsonenu}\eqdef\sum_{k\ne 0}e^{\nu(t)|k|}|k|^s|\hat{f}(k)|, \quad s\ge 0,
\end{equation}
with  
\begin{equation}\label{nu}
    \nu(t)\eqdef\nu_0 \frac{t}{1+t}>0,
\end{equation}
where $0<\nu'(t)\leq \nu_0$ is bounded and small enough for all time when $\nu_0>0$ is chosen small enough. 


We recall the embeddings $\dot{\mathcal{F}}^{s_2,1}_\nu\hookrightarrow \dot{\mathcal{F}}^{s_1,1}_\nu$ for $0<s_1\leq s_2$, with norm inequality
\begin{equation}\label{embed}
\|f\|_{\dot{\mathcal{F}}^{s_1,1}_\nu}\leq \|f\|_{\dot{\mathcal{F}}^{s_2,1}_\nu}, \qquad  0<s_1\leq s_2.
\end{equation}
This inequality also holds if $s_1=0$ with $\fzerone$ in the lower bound provided that $\hat{f}(0)=0$.
We also recall the interpolation inequality 
\begin{equation}\label{interpolation}
\|f\|_{\dot{\mathcal{F}}^{s,1}_\nu}\leq \|f\|_{\dot{\mathcal{F}}^{s_1,1}_{\nu}}^{1-\sigma}\|f\|_{\dot{\mathcal{F}}^{s_2,1}_{\nu}}^\sigma,\qquad s=(1-\sigma)s_1+\sigma s_2,
\end{equation}
for $0 \le \sigma \le 1$, $s, s_1, s_2 \ge 0$.

We further will use the discrete delta function $\delta(k)$ for $k\in \mathbb{Z}$ which is defined  as $\delta(0)=1$ and $\delta(k)=0$ for $k\ne 0$.  We also define $1_A$ to be the standard indicator function of the set $A$ so that $1_A(x)=1$ if $x\in A$ and $1_A(x)=0$ if $x\notin A$.  

We define the $\ell^p=\ell^p(\mathbb{Z})$ norms for $1\le p < \infty$ of a sequence $a=\{a_k\}_{k\in \mathbb{Z}}$ as
\begin{equation}\notag
    \| a\|_{\ell^p} \eqdef \left( \sum_{k\in\mathbb{Z}} |a_k|^p \right)^{1/p},
\end{equation}
and for $p=\infty$ we use
\begin{equation}\notag
    \| a\|_{\ell^\infty} \eqdef \sup_{k\in\mathbb{Z}} |a_k|.
\end{equation}
We also define the following notation for $s\ge 0$:
\begin{equation}\label{bfcn.def}
    \bfcn(n,s) 
    \eqdef \left\{
    \begin{aligned} 
    1,\quad 0 \le s \le 1,\\
        n^{s-1}, \quad s >1,
    \end{aligned}\right.
\end{equation}
Further define the high frequency cut-off operator $\mathcal{J}_N$ for $N\ge 0$ by
\begin{equation}\label{CutOffHigh}
    \widehat{\mathcal{J}_N f}(k) \eqdef 1_{|k|\leq N}\widehat{f}(k).
\end{equation}
We will use these norms and notations in the rest of the paper.


Throughout the paper, we will denote 
\begin{equation*}
    C_i=C_i(x)=C_i\left(x; A_\mu,\frac{|A_\rho|R^2}{A_\sigma}\right)>0
\end{equation*}
as functions that are increasing in $x\geq0$ and might depend on the physical parameters such as $A_\mu,\frac{|A_\rho|R^2}{A_\sigma}$,  with the property that $C_i(x)\approx 1$  for all $-1\leq A_\mu\leq1$, $\frac{|A_\rho|R^2}{A_\sigma}\geq0$. Typically, $x$ will be the norm $\|\theta\|_{\fsonenu}$ with $s=0$ or $s=1/2$.

We denote $f*g$ as the standard convolution of $f$ and $g$.
We use the iterated convolution notation
\begin{equation}\label{iteratedConvlution}
*^kf = \underbrace{f *  \cdots *f}_\text{$k-1$ convolutions of $k$ copies of $f$}
\end{equation}
Thus for instance $*^2f = f*f$.  We then sometimes also use the notation $g*^kf = g * \underbrace{f *  \cdots *f}_\text{$k-1$ convolutions}$ to avoid an additional $*$ in the notation.

\section{Main Results}\label{sec:globalTHM} 

The main result of this paper states that, for any value of the physical parameters $A_\rho$, $A_\mu$, $A_\sigma$, a bubble in a porous medium, with arbitrary volume $\pi R^2$ and shape that is not too far from a circle, converges exponentially fast to a circle that rises (or falls) with constant velocity proportional to the difference between the inner and outer density. The initial interface needs to have barely more than a  continuous tangent vector, allowing in particular for unbounded curvature.  In particular since we suppose $\theta_0\in \dot{\mathcal{F}}^{\frac12,1}$ then the initial interface has regularity $W^{\frac32,\infty}$,  in terms of the tangent vector the initial regularity is $W^{\frac12,\infty}$.  In particular the initial curvature doesn't need to be bounded.
Moreover, the interface becomes instantaneously analytic.

We summarize here the system that models our problem. For clarity, we write the zero frequency of $\vartheta$ apart because it is decoupled from the rest, and the equation for $\theta$ with the linear and nonlinear terms separated:
\begin{equation}\label{finalsystem}
\left\{\begin{aligned}
 \widehat{\vartheta}_t(0)&=\frac{2\pi}{L(t)}\widehat{T}*\widehat{(1+\theta_\alpha)}(0),\\
     \theta_t(\alpha)&=\frac{2\pi}{L(t)}\Big(\mathcal{L}(\alpha)+N(\alpha)\Big),\\
    L(t)&=2\pi R\Big(1+\frac{1}{2\pi}\imag \int_{-\pi}^\pi\int_0^\alpha e^{i(\alpha-\eta)} \sum_{n\geq1}\frac{i^n}{n!}(\theta(\alpha)-\theta(\eta))^n d\eta d\alpha\Big)^{-\frac12},\\
    0&=\int_{-\pi}^{\pi} e^{i(\alpha+\theta(\alpha))}d\alpha,
    \end{aligned}\right.
\end{equation}
where the linear and nonlinear terms $\mathcal{L}(\alpha)$, $N(\alpha)$ are given by \eqref{system}
with $T(\alpha)$ defined in \eqref{T}, \eqref{Tsplit}, and \eqref{T0}, $U(\alpha)$ in \eqref{U} and \eqref{Usplit}.

\begin{thm}\label{thm:global}
	Fix $A_\mu\in[-1,1]$, $A_\rho\in\mathbb{R}$, $A_\sigma>0$, and $R>0$.
	Assume that the initial data  $\vartheta_0(\alpha)=\widehat{\vartheta}_0(0)+\theta_0(\alpha)\in \dot{\mathcal{F}}^{\frac12,1}$ satisfies the medium-size condition 
    \begin{equation}\label{condition}
     \|\theta_0\|_{\dot{\mathcal{F}}^{\frac12,1}}<K\Big(\frac{|A_\rho|R^2}{A_\sigma}, A_\mu\Big),
    \end{equation}
    where $K$ is defined in \eqref{Kdef}.
	Then, for any $T>0$, there exists a unique global strong solution   $\vartheta(\alpha, t)=\widehat{\vartheta}(0,t)+\theta(\alpha,t)$ to the system  \eqref{finalsystem}, which lies in the space
	\begin{equation*}	    \vartheta\in C([0,T];\dot{\mathcal{F}}^{\frac12,1}_\nu)\cap L^1([0,T];\dot{\mathcal{F}}^{\frac72,1}_\nu),\quad 0<T<\infty,
	\end{equation*}
	with $\nu(t)$ given by \eqref{nu}.
	In particular the solution becomes instantaneously analytic. Moreover, the following energy inequality is satisfied for $0\leq t\leq T$:
	\begin{equation}\label{estimatef12}
    	\thetahonenu(t)+ \frac{A_\sigma}{R^3}  \dissconstant \int_0^t \thetasevenhonenu(\tau) d\tau \leq C_S^2\|\theta_0\|_{\fhone},
	\end{equation}
	with $$\dissconstant=\dissconstant\Big(C_S\|\theta_0\|_{\fhonenu},\frac{|A_\rho|R^2}{A_\sigma},
       A_\mu,\frac{A_\sigma}{R^3},\nu\Big)>0$$
       defined in \eqref{dissipation}, and $C_S=C_S\big(A_\mu,\frac{|A_\rho| R^2}{A_\sigma}\big)$ defined in \eqref{CSbounds}. 
       In addition,  we have the uniform in time estimate
	\begin{equation}\label{decay}
	    \begin{aligned}
	        \thetahonenu(t)\leq C_S^2 \|\theta_0\|_{\fhone}e^{-\frac{A_\sigma}{R^3}\dissconstant t}.
	    \end{aligned}
	\end{equation}
	Furthermore, the zero frequency $\widehat{\vartheta}(0)$ remains bounded for all times
	\begin{equation}\label{zeroBoundUniform}
	     |\widehat{\vartheta}(0,t)|\leq |\widehat{\vartheta}_0(0)|+C_{42}\|\theta_0\|_{\fhone},
	\end{equation}
	where $C_{42}$ is defined in \eqref{C42}.
\end{thm}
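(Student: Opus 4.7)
The strategy is to derive an a priori energy inequality in the analytic Wiener algebra $\dot{\mathcal{F}}^{1/2,1}_{\nu(t)}$ with time-growing radius $\nu(t)$, whose dissipative counterpart is $\dot{\mathcal{F}}^{7/2,1}_{\nu(t)}$. The gap of three derivatives reflects the third-order dissipation produced by the surface tension term $\mathcal{L}_\sigma = -\Lambda^3\theta + \cdots$ identified in \eqref{linearterms}. Because the zero frequency $\widehat{\vartheta}(0)$ decouples in \eqref{finalsystem}$_{1,2}$, I would first prove the estimates \eqref{estimatef12}--\eqref{decay} for $\theta = \vartheta - \widehat{\vartheta}(0)$ alone, and then integrate the evolution of $\widehat{\vartheta}(0)$ against those decay estimates to obtain \eqref{zeroBoundUniform}. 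Local existence in $\dot{\mathcal{F}}^{1/2,1}_{\nu}$ will be produced (or imported) in the usual way; the heart of the matter is to close a global a priori bound with the dissipation $\mathcal{D}>0$ under the smallness hypothesis \eqref{condition}, after which continuation in time yields the global solution.

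\textbf{Key steps.} First, Section~\ref{IFTSection} handles the two neutral Fourier modes: the constraint $\int_{-\pi}^\pi e^{i(\alpha+\theta)}d\alpha = 0$ in \eqref{constraint} is used via an implicit function theorem to express $\widehat{\theta}(\pm 1)$ as smooth, quadratically small functionals of $\{\widehat{\theta}(k)\}_{|k|\ge 2}$, with estimates in the relevant $\dot{\mathcal{F}}^{s,1}_\nu$ norms. Second, the linear operator of Proposition~\ref{linearfourier} couples mode $k$ with mode $k+1$ through the gravity--viscosity term $(1+A_\mu)A_\rho(k^2-1)(k+1)/[k(k+2)]\,e^{-i\hat{\vartheta}(0)}\hat{\theta}(k+1)$, so one cannot simply estimate mode by mode. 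The fix indicated in the introduction is to introduce a Fourier-side change of variable $\tilde\theta$ that diagonalizes the linear system, converting it into pure dissipation $-A_\sigma(2\pi/L)^2 k(k^2-1)\widehat{\tilde\theta}(k)$ on each mode while keeping $\|\tilde\theta\|_{\dot{\mathcal{F}}^{s,1}_\nu}$ comparable to $\|\theta\|_{\dot{\mathcal{F}}^{s,1}_\nu}$; this is the step that removes the dependence of the smallness constant $K$ on the size of $|A_\rho|R^2/A_\sigma$. Third, the operator bounds of Section~\ref{sec:FourierRandS} for $\mathcal{R}$ and $\mathcal{S}$ (both defined in \eqref{R}--\eqref{S}) feed into the vorticity estimate of Section~\ref{secw} applied to \eqref{omega}, giving $\|\omega\|_{\dot{\mathcal{F}}^{s,1}_\nu}$ in terms of $\|\theta\|_{\dot{\mathcal{F}}^{s+2,1}_\nu}$ with constants $C_i(\|\theta\|_{\dot{\mathcal{F}}^{1/2,1}_\nu})$. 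Fourth, \eqref{Lequation} together with the smallness condition shows $L(t)\sim 2\pi R$ uniformly, justifying the use of \eqref{finalsystem} throughout (this is the content of Subsection~\ref{sec:Lestimate}).

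\textbf{Closing the energy inequality and remaining items.} Differentiating $\sum_{k\ne 0}e^{\nu(t)|k|}|k|^{1/2}|\widehat{\tilde\theta}(k,t)|$ in time, the diagonalized linear piece produces a dissipative integrand proportional to $(A_\sigma/R^3)\|\tilde\theta\|_{\dot{\mathcal{F}}^{7/2,1}_\nu}^2/\|\tilde\theta\|_{\dot{\mathcal{F}}^{1/2,1}_\nu}$ (with an additional $\nu'(t)\|\tilde\theta\|_{\dot{\mathcal{F}}^{3/2,1}_\nu}$ term absorbed by Young's inequality since $\nu'\le \nu_0$ is small), while the nonlinear terms from \eqref{system} are estimated in the Wiener algebra via the operator bounds of Step~3 and the IFT output, producing a factor $C_i(\|\theta\|_{\dot{\mathcal{F}}^{1/2,1}_\nu}) \|\theta\|_{\dot{\mathcal{F}}^{7/2,1}_\nu} \|\theta\|_{\dot{\mathcal{F}}^{1/2,1}_\nu}$. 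Under \eqref{condition} with $K$ chosen as in \eqref{Kdef}, these nonlinear contributions are strictly dominated by the linear dissipation, yielding $\mathcal{D}>0$ in \eqref{estimatef12}; since the dissipation norm controls the energy norm (a Poincaré-type inequality $\|\theta\|_{\dot{\mathcal{F}}^{1/2,1}_\nu}\le \|\theta\|_{\dot{\mathcal{F}}^{7/2,1}_\nu}$ using that $\widehat{\theta}(0)=0$), Grönwall produces the exponential decay \eqref{decay}. The bound \eqref{zeroBoundUniform} then follows by integrating \eqref{finalsystem}$_1$ using the structure of $T(\alpha)$ from \eqref{T} and \eqref{Tsplit} together with the exponential smallness of $\theta$. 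Uniqueness is addressed by a difference estimate in $\dot{\mathcal{F}}^{1/2,1}$ (Section~\ref{sec:uniqueness}), exploiting the same dissipative structure.

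\textbf{Main obstacle.} The hardest step is the diagonalization of the linear operator. The naive linear principal symbol has a zero eigenvalue on the $\pm 1$ modes and off-diagonal couplings of size $|A_\rho|$ on all modes, so the standard Fourier energy method does not give dissipation when $|A_\rho|R^2/A_\sigma$ is large. Constructing a change of variables that simultaneously (i) diagonalizes the linear principal part for every mode $|k|\ge 2$, (ii) keeps the transformed variable equivalent in norm to $\theta$, and (iii) is compatible with the constraint \eqref{constraint} after the IFT has removed the $\pm 1$ modes — and then controlling the commutators it generates against the nonlinearity $N(\alpha)$ — is what makes the smallness condition $K$ in \eqref{condition} explicit and independent of the physical parameter ratios.
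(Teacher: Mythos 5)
Your proposal follows essentially the same route as the paper's proof: the implicit function theorem for the neutral $\pm1$ modes, a Fourier-side diagonalization of the upper-triangular linear system with norm-equivalence bounds for the transformation, the $\mathcal{R}$/$\mathcal{S}$ multiplier and vorticity estimates, uniform control of $L(t)$, a Wiener-algebra energy estimate in $\dot{\mathcal{F}}^{1/2,1}_{\nu}$ closed under the smallness condition, and then integration of the decoupled zero mode, a frequency-cutoff regularization for existence, and a difference estimate for uniqueness. The only slip is cosmetic: in the $\ell^1$-type norm the diagonalized linear part yields a dissipation term \emph{linear} in $\|\theta\|_{\dot{\mathcal{F}}^{7/2,1}_{\nu}}$ rather than the quotient $\|\theta\|_{\dot{\mathcal{F}}^{7/2,1}_{\nu}}^2/\|\theta\|_{\dot{\mathcal{F}}^{1/2,1}_{\nu}}$ of an $L^2$-based energy, which is precisely what is needed to dominate the quadratic nonlinearity $\|\theta\|_{\dot{\mathcal{F}}^{1/2,1}_{\nu}}\|\theta\|_{\dot{\mathcal{F}}^{7/2,1}_{\nu}}$.
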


We remark that none of the uniform constants in Theorem \ref{thm:global} depend upon our choice of $T>0$, and $T$ can be taken arbitrarily large.  We also remark that for $\nu(t)$ given by \eqref{nu} then in  $\nu_0>0$ is chosen sufficiently small as in \eqref{dissipationcond}.

\begin{remark}
From Proposition \ref{circles}, the large time decay in \eqref{decay} implies the exponential convergence to rising or falling circles. Moreover, as part of the proof, it will be proven in \eqref{Lboundaux} that the length satisfies for all times $t\geq0$ that
\begin{equation}\label{Lfinalbound}
     \begin{aligned}
        \frac{R}{\sqrt{1+\frac{\pi}{2} \big(e^{2\|\theta\|_{\fhone}(t)}-1\big)}}\leq\frac{L(t)}{2\pi}\leq \frac{R}{\sqrt{1-\frac{\pi}{2}\big(e^{2\|\theta\|_{\fhone}(t)}-1\big)}},
    \end{aligned}
    \end{equation}
    which also shows that $L(t)\to 2\pi R$ as $t\to \infty$.
\end{remark}

\begin{remark}
The size condition \eqref{condition} of the theorem above is explicit: for any value of the physical parameters, it gives a bound for the norm of the initial data that can be computed. We also notice that, thanks to the diagonalization performed in Chapter \ref{secanalytic}, the dissipative character of the equation is shown in \eqref{estimatef12} for any $A_\sigma>0$, no matter how large the gravity effects are.
\end{remark}

\begin{remark}
We further point out that all of the estimates in Chapter \ref{IFTSection}, Chapter \ref{sec:FourierRandS}, Chapter \ref{secw},  Chapter \ref{secanalytic}, and Chapter \ref{sec:uniqueness} carefully track the constants in each estimate.    If one only wanted to replace \eqref{condition} and \eqref{Kdef} with a non-explicit smallness condition then the proof presented in this paper could be substantially shortened. 
\end{remark}

\begin{remark}
The gain of analyticity property for any $t>0$ of the solution from our main Theorem \ref{thm:global} in $\fhonenu$ makes equivalent any of the formulations used in this paper, including the equivalence between \eqref{eqcurve.theta} and \eqref{finalsystem}.
\end{remark}

\chapter{Implicit function theorem}\label{IFTSection}

In this chapter, we will prove an explicit uniform upper bound for the $\pm 1$ frequencies of $\theta$ in terms of the higher Fourier frequencies of $\theta$.  The main result of this chapter is the implicit function theorem in Proposition \ref{IFTprop}.

For ease of notation, only in this chapter we will use the following space.  For $s \ge 0$, we define the normed space
\begin{equation}\label{tildefsone}
    \tildefs \eqdef \left\{ u: \mathbb{T} \rightarrow \mathbb{T} \ | \ \hat{u}(0) = \hat{u}(\pm 1) = 0 \text{ and } \|u\|_{\tildefs}  < \infty \right\}.
\end{equation}
Here we use the norm
\begin{equation*}
    \|u\|_{\tildefs} \eqdef \sum_{|k|\geq 2} |k|^{s}|\hat{u}(k)|.
\end{equation*}
In view of \eqref{tildefsone}, recalling \eqref{CutOffHigh}, we consider 
$
\tilde{\theta} \eqdef \left(I-\mathcal{J}_2 \right)\theta.
$
We then remark that 
$$
\tilde{\theta}(\alpha) = 
\left(I-\mathcal{J}_2 \right)\theta(\alpha)=\sum_{|k|\ge 2} \hat{\theta}(k) e^{ik\alpha}.
$$
An implicit relation between the frequencies $\hat{\theta}(\pm 1)$ and the function $\tilde{\theta}$ can be derived from \eqref{constraint} and is given by
\begin{equation}\label{implicitrelation}
	\int_{-\pi}^{\pi} e^{i\alpha + i\hat{\theta}(-1)e^{-i\alpha} + i\hat{\theta}(1)e^{i\alpha} + i\tilde{\theta}(\alpha)} d\alpha = 0.
\end{equation}
Note that $\hat{\theta}(-1) = \overline{\hat{\theta}(1)}=\real \hat{\theta}(1)- i \imag \hat{\theta}(1)$ since $\theta$ is real.  Further
\begin{equation}\notag
\hat{\theta}(-1)e^{-i\alpha} + \hat{\theta}(1)e^{i\alpha}
=
2\real \hat{\theta}(1) \cos(\alpha) - \imag \hat{\theta}(1)\sin(\alpha).
\end{equation}
We will also use the following notation
\begin{equation}\label{psi}
    \psi(\alpha, x, u) = \alpha + 2(x_{1}\cos(\alpha) - x_{2}\sin(\alpha)) + u(\alpha),
    \quad x\in\mathbb{R}^{2}.
\end{equation}
Then we express the integral in \eqref{implicitrelation} as a vector as follows
\begin{equation}\label{g}
	g(u, x) = 
	\begin{bmatrix}
		\int_{-\pi}^{\pi} \cos(\psi(\alpha, x, u))d\alpha
		\\
		\int_{-\pi}^{\pi} \sin(\psi(\alpha, x, u))d\alpha
	\end{bmatrix}
	=	\begin{bmatrix}
		g_1(u, x)
		\\
		g_2(u, x)
	\end{bmatrix}.
\end{equation}
Here $g: \tildefzero \times \mathbb{R}^{2} \rightarrow \mathbb{R}^{2}$.
Now we can rewrite the relation \eqref{implicitrelation} as
$$
g(\tilde{\theta}, (\real\hat{\theta}(1), \imag \hat{\theta}(1))) = 0.
$$
The main result in this chapter is the following proposition.

\begin{prop}\label{IFTprop}
Fix $0<r<\frac{1}{2}\log(\frac{5}{4})$.
Consider $\|\theta\|_{\fzerone} < r$.  Let
$$
B_{\tildefzero}(0,r) \eqdef \{ u \in \tildefzero: \|u\|_{\tildefzero} < r\}.
$$
Then there exists a unique function $F:B_{\tildefzero}(0,r)\to \R^2$ such that 
$F(\tilde{\theta})=(\real\hat{\theta}(1), \imag \hat{\theta}(1))$ and \eqref{implicitrelation} is satisfied.
We further have
\begin{equation}\label{frequencyrelation}
			|\hat{\theta}(+1)|+|\hat{\theta}(-1)|  \leq   C_{I}(r)r \sum_{|k|\ge 2} |\hat{\theta}(k)|.
\end{equation}
Here the constant $C_I(r)>0$ is given by 
\begin{equation}\label{c1.implicit}
C_I(r) \eqdef \frac{1}{r}\frac{2\exp(r)(\exp(r)-1)}{1- 4(\exp(2r)-1)}.
\end{equation}
We note that $C_I(r)$ is an increasing function of $r$ and that $C_I(r)\to 2$ as $r\to 0$ and $C_I(r)\to \infty$ as $r\to \frac{1}{2}\log(\frac{5}{4})$. 
\end{prop}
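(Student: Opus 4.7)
The plan is to convert the vector constraint \eqref{implicitrelation} into a one-dimensional complex fixed-point problem in the Wiener algebra $\dotfzero$, and then apply the Banach contraction mapping theorem. Setting $z\eqdef \hat{\theta}(-1) = x_1 - ix_2$ and $h(\alpha)\eqdef \bar{z}e^{i\alpha} + ze^{-i\alpha}$ (so that $2(x_1\cos\alpha - x_2\sin\alpha) = h(\alpha)$ and $\|h\|_{\dotfzero} = 2|z|$), the constraint \eqref{implicitrelation} becomes $\widehat{e^{i(h+\tilde{\theta})}}(-1) = 0$. Peeling off the linear mode $\widehat{iG}(-1) = iz$ (using $\widehat{\tilde{\theta}}(\pm 1) = 0$), where $G\eqdef h+\tilde{\theta}$, gives the equivalent fixed-point equation
\begin{equation*}
    z = \Phi(z) \eqdef i\,\mathcal{F}\Big[\sum_{n\ge 2}\frac{(iG)^n}{n!}\Big](-1).
\end{equation*}

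I would then apply Banach's fixed-point theorem to $\Phi$ on the closed disk $\overline{B}(0,\rho_\ast)\subset\mathbb{C}$ with $\rho_\ast\eqdef r/2$. Setting $T\eqdef \|\tilde{\theta}\|_{\tildefzero}\leq r$ and using the Wiener algebra inequality $\|uv\|_{\dotfzero}\leq \|u\|_{\dotfzero}\|v\|_{\dotfzero}$ together with $|\hat{f}(-1)|\leq\|f\|_{\dotfzero}$, one obtains a self-map bound of the form $|\Phi(z)|\leq e^{2|z|+T}-1-(2|z|+T)$; and via the telescoping identity $G_1^n - G_2^n = (G_1-G_2)\sum_{j=0}^{n-1}G_1^j G_2^{n-1-j}$ combined with $\|G_{z_1}-G_{z_2}\|_{\dotfzero} = 2|z_1-z_2|$, the Lipschitz estimate
\begin{equation*}
	|\Phi(z_1)-\Phi(z_2)|\leq 2\bigl(e^{2\rho_\ast+T}-1\bigr)|z_1-z_2|.
\end{equation*}
The hypothesis $r<\tfrac12\log(\tfrac54)$ ensures $4(e^{2r}-1)<1$, so the contraction ratio is bounded by $1/2$ and a direct verification yields the self-map property. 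Banach's theorem then produces a unique fixed point $z_\ast\in \overline{B}(0,\rho_\ast)$, and I set $F(\tilde{\theta})\eqdef(\real z_\ast,\,-\imag z_\ast)$.

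For the quantitative bound \eqref{frequencyrelation}, I would exploit the closed-form identity
\begin{equation*}
    \sum_{n\ge 2}\frac{(iG)^n}{n!} = e^{iG}-1-iG = (e^{ih}-1-ih) + e^{ih}(e^{i\tilde{\theta}}-1) - i\tilde{\theta},
\end{equation*}
which follows from $e^{iG}=e^{ih}e^{i\tilde{\theta}}$. Since $\widehat{\tilde{\theta}}(-1)=0$, taking the $-1$ Fourier mode splits $\Phi(z)=P(z)+Q(z,\tilde{\theta})$ with
\begin{equation*}
    P(z) = i\widehat{e^{ih}-1-ih}(-1) = z - \frac{z}{|z|}J_1(2|z|), \qquad Q(z,\tilde{\theta}) = i\widehat{e^{ih}(e^{i\tilde{\theta}}-1)}(-1),
\end{equation*}
using the Bessel identity $\widehat{e^{ih}}(-1) = i(z/|z|)J_1(2|z|)$ obtained from the double series $e^{ih}=\sum_{n,m\ge 0}\tfrac{(i\bar z)^n(iz)^m}{n!\,m!}e^{i(n-m)\alpha}$. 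The pure-$h$ contribution obeys the sharp cubic bound $|P(z)|\leq |z|^3/2 \leq (\rho_\ast^2/2)|z|$ on $\overline{B}(0,\rho_\ast)$. Further decomposing $Q(z,\tilde{\theta})=Q(0,\tilde{\theta}) + i\widehat{(e^{ih}-1)(e^{i\tilde{\theta}}-1)}(-1)$, the first piece is controlled by $\|e^{i\tilde{\theta}}-1-i\tilde{\theta}\|_{\dotfzero}\leq e^T-1-T$, while the second is dominated in the Wiener algebra by $\|e^{ih}-1\|_{\dotfzero}\|e^{i\tilde{\theta}}-1\|_{\dotfzero}\leq 2|z|e^{2|z|}(e^T-1)$. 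Substituting into $|z|=|\Phi(z)|$ produces a linear inequality $|z|(1-B(r))\leq A(r)\,T$ whose explicit solution yields \eqref{frequencyrelation}.

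The main technical obstacle is the careful bookkeeping needed to recover the precise constant $C_I(r)$ with denominator $1-4(e^{2r}-1)$ and numerator $e^r(e^r-1)$; more lenient accounting produces a qualitatively identical but quantitatively weaker estimate, which nevertheless suffices for the existence and uniqueness portions of the statement.
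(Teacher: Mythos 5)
Your proposal is correct, but it takes a genuinely different route from the paper. The paper runs a Banach-space implicit function theorem on $\tildefzero\times\mathbb{R}^2$: it computes the Fr\'echet derivatives of the vector map $g(u,x)$, normalizes by $D_xg(0,0)^{-1}$, shows the map $\tau(u,x)=(u,x)-\phi(u,x)$ is a contraction, and then obtains \eqref{frequencyrelation} from $|x|\le\|D_x\tilde g(u,x)^{-1}\|\,\|D_u\tilde g(u,x)\|\,\|u\|_{\tildefzero}$; the denominator $1-4(e^{2r}-1)$ in \eqref{c1.implicit} comes from a lower bound on $\det D_x\tilde g$ and the numerator from an upper bound on the adjugate matrix. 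You instead collapse the two real equations into the single complex equation $\widehat{e^{iG}}(-1)=0$ and solve a scalar fixed-point problem in $z=\hat\theta(-1)$, which eliminates the matrix inversion entirely: the linear part of the map in $z$ is multiplication by $i$, and the quantitative estimate comes from the exact algebraic splitting $e^{iG}-1-iG=(e^{ih}-1-ih)+e^{ih}(e^{i\tilde\theta}-1)-i\tilde\theta$ together with the Bessel identity for $\widehat{e^{ih}}(-1)$. I verified the key numerics: with $\rho_*=r/2$ and $T=\|\tilde\theta\|_{\tildefzero}<r$ the contraction ratio $2(e^{2\rho_*+T}-1)\le 2(e^{2r}-1)<1/2$ and the self-map bound $e^{2\rho_*+T}-1-(2\rho_*+T)\le 2r^2e^{2r}\le r/2$ both hold under $r<\tfrac12\log(5/4)$, and your final inequality reads $|z|\bigl(1-\tfrac{r^2}{8}-2e^{r}(e^{r}-1)\bigr)\le e^{T}-1-T\le\tfrac{T}{2}(e^{r}-1)$, whose denominator dominates $1-4(e^{2r}-1)$ and whose numerator is smaller than the paper's $e^r(e^r-1)T$; so, contrary to the hedge in your last sentence, your accounting actually yields \eqref{frequencyrelation} with a constant no larger than the stated $C_I(r)$, not merely a weaker variant. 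What the paper's approach buys is a template that generalizes to constraints without complex structure; what yours buys is shorter bookkeeping, an explicit closed form for the pure-$h$ contribution (the sharp cubic bound $|P(z)|\le|z|^3/2$), and a slightly better constant. The only point to make explicit in a final write-up is that uniqueness of the fixed point is asserted only on the disk $|z|\le r/2$, which suffices because the hypothesis $\|\theta\|_{\fzerone}<r$ forces $|\hat\theta(\pm1)|<r/2$ — the same restriction implicit in the paper's contraction ball.
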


We point out that the explicit constant $K>0$ in our main Theorem \ref{thm:global} in \eqref{condition} and \eqref{Kdef} is smaller than $\frac{1}{2}\log(\frac{5}{4})$.  Therefore the smallness condition in Proposition \ref{IFTprop} is consistent with Theorem \ref{thm:global}.

This proposition is shown by an implicit function theorem argument on the function described in \eqref{g} around the value $g(0,0) = 0$. The remainder of this chapter is dedicated to proving Proposition \ref{IFTprop}.

\section{Calculation of the Fr{\'e}chet derivatives}
First, we compute the Fr{\'e}chet derivatives with respect to $u \in \tildefzero$ and $x\in \mathbb{R}^{2}$.  Below we use the notation $D_u$ to denote the one component Fr{\'e}chet derivative of $g(u,x)$ so that $D_u g(u,x)$ is a two dimensional vector, and we denote $D_x$ to denote the two component Fr{\'e}chet derivative of $g(u,x)$ so that $D_xg(u,x)$ is a $2\times 2$ matrix.  We will also use the notation $D=(D_u, D_x)$ to denote the derivative in the variables $(u,x)$ and then $Dg(u,x)$ can be represented as a $3\times 3$ matrix.

\begin{lemma}\label{Frechet:Lemma}
	The Fr\'echet derivatives of $g$, recalling \eqref{psi}, are given by
	\begin{equation}\label{Frechetu}
		D_{u}g(u,x)h 
		=
		\int_{-\pi}^{\pi}d\alpha ~ h(\alpha)	\begin{bmatrix}
		 -\sin(\psi(\alpha, x, u))
		\\
		\cos(\psi(\alpha, x, u))
	\end{bmatrix},
	\end{equation}
	for $h\in \tildefzero$ and
	\begin{equation}\label{Frechetx}
		D_{x}g(u,x)y 
				=
		\begin{bmatrix}
		 -2\int_{-\pi}^{\pi}d\alpha \sin(\psi) \cos\alpha & 2\int_{-\pi}^{\pi}d\alpha \sin(\psi) \sin\alpha
		\\
		 2\int_{-\pi}^{\pi}d\alpha \cos(\psi) \cos\alpha 
		 & -2\int_{-\pi}^{\pi}d\alpha \cos(\psi) \sin\alpha
	\end{bmatrix}
	\begin{bmatrix}
		 y_{1}
		\\
		y_{2}
	\end{bmatrix},
	\end{equation}
	for $y = \begin{bmatrix}
		 y_{1}
		\\
		y_{2}
	\end{bmatrix} \in \mathbb{R}^{2}$.
\end{lemma}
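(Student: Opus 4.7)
The plan is to verify both formulas by the standard Gâteaux-plus-remainder approach, then promote Gâteaux differentiability to Fréchet differentiability by using the embedding $\tildefzero\hookrightarrow L^\infty$.

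First, I would compute the Gâteaux derivatives formally. Since $\psi$ in \eqref{psi} is linear in $(u,x)$, the chain rule gives
$$
\partial_t\bigl|_{t=0} \cos(\psi(\alpha,x,u+th)) = -\sin(\psi(\alpha,x,u))\, h(\alpha),
$$
$$
\partial_t\bigl|_{t=0} \cos(\psi(\alpha,x+ty,u)) = -\sin(\psi(\alpha,x,u))\bigl(2y_1\cos\alpha - 2y_2\sin\alpha\bigr),
$$
with analogous formulas replacing $\cos$ by $\sin$ on the left and $-\sin$ by $\cos$ on the right. Integrating in $\alpha$ and interchanging the derivative with the integral (justified by the uniform bounds $|\sin|,|\cos|\le 1$ on $\psi$-derivatives) produces exactly \eqref{Frechetu} and \eqref{Frechetx}.

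Next I would verify Fréchet differentiability. Set $\Delta(\alpha)\eqdef 2(y_1\cos\alpha-y_2\sin\alpha)+h(\alpha)$ and use the trigonometric identities
$$
\cos(\psi+\Delta) = \cos\psi\cos\Delta - \sin\psi\sin\Delta,
\qquad
\sin(\psi+\Delta) = \sin\psi\cos\Delta + \cos\psi\sin\Delta.
$$
Subtracting the first-order linear part gives remainders controlled by $\cos\Delta-1$ and $\sin\Delta-\Delta$, which obey $|\cos\Delta-1|\le\tfrac12\Delta^2$ and $|\sin\Delta-\Delta|\le\tfrac16|\Delta|^3$. Since
$$
\|\Delta\|_{L^\infty(\T)}\le 2|y|+\|h\|_{L^\infty}\le 2|y|+\|h\|_{\tildefzero},
$$
the integrals of these remainders over $\T$ are bounded by a constant times $\bigl(|y|+\|h\|_{\tildefzero}\bigr)^2$, hence $o\bigl(|y|+\|h\|_{\tildefzero}\bigr)$ as $(h,y)\to 0$. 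This is exactly the Fréchet condition.

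Finally I would check that the candidate differentials \eqref{Frechetu}, \eqref{Frechetx} define bounded linear maps on the appropriate spaces. For \eqref{Frechetu}, boundedness follows from
$$
\left|\int_{-\pi}^{\pi} h(\alpha)\,\sin(\psi(\alpha,x,u))\,d\alpha\right|\le 2\pi\|h\|_{L^\infty}\le 2\pi\|h\|_{\tildefzero},
$$
and analogously for the $\cos$ component; \eqref{Frechetx} is bounded by $4\pi|y|$. The only minor subtlety worth mentioning — and the closest thing to an obstacle — is the interchange of the $t$-derivative with the $\alpha$-integral and the promotion of Gâteaux to Fréchet, both of which are handled by the uniform $L^\infty$ control of the integrands through $\|h\|_{\tildefzero}$; no genuine analytical difficulty arises here.
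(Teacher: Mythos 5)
Your proof is correct and follows essentially the same route as the paper: write down the candidate linear maps, subtract, and bound the remainder in $L^\infty$ via $\|h\|_{L^\infty}\le\|h\|_{\tildefzero}$ to get the $o(\|h\|_{\tildefzero}+|y|)$ estimate. The paper does this with the complex exponential $e^{i\psi}(e^{ih}-1-ih)$ and its Taylor tail rather than your real trigonometric addition formulas, but the argument is the same.
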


For simplicity, we write \eqref{g} in complex notation as
\begin{equation}\notag
	g(u, x) = \int_{-\pi}^{\pi} e^{i\alpha + 2i(x_{1}\cos(\alpha) -x_{2}\sin(\alpha)) + iu(\alpha)}d\alpha.
\end{equation}
In particular then in complex notation \eqref{Frechetu} takes the form
	\begin{equation}\notag
		D_{u}g(u,x)h = \int_{-\pi}^{\pi} i h(\alpha) e^{i\psi(\alpha, x, u)} d\alpha.
	\end{equation}
We will prove Lemma \ref{Frechet:Lemma} using this expression for $g(u,x)$.

\begin{proof}[Proof of Lemma \ref{Frechet:Lemma}]
	Computing the derivative with respect to $u$, we have
	\begin{multline*}
		|g(u+h, x) - g(u,x) - D_{u}g(u,x)h| \\= \Big|\int_{-\pi}^{\pi}  e^{i\psi(\alpha, x, u)}(e^{i h(\alpha)} - 1 - ih(\alpha)) d\alpha \Big|
		\\
		\leq 2\pi \|e^{ih(\alpha)} - 1 - ih(\alpha)\|_{L^{\infty}} \leq 2\pi \sum_{n=2}^{\infty} \frac{\|h\|_{L^{\infty}}^{n}}{n!} 
		\\
		\leq   \Big(2\pi \sum_{n=1}^{\infty} \frac{\|h\|_{\tildefzero}^{n}}{(n+1)!} \Big) \|h\|_{\tildefzero}
				\leq   \Big(2\pi e^{\|h\|_{\tildefzero}} \Big) \|h\|_{\tildefzero},
	\end{multline*}
	since for $h\in\tildefzero$ we have that $\hat{h}(0) = \hat{h}(\pm 1) = 0$ and so
	$$ \|h\|_{L^{\infty}} \leq \sum_{k\in\mathbb{Z}} |\hat{h}(k)| = \sum_{|k| \geq 2}  |\hat{h}(k)|.$$
	Thus, as $h \rightarrow 0$, we obtain validation of \eqref{Frechetu}. Then \eqref{Frechetx} is proven in a similar way.  \end{proof}

\section{Proof of the Implicit function theorem}
We will now prove Proposition \ref{IFTprop}.

\begin{proof}[Proof of Proposition \ref{IFTprop}]
First notice that from \eqref{Frechetx} we have 
\begin{equation}\label{dxg00}
    \begin{aligned}
	D_{x} g(0,0)y
	& = 		\begin{bmatrix}
		 -2\int_{-\pi}^{\pi}d\alpha \sin(\alpha) \cos\alpha & 2\int_{-\pi}^{\pi}d\alpha \sin(\alpha) \sin\alpha
		\\
		 2\int_{-\pi}^{\pi}d\alpha \cos(\alpha) \cos\alpha 
		 & -2\int_{-\pi}^{\pi}d\alpha \cos(\alpha) \sin\alpha
	\end{bmatrix}
	\begin{bmatrix}
		y_{1}\\
		y_{2}
	\end{bmatrix} 
	\\
	&= 2\pi \begin{bmatrix}
		0 &  1\\
		1 & 0
	\end{bmatrix} \begin{bmatrix}
		y_{1}\\
		y_{2}
	\end{bmatrix}
	=2\pi\begin{bmatrix}
		y_{2}\\
		y_{1}
	\end{bmatrix},
	\end{aligned}
\end{equation}
Therefore $D_{x} g(0,0)^{-1} = \frac{1}{2\pi}\begin{bmatrix}
		0 &  1\\
		1 & 0
	\end{bmatrix}$.  
For simplicity, we normalize the function $g$ around $(0,0)$ by defining
\begin{equation}\label{tildeg}
	\tilde{g}(u,x) = \begin{bmatrix}
		\tilde{g}_{1}(u,x)\\
		\tilde{g}_{2}(u,x)
	\end{bmatrix} 
	=  D_{x}g(0,0)^{-1}\begin{bmatrix}
		g_{1}(u,x)\\
		g_{2}(u,x)
	\end{bmatrix}
	=\frac{1}{2\pi}\begin{bmatrix}
		g_{2}(u,x)\\
		g_{1}(u,x)
	\end{bmatrix},
\end{equation}
and thus, $D_{x}\tilde{g}(0,0) = \mathbb{I}_{\mathbb{R}^{2}}$ which is the identity matrix on $\mathbb{R}^{2}$. Next, define the function $\phi: \tildefzero \times \mathbb{R}^{2} \rightarrow \tildefzero \times \mathbb{R}^{2}$ by
\begin{equation}\label{phi.def}
\phi(u,x) = [D\tilde{\phi}(0,0)]^{-1}\tilde{\phi}(u,x), \quad\text{  where  } \quad \tilde{\phi}(u,x) 
=
\begin{bmatrix}
u   \\
	\tilde{g}_{1}(u,x)\\
		\tilde{g}_{2}(u,x)
\end{bmatrix}.
\end{equation}
Then also $D\tilde{\phi}(u,x)$ is a $3 \times 3$ matrix.  We will obtain the implicit function $F$ given in Proposition \ref{IFTprop} by inverting $\phi$ in a neighborhood of the point $(0, 0) \in \tildefzero \times \mathbb{R}^{2}$. We will also calculate $[D\tilde{\phi}(0,0)]^{-1}$,  notice that 
\begin{equation}\notag
D\tilde{\phi}(0,0) 
=
    \begin{bmatrix}
		\mathbb{I}_{\tildefzero} & 0\\
		D_{u}\tilde{g}(0,0) & D_{x}\tilde{g}(0,0)
	\end{bmatrix}
	=
		\begin{bmatrix}
		\mathbb{I}_{\tildefzero} &  0\\
		  0  & \mathbb{I}_{\mathbb{R}^{2}}
	\end{bmatrix}.
\end{equation}
Here $\mathbb{I}_{\tildefzero}$ is the identity map on $\tildefzero$.
The last equality holds since
\begin{equation*}
\begin{aligned}
D_{u}\tilde{g}(0,0)h 
&= D_{x}g(0,0)^{-1}D_{u}g(0,0)h 
\\
&=\frac{1}{2\pi}
		\int_{-\pi}^{\pi}d\alpha ~ h(\alpha)	\begin{bmatrix}
		0 &  1\\
		1 & 0
	\end{bmatrix}	\begin{bmatrix}
		 -\sin(\psi(\alpha, x, u))
		\\
		\cos(\psi(\alpha, x, u))
	\end{bmatrix}=  0,
\end{aligned}
\end{equation*}
since $h\in\tildefzero$ so that $\hat{h}(\pm 1) =0$.  Therefore $[D\tilde{\phi}(0,0)]^{-1} = 		\begin{bmatrix}
		\mathbb{I}_{\tildefzero} &  0\\
		  0  & \mathbb{I}_{\mathbb{R}^{2}}
	\end{bmatrix}$.

For two norms $\| \cdot \|$ and $| \cdot |$, we will use the notation that 
\begin{equation}\label{littleo}
   f=o(|h|), \quad \text{if} \quad 
   \|f\|\to 0 \quad \text{and} \quad \frac{\|f\|}{|h|}\to 0 \quad \text{as}\quad  |h|\to 0.
\end{equation}
Now, to invert $\phi$, we first define the function 
\begin{equation}\label{tau.def}
    \tau(u,x) \eqdef (u,x) - \phi(u,x).
\end{equation}
We will show that $\tau(u,x)$ is a contraction map by computing $D\tau$. We will calculate that
$$
D\tau(u,x) = \mathbb{I} - [D\tilde{\phi}(0,0)]^{-1} D\tilde{\phi}(u,x),
$$
where $\mathbb{I}$ is the identity map on $\tildefzero \times \mathbb{R}^{2}$. To this end, we compute 
\begin{multline}\notag
	\tilde{\phi}(u+h,x+y) - \tilde{\phi}(u,x)
	\\ = \tilde{\phi}(u+h,x+y) - \tilde{\phi}(u,x+y) + \tilde{\phi}(u,x+y) - \tilde{\phi}(u,x)
	\\
	= \begin{bmatrix}
		\mathbb{I}_{\tildefzero} & 0\\
		D_{u}\tilde{g}(u,x) & D_{x}\tilde{g}(u,x)
	\end{bmatrix}
	\begin{bmatrix}
		h\\
		y
	\end{bmatrix} + o(\|h\|_{\tildefzero} + |y|).
\end{multline}
Then using, 
\eqref{littleo},  the above holds as $\|h\|_{\tildefzero} + |y| \to 0$.
 Hence,
\begin{equation*}
\begin{aligned}
	D\tau(u,x) &=  \mathbb{I} - 
	\begin{bmatrix}
		\mathbb{I}_{\tildefzero} &  0\\
		  0  & \mathbb{I}_{\mathbb{R}^{2}}
	\end{bmatrix} \!\!
	\begin{bmatrix}
		\mathbb{I}_{\tildefzero} & 0\\
		D_{u}\tilde{g}(u,x) & D_{x}\tilde{g}(u,x)
	\end{bmatrix} \\
	&=
	\begin{bmatrix}
		0  & 0\\
		-D_{u}\tilde{g}(u,x) & \mathbb{I}_{\mathbb{R}^{2}}- D_{x}\tilde{g}(u,x)
	\end{bmatrix}.
	\end{aligned}
\end{equation*}
To obtain $\tau$ as a contraction on some ball $B_{R}(0) \subset \tildefzero \times \mathbb{R}^{2}$, it is sufficient to show that the following holds
\begin{equation}\label{contraction}
	\Big\|D\tau(u,x) \begin{bmatrix}
		h\\
		y
	\end{bmatrix}\Big\|_{\tildefzero\times\mathbb{R}^{2}} < r \|(h,y)\|_{\tildefzero\times\mathbb{R}^{2}},
\end{equation}
for some $0< r< 1$ and
for any $(h,y)\in B_{R}(0) \subset \tildefzero \times \mathbb{R}^{2}$  where 
\begin{equation}\notag
    B_{R}(0) \eqdef \{(u,x) \ | \ |x|+\|u\|_{\tildefzero} < R \}, \quad R>0.
\end{equation}
Here we also denote $\|(h,y)\|_{\tildefzero\times\mathbb{R}^{2}} = \|h\|_{\tildefzero}+ |y|$.

Since we have \eqref{dxg00} then, using \eqref{Frechetu} and \eqref{Frechetx}, condition \eqref{contraction} becomes the condition that the inequality
\begin{multline}\label{contractioncondition}
\Big\|D\tau(u,x) \begin{bmatrix}
		h\\
		y
	\end{bmatrix}\!\Big\|_{\tildefzero\times\mathbb{R}^{2}}\!\!
	=
	\\
	\Bigg|\frac{1}{2\pi} 		\int_{-\pi}^{\pi}d\alpha ~ h(\alpha)	\begin{bmatrix}
		 -\cos(\psi(\alpha, x, u))
		\\
		\sin(\psi(\alpha, x, u))
	\end{bmatrix}
	\\
	+\! \frac{1}{2\pi} 
			\begin{bmatrix}
					 2\int_{-\pi}^{\pi}d\alpha \cos(\psi) \left(y_1 \cos\alpha -y_2\sin\alpha\right) - 2\pi y_1
		 \\
		 -2\int_{-\pi}^{\pi}d\alpha \sin(\psi) \left(y_1 \cos\alpha -y_2\sin\alpha\right)- 2\pi y_2
	\end{bmatrix}
	\Bigg|
	\\
	< r\|h\|_{\tildefzero} + r|y|,
\end{multline}
holds for a fixed $0< r< 1$, for all $(u,x)\in B_{R}(0)$, and for any $(h,y)\in \tildefzero\times \mathbb{R}^{2}$.  We also use the definition \eqref{psi} in the integral above.

We further {\it claim} that condition \eqref{contractioncondition} is satisfied for any ball 
$B_{R}(0)$ such that $2|x|+\|u\|_{\tildefzero} < \log(2)$ holds for all $(u,x)\in B_{R}(0)$.

 {\it Proof of the claim:}	Let
  	\begin{equation}\label{A1}
 	A_{1} = 
 	\begin{bmatrix}
		A_{11}
		\\
		A_{12}
	\end{bmatrix}
 	=
 	 \frac{1}{2\pi} 		\int_{-\pi}^{\pi}d\alpha ~ h(\alpha)	\begin{bmatrix}
		 -\cos(\psi(\alpha, x, u))
		\\
		\sin(\psi(\alpha, x, u))
	\end{bmatrix}.\end{equation}
We will use the complex exponential representation of $\cos(\psi(\alpha, x, u))$ and $\sin(\psi(\alpha, x, u))$ with \eqref{psi}.  We will further Taylor expand $\exp\left(i 2(x_{1}\cos(\alpha)-x_{2}\sin(\alpha))+i u(\alpha)) \right)$ 
	and use that
	$\frac{1}{2\pi}\int_{-\pi}^{\pi} h(\alpha) e^{\pm i\alpha} d\alpha = \hat{h}(\mp 1) = 0$ since $h\in \tildefzero$.   We obtain
		\begin{multline*}
		|A_{11}| 
		\le \frac{1}{4\pi}\Big| \int_{-\pi}^{\pi} h(\alpha) e^{i\alpha}\sum_{n=1}^{\infty} \frac{i^{n}(2(x_{1}\cos(\alpha)-x_{2}\sin(\alpha))+u(\alpha))^{n}}{n!} d\alpha \Big|\\
+ \frac{1}{4\pi}\Big| \int_{-\pi}^{\pi} h(\alpha) e^{-i\alpha}\sum_{n=1}^{\infty} \frac{i^{n}(2(x_{1}\cos(\alpha)-x_{2}\sin(\alpha))+u(\alpha))^{n}}{n!} d\alpha \Big|.
	\end{multline*}
	For simplicity we estimate the term with $e^{i\alpha}$ below:
\begin{multline}\notag
\frac{1}{2\pi}\Big| \int_{-\pi}^{\pi} h(\alpha) e^{i\alpha}\sum_{n=1}^{\infty} \frac{i^{n}(2(x_{1}\cos(\alpha)-x_{2}\sin(\alpha))+u(\alpha))^{n}}{n!} d\alpha \Big|
\\
		= \frac{1}{2\pi}\Big| \int_{-\pi}^{\pi} h(\alpha) e^{i\alpha}\sum_{n=1}^{\infty}\sum_{k=0}^{n} {n \choose k} \frac{2^{k}(x_{1}\cos(\alpha)-x_{2}\sin(\alpha))^{k}u(\alpha)^{n-k}}{n!} d\alpha   \Big|
\\
		\leq \sum_{n=1}^{\infty}\sum_{k=0}^{n} {n \choose k}\frac{1}{n!}\Big|\Big(\hat{h} \ast^{k} \mathcal{F}\{2(x_{1}\cos(\alpha)-x_{2}\sin(\alpha))\}\ast^{n-k}\hat{u}\Big)(-1)\Big|
\\
\leq \sum_{n=1}^{\infty}\sum_{k=0}^{n} {n \choose k}\frac{1}{n!} \|\hat{h}\|_{\ell^{1}}\|\hat{u}\|_{\ell^{1}}^{n-k}2^{k-1}|x|^{k}.
\end{multline}
The last line is obtained using Young's inequality for convolutions and 
	$$\widehat{\cos}(k) = \frac{1}{2}(\delta(1-k) + \delta(-1-k)), \quad   \widehat{\sin}(k) = \frac{-i}{2}(\delta(1-k) - \delta(-1-k)),$$
 and thus the $\ell^{1}$ norm of the Fourier transform is
 \begin{equation}\label{ell1ft.def}
    \|(2(x_{1}\cos(\alpha)-x_{2}\sin(\alpha)))^{\wedge}\|_{\ell^{1}} = 2|x|,
\end{equation}
	and the $\ell^{\infty}$ norm of the Fourier transform is
	$$\|(2(x_{1}\cos(\alpha)-x_{2}\sin(\alpha)))^{\wedge}\|_{\ell^{\infty}} = |x|.$$
Then, rewriting the series in function form using Taylor's theorem, we get
\begin{equation}\notag
    	|A_{11}| \leq \frac{1}{2}\|h\|_{\tilde{\mathcal{F}}^{0,1}}\Big( e^{2|x| + \|u\|_{\tilde{\mathcal{F}}^{0,1}}} - 1\Big).
\end{equation}
Doing the same for the second term in \eqref{A1} we obtain
\begin{equation}\label{A1estimate}
    	|A_{1}| \leq \|h\|_{\tilde{\mathcal{F}}^{0,1}}\Big( e^{2|x| + \|u\|_{\tilde{\mathcal{F}}^{0,1}}} - 1\Big).
\end{equation}
	Since the latter term in \eqref{contractioncondition} does not involve $h$, for this term we need 
$$
 e^{2|x| + \|u\|_{\tilde{\mathcal{F}}^{0,1}}} - 1 < r < 1.
$$
These estimates give the upper bound in terms of $\|h\|_{\tildefzero}$ in \eqref{contractioncondition}.

	The other term in \eqref{contractioncondition} will give us the upper bound in terms of $|y|$. Now let
	\begin{equation*}
	    A_{2} = \begin{bmatrix} A_{21} \\ A_{22} \end{bmatrix}=  \frac{1}{2\pi} 
			\begin{bmatrix}
					 2\int_{-\pi}^{\pi}d\alpha \cos(\psi) \left(y_1 \cos\alpha -y_2\sin\alpha\right) - 2\pi y_1
		 \\
		 -2\int_{-\pi}^{\pi}d\alpha \sin(\psi) \left(y_1 \cos\alpha -y_2\sin\alpha\right)- 2\pi y_2
	\end{bmatrix}.
	\end{equation*}
	Then using \eqref{psi}, again we use the complex exponential form of $\cos(\psi)$ and by Taylor expansion we have
	\begin{multline}\notag
		A_{21} = 
		\frac{1}{2\pi} \int_{-\pi}^{\pi} d\alpha (y_{1}\cos(\alpha)-y_{2}\sin(\alpha))e^{i\alpha}\sum_{n=1}^{\infty}\frac{i^{n}(\psi(\alpha,x,u)-\alpha)^{n}}{n!} 
		\\
+	\frac{1}{2\pi} \int_{-\pi}^{\pi} d\alpha (y_{1}\cos(\alpha)-y_{2}\sin(\alpha))e^{-i\alpha}\sum_{n=1}^{\infty}\frac{i^{n}(\psi(\alpha,x,u)-\alpha)^{n}}{n!}.
	\end{multline}
We estimate the term with $e^{i\alpha}$ as 
	\begin{multline}\notag
		\frac{1}{2\pi}\Big| \int_{-\pi}^{\pi} d\alpha (y_{1}\cos\alpha-y_{2}\sin\alpha)e^{i\alpha}\sum_{n=1}^{\infty}\frac{i^{n}(\psi(\alpha,x,u)-\alpha)^{n}}{n!} \Big|
		\\
 =\Big| \sum_{n=1}^{\infty}\frac{1}{2\pi}\int_{-\pi}^{\pi} (y_{1}\cos\alpha-y_{2}\sin\alpha)e^{i\alpha}\frac{(2(x_{1}\cos\alpha-x_{2}\sin\alpha)+u(\alpha))^{n}}{n!} d\alpha \Big|
		\\
		\leq \frac{1}{2}\sum_{n=1}^{\infty} \frac{1}{n!}\|2(y_{1}\cos\alpha-y_{2}\sin\alpha)^{\wedge}\|_{\ell^{\infty}}\|(2(x_{1}\cos\alpha-x_{2}\sin\alpha)+u(\alpha))^{\wedge}\|_{\ell^{1}}^{n}
		\\
		\leq \frac{1}{2}\Big( e^{2|x| + \|u\|_{\tilde{\mathcal{F}}^{0,1}}} - 1\Big) |y|.
	\end{multline}
	All the other terms are estimated in the same way.
	Adding all the estimates together we obtain the condition 
\begin{equation}\notag
    |A_2| \le 2\Big( e^{2|x| + \|u\|_{\tilde{\mathcal{F}}^{0,1}}} - 1\Big) |y|.
\end{equation}
This is a bigger coefficient in front of $|y|$ than the coefficient in the bound for $A_{1}$. Thus, the following condition
$$
2\Big( e^{2|x| + \|u\|_{\tilde{\mathcal{F}}^{0,1}}} - 1\Big) < 1,
$$ 
is sufficient. This yields the claim that condition \eqref{contractioncondition} is satisfied on any ball contained in the set of $(x,u)$ such that $$2|x|+\|u\|_{\tildefzero} < \log(3/2).$$  This completes the proof of the {\it claim}.

Now  $\tau$ satisfies the contraction \eqref{contraction} for $(u,x)\in B_{R}(0)$ for $R$ chosen to satisfy \eqref{contractioncondition}.  Recalling \eqref{phi.def} and \eqref{tau.def}, for $v,w\in\tildefzero\times\mathbb{R}^{2}$ and for $0<r<1$ fixed in \eqref{contractioncondition}, we obtain
\begin{align*}
	\|v-w\|_{\tildefzero\times\mathbb{R}^{2}} &\leq \|\phi(v)-\phi(w)\|_{\tildefzero\times\mathbb{R}^{2}} + \|\tau(v)-\tau(w)\|_{\tildefzero\times\mathbb{R}^{2}}\\ &\leq \|\phi(v)-\phi(w)\|_{\tildefzero\times\mathbb{R}^{2}} + r\|v-w\|_{\tildefzero\times\mathbb{R}^{2}}.
\end{align*} 
We conclude that
\begin{equation*}
	\|v-w\|_{\tildefzero\times\mathbb{R}^{2}} 
	\leq (1-r)^{-1}\|\phi(v)-\phi(w)\|_{\tildefzero\times\mathbb{R}^{2}}.
\end{equation*}
Hence, $\phi$ is an injection on $B=B_R(0)$ and so there exists an inverse $\phi^{-1} : \phi(B)\rightarrow B$. Now we define the map $F(u)$ by
$$
F(u) \eqdef \pi_{2}\circ \phi^{-1}(u,(0,0)) \text{ \ \ where \ \ } \pi_{2}(a,(b_{1},b_{2})) = (b_{1},b_{2}).
$$
Further define the map $\pi_{1}$ by $\pi_{1}(a,(b_{1},b_{2})) = a$.

Given $u\in \pi_{1}(B)$, by the definition of $F(u)$ there exists $u'\in \pi_{1}(B)$ such that $(u,(0,0)) = \phi(u',F(u))$.  Then $u=u'$ by \eqref{phi.def}. Further $\phi(u,F(u)) = (u,\tilde{g}(u,F(u)))$ and therefore $\tilde{g}(u,F(u)) = 0$ using \eqref{phi.def} and \eqref{tildeg}.  We have thus shown that $g(u,F(u)) = 0$ from \eqref{g}.  We conclude that $(u,(0,0)) \in \phi(B)$  for all $u\in \pi_{1}(B)$ and so $(u,F(u)) = \phi^{-1}(u,(0,0)) \in B$.  Therefore $g(u,F(u)) = 0$ for any $u\in \pi_{1}(B)$. This concludes the proof of the existence of the function $F$ described in Proposition \ref{IFTprop}.

To obtain the estimate \eqref{frequencyrelation} on $B$, first note that for $x = (x_{1},x_{2})$ we have
\begin{equation*}
\begin{aligned}
|x| &= |\pi_{2}\circ \phi^{-1}(u,(0,0))| = |\pi_{2}\circ \phi^{-1}(u,(0,0)) - \pi_{2}\circ \phi^{-1}(0,(0,0))|\\
&\leq \|u\|_{\tildefzero}\|D_{u}(\pi_{2}\circ\phi^{-1})\|.
\end{aligned}
\end{equation*}
Above note that $g(0,F(0))=0$ and so $F(0)=0$.  
Also the norm on $D_{u}(\pi_{2}\circ\phi^{-1})$ on is the operator norm.   We calculate $D_{u}(\pi_{2}\circ\phi^{-1})$  as follows:
$$\pi_{2}\circ \phi^{-1}(u+h,y)-\pi_{2}\circ \phi^{-1}(u,y) = \pi_{2}(u+h,x_{h}) - \pi_{2}(u,x) = x_{h}-x,$$
where we suppose $(u,y) \in \phi(B)$ and $(u+h,y) \in \phi(B)$ for small $h$.  Therefore
\begin{equation*}
\tilde{g}(u+h,x_{h}) = y \text{ \ \ and \ \ }  \tilde{g}(u,x) = y.    
\end{equation*}
We conclude that $[\tilde{g}(u+h,x_{h})- \tilde{g}(u,x)] = 0$.
Thus
$$0 = \tilde{g}(u+h,x_{h}) - \tilde{g}(u,x_{h}) +\tilde{g}(u,x_{h})- \tilde{g}(u,x).$$
Then, using \eqref{littleo}, we have
$$
0 = D_{u}\tilde{g}(u,x_{h})h +o(\|h\|_{\tildefzero}) + D_{x}\tilde{g}(u,x)(x_{h}-x) +o(|x_{h}-x|).
$$
We can now conclude that
\begin{multline}\notag
    x_{h}-x + D_{x}\tilde{g}(u,x)^{-1}o(|x_{h}-x|) \\ = -D_{x}\tilde{g}(u,x)^{-1}D_{u}\tilde{g}(u,x)h -D_{x}\tilde{g}(u,x)^{-1}o(\|h\|_{\tildefzero}).
\end{multline}
We will show that $D_{x}\tilde{g}(u,x)^{-1}$ is a bounded operator with bound \eqref{detandTbound} for $(x,u)$ satisfying \eqref{xucriteria}. Thus, for $(x,u)$ satisfying \eqref{xucriteria}, we have
$$D_{u}(\pi_{2}\circ \phi^{-1}) = -D_{x}\tilde{g}(u,x)^{-1}D_{u}\tilde{g}(u,x).$$
Then we conclude that
\begin{equation}\label{xinitialbound}
|x| \leq \|D_{x}\tilde{g}(u,x)^{-1}\| \|D_{u}\tilde{g}(u,x)\| \|u\|_{\tildefzero}.
\end{equation}
The term $D_{u}\tilde{g}(u,x)$, using \eqref{Frechetu} with \eqref{dxg00}, is given by
$$D_{u}\tilde{g}(u,x) = D_{x}g(0,0)^{-1}D_{u}g(u,x) = -A_{1},$$
where  $A_{1}$ is given by \eqref{A1} with \eqref{psi}. Hence, using \eqref{A1estimate}, we have 
\begin{equation}\label{Dugbound}
\|D_{u}\tilde{g}(u,x)\| \leq 	\Big(e^{2|x| + \|u\|_{\tilde{\mathcal{F}}^{0,1}}} - 1\Big).
\end{equation}
Consider the operator $D_{x}\tilde{g}(u,x)^{-1}$.  
Using \eqref{Frechetx} and \eqref{dxg00} with \eqref{psi}, we calculate that
\begin{equation*}
    D_{x}\tilde{g}(u,x) =\! \frac{1}{\pi}\! \begin{bmatrix}
    \!\int_{-\pi}^{\pi}\cos(\alpha)\cos(\psi(\alpha)) d\alpha & -\int_{-\pi}^{\pi}\sin(\alpha)\cos(\psi(\alpha)) d\alpha\\\\
    -\int_{-\pi}^{\pi}\cos(\alpha)\sin(\psi(\alpha)) d\alpha &  \int_{-\pi}^{\pi}\sin(\alpha)\sin(\psi(\alpha)) d\alpha
    \end{bmatrix}\!.    
\end{equation*}
Inverting this matrix, we obtain
\begin{equation}\label{aux1}
    \begin{aligned}
    D_{x}\tilde{g}(u,x)^{-1} = \frac{1}{\det(D_{x}\tilde{g}(u,x))} T(\psi),
    \end{aligned}
\end{equation}
where
\begin{equation}\label{T.psi.def}
    T(\psi)=\frac{1}{\pi} \begin{bmatrix}
\int_{-\pi}^{\pi}\sin(\alpha)\sin(\psi(\alpha)) d\alpha\ \ \ \ \int_{-\pi}^{\pi}\sin(\alpha)\cos(\psi(\alpha)) d\alpha
\\
\\
\int_{-\pi}^{\pi}\cos(\alpha)\sin(\psi(\alpha)) d\alpha \ \ \ \  \int_{-\pi}^{\pi}\cos(\alpha)\cos(\psi(\alpha)) d\alpha
\end{bmatrix}.
\end{equation}
We will calculate a lower bound for $d\eqdef\det(D_{x}\tilde{g}(u,x))$:  
\begin{align}
d
&\!=\! \frac1{\pi^{2}}\!\int_{-\pi}^{\pi}\!\!\cos(\alpha)\cos(\psi(\alpha)) d\alpha\! \int_{-\pi}^{\pi}\!\!\sin(\beta)\sin(\psi(\beta)) d\beta \nonumber\\ \label{det}
&\quad- \frac1{\pi^{2}}\! \int_{-\pi}^{\pi}\!\!\!\!\cos(\alpha)\sin(\psi(\alpha)) d\alpha \!\int_{-\pi}^{\pi}\!\!\!\sin(\beta)\cos(\psi(\beta)) d\beta\\
&=\frac1{\pi^{2}}\int_{-\pi}^{\pi}\!\!\int_{-\pi}^{\pi}\!\!\cos(\alpha)\sin(\beta)\sin(\psi(\beta)-\psi(\alpha))d\alpha d\beta.\nonumber
\end{align}
Next, recalling \eqref{psi}, define 
$$
R(\alpha,\beta) \eqdef 2x_{1}(\cos\beta-\cos\alpha)-2x_{2}(\sin\beta-\sin\alpha) + u(\beta)-u(\alpha).
$$
Then we have
\begin{align}
|\sin(\psi(\beta)-\psi(\alpha))& -\sin(\beta - \alpha)|  = |\sin(\beta-\alpha + R(\alpha,\beta))-\sin(\beta - \alpha)| \nonumber\\\label{sinAalphbeta}
&=  |\sin(\beta-\alpha)(\cos(R(\alpha,\beta)) - 1)+\cos(\beta-\alpha)\sin(R(\alpha,\beta))| \\\nonumber
&\leq |\cos(R(\alpha,\beta)) - 1| + |\sin(R(\alpha,\beta))| .
\end{align}
Since $|R(\alpha,\beta)| \leq 4|x| + 2\|u\|_{\fzerone}$, we have that
\begin{multline}\label{sinAalphbetabound}
|\cos(R(\alpha,\beta)) - 1| + |\sin(R(\alpha,\beta))| 
\\
\leq \sum_{n=1}^{\infty} \frac{(4|x|\! +\! 2\|u\|_{\fzerone})^{2n}}{(2n)!}\! +\! \sum_{n=0}^{\infty}\frac{(4|x| \!+\! 2\|u\|_{\fzerone})^{2n+1}}{(2n+1)!} = e^{4|x| + 2\|u\|_{\fzerone}} \!-\! 1,
\end{multline}
which tends to zero as $(u,x)$ go to zero. Hence, by \eqref{sinAalphbeta} and \eqref{sinAalphbetabound} we have
\begin{multline*}
\int_{-\pi}^{\pi}\int_{-\pi}^{\pi}\cos(\alpha)\sin(\beta)(\sin(\psi(\beta)-\psi(\alpha)) -\sin(\beta - \alpha))d\alpha d\beta \\ \geq -4\pi^{2}\|\sin(\psi(\beta)-\psi(\alpha)) -\sin(\beta - \alpha)\|_{L^{\infty}} \\
\geq -4\pi^{2} (e^{4|x| + 2\|u\|_{\fzerone}} - 1)
\end{multline*}
and
$$\int_{-\pi}^{\pi}\int_{-\pi}^{\pi}\cos(\alpha)\sin(\beta) \sin(\beta - \alpha)d\alpha d\beta = \pi^{2}.$$
We thus conclude 
\begin{multline*}
    \Big|\int_{-\pi}^{\pi}\int_{-\pi}^{\pi}\cos(\alpha)\sin(\beta)\sin(\psi(\alpha)-\psi(\beta))d\alpha d\beta\Big| \\
    = \Big|\int_{-\pi}^{\pi}\int_{-\pi}^{\pi}\cos(\alpha)\sin(\beta)(\sin(\psi(\alpha)-\psi(\beta)) -\sin(\alpha - \beta))d\alpha d\beta\\ 
    + \int_{-\pi}^{\pi}\int_{-\pi}^{\pi}\cos(\alpha)\sin(\beta) \sin(\alpha - \beta)d\alpha d\beta\Big|\\
    \geq -4\pi^{2} (e^{4|x| + 2\|u\|_{\fzerone}} - 1) + \pi^{2}.
\end{multline*}
Thus, the determinant given by \eqref{det} is bounded from below by
\begin{equation}\label{detbound}
|\det(D_{x}\tilde{g}(u,x))| \geq 1+4\big(1- e^{4|x| + 2\|u\|_{\fzerone}}\big).
\end{equation}
Therefore, it only remains to estimate the norm $\|T(\psi)\|$ to obtain a bound for the norm of $D_{x}\tilde{g}(u,x)^{-1}$ in \eqref{aux1}.

Recalling \eqref{T.psi.def}, we calculate that 
\begin{equation}\notag
    T(\psi)\begin{bmatrix}
y_{1}\\
y_{2}
\end{bmatrix}
=
    \frac{1}{\pi} \begin{bmatrix}
\int_{-\pi}^{\pi}\sin(\alpha)\left( y_1 \sin(\psi(\alpha))+y_2\cos(\psi(\alpha)) \right)d\alpha
\\
\int_{-\pi}^{\pi}\cos(\alpha)\left( y_1 \sin(\psi(\alpha))+y_2\cos(\psi(\alpha))\right) d\alpha
\end{bmatrix}.
\end{equation}
We have for $\phi(\alpha) = \psi(\alpha) - \alpha = 2(x_{1}\cos(\alpha) - x_{2}\sin(\alpha)) + u(\alpha)$ that
\begin{multline}\notag
\sin\alpha\left( y_1 \sin(\psi(\alpha))+y_2\cos(\psi(\alpha)) \right)
\\
    =
    -\frac{1}{4}y_1
    \left(
    e^{i\phi}e^{2i\alpha}-e^{-i\phi}-e^{i\phi}
    +e^{-i\phi}e^{-2i\alpha}
    \right)
    \\
        +\frac{1}{4i}y_2
    \left(
    e^{i\phi}e^{2i\alpha}+e^{-i\phi}-e^{i\phi}
    -e^{-i\phi}e^{-2i\alpha}
    \right).
\end{multline}
And similarly
\begin{multline}\notag
\cos\alpha\left( y_1 \sin(\psi(\alpha))+y_2\cos(\psi(\alpha)) \right)
\\
    =
    \frac{1}{4i}y_1
    \left(
    e^{i\phi}e^{2i\alpha}-e^{-i\phi}+e^{i\phi}
    -e^{-i\phi}e^{-2i\alpha}
    \right)
    \\
        +\frac{1}{4}y_2
    \left(
    e^{i\phi}e^{2i\alpha}+e^{-i\phi}+e^{i\phi}
    +e^{-i\phi}e^{-2i\alpha}
    \right).
\end{multline}
We conclude that
\begin{multline}\notag
    T(\psi)
=
\\
\frac{1}{2}
\begin{bmatrix}
\left(\real{\mathcal{F}\{e^{i\phi}\}(0)}-\real{\mathcal{F}\{e^{i\phi}\}(-2)} \right)
&
\left(\imag{\mathcal{F}\{e^{i\phi}\}(-2)}-\imag{\mathcal{F}\{e^{i\phi}\}(0)} \right)
\\
\left(\imag{\mathcal{F}\{e^{i\phi}\}(-2)}-\imag{\mathcal{F}\{e^{i\phi}\}(0)} \right)
& \left(\real{\mathcal{F}\{e^{i\phi}\}(-2)}+\real{\mathcal{F}\{e^{i\phi}\}(0)} \right)
\end{bmatrix}.
\end{multline}
Hence 
\begin{equation*}
\Big|T(\psi) \begin{bmatrix}
y_{1}\\
y_{2}
\end{bmatrix} \Big| \leq 2 |y|\|\widehat{e^{i\phi(\alpha)}}\|_{\ell^{1}}.
\end{equation*}
Hence, using \eqref{ell1ft.def}, we obtain the bound
\begin{align*}
\|T(\psi)\| &\leq \sum_{n=0}^{\infty} \frac{\|\widehat{\phi^{n}}\|_{\ell^{1}}}{n!}\\ 
&\leq  \sum_{n=1}^{\infty}\sum_{k=0}^{n} {n \choose k}\frac{1}{n!}\Big\| \ast^{k} |\mathcal{F}\{2(x_{1}\cos\alpha-x_{2}\sin\alpha)\}|\ast^{n-k}|\hat{u}|\Big\|_{\ell^{1}}\\
&\leq  \sum_{n=1}^{\infty}\sum_{k=0}^{n} {n \choose k}\frac{1}{n!} 2^{k}|x|^{k}\|u\|_{\tildefzero}^{n-k}.
\end{align*}
This finally yields the result 
\begin{equation}\label{Tbound}
\|T(\psi)\|\leq 2e^{2|x|+\|u\|_{\tildefzero}}.
\end{equation}
From \eqref{aux1}, using \eqref{detbound} and \eqref{Tbound}, we obtain 
\begin{equation}\label{detandTbound}
\|D_{x}\tilde{g}(u,x)^{-1}\| \leq \frac{ 2e^{2|x|+\|u\|_{\tildefzero}}}{1+ 4(1- e^{4|x|+2\|u\|_{\tildefzero}})}.
\end{equation}
In conclusion, we have from \eqref{xinitialbound}, \eqref{Dugbound} and \eqref{detandTbound} that
\begin{equation}\label{xbound}
2|x| \leq  \frac{ 2e^{2|x|+\|u\|_{\tildefzero}}(e^{2|x| + \|u\|_{\tildefzero}} - 1)}{1- 4( e^{4|x|+2\|u\|_{\tildefzero}}-1)} \|u\|_{\tildefzero}.
\end{equation}
From the denominator term, we need $$ e^{4|x|+2\|u\|_{\tildefzero}} - 1 < \frac{1}{4}.$$
This yields the condition
\begin{equation}\label{xucriteria}
2|x| + \|u\|_{\tildefzero} < \frac{1}{2}\log\big(5/4\big),
\end{equation}
which ensures that the condition 
$2|x| + \|u\|_{\tildefzero} < \log(3/2)$ is also satisfied.   Using $|x| = |\hat{\theta}(\pm 1)|$ and $u = \tilde{\theta}$, we obtain 
$$2|x| + \|u\|_{\tildefzero} = |\hat{\theta}(1)| +  |\hat{\theta}(-1)| + \|\tilde{\theta}\|_{\tildefzero} =  \|\theta\|_{\fzerone}.$$
Thus, for $\|\theta\|_{\fzerone} <\frac{1}{2}\log\big(\frac{5}{4}\big)$ and using \eqref{xbound}, we obtain Proposition \ref{IFTprop}.
\end{proof}

The result in Proposition \ref{IFTprop} will be needed to close the \textit{a priori} estimates in Section \ref{subsecGlobal}.  In particular, Proposition \ref{linearfourier} show that the modes $\pm 1$ are neutral at the linear level (see also \eqref{auxx2}).

\chapter{Fourier multiplier estimates}\label{sec:FourierRandS}

In this chapter, we will prove the crucial estimates for the operators $\mathcal{R}$ from \eqref{R}  (in Proposition \ref{Restimates.prop}) and $\mathcal{S}$ from \eqref{S} (in Proposition \ref{Smultiplierprop}) in the norms ${\fzeronenu}$ and  ${\fsonenu}$ for $s> 0$ from \eqref{fzeroonenunorm} and \eqref{fsonenorm} respectively.  

We will use the following facts throughout the section:

\begin{lemma}\label{triangleprop}
We have the estimate
	\begin{equation}\label{zeroproduct}
		\|g_{1}g_{2}\cdots g_{n}\|_{\fzeronenu} \leq \prod_{k=1}^{n} \|g_{k}\|_{\fzeronenu}.
	\end{equation}
	For $s>0$, recalling \eqref{bfcn.def}, we have
	\begin{equation}\label{sproduct}
		\|g_{1}g_{2}\cdots g_{n}\|_{\fsonenu} \leq \bfcn(n,s)\sum_{j=1}^{n} \|g_{j}\|_{\fsonenu}
		\prod_{\substack{k=1 \\ k\ne j}}^{n}  \|g_{k}\|_{\fzeronenu}.
	\end{equation}
\end{lemma}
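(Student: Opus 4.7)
The plan is to reduce both inequalities to the convolution structure of the Fourier transform of a product and then use suitable subadditivity/convexity bounds on $|k|^s$ and on the exponential weight $e^{\nu(t)|k|}$. Observe that for a product of $n$ periodic functions,
\[
\widehat{g_{1}\cdots g_{n}}(k)=\sum_{\substack{k_{1},\ldots,k_{n}\in\mathbb{Z}\\k_{1}+\cdots+k_{n}=k}}\hat{g}_{1}(k_{1})\cdots\hat{g}_{n}(k_{n}),
\]
and that the exponential weight satisfies $e^{\nu(t)|k|}\le \prod_{j=1}^{n}e^{\nu(t)|k_{j}|}$ whenever $k=k_{1}+\cdots+k_{n}$, by subadditivity of the absolute value and monotonicity of the exponential.

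For the first estimate I would insert the convolution identity into the definition \eqref{fzeroonenunorm}, apply the exponential weight bound above, move the absolute value inside, and then swap the order of summation via Fubini/Tonelli. This factorizes the multiple sum into the product of $n$ one-dimensional sums, each one equal to $\|g_{j}\|_{\fzeronenu}$, yielding \eqref{zeroproduct}. An equivalent and cleaner write-up is induction on $n$: the base case $n=2$ is the above direct calculation, and the inductive step follows from grouping $g_{1}\cdots g_{n}=(g_{1}\cdots g_{n-1})g_{n}$.

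For the second estimate the only new ingredient is a bound on $|k|^{s}$ in terms of the $|k_{j}|^{s}$. Writing $|k|\le |k_{1}|+\cdots+|k_{n}|$, I split into two cases matching the definition \eqref{bfcn.def}: for $0\le s\le 1$, subadditivity of $x\mapsto x^{s}$ gives $(|k_{1}|+\cdots+|k_{n}|)^{s}\le\sum_{j=1}^{n}|k_{j}|^{s}$; for $s>1$, Jensen's inequality applied to the convex function $x\mapsto x^{s}$ gives $(|k_{1}|+\cdots+|k_{n}|)^{s}\le n^{s-1}\sum_{j=1}^{n}|k_{j}|^{s}$. In both cases the uniform bound is $|k|^{s}\le \bfcn(n,s)\sum_{j=1}^{n}|k_{j}|^{s}$. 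Inserting this into the convolution representation of $\widehat{g_{1}\cdots g_{n}}(k)$ together with the exponential weight splitting, exchanging sums by Fubini/Tonelli, and identifying the factor carrying $|k_{j}|^{s}$ with $\|g_{j}\|_{\fsonenu}$ while the remaining $n-1$ factors give $\|g_{m}\|_{\fzeronenu}$, produces precisely \eqref{sproduct}.

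There is no real obstacle here; the only subtle point is tracking the correct constant $\bfcn(n,s)$, which comes solely from the $s>1$ Jensen step and is sharp in the sense that the bound is dimension-free in $k$ but picks up the $n^{s-1}$ factor from the number of summands. The sum over $k_{j}=0$ contributes nothing when $|k_{j}|^{s}$ appears (so the restriction $k\ne 0$ in the $\fsonenu$ norm causes no trouble), and the exponential weight inequality is an equality on the support of each convolution. I would present this as a single proof handling $n=2$ explicitly and then either iterating or directly writing the general $n$ case, since no induction is strictly needed once the pointwise bounds on $|k|^{s}$ and $e^{\nu|k|}$ are in hand.
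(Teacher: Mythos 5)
Your proposal is correct and follows essentially the same route as the paper: both reduce to the convolution representation of $\widehat{g_1\cdots g_n}$, split the exponential weight by subadditivity of $|k|$, and invoke the pointwise bound $|k|^s\le \bfcn(n,s)\sum_j|k_j|^s$ (the paper's inequality \eqref{s.inequality}, stated there in telescoping form) before summing. Your explicit justification of that bound via subadditivity for $0<s\le 1$ and Jensen's inequality for $s>1$ is a welcome detail the paper leaves implicit, but it is not a different method.
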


\begin{remark}\label{nuremark}
We note that these results and all the apriori estimates in this paper further hold with ${\fzeronenu}$ and ${\fsonenu}$ replaced by ${\fzerone}$ and ${\fsone}$ respectively since we can simply take $\nu=0$.
\end{remark}

\begin{proof}
First consider the case $0< s \leq 1$ and $n=2$ in \eqref{sproduct}. Then we use the inequalities
$$|k|^{s} \leq |k-j|^{s} + |j|^{s}, $$
and
$$e^{\nu(t)|k|} \leq e^{\nu(t)|k-j|}e^{\nu(t)|j|}, $$
to see that
\begin{align*}
	\|g_1 g_2\|_{\fsonenu} &= \sum_{k\in\mathbb{Z}} e^{\nu(t)|k|} e^{\nu(t)|k|} |k|^{s} 
	\left| \widehat{g_1 g_2}(k) \right|
	\\
 	&\leq \sum_{j,k\in\mathbb{Z}} e^{\nu(t)|k-j|}e^{\nu(t)|j|}(|k-j|^{s} + |j|^{s})  \left| \hat{g_1}(k-j)\hat{g_2}(j) \right| \\
	&\leq \|g_1\|_{\fsonenu}\|g_2\|_{\fzeronenu} + \|g_1\|_{\fzeronenu}\|g_2\|_{\fsonenu}.
\end{align*}
For general $n$ and $s>0$, recalling \eqref{bfcn.def}, we use the following inequality
\begin{equation}\label{s.inequality}
    |k|^{s} \leq \bfcn(n,s) (|k-k_{1}|^{s} + |k_{1}-k_{2}|^{s} + \ldots + |k_{n-2} - k_{n-1}|^{s} + |k_{n-1}|^{s}).
\end{equation}
Then the proofs of all the other cases in \eqref{zeroproduct} and \eqref{sproduct} follow similarly.  
\end{proof}

We will also use the following repeatedly in this chapter:
\begin{prop}
We have the useful bound for $\beta \in \mathbb{T}=[-\pi, \pi]$ and $l \ge 1$:
\begin{equation}\label{usefuloperatorbound}
\Big|\Big( \frac{i\beta}{(1-e^{-i\beta})}\Big)^{l}-1 \Big| \leq |\beta|\frac{l}{2}\Big(\frac{\pi}{2}\big)^{l-1}\sqrt{1 + \frac{\pi^{2}}{4}}.
\end{equation}
\end{prop}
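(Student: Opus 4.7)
The plan is to reduce the problem to estimating $|a|$ and $|a-1|$, where $a \eqdef i\beta/(1-e^{-i\beta})$, via the telescoping factorization $a^l - 1 = (a-1)\sum_{k=0}^{l-1}a^k$. This yields $|a^l-1| \leq |a-1|\sum_{k=0}^{l-1}|a|^k$, and once I establish $|a|\leq \pi/2$ on $\mathbb{T}$, the sum is bounded by $l(\pi/2)^{l-1}$ (regardless of whether $|a|\leq 1$ or not, since the maximum term dominates up to the factor $l$). Thus it remains to extract the factor $|\beta|\sqrt{1+\pi^2/4}/2$ from $|a-1|$.

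First I would rewrite $a$ in polar form using $1 - e^{-i\beta} = 2i\sin(\beta/2)\, e^{-i\beta/2}$:
\begin{equation*}
a \,=\, \frac{x}{\sin x}\, e^{ix},\qquad x \eqdef \beta/2 \in [-\pi/2,\pi/2].
\end{equation*}
This identity immediately gives $|a| = |x|/|\sin x|$. Since $|\sin x|/|x|$ is even and decreasing in $|x|$ on $(0,\pi/2]$ (the derivative of $\sin(x)/x$ has sign that of $x\cos x - \sin x$, which is negative on $(0,\pi/2)$), its minimum on $[-\pi/2,\pi/2]$ is $2/\pi$, attained at $|x|=\pi/2$. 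Hence $|a| \leq \pi/2$ as desired.

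The polar form also gives $a - 1 = (x\cot x - 1) + i x$, so
\begin{equation*}
|a-1|^2 \,=\, (1 - x\cot x)^2 + x^2.
\end{equation*}
The key remaining step is the pointwise inequality $1 - x\cot x \leq |x|\pi/2$ for $|x|\leq \pi/2$, since plugging this in gives exactly $|a-1|^2 \leq x^2(1+\pi^2/4)$, i.e.\ $|a-1| \leq (|\beta|/2)\sqrt{1+\pi^2/4}$. By evenness I may assume $x\geq 0$ and set $g(x) \eqdef x\pi/2 + x\cot x - 1$; I need to show $g\geq 0$ on $[0,\pi/2]$. A direct computation yields
\begin{equation*}
g''(x) \,=\, 2\csc^2 x\,(x\cot x - 1),
\end{equation*}
which is $\leq 0$ on $(0,\pi/2)$ because $x\cot x$ is known to decrease from $1$ at $x=0^+$ to $0$ at $x=\pi/2$. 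So $g'$ is nonincreasing; since $g'(\pi/2)=\pi/2 + 0 - \pi/2 = 0$, we conclude $g' \geq 0$ on $[0,\pi/2]$. Combined with $g(0)=0$ (using $\lim_{x\to 0^+}x\cot x = 1$), this gives $g\geq 0$ throughout, which is the claim.

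Chaining these three ingredients (factorization, bound on $|a|$, bound on $|a-1|$) delivers the stated inequality with the exact constants. The only genuinely nontrivial step is the monotonicity/convexity argument for $g$; everything else is algebra or a standard comparison of $\sin x$ to $(2/\pi)x$. I do not foresee any obstacle beyond carefully verifying the sign of $g''$.
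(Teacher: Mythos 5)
Your proof is correct, and it takes a genuinely different route from the paper's. The paper proves \eqref{usefuloperatorbound} by the mean value theorem (in effect, $|f(\beta)-f(0)|\le|\beta|\,\|f'\|_{L^\infty}$ with $f(\beta)=a(\beta)^l$ and $f(0)=1$), then applies the chain rule so that the task splits into the same bound $|a|\le\pi/2$ that you use, plus a bound $\bigl|\tfrac{d}{d\beta}\tfrac{\beta}{1-e^{-i\beta}}\bigr|\le\tfrac12\sqrt{1+\pi^2/4}$ obtained by estimating the real and imaginary parts of $\tfrac{1-e^{-i\beta}-i\beta e^{-i\beta}}{(1-e^{-i\beta})^2}$ separately. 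You instead use the algebraic telescoping $a^l-1=(a-1)\sum_{k=0}^{l-1}a^k$, so the derivative bound is replaced by a direct pointwise bound on $|a-1|$, which you get exactly from the polar form $a=\tfrac{x}{\sin x}e^{ix}$ and the elementary inequality $1-x\cot x\le \pi x/2$ on $[0,\pi/2]$ (your concavity argument for $g$ is sound: $g''\le0$, $g'(\pi/2)=0$, $g(0)=0$). Both routes land on the identical constant $|\beta|\tfrac{l}{2}(\pi/2)^{l-1}\sqrt{1+\pi^2/4}$; yours avoids differentiating a complex quotient and any appeal to the mean value theorem for complex-valued functions, at the cost of the slightly longer monotonicity analysis of $x\cot x$. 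The one point worth stating explicitly if you write this up is that $a(\beta)$ is understood as its continuous extension $a(0)=1$ at $\beta=0$ (the paper's convention as well), where the inequality is trivial.
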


\begin{proof}
By the mean value theorem we have
\begin{equation*}
		\Big|\Big( \frac{i\beta}{(1-e^{-i\beta})}\Big)^{l}-1 \Big| \leq |\beta| \Big\|\frac{d}{d\beta}\Big( \frac{i\beta}{(1-e^{-i\beta})}\Big)^{l}  \Big\|_{L^{\infty}({\mathbb{T}})}.
	\end{equation*}
	We compute the above derivative as:
	\begin{equation*}
		\frac{d}{d\beta}\Big( \frac{i\beta}{1-e^{-i\beta}}\Big)^{l}  = l\Big(\frac{i\beta}{1-e^{-i\beta}}\Big)^{l-1}\frac{d}{d\beta}\Big(\frac{i\beta}{1-e^{-i\beta}}\Big).
	\end{equation*}
	First, for $\beta\in\mathbb{T}$, we have 
	\begin{align*}
		\Big|\frac{\beta}{(1-e^{-i\beta})}\Big|&= \frac{|\beta|}{\sqrt{(1-\cos(\beta))^{2}+\sin^{2}(\beta)}}
		= \frac{|\beta|}{\sqrt{2-2\cos(\beta)}}\\
		&= \frac{|\beta|}{2\sin(\beta/2)}
		\leq \frac{\pi}{2}.
	\end{align*}
		Next, we have
	\begin{align*}
		\Big|\frac{d}{d\beta}\Big(\frac{\beta}{1-e^{-i\beta}}\Big)\Big| & = \frac{|1-e^{-i\beta}-i\beta e^{-i\beta}|}{|(1-e^{-i\beta})^{2}|}\\
		&=\frac{|(1-\cos(\beta)-\beta\sin(\beta)) + i (\sin(\beta)-\beta\cos(\beta))|}{4\sin^{2}(\beta/2)}.
	\end{align*}
	In absolute value, the real term in the numerator is
	$$|1-\cos(\beta)-\beta\sin(\beta)| = |2\sin^{2}(\beta/2) -2\beta\sin(\beta/2)\cos(\beta/2)|.$$
	Hence for $\beta\in\mathbb{T}$ we have  
	$$\frac{|1-\cos(\beta)-\beta\sin(\beta)| }{4\sin^{2}(\beta/2)} \leq \Big|\frac{1}{2} - \frac{\beta}{2\tan(\beta/2)}\Big|\leq \frac{1}{2}.$$
	Next, the imaginary part can be checked to have a maximum of 
	$$ \frac{|\sin(\beta)-\beta\cos(\beta)|}{4\sin^{2}(\beta/2)} \leq \frac{\pi}{4}.$$
	Thus we have 
	\begin{equation}\notag
	\Big|\frac{d}{d\beta}\Big(\frac{\beta}{1-e^{-i\beta}}\Big)\Big|\leq \frac{1}{2}\sqrt{1 + \frac{\pi^{2}}{4}}.
	\end{equation}
	We conclude that
	$$\Big|\frac{d}{d\beta}\Big[\Big( \frac{i\beta}{1-e^{-i\beta}}\Big)^{l}-1\Big] \Big| \leq \frac{1}{2}l\Big(\frac{\pi}{2}\Big)^{l-1}\sqrt{1 + \frac{\pi^{2}}{4}}.$$
	This yields \eqref{usefuloperatorbound}.
\end{proof}

\section{Estimates on the operator $\mathcal{R}$}
We will now estimate the operator $\mathcal{R}$ from \eqref{R} as follows. 

\begin{prop}\label{Restimates.prop}
	The operator $\mathcal{R}$ from \eqref{R} satisfies the estimates
	\begin{equation}\label{Restimates}
\begin{aligned}
		\|\mathcal{R}(f)\|_{\fzeronenu} &\leq {\crconstant}\|f\|_{\fzeronenu}\|\theta\|_{\fzeronenu},
		\\
		\|\mathcal{R}(f)\|_{\fsonenu} &\leq {\bfcn(2,s)} {\crconstant}(\|f\|_{\fsonenu}\|\theta\|_{\fzeronenu} + \|f\|_{\fzeronenu}\|\theta\|_{\fsonenu}),\qquad s>0.
\end{aligned}
\end{equation}
where $\bfcn(2,s)$ is from \eqref{bfcn.def} and the constant
\begin{equation}\label{CR}
{\crconstant}	\eqdef 1+\tilde{\mathcal{C}}>0,    
\end{equation}
and $\tilde{\mathcal{C}}>0$ is defined in \eqref{catalanconstant}.
\end{prop}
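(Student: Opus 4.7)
The plan is to pass to Fourier coefficients and reduce both estimates to a uniform bound on a bilinear Fourier symbol. Inserting the Fourier series $f(\alpha-\beta)=\sum_k \hat{f}(k)e^{ik(\alpha-\beta)}$ and $\theta(\alpha+(s-1)\beta)=\sum_j \hat{\theta}(j)e^{ij\alpha}e^{ij(s-1)\beta}$ into \eqref{R} and interchanging summation with integration gives
\begin{equation*}
\widehat{\mathcal{R}(f)}(m)=\sum_{k+j=m} M(k,j)\,\hat{f}(k)\hat{\theta}(j),
\end{equation*}
where the bilinear symbol is
\begin{equation*}
M(k,j)=\frac{i}{\pi}\int_0^1\pv\!\int_{-\pi}^{\pi}\frac{\beta\,e^{-ik\beta}\,e^{i(j+1)(s-1)\beta}}{(1-e^{-i\beta})^{2}}\,d\beta\,ds.
\end{equation*}
The heart of the argument is then the uniform bound $|M(k,j)|\leq \crconstant$ for all $k,j\in\mathbb{Z}$.

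To establish this, I would decompose the singular kernel via
\begin{equation*}
\frac{\beta}{(1-e^{-i\beta})^{2}}=-\frac{1}{\beta}-\frac{1}{\beta}\left[\left(\frac{i\beta}{1-e^{-i\beta}}\right)^{\!2}-1\right],
\end{equation*}
splitting $M=M_1+M_2$ accordingly. The bound \eqref{usefuloperatorbound} with $l=2$ shows the bracketed factor is $O(|\beta|)$ near $\beta=0$, so the second piece is a bounded (non-singular) function on $\mathbb{T}$, the principal value disappears in $M_2$, and $|M_2(k,j)|$ is controlled by a trivial pointwise absolute value estimate that is independent of $k,j$. For $M_1$, the inner principal value is explicit via the sine integral,
\begin{equation*}
\pv\!\int_{-\pi}^{\pi}\frac{e^{-in\beta}}{\beta}\,d\beta=-2i\,\mathrm{Si}(n\pi),\qquad n\in\mathbb{R},
\end{equation*}
so after the $s$-integration one obtains $|M_1(k,j)|\leq \frac{2}{\pi}\sup_{x\in\mathbb{R}}|\mathrm{Si}(x)|$. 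Combining the two contributions yields a finite constant of the form $\crconstant = 1 + \tilde{\mathcal{C}}$ as in \eqref{CR}.

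With the pointwise symbol bound in hand, both stated inequalities follow from standard convolution arguments on the Fourier side in the spirit of Lemma \ref{triangleprop}. Using $e^{\nu(t)|m|}\leq e^{\nu(t)|k|}e^{\nu(t)|j|}$ whenever $m=k+j$ gives
\begin{equation*}
\|\mathcal{R}(f)\|_{\fzeronenu}\leq \crconstant\sum_{k,j}e^{\nu(t)|k|}|\hat{f}(k)|\,e^{\nu(t)|j|}|\hat{\theta}(j)|=\crconstant\,\|f\|_{\fzeronenu}\|\theta\|_{\fzeronenu}.
\end{equation*}
For the $\fsonenu$ inequality with $s>0$, apply the subadditivity \eqref{s.inequality} with $n=2$ to get $|m|^{s}\leq \bfcn(2,s)(|k|^{s}+|j|^{s})$, and split the convolution accordingly to obtain the two-term bound in \eqref{Restimates}.

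The main obstacle is the uniform control of $M(k,j)$. The subtle point is that the argument $n=k+(j+1)(1-s)$ of $\mathrm{Si}$ in $M_1$ varies continuously with $s\in[0,1]$ and is generally not an integer, so one cannot close the estimate by computing $\mathrm{Si}$ at a discrete set of values; one must instead invoke the global boundedness of $\mathrm{Si}$ on all of $\mathbb{R}$. Tracking the precise value of the constant $\tilde{\mathcal{C}}$ requires carefully evaluating the $s$-integral of $\mathrm{Si}$, together with the contribution from $M_2$, and it is at this stage that a Catalan-type constant is expected to appear in the final value of $\crconstant$.
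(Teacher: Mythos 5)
Your reduction to a uniform bound on the bilinear symbol is the right framework, and the pieces of your argument are individually sound: the symbol formula for $\widehat{\mathcal{R}(f)}$, the algebraic identity $\frac{\beta}{(1-e^{-i\beta})^{2}}=-\frac{1}{\beta}-\frac{1}{\beta}\bigl[\bigl(\frac{i\beta}{1-e^{-i\beta}}\bigr)^{2}-1\bigr]$, the use of \eqref{usefuloperatorbound} with $l=2$ to tame the remainder, the sine-integral evaluation of the principal value, and the Young-type convolution step via \eqref{zeroproduct}, \eqref{s.inequality}. This route does prove estimates of the stated \emph{form}. The genuine gap is the constant: your decomposition cannot produce $\crconstant=1+\tilde{\mathcal{C}}$. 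Concretely, $|M_1|\le\frac{2}{\pi}\mathrm{Si}(\pi)\approx1.18$, while the trivial bound on $M_2$ is $\frac{1}{\pi}\int_0^1\int_{-\pi}^{\pi}\frac{1}{|\beta|}\cdot|\beta|\frac{\pi}{2}\sqrt{1+\tfrac{\pi^2}{4}}\,d\beta\,ds=\pi\sqrt{1+\tfrac{\pi^2}{4}}\approx5.85$, for a total near $7$, versus $1+\tilde{\mathcal{C}}\approx3.17$. Moreover, Catalan's constant will \emph{not} appear from your splitting: in the paper it arises from $\int_{-\pi}^{\pi}\frac{|\beta|}{2|\sin(\beta/2)|}\,d\beta=8\mathcal{V}$, i.e.\ from a remainder that still carries the Hilbert kernel $\frac{1}{1-e^{-i\beta}}$, whereas you have factored out $\frac{1}{\beta}$ entirely so your remainder integrand is just a bounded constant. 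Your closing sentence asserting the constant comes out as $1+\tilde{\mathcal{C}}$ is therefore unjustified and, as the computation shows, false for this decomposition. Since the explicit value of $\crconstant$ propagates through $C_7,C_8,\dots,C_{41},C_{42}$ into the medium-size threshold $K$ of Theorem \ref{thm:global}, this is not a cosmetic issue for this paper.

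The paper's mechanism, which you should compare against, is to integrate in $s$ \emph{first}, producing the factor $\frac{1-e^{-i\beta(1+k_1)}}{i\beta(1+k_1)}$; the numerator then cancels one power of $(1-e^{-i\beta})$ exactly via the geometric sum $\frac{1-e^{-i\beta(1+k_1)}}{1-e^{-i\beta}}=\sum_{r=0}^{k_1}e^{-ir\beta}$ (and its analogue for $k_1\le-2$). Each resulting term is the elementary integral \eqref{j1}, which equals exactly $\pm1$, and the prefactor $\frac{1}{1+k_1}$ compensates the number of terms, giving $|I(k,k_1)|\le1$ for all $k_1\ne-1$. Only the single degenerate frequency $k_1=-1$, where the geometric sum is unavailable, requires the remainder estimate \eqref{usefuloperatorbound} (with $l=1$, not $l=2$) against the kernel $\frac{1}{|1-e^{-i\beta}|}$, and that is the sole source of $\tilde{\mathcal{C}}$. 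In short: the paper pays the Catalan-type price only at one frequency and gets $1$ everywhere else; you pay an $O(\beta)/\beta$ remainder price at every frequency pair. To salvage your write-up you would either need to redo $M_2$ so as to retain a $\frac{1}{1-e^{-i\beta}}$ factor (effectively reproducing the paper's split), or restate the proposition with your larger explicit constant and track it through Sections 6--7.
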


In the rest of this chapter we will adopt the convention that 
	\begin{equation}\label{convention}
	\frac{1-e^{-i\beta(1+k_1)}}{1+k_1}  = i\beta \quad \text{for} ~k_1=-1. 
	\end{equation}
This convention will allow us to write many formula's succinctly.

\begin{proof}
Taking the Fourier transform of the operator $\mathcal{R}$ from \eqref{R} and using the convention \eqref{convention} we obtain 
	\begin{equation*}
	   \begin{aligned}
	   	\widehat{\mathcal{R}(f)}(k) &=\!\frac{i}{\pi}\pv\!\!\int_{-\pi}^{\pi}\!\frac{\hat{f}(k)e^{-ik\beta}\beta}{(1\!-\!e^{-i\beta})^2}\ast \!\!\int_0^1\!\!e^{i(s-1)\beta(1+k)}\hat{\theta}(k) dsd\beta\\
		&=\!\frac{i}{\pi}\sum_{k_1\in\mathbb{Z}}\pv\!\!\int_{-\pi}^{\pi}\!\frac{\hat{f}(k-k_1)e^{-i(k-k_{1})\beta}\beta}{(1\!-\!e^{-i\beta})^2} \!\!\int_0^1\!\!e^{i(s-1)\beta(1+k_1)}\hat{\theta}(k_1) dsd\beta\\
		&= \!\frac{1}{\pi}\sum_{k_1\in\mathbb{Z}}\pv\!\!\int_{-\pi}^{\pi}\!\frac{\hat{f}(k-k_1)e^{-i(k-k_{1})\beta}}{(1\!-\!e^{-i\beta})^2} \frac{1-e^{-i\beta(1+k_1)}\hat{\theta}(k_1)}{1+k_1} d\beta.
	    \end{aligned}
	\end{equation*}
		Using the convention \eqref{convention}, we have
		\begin{equation*}
	\widehat{\mathcal{R}(f)}(k)=  \sum_{k_1\in\mathbb{Z}} \hat{f}(k-k_1)\hat{\theta}(k_1)I(k,k_1),
	\end{equation*}
   where
	\begin{equation}\label{I.function.def}
	 I(k,k_1)\eqdef \frac{1}{\pi}\pv\!\int_{-\pi}^{\pi}\!\frac{e^{-i(k-k_{1})\beta}}{(1\!-\!e^{-i\beta})^2} \frac{1-e^{-i\beta(1+k_1)}}{1+k_1} d\beta.
	\end{equation}
	For $k_{1} = -1$, using \eqref{convention}, we split \eqref{I.function.def} as 
	\begin{equation*}
		I(k,-1) = \frac{1}{\pi} \pv\int_{-\pi}^{\pi}\frac{e^{-i(k+1)\beta}}{1-e^{-i\beta}} d\beta 
		+ \frac{1}{\pi}\pv\int_{-\pi}^{\pi}\frac{e^{-i(k+1)\beta}}{1-e^{-i\beta}} \Big(\frac{i\beta}{1-e^{-i\beta}} - 1 \Big).
	\end{equation*}
We will first calculate each of these two integrals separately below.

We can calculate the general integral formula:
\begin{equation}\label{j1}
 \frac{1}{\pi} \pv \int_{-\pi}^{\pi} \frac{e^{-i \ell \beta}}{1-e^{-i\beta}} d\beta
 =
 1_{\ell \le 0}(\ell) -  1_{\ell \ge 1}(\ell).
\end{equation}
This will be used several times below.  It is proven using  \eqref{hilbert} and noticing that $\mathcal{F}(e^{-i \ell \beta})(0)=\delta(\ell)$.  Further from \eqref{defHilbertTransform} we have
$$
-i\mH(e^{-i \ell \beta})(0) = 
-\frac{1}{2\pi}\pv\int_{-\pi}^{\pi}\frac{\sin(\ell\beta)}{\tan{(\beta/2)}} d\beta
= 1_{\ell \le -1}(\ell) -  1_{\ell \ge 1}(\ell).
$$  
Combining these calculations gives \eqref{j1}.
	
	From \eqref{j1}, the first term in $I(k,-1)$ is $\pm 1$
	depending on the sign of $k+1$. For the second integral, we use \eqref{usefuloperatorbound} for $l=1$ to obtain
	\begin{align*}
	\Big|\frac{1}{\pi}\pv\int_{-\pi}^{\pi}\frac{e^{-i(k+1)\beta}}{1-e^{-i\beta}} \Big(\frac{i\beta}{1-e^{-i\beta}} - 1 \Big)\Big| &\leq \frac{1}{\pi}\sqrt{\frac{1}{4} + \frac{\pi^{2}}{16}} \int_{-\pi}^{\pi} \frac{|\beta|}{2|\sin(\beta/2)|} d\beta\\
	&\leq \tilde{\mathcal{C}},
	\end{align*}
	where 
	\begin{equation}\label{catalanconstant}
		\tilde{\mathcal{C}} \eqdef \frac{4}{\pi}\mathcal{V}\sqrt{1 + \frac{\pi^{2}}{4}}.
	\end{equation}
In this calculation we used that $|1-e^{-i\beta}| = 2|\sin(\beta/2)|$.  Here $\mathcal{V} \approx 0.916$ is Catalan's constant:
	\begin{equation}\notag
	    \mathcal{V} = \frac{1}{4}\int_{-\pi/2}^{\pi/2} \frac{\beta}{\sin\beta} d\beta
	    =\frac{1}{16}\int_{-\pi}^{\pi} \frac{\beta}{\sin(\beta/2)} d\beta.
	\end{equation}
	Hence, for $k_{1}= -1$, using \eqref{CR}, we have the bound
	\begin{equation}\label{k1negativeone}
	|I(k,-1)| \leq 1+ \tilde{\mathcal{C}} = {\crconstant}.
	\end{equation}
This will be our main estimate for the case for $k_{1}= -1$.

Now generally in \eqref{I.function.def} for $k_1>-1$ we have
	\begin{equation}\label{big.one.def}
    	\frac{1-e^{-i\beta (1+k_{1})}}{1-e^{-i\beta}} = \sum_{r=0}^{k_{1}} e^{-ir\beta}
\end{equation}
	and
    if $k_1 \leq -2$ then we have
    \begin{equation}\label{small.one.def}
\frac{1-e^{-i\beta (1+k_{1})}}{1-e^{-i\beta}} = -\sum_{r=1}^{-1-k_{1}} e^{i\beta r}.
\end{equation}
Thus for $k_1>-1$, using \eqref{I.function.def},  \eqref{j1} and \eqref{big.one.def} we have
\begin{multline*}
	I(k,k_1) 
	= \frac{1}{1+k_1}\sum_{r=0}^{k_1} \frac{1}{\pi}\pv\!\int_{-\pi}^{\pi}\!\frac{e^{-i(k-k_{1}+r)\beta}}{1\!-\!e^{-i\beta}}  d\beta
	\\
	=\frac{1}{1+k_1}\sum_{r=0}^{k_1} \left( 1_{k-k_{1}+r \le 0} -  1_{k-k_{1}+r \ge 1} \right),
	\end{multline*}
	while for $k_1 \leq -2$ we similarly, using \eqref{small.one.def}, have
	\begin{multline*}
	I(k,k_1) = - \frac{1}{1+k_1}\sum_{r=1}^{-1-k_1} \frac{1}{\pi}\pv\!\int_{-\pi}^{\pi}\!\frac{e^{-i(k-k_{1}-r)\beta}}{1\!-\!e^{-i\beta}}  d\beta
	\\
	=\frac{-1}{1+k_1}\sum_{r=1}^{-1-k_1} \left( 1_{k-k_{1}-r \le 0} -  1_{k-k_{1}-r \ge 1} \right),
	\end{multline*}
	In both cases, $k_1>-1$ and $k_1 \leq -2$, we conclude that 
	    \begin{equation}\label{I.est.rest}
    |I(k,k_1)| \leq 1.
    \end{equation}
Hence we conclude that
	\begin{equation*}
	|\widehat{\mathcal{R}(f)}(k)| \leq  	{\crconstant}|(\hat{f}\ast \hat{\theta})(k)|.   
	\end{equation*}
	Applying ${\fzeronenu}$ and ${\fsonenu}$ norms to both sides, using Lemma \ref{triangleprop}, gives the result.
\end{proof}

\section{Estimates on the non-linear operator $\mathcal{S}$}
		
Next, we proceed with the estimates for $\mathcal{S}$ from \eqref{S}.
\begin{prop}\label{Smultiplierprop}
	The operator $\mathcal{S}$ in \eqref{S} satisfies the estimates
	\begin{equation}\label{Sestimates}
    \begin{aligned}
    	\|\mathcal{S}(f)\|_{\fzeronenu} &\leq C_1 \|\theta\|_{\fzeronenu}^2\|f\|_{\fzeronenu},\\
			\|\mathcal{S}(f)\|_{\fsonenu}& \leq C_3\|\theta\|_{\fzeronenu}^{2}\|f\|_{\fsonenu}+C_4\|\theta\|_{\fzeronenu}\|f\|_{\fzeronenu}\|\theta\|_{\fsonenu},     \quad s>0,
    \end{aligned}
	\end{equation}
	where the positive constant $C_1$ is given by \eqref{C1}.  Further the positive constants $C_{3}$ and $C_{4}$ for $s>0$ are given in \eqref{C3}.
\end{prop}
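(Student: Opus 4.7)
The plan is to mirror the strategy used in the proof of Proposition \ref{Restimates.prop}, expanding $\mathcal{S}(f)$ term by term, computing the Fourier transform of each piece, and extracting a convolution structure plus an explicit $\beta$-integral kernel. For each pair $(n,l)$ with $n+l\geq 2$, the inner $n$-th power produces, after multinomial expansion, a multi-index $(m_1,\ldots,m_n)$ with each $m_j\geq 1$, and the $s$-integrals give factors of the form $\int_0^1 e^{i(s-1)\beta(1+K_j)}ds = \tfrac{1-e^{-i\beta(1+K_j)}}{i\beta(1+K_j)}$ (with the convention \eqref{convention}). The resulting contribution to $\widehat{\mathcal{S}(f)}(k)$ is thus a convolution of $l+m_1+\cdots+m_n$ copies of $\hat\theta$ with one copy of $\hat f$, weighted by a $\beta$-integral that is structurally identical to a higher-order analogue of $I(k,k_1)$ in \eqref{I.function.def}.

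For the kernel estimate I would use \eqref{usefuloperatorbound} together with the uniform bound $\big|\tfrac{\beta}{1-e^{-i\beta}}\big|\leq \pi/2$ on $\mathbb{T}$ to majorize the factor $\tfrac{\beta^{n+1}}{(1-e^{-i\beta})^{n+1}}$ by $(\pi/2)^{n+1}$, and then reduce the remaining oscillatory integral to a finite geometric sum of the integrals in \eqref{j1}, exactly as in the derivation of \eqref{I.est.rest}. This bounds each such kernel term by a constant depending only on $n$ and $(\pi/2)^{n+1}$. Applying the $\fzeronenu$ triangle inequality \eqref{zeroproduct} of Lemma \ref{triangleprop} gives, for each $(n,l,m_1,\ldots,m_n)$ term, a bound of the form $\|\theta\|_{\fzeronenu}^{l+m_1+\cdots+m_n}\|f\|_{\fzeronenu}$ times a combinatorial/kernel constant. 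Since $n+l\geq 2$ and $m_j\geq 1$ force $l+m_1+\cdots+m_n\geq 2$, I can pull out $\|\theta\|_{\fzeronenu}^2\|f\|_{\fzeronenu}$ uniformly; the remaining series is summed using the $\tfrac{1}{l!}\prod_j \tfrac{1}{m_j!}$ factorials, which absorb both the $(\pi/2)^{n+1}$ growth and the multinomial coefficients generated by expanding $\bigl(\sum_{m\geq 1}\cdots\bigr)^n$. The resulting bound is geometric-exponential in $\|\theta\|_{\fzeronenu}$ and produces the constant $C_1$ in \eqref{C1}, establishing the first estimate in \eqref{Sestimates}.

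For the $\fsonenu$ estimate the plan is analogous but uses \eqref{sproduct} instead of \eqref{zeroproduct}: the $\fsonenu$ norm must land on exactly one of the $l+m_1+\cdots+m_n+1$ factors, yielding either the contribution $\|\theta\|_{\fzeronenu}^{l+\sum m_j}\|f\|_{\fsonenu}$ (when it falls on $f$) or $(l+\sum m_j)\|\theta\|_{\fzeronenu}^{l+\sum m_j-1}\|f\|_{\fzeronenu}\|\theta\|_{\fsonenu}$ (when it falls on a copy of $\theta$), multiplied by the combinatorial factor $\bfcn(l+\sum m_j+1,s)$ from \eqref{bfcn.def}. These two cases give rise to the two terms in the second inequality of \eqref{Sestimates}, with constants $C_3$ and $C_4$ as in \eqref{C3}. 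The main obstacle will be the combinatorial bookkeeping when $s>1$, where $\bfcn$ grows polynomially: one must verify that the polynomial growth factor $(l+\sum m_j)^{s-1}$ together with the linear factor $(l+\sum m_j)$ from case (ii) remains tame against the factorials $\tfrac{1}{l!\prod m_j!}$, so that the multi-indexed series converges for any $\|\theta\|_{\fzeronenu}$ bounded by an absolute constant. This is the one place where keeping careful track of the combinatorial powers of $n$, $l$, and $|m|=m_1+\cdots+m_n$ is essential, and the rest of the argument is routine.
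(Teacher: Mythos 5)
Your outline is essentially the paper's proof: Fourier-side expansion into iterated convolutions of $\hat\theta$ and $\hat f$ against a kernel generalizing \eqref{I.function.def}, kernel bounds via \eqref{j1} and \eqref{usefuloperatorbound}, then Lemma \ref{triangleprop} and a factorial summation. The only structural difference is bookkeeping: you expand the $n$-th power into multi-indices $(m_1,\dots,m_n)$, whereas the paper packages the inner sum over $m$ into the single kernel $P$ of \eqref{P}, so that $\widehat{\mathcal{S}_n(f)}$ is $\hat f$ convolved with $n$ copies of $P$ and the $m$-summation is performed once via $\|P\|_{\ell^1_\nu}\le e^{\|\theta\|_{\fzeronenu}}-1$; the two are equivalent.

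Three points need care when you write this out. (i) The kernel step as stated --- majorize $\beta^{n+1}/(1-e^{-i\beta})^{n+1}$ by $(\pi/2)^{n+1}$ and then invoke \eqref{j1} --- does not work literally: after cancelling the explicit $1/\beta$, the kernel of $\mathcal{S}_n$ still behaves like $1/\beta$ at the origin (since $(1-e^{-i\beta})^{n+1}\sim(i\beta)^{n+1}$ while $M(\alpha,\beta)^n$ does not vanish there), so once you put absolute values on those factors the integral diverges and the cancellation needed for \eqref{j1} is gone. The correct mechanism, which your citation of \eqref{usefuloperatorbound} suggests you intend, is the split \eqref{j1j2}: reserve one factor $1/(1-e^{-i\beta})$ for the exact evaluation \eqref{j1}, convert the other $n$ factors into finite geometric sums or powers of $i\beta/(1-e^{-i\beta})$, and write $(i\beta/(1-e^{-i\beta}))^l=1+[(i\beta/(1-e^{-i\beta}))^l-1]$, where the extra factor $|\beta|$ in \eqref{usefuloperatorbound} is exactly what makes the correction absolutely integrable. (ii) Your enumeration ``$(n,l)$ with $n+l\ge2$'' omits the second summand of \eqref{S}, namely the term $\sonesplit$ in \eqref{breveS} with $n=1$, $l=0$, which is quadratic in $\theta$ only because of the constraint $m\ge2$; it must be handled separately (the paper runs the same argument with $P$ replaced by $P_2=\sum_{m\ge2}\cdots$, producing the $\crconstant\breve{C}_1$, $\crconstant\breve{C}_3$, $\crconstant\breve{C}_4$ contributions to \eqref{C1} and \eqref{C3}). (iii) The factorials do not absorb the $(\pi/2)^n$ growth of the kernel constants $a_n$: the sum over $n$ is only geometric in $\frac{\pi}{2}(e^{\|\theta\|_{\fzeronenu}}-1)$, so convergence genuinely requires this quantity to be below $1$ (and $e^{\|\theta\|_{\fzeronenu}}<2$), which is why $C_1$, $C_3$, $C_4$ are finite only below an explicit threshold of $\|\theta\|_{\fzeronenu}$ --- consistent with, but sharper than, your ``bounded by an absolute constant.''
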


\begin{proof}
From \eqref{S}, we split  the operator $\mathcal{S}$ as: 
	\begin{equation}\label{Ssum}
	\mathcal{S}(f)(\alpha) = 
	{\sonesplit}(f)(\alpha) + {\stwosplit}(f)(\alpha),
	\end{equation}
	where
	\begin{multline}\label{breveS}
	{\sonesplit}(f)(\alpha) \eqdef \\ \frac{1}{\pi}\pv\int_{-\pi}^{\pi}\frac{f(\alpha-\beta)\beta^{2}}{\beta(1-e^{-i\beta})^{2}}
	\sum_{m\geq 2}\frac{i^m}{m!}\int_0^1 e^{i(s-1)\beta}(\theta(\alpha+(s-1)\beta))^m ds d\beta,
	\end{multline}
	and
	\begin{equation}\label{tildeS}
		{\stwosplit}(f)(\alpha)\eqdef \sum_{\substack{n,l\geq0 \\ n+l\geq 2}}\frac{(-1)^n i^{l+n+1}(\theta(\alpha))^l}{l!} \mathcal{S}_{n}(f)(\alpha).
	\end{equation}
Here for $n\ge0$ we define
	\begin{equation}\label{Snl}
		\mathcal{S}_{n}(f)(\alpha) \eqdef \frac{1}{\pi} \pv\int_{-\pi}^{\pi}\frac{f(\alpha-\beta)\beta^{n+1}}{\beta(1-e^{-i\beta})^{n+1}}
		M(\alpha,\beta)^{n} d\beta,
	\end{equation}
	with
	\begin{equation}\label{M}
		M(\alpha,\beta) \eqdef \sum_{m\geq 1}\frac{i^m}{m!}\int_0^1 e^{i(s-1)\beta}(\theta(\alpha+(s-1)\beta))^m ds.
	\end{equation}
We take the Fourier transform to obtain
	\begin{equation}\label{fourierS}
		\widehat{{\stwosplit}}(f)(k) = \sum_{\substack{n,l\geq0 \\ n+l\geq 2}}\frac{(-1)^ni^{l+n+1}(\ast^{l}\hat{\theta}(k))}{l!} \ast \widehat{\mathcal{S}_{n}(f)}(k),
	\end{equation}
	where 
	\begin{equation}\label{fourierSnl}
		\widehat{\mathcal{S}_{n}(f)}(k) \eqdef \frac{1}{\pi} \pv\int_{-\pi}^{\pi}\frac{\hat{f}(k)e^{-ik\beta}\beta^{n+1}}{\beta(1-e^{-i\beta})^{n+1}}
		\ast^{n}\widehat{M}(k,\beta) d\beta.
	\end{equation}
For $k_{1} \neq -1$, from \eqref{M} we have
	\begin{align*}
		\widehat{M}(k_{1},\beta) &= \sum_{m\geq 1}\frac{i^m}{m!}\int_0^1 e^{i(s-1)\beta}\Big(\ast^{m}(\hat{\theta}(k_{1}) e^{ik_{1}(s-1)\beta})\Big) ds\\
		&= \sum_{m\geq 1}\sum_{k_{2},\ldots,k_{m}\in\mathbb{Z}}\frac{i^m}{m!} \int_0^1 e^{i(s-1)\beta}\left(\prod_{j=1}^{m-1}\hat{\theta}(k_{j}-k_{j+1}) e^{i(k_{j}-k_{j+1})(s-1)\beta}\right)\\
		&\hspace{3in} \cdot\hat{\theta}(k_{m}) e^{ik_{m}(s-1)\beta}  ds\\
		&= \sum_{m\geq 1}\sum_{k_{2},\ldots,k_{m}\in\mathbb{Z}}\frac{i^m}{m!}\int_0^1 e^{i(s-1)\beta(1+k_{1})}\left(\prod_{j=1}^{m-1}\hat{\theta}(k_{j}-k_{j+1}) \right) \hat{\theta}(k_{m}) ds\\
		&= \sum_{m\geq 1}\sum_{k_{2},\ldots,k_{m}\in\mathbb{Z}}\frac{i^m}{m!} \left(\prod_{j=1}^{m-1}\hat{\theta}(k_{j}-k_{j+1}) \right) \hat{\theta}(k_{m}) \frac{1-e^{-i\beta (1+k_{1})}}{i\beta (1+k_{1})}\\
		&=  \sum_{m\geq 1}\frac{i^m}{m!} \left(\ast^{m}\widehat{\theta}(k_{1})\right) \frac{1-e^{-i\beta (1+k_{1})}}{i\beta (1+k_{1})}.
	\end{align*}
And when $k_{1} = -1$,  analogously we have
	$$\widehat{M}(k_{1},\beta) =  \sum_{m\geq 1}\frac{i^m}{m!} \left(\ast^{m}\widehat{\theta}(k_{1})\right).$$
Thus we define
	\begin{equation}\label{P}
		P(k) \eqdef \sum_{m\geq 1}\frac{i^m}{m!} (\ast^{m}\widehat{\theta}(k)).
	\end{equation}
Now we use the convention \eqref{convention} and plug the above into \eqref{fourierSnl} to obtain
	\begin{align}
		\widehat{\mathcal{S}_{n}}(k_{1}) &= \frac{1}{\pi} \pv\int_{-\pi}^{\pi}\frac{\hat{f}(k_{1})e^{-ik_{1}\beta}\beta^{n+1}}{\beta(1-e^{-i\beta})^{n+1}}\ast
		\left(\ast^{n}P(k_{1})\frac{1-e^{-i\beta (1+k_{1})}}{i\beta (1+k_{1})}\right) d\beta \nonumber\\
		&= \frac{1}{i^{n}}\sum_{k_{2},\ldots, k_{n+1}\in \mathbb{Z}}I(k_{1},\ldots,k_{n+1})\hat{f}(k_{n+1})\prod_{j=1}^{n}P(k_{j}-k_{j+1}) \hspace{0.05cm}, \label{Snfourier}
	\end{align}
	where $I = I(k_{1},\ldots,k_{n+1})$ is defined by 
	\begin{equation}\label{I}
		I \eqdef \frac{1}{\pi}\pv \int_{-\pi}^{\pi} \frac{e^{-i k_{n+1}\beta}}{1-e^{-i\beta}}\prod_{j=1}^{n}
		\frac{1-e^{-i\beta (1+k_{j}-k_{j+1})}}{(1+k_{j}-k_{j+1})(1-e^{-i\beta})}d\beta.
	\end{equation}
	We suppose that $l$ elements of $\{k_{j}-k_{j+1}\}_{j=1}^n$ satisfy $k_{j} - k_{j+1} =  -1$ for $0\le l \le n$.  Then ordering the subscripts such that $k_{j} - k_{j+1} \neq  -1$ for $j= 1,\ldots, n-l$, we obtain with the convention \eqref{convention} that the integral $I = I(k_{1},\ldots, k_{n+1})$ given by \eqref{I} becomes 
	\begin{equation*}
		I = \frac{1}{\pi}\pv \int_{-\pi}^{\pi} \frac{e^{-i k_{n+1}\beta}}{1-e^{-i\beta}}\frac{(i\beta)^{l}}{(1-e^{-i\beta})^l}
		\prod_{j=1}^{n-l}
		\frac{1-e^{-i\beta (1+k_{j}-k_{j+1})}}{(1+k_{j}-k_{j+1})(1-e^{-i\beta})}d\beta.
	\end{equation*}
	Now, if $k_{j}-k_{j+1} > -1$ then similar to \eqref{big.one.def} we have
\begin{equation}\notag 
    	\frac{1-e^{-i\beta (1+k_{j}-k_{j+1})}}{1-e^{-i\beta}} = \sum_{r_{j}=0}^{k_{j}-k_{j+1}} e^{-ir_{j}\beta}
\end{equation}
	and
    if $k_{j}-k_{j+1} \leq -2$ then similar to \eqref{small.one.def} we have
    \begin{equation}\notag 
\frac{1-e^{-i\beta (1+k_{j}-k_{j+1})}}{1-e^{-i\beta}} = \sum_{r_{j}=1}^{-1-(k_{j}-k_{j+1})} -e^{i\beta r_j}.
\end{equation}
    Hence, if $k_{j}-k_{j+1} \leq -2$ only for $j=m,\ldots,n-l$ then
    \begin{multline*}
    \prod_{j=1}^{n-l}
		\frac{1-e^{-i\beta (1+k_{j}-k_{j+1})}}{1-e^{-i\beta}}\\ = \sum_{r_{1}=0}^{k_{1}-k_{2}} e^{-ir_{1}\beta} \cdots \sum_{r_{m-1}=0}^{k_{m-1}-k_{m}} e^{-ir_{m-1}\beta} \sum_{r_{m}=1}^{k_{m+1}-1-k_{m}} -e^{ir_{m}\beta}\\ \cdots \sum_{r_{n-l}=1}^{k_{n-l+1}-1-k_{n-l}} -e^{ir_{n-l}\beta}\\
		=\sum_{r_{1}=0}^{k_{1}-k_{2}}\cdots \sum_{r_{m-1}=0}^{k_{m-1}-k_{m}}  \sum_{r_{m}=1}^{k_{m+1}-1-k_{m}} \cdots \sum_{r_{n-l}=1}^{k_{n-l+1}-1-k_{n-l}}(-1)^{n-l-m+1} e^{-i\tilde{A}\beta}
		\end{multline*}
		where
		$$\tilde{A} = r_{1} + \ldots + r_{m-1} -r_{m} -\ldots -r_{n-l}.$$
		Hence, in this case
	\begin{multline}\label{IJ}
	    I =  \prod_{j=1}^{n-l}
		\frac{1}{1+k_{j}-k_{j+1}}\sum_{r_{1}=0}^{k_{1}-k_{2}}\cdots \sum_{r_{m-1}=0}^{k_{m-1}-k_{m}}  \sum_{r_{m}=1}^{k_{m+1}-1-k_{m}} \\ \cdots \sum_{r_{n-l}=1}^{k_{n-l+1}-1-k_{n-l}}(-1)^{n-l-m+1}
		\frac{1}{\pi}\pv \int_{-\pi}^{\pi} \frac{e^{-i A\beta}(i\beta)^{l}}{(1-e^{-i\beta})^{l+1}} d\beta
	\end{multline}
	where
	$$A = k_{n+1}+\tilde{A}.$$
	For the inner integral, which we define as $J$, we have
	\begin{equation}\label{j1j2}
		J \eqdef \frac{1}{i^{l}\pi}\pv \int_{-\pi}^{\pi} \frac{e^{-i A\beta}(i\beta)^{l}}{(1-e^{-i\beta})^{l+1}} d\beta = \frac{1}{i^{l}}( J_{1} + J_{2}), 
	\end{equation}
	where we calculate $J_1$ as in \eqref{j1}
	and
	\begin{equation}\label{j2}
		J_{2} \eqdef  \frac{1}{\pi} \pv \int_{-\pi}^{\pi} \frac{e^{-i A\beta}}{1-e^{-i\beta}}\Big[\Big( \frac{i\beta}{(1-e^{-i\beta})}\Big)^{l}-1\Big] d\beta.
	\end{equation}
	We can bound $J_{2}$ as well. Using the bound \eqref{usefuloperatorbound} in \eqref{j2}, we have 
	\begin{align*}
		|J_{2}| &\leq \frac{1}{2\pi}l\Big(\frac{\pi}{2}\Big)^{l-1}\sqrt{1 + \frac{\pi^{2}}{4}} \int_{-\pi}^{\pi} \frac{|\beta|}{2|\sin(\beta/2)|} d\beta\\
		&=\tilde{\mathcal{C}} l\Big(\frac{\pi}{2}\Big)^{l-1},
	\end{align*}
	where the calculation above is similar to the calculation above \eqref{catalanconstant} and the constant $\tilde{\mathcal{C}}$ is given by \eqref{catalanconstant}.

Thus, from \eqref{IJ} and \eqref{j1j2}, we have
	\begin{align*}
		|I|&\leq\prod_{j=1}^{n-l}
		\frac{1}{|1+k_{j}-k_{j+1}|}\sum_{r_{1}=0}^{k_{1}-k_{2}}\cdots \sum_{r_{m-1}=0}^{k_{m-1}-k_{m}}  \sum_{r_{m}=1}^{k_{m+1}-1-k_{m}} \\ 
		&\hspace{2in}\cdots \sum_{r_{n-l}=1}^{k_{n-l+1}-1-k_{n-l}}  \left(1 + \tilde{\mathcal{C}} l\big(\frac{\pi}{2}\big)^{l-1}\right)\\
		&\leq\prod_{j=1}^{n-l}
		\frac{1}{|1+k_{j}-k_{j+1}|}\sum_{r_{1}=0}^{k_{1}-k_{2}}\cdots \sum_{r_{m-1}=0}^{k_{m-1}-k_{m}}  \sum_{r_{m}=1}^{k_{m+1}-1-k_{m}} 
		\\ 
		&\hspace{2in}\cdots \sum_{r_{n-l}=1}^{k_{n-l+1}-1-k_{n-l}} \left( 1 + \tilde{\mathcal{C}} n\big(\frac{\pi}{2}\big)^{n-1}\right).
	\end{align*}
Then we denote
\begin{equation}\label{an.def}
	    a_n\eqdef 1 + \tilde{\mathcal{C}} n\big(\frac{\pi}{2}\big)^{n-1}.
\end{equation}
Therefore we have
	\begin{equation}\label{Ibound}
		|I(k_{1},\ldots,k_{n+1})| \leq a_n \ \ \forall  (k_{1},\ldots, k_{n+1}) \in \mathbb{Z}^{n+1}.
	\end{equation}	
	Plugging \eqref{Ibound} into the estimate for \eqref{Snfourier}, we obtain
	\begin{align}
		|\widehat{\mathcal{S}_{n}}(k_{1})|&\leq  \sum_{k_{2},\ldots, k_{n+1}\in \mathbb{Z}} |I(k_{1},\ldots,k_{n+1})||\hat{f}(k_{n+1})|
		\prod_{j=1}^n |P(k_{j}-k_{j+1})| \nonumber\\
		&\leq  \sum_{k_{2},\ldots, k_{n+1}\in \mathbb{Z}}  a_n |\hat{f}(k_{n+1})|\prod_{j=1}^n |P(k_{j}-k_{j+1})|\nonumber\\
		& = a_n(|\hat{f}|\ast^{n} |P|)(k_{1}). \label{Snfbound}
	\end{align}	
	We will use this estimate below to obtain the appropriate upper bound for $\mathcal{S}(f)$ given by \eqref{fourierS}.

Recalling \eqref{nu}, in the rest of this proof for convenience of notation we define the $\ell^1_\nu$ norm of a sequence $a=\{a_k\}_{k\in \mathbb{Z}}$ as
\begin{equation}\notag
    \| a\|_{\ell^{1}_\nu} \eqdef \sum_{k\in\mathbb{Z}} e^{\nu |k|} |a_k|^p.
\end{equation}
Then for $s=0$ using \eqref{tildeS} and \eqref{fourierS} we have
	\begin{align}
		\|{\stwosplit}(f)\|_{\fzeronenu} &= \Big\|\sum_{\substack{n,l\geq 0 \\ n+l\geq 2}}\frac{(-1)^ni^{l+n+1}(\ast^{l}\hat{\theta}(k))}{l!} \ast \widehat{\mathcal{S}_{n}(f)}(k) \Big\|_{\ell^{1}_{\nu}} \nonumber \\
		&\leq \sum_{\substack{n\geq 0,l\geq 0 \\n+l \geq 2}}\frac{1}{l!}\|\hat{\theta}(k)\|_{\ell^{1}_{\nu}}^{l}\|\widehat{\mathcal{S}_{n}(f)}(k)\|_{\ell^{1}_{\nu}} \label{SSn0}.
	\end{align}
	Let us examine the bound on the $\widehat{\mathcal{S}_{n}(f)}$ term in \eqref{SSn0} using \eqref{Snfbound}. For the quantity $P$ given by \eqref{P}, 	we have
	\begin{equation}\label{P0}
		\|P\|_{\ell^{1}_{\nu}} \leq \sum_{m\geq 1}\frac{1}{m!} \|\theta\|_{\fzeronenu}^{m} = \exp(\|\theta\|_{\fzeronenu})-1.
	\end{equation}
	From \eqref{Snfbound} and \eqref{an.def}, we then have
	\begin{equation}\label{Snfzero}
		\|\mathcal{S}_{n}(f)\|_{\fzeronenu} \leq   a_n\|f\|_{\fzeronenu}(\exp(\|\theta\|_{\fzeronenu})-1)^{n}.
	\end{equation}
	Hence, from \eqref{SSn0}, we have
	\begin{equation}\label{SSn00}
		\|{\stwosplit}(f)\|_{\fzeronenu} \leq 
		\! \! \! \! \! \sum_{\substack{n\geq 0,l\geq 0 \\n+l \geq 2}}\frac{a_n}{l!}\|\hat{\theta}(k)\|_{\ell^{1}_{\nu}}^{l}   \|f\|_{\fzeronenu}(\exp(\|\theta\|_{\fzeronenu})-1)^{n}.
	\end{equation}
We will separately look at the above sum when $n=0$, $n=1$ and $n\geq 2$.

First, when $n=0$ then $a_0=1$ and we have
	\begin{equation}\label{SSn00n0}
		\sum_{l\geq 2}\frac{1}{l!}\|\hat{\theta}(k)\|_{\ell^{1}_{\nu}}^{l}   \|f\|_{\fzeronenu} = \frac{\exp(\|\theta\|_{\fzeronenu}) - 1 - \|\theta\|_{\fzeronenu}}{\|\theta\|_{\fzeronenu}^{2}} \|\theta\|_{\fzeronenu}^{2}\|f\|_{\fzeronenu}.
	\end{equation}
	When $n=1$ then $a_1=1+\tilde{\mathcal{C}}$ and we have
	\begin{multline}\label{SSn0n1}
		\sum_{l\geq 1}\frac{1}{l!}\|\hat{\theta}(k)\|_{\ell^{1}_{\nu}}^{l}\Big(1 + \tilde{\mathcal{C}} \Big)(\exp(\|\theta\|_{\fzeronenu})-1)   \|f\|_{\fzeronenu} \\ =\Big(1 + \tilde{\mathcal{C}}\Big)\frac{(\exp(\|\theta\|_{\fzeronenu})-1)^{2}}{\|\theta\|_{\fzeronenu}^{2}}\|\theta\|_{\fzeronenu}^{2}\|f\|_{\fzeronenu}.
	\end{multline}
	Finally, for $n\geq 2$ using also \eqref{an.def} we have
	\begin{multline}\label{SSn0n2}
		\sum_{n\geq 2,l\geq 0 }\frac{a_n}{l!}\|\hat{\theta}(k)\|_{\ell^{1}_{\nu}}^{l}   \|f\|_{\fzeronenu}(\exp(\|\theta\|_{\fzeronenu})-1)^{n} \\= \sum_{n\geq 2,l\geq 0 }\frac{1}{l!}\|\hat{\theta}(k)\|_{\ell^{1}_{\nu}}^{l} \|f\|_{\fzeronenu}(\exp(\|\theta\|_{\fzeronenu})-1)^{n}\\ 
		+ \sum_{n\geq 2,l\geq 0 }\frac{1}{l!}
		\tilde{\mathcal{C}} n\big(\frac{\pi}{2}\big)^{n-1}
		\|\hat{\theta}(k)\|_{\ell^{1}_{\nu}}^{l}   
		\|f\|_{\fzeronenu}(\exp(\|\theta\|_{\fzeronenu})-1)^{n}.
	\end{multline}
Now considering the first sum on the right hand side of \eqref{SSn0n2}, we have
	\begin{multline}\label{SSn0n21}
		\sum_{n\geq 2,l\geq 0 }\frac{1}{l!}\|\hat{\theta}(k)\|_{\ell^{1}_{\nu}}^{l} \|f\|_{\fzeronenu}(\exp(\|\theta\|_{\fzeronenu})-1)^{n}\\ = \frac{\exp(\|\theta\|_{\fzeronenu})(\exp(\|\theta\|_{\fzeronenu})-1)^{2}}{\|\theta\|_{\fzeronenu}^{2}}\sum_{n\geq 2} \|f\|_{\fzeronenu}\|\theta\|_{\fzeronenu}^{2}(\exp(\|\theta\|_{\fzeronenu})-1)^{n-2}\\
		=\frac{\exp(\|\theta\|_{\fzeronenu})(\exp(\|\theta\|_{\fzeronenu})-1)^{2}}{\|\theta\|_{\fzeronenu}^{2}(2-\exp(\|\theta\|_{\fzeronenu}))} \|f\|_{\fzeronenu}\|\theta\|_{\fzeronenu}^{2}.
	\end{multline}
Then the second sum on the right hand side of \eqref{SSn0n2} is
\begin{multline*}
\sum_{n\geq 2,l\geq 0 }\frac{1}{l!}\tilde{\mathcal{C}} n\big(\frac{\pi}{2}\big)^{n-1}\|\hat{\theta}(k)\|_{\ell^{1}_{\nu}}^{l}   \|f\|_{\fzeronenu}(\exp(\|\theta\|_{\fzeronenu})-1)^{n} 
\\
=\frac{\exp(\|\theta\|_{\fzeronenu})(\exp(\|\theta\|_{\fzeronenu})-1)}{\|\theta\|_{\fzeronenu}}\\ \cdot \sum_{n\geq 2}\tilde{\mathcal{C}} n\big(\frac{\pi}{2}(\exp(\|\theta\|_{\fzeronenu})-1)\big)^{n-1} \|f\|_{\fzeronenu}\|\theta\|_{\fzeronenu}.
\end{multline*}
Now using that $$\sum_{ n\geq 2} nx^{n-1} = -1 + \sum_{n\geq 1} nx^{n-1} = -1 + \frac{1}{(1-x)^{2}} = \frac{x(2-x)}{(1-x)^{2}}, $$
	we obtain
\begin{multline}\label{SSn00n22}
		\sum_{n\geq 2,l\geq 0 }\frac{1}{l!}\|\hat{\theta}(k)\|_{\ell^{1}_{\nu}}^{l}   \tilde{\mathcal{C}} n\big(\frac{\pi}{2}\big)^{n-1}\|f\|_{\fzeronenu}(\exp(\|\theta\|_{\fzeronenu})-1)^{n} \\
=\frac{\pi\tilde{\mathcal{C}}}{2}
		\frac{\exp(\|\theta\|_{\fzeronenu})(\exp(\|\theta\|_{\fzeronenu})-1)^{2}}{\|\theta\|_{\fzeronenu}^{2}}\|f\|_{\fzeronenu}\|\theta\|_{\fzeronenu}^{2}\\\cdot\Big( \frac{2-\frac{\pi}{2}(\exp(\|\theta\|_{\fzeronenu})-1)}{\big(1-\frac{\pi}{2}(\exp(\|\theta\|_{\fzeronenu})-1)\big)^{2}}\Big).
\end{multline}	
	Combining \eqref{SSn00n0}, \eqref{SSn0n1}, \eqref{SSn0n2}, \eqref{SSn0n21} and \eqref{SSn00n22} into \eqref{SSn00}, we obtain in the space $\fzeronenu$ with $\nu=0$ that
	\begin{equation*}
		\|{\stwosplit}(f)\|_{\fzeronenu} \leq \tilde{C}_1\|\theta\|_{\fzeronenu}^{2}\|f\|_{\fzeronenu},
	\end{equation*} 
	where $\tilde{C}_1\eqdef \tilde{C}_1(\|\theta\|_{\fzeronenu})$ is an increasing function of $\|\theta\|_{\fzeronenu}$ given by
	\begin{multline}\label{tildeC1}
		\tilde{C}_1=
		\\
		\frac{\pi\tilde{\mathcal{C}}}{2}\frac{\exp(\|\theta\|_{\fzeronenu})(\exp(\|\theta\|_{\fzeronenu})-1)^{2}}{\|\theta\|_{\fzeronenu}^{2}} \frac{2-\frac{\pi}{2}(\exp(\|\theta\|_{\fzeronenu})-1)}{\big(1-\frac{\pi}{2}(\exp(\|\theta\|_{\fzeronenu})-1)\big)^{2}} \\+ \frac{\exp(\|\theta\|_{\fzeronenu}) - 1 - \|\theta\|_{\fzeronenu}}{\|\theta\|_{\fzeronenu}^{2}} + \Big(1 + \tilde{\mathcal{C}}\Big)\frac{(\exp(\|\theta\|_{\fzeronenu})-1)^{2}}{\|\theta\|_{\fzeronenu}^{2}} \\+\frac{\exp(\|\theta\|_{\fzeronenu})(\exp(\|\theta\|_{\fzeronenu})-1)^{2}}{\|\theta\|_{\fzeronenu}^{2}(2-\exp(\|\theta\|_{\fzeronenu}))}.
	\end{multline}
 This completes the estimate \eqref{Sestimates} for the operator \eqref{tildeS} and for $s=0$.

For the operator \eqref{tildeS} and $s>0$, from \eqref{fourierS}, \eqref{fourierSnl}, \eqref{P}, \eqref{Snfourier} with \eqref{I} we have
	\begin{multline}\label{S1S2S3}
	\|{\stwosplit}(f)\|_{\fsonenu}
	= \Big\||k_{1}|^{s}\sum_{\substack{n,l\geq 0 \\ n+l\geq 2}}\frac{(-1)^n i^{l+n+1}(\ast^{l}\hat{\theta}(k_{1}))}{l!} \ast \widehat{\mathcal{S}_{n}(f)}(k_{1}) \Big\|_{\ell^{1}_{\nu}} 
	\\
	= \Big\||k_{1}|^{s}\sum_{\substack{n,l\geq 0 \\ n+l\geq 2}}\frac{(\ast^{l}\hat{\theta}(k_{1}))}{l!} \ast \sum_{k_{2},\ldots, k_{n+1}\in \mathbb{Z}}I(k_{1},\ldots,k_{n+1})\hat{f}(k_{n+1})\prod_{j=1}^{n}P(k_{j}-k_{j+1})  \Big\|_{\ell^{1}_{\nu}}
	\\
	\leq S_{1} + S_{2} + S_{3},
	\end{multline}
	where we use $a_n$ from \eqref{an.def} and \eqref{bfcn.def} so that,
	using \eqref{s.inequality} and \eqref{Ibound} with \eqref{P}, the terms $S_i$ are given by
	\begin{equation*}
	    \begin{aligned}
	S_{1}  &=  \sum_{\substack{n,l\geq 0 \\ n+l\geq 2}} \frac{{\bfcn}(l+n+1,s)}{l!}a_n l\|\theta\|_{\fzeronenu}^{l-1}\|\theta\|_{\fsonenu}\|f\|_{\fzeronenu}\|P\|_{\ell^{1}_{\nu}}^{n},\\
	S_{2} &= \sum_{\substack{n,l\geq 0 \\ n+l\geq 2}} \frac{{\bfcn}(l+n+1,s)}{l!}a_n \|\theta\|_{\fzeronenu}^{l}\|f\|_{\fsonenu}\|P\|_{\ell^{1}_{\nu}}^{n},\\
	S_{3} &= \sum_{\substack{n,l\geq 0 \\ n+l\geq 2}} \frac{{\bfcn}(l+n+1,s)}{l!}a_n n \|\theta\|_{\fzeronenu}^{l}\|f\|_{\fzeronenu}\|P\|_{_{\ell^{1}_{\nu}}}^{n-1}\|\mathcal{F}^{-1}(P)\|_{\fsonenu} .
	    \end{aligned}
	\end{equation*}
	We recall from \eqref{P} and \eqref{P0} that
	$\|P\|_{\ell^{1}_{\nu}}\le \exp{(\|\theta\|_{\fzeronenu})}-1$  and notice with \eqref{s.inequality} that for $s>0$ we have
	\begin{equation}\label{Ps.estimate}
	\|\mathcal{F}^{-1}(P)\|_{\fsonenu} \leq \sum_{m\geq 1}\frac{{\bfcn}(m,s)}{(m-1)!} \|\theta\|_{\fzeronenu}^{m-1}\|\theta\|_{\fsonenu}.
		\end{equation}
	Then, we have that
	\begin{equation*}
	S_{2}\le \tilde{C}_3\|\theta\|_{\fzeronenu}^2\|f\|_{\fsonenu},
	\end{equation*}
	where
	\begin{equation}\label{tildeC3}
	\begin{aligned}
	 \tilde{C}_3&\eqdef \sum_{\substack{n\geq 0,l\geq 0 \\ n+l\geq 2}} \frac{{\bfcn}(l+n+1,s)}{l!} a_n\|\theta\|_{\fzeronenu}^{n+l-2}\Big(\frac{e^{\|\theta\|_{\fzeronenu}}-1}{\|\theta\|_{\fzeronenu}}\Big)^n,
	\end{aligned}
	\end{equation}
	and
\begin{equation*}
S_{1}  +S_{3}\le  \tilde{C}_4\|\theta\|_{\fzeronenu}\|\theta\|_{\fsonenu}\|f\|_{\fzeronenu},
\end{equation*}
where 
\begin{equation}\label{tildeC4}
\begin{aligned}
\tilde{C}_4&\eqdef \!\!\!\sum_{\substack{n\geq 0,l\geq 0 \\ n+l\geq 2}}\!\!\!\! \frac{{\bfcn}(l\!+\!n\!+\!1,s)}{l!} a_n\|\theta\|_{\fzeronenu}^{n+l-2}\Big(l\Big(\frac{e^{\|\theta\|_{\fzeronenu}}\!-\!1}{\|\theta\|_{\fzeronenu}}\Big)^n+\! \tilde{C}_5\Big),
\end{aligned}
\end{equation}
with 
\begin{equation}\label{C5}
\tilde{C}_5\eqdef n\Big(\frac{e^{\|\theta\|_{\fzeronenu}}\!-\!1}{\|\theta\|_{\fzeronenu}}\Big)^{n-1}\sum_{m\geq 1}\frac{{\bfcn}(m,{s})}{(m-1)!} \|\theta\|_{\fzeronenu}^{m-1}.    
\end{equation}
Finally, going back to \eqref{S1S2S3}, we obtain the result for $s>0$ that
\begin{equation*}
    \|{\stwosplit}(f)\|_{\fsonenu}\leq \tilde{C}_3\|\theta\|_{\fzeronenu}^2\|f\|_{\fsonenu}+\tilde{C}_4\|\theta\|_{\fzeronenu}\|\theta\|_{\fsonenu}\|f\|_{\fzeronenu}.
\end{equation*}
This completes the desired estimate for ${\stwosplit}$ in \eqref{Sestimates} for $s>0$.

Now it only remains to bound ${\sonesplit}(f)(\alpha)$ as defined by \eqref{breveS} in \eqref{Sestimates}.  Analogously to \eqref{Snfourier} and \eqref{I}, one can obtain that
\begin{equation*}
\widehat{{\sonesplit}}(k_{1}) = \sum_{k_{2}\in\mathbb{Z}} I(k_{1},k_{1}-k_{2})\hat{f}(k_{2})P_2(k_{1}-k_{2})
\end{equation*}
where
$$ P_2(k) = \sum_{m\geq 2} \frac{i^{m}}{m!} (\ast^{m}\hat{\theta}(k))$$
and $I(k_{1},k_{1}-k_{2})$ is given by \eqref{I.function.def} with $k=k_1$ and $k_1$ in \eqref{I.function.def} replaced by $k_{1}-k_{2}$ using also \eqref{convention}, so that in particular
$$ 
I(k_{1},k_{1}-k_{2}) =  \frac{1}{\pi}\pv \int_{-\pi}^{\pi} \frac{e^{-i k_{2}\beta}}{1-e^{-i\beta}}
		\frac{1-e^{-i\beta (1+k_{1}-k_{2})}}{i(1+k_{1}-k_{2})(1-e^{-i\beta})}d\beta.
$$
Then analogously to \eqref{k1negativeone} and \eqref{I.est.rest} we have $$| I(k_{1},k_{1}-k_{2})| \le C_\mathcal{R},$$ with $C_\mathcal{R}$ from \eqref{CR}.  Note that $\widehat{{\sonesplit}}$ is the operator $\widehat{\mathcal{S}_{1}}$ in \eqref{Snfourier} with $n=1$ if you replace $P$ from \eqref{P} with $P_2$ above.  Then we have the same estimate as \eqref{Snfbound} with $P_2$ replacing $P$ and $n=1$ recalling also \eqref{an.def}.   We estimate $P_2$ similarly to \eqref{P0} (except that $m\ge 2$).  We conclude that
\begin{equation}\label{breveS0bound}
\|{\sonesplit}\|_{\fzeronenu} \leq C_\mathcal{R} \|f\|_{\fzeronenu} \|P_2\|_{\fzeronenu} 
\leq C_\mathcal{R} \breve{C}_{1}\|\theta\|_{\fzeronenu}^{2}\|f\|_{\fzeronenu}
\end{equation}
where
\begin{equation}\label{breveS1constant1}
\breve{C}_{1} = \frac{\exp(\|\theta\|_{\fzeronenu}) - 1 - \|\theta\|_{\fzeronenu}}{\|\theta\|_{\fzeronenu}^{2}}.
\end{equation}
Thus, recalling \eqref{CR}, \eqref{catalanconstant}, \eqref{tildeC1} and \eqref{breveS1constant1}, then we have
\begin{equation}\label{C1}
C_{1} = \tilde{C}_{1} + C_\mathcal{R} \breve{C}_{1}.
\end{equation}
We obtain \eqref{Sestimates} for $s=0$ by combining \eqref{breveS0bound} with the bound above \eqref{tildeC1}.

We turn to the estimate for ${\sonesplit}$ in \eqref{breveS} for $s>0$.    We will use \eqref{s.inequality} since $s>0$ and \eqref{bfcn.def}.  We will in this case estimate $P_2$ similarly to \eqref{Ps.estimate} (except with $m\ge 2$).
We then have the estimate
\begin{equation*}
 \|{\sonesplit}\|_{\fsonenu} \leq C_\mathcal{R} \sum_{m\geq 2} \frac{{\bfcn}(m+1,s)}{m!} \big(\|\theta\|_{\fzeronenu}^{m}\|f\|_{\fsonenu} + m \|\theta\|_{\fzeronenu}^{m-1}\|f\|_{\fzeronenu}\|\theta\|_{\fsonenu} \big).
\end{equation*}
Now define
\begin{equation}\label{breveCs}
\breve{C}_{3} \eqdef \sum_{m\geq 0} \frac{{\bfcn}(m+3,s)}{(m+2)!}\|\theta\|_{\fzeronenu}^{m}, \quad
\breve{C}_{4} \eqdef \sum_{m\geq 0} \frac{{\bfcn}(m+3,s)}{(m+1)!} \|\theta\|_{\fzeronenu}^{m},
\end{equation}
then we have
\begin{equation}\label{breveSsbound}
\|{\sonesplit}\|_{\fsonenu} \leq C_\mathcal{R} \breve{C}_{3}\|\theta\|_{\fzeronenu}^{2}\|f\|_{\fsonenu} + C_\mathcal{R} \breve{C}_{4} \|\theta\|_{\fzeronenu}\|f\|_{\fzeronenu}\|\theta\|_{\fsonenu}.
\end{equation}
Hence further define
\begin{equation}\label{C3}
C_{3} \eqdef \tilde{C}_{3} + C_\mathcal{R} \breve{C}_{3}, \quad C_{4} \eqdef \tilde{C}_{4} + C_\mathcal{R} \breve{C}_{4}.
\end{equation}
Then using \eqref{tildeC3}, \eqref{tildeC4} (and the estimate below them) and \eqref{breveCs} (and the estimate above it) we obtain \eqref{Sestimates} for $s>0$.
\end{proof}

In the next chapter we will prove the \textit{a priori} estimates on the vorticity strength $\omega$.

\chapter{\textit{A priori} estimates on the vorticity strength}\label{secw}

This chapter includes the \textit{a priori} estimates for the vorticity strength $\omega$ from \eqref{omega} that will be used in particular in Section \ref{subsecGlobal}.  The main result of the section is the following Proposition \ref{vorticityestimatess}.

\begin{prop}\label{vorticityestimatess}
	The linear part of the vorticity, $\omega_1$  in \eqref{omegasplit}, satisfies the following estimates:
	\begin{equation}\label{omega1f0}
    \|\omega_1\|_{\fzeronenu}\leq A_\sigma\frac{4\pi}{L(t)}\thetatwoonenu+(1+2|A_\mu|)|A_\rho|\frac{L(t)}{\pi}e^{\nu(t)}\thetazeronenu,
    \end{equation}
    for $s>0$, recalling \eqref{bfcn.def}, we have
    \begin{equation}\label{omega1fss}
    \|\omega_1\|_{\fsonenu}
    \leq A_\sigma\frac{4\pi}{L(t)}\thetatwosonenu+(1+2|A_\mu|)|A_\rho|\frac{L(t)}{\pi}2\bfcn(2,s)e^{\nu(t)}\thetasonenu.
    \end{equation}
The nonlinear part  $\omega_{\geq2}$ from \eqref{omegasplit} satisfies
\begin{equation}\label{omega2f0}
	\|\omega_{\geq2}\|_{\fzerone_{\nu}}
	\leq |A_\mu|A_\sigma\frac{4\pi}{L(t)}C_9\thetazeronenu\thetatwoonenu
	+|A_\rho| \frac{L(t)}{\pi}e^{\nu(t)}C_{10}\thetazeronenu^2,
\end{equation}
while $s>0$ we have
\begin{multline}\label{omega2fss}
	\|\omega_{\geq2}\|_{\fsonenu}
	\!\leq\!
	|A_\mu|A_\sigma\frac{4\pi}{L(t)}C_{13}\thetazeronenu\thetatwosonenu\!
	\\
+
	\!|A_\rho| \frac{L(t)}{\pi}e^{\nu(t)}C_{14}\thetazeronenu\thetasonenu,
\end{multline}
where $C_9$ and $C_{10}$ are defined in \eqref{C9C10} and $C_{13}$ and $C_{14}$ are in \eqref{C13C14}.
\end{prop}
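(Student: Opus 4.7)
The strategy is to bound each constituent of $\omega_1$ and $\omega_{\geq 2}$ from the decomposition \eqref{omegasplit}, using Lemma \ref{triangleprop} for products, Proposition \ref{Restimates.prop} for $\mathcal{R}$, and Proposition \ref{Smultiplierprop} for $\mathcal{S}$. The key initial observation is that $\omega_0 = -A_\rho \frac{L(t)}{\pi}\sin(\alpha+\widehat{\vartheta}(0))$ and its Hilbert transform $\mathcal{H}\omega_0 = A_\rho\frac{L(t)}{\pi}\cos(\alpha+\widehat{\vartheta}(0))$ have Fourier support only at $k=\pm 1$, so their $\fzeronenu$ and $\fsonenu$ norms are explicitly equal to $|A_\rho|\frac{L(t)}{\pi} e^{\nu(t)}$ times an absolute constant. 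This will make every factor of $\omega_0$ that appears carry a factor $|A_\rho|\frac{L(t)}{\pi}$ and an exponential weight $e^{\nu(t)}$, matching the form of the stated bounds.

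\textbf{The linear part.} Looking at $\omega_1$ in \eqref{omegasplit}$_2$, the surface tension term $2A_\sigma\frac{2\pi}{L(t)}\theta_{\alpha\alpha}$ contributes exactly $A_\sigma\frac{4\pi}{L(t)}\thetatwoonenu$, and analogously $A_\sigma\frac{4\pi}{L(t)}\thetatwosonenu$ in the $\fsonenu$ case. The gravity term $-A_\rho\frac{L(t)}{\pi}\cos(\alpha+\widehat{\vartheta}(0))\theta(\alpha)$ is handled by \eqref{zeroproduct} (resp. \eqref{sproduct} with $\bfcn(2,s)$). For the viscosity term $A_\mu\frac{L(t)}{\pi}\md_1(\omega_0)$, substituting \eqref{mdsplit}$_1$ reduces matters to estimating $\theta\cdot\mathcal{H}\omega_0$ (again a product of $\theta$ with a function supported at $\pm 1$) and $\mathcal{R}(\omega_0)$; the latter is controlled by Proposition \ref{Restimates.prop} using $\|\omega_0\|_{\fzeronenu}\le |A_\rho|\frac{L(t)}{\pi}e^{\nu(t)}$. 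Collecting the resulting prefactors yields \eqref{omega1f0} and \eqref{omega1fss}.

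\textbf{The nonlinear part.} For $\omega_{\geq 2}$ the same philosophy applies, but $\md_{\geq 2}$ is defined through $\omega_{\geq 1}=\omega_1+\omega_{\geq 2}$, so the estimate is implicit. The gravity-type sums $\sum_{j\geq 1}\frac{(-1)^j\theta^{2j}}{(2j)!}$ and $\sum_{j\geq 1}\frac{(-1)^j\theta^{1+2j}}{(1+2j)!}$ (paired with $\sin$ or $\cos(\alpha+\widehat{\vartheta}(0))$) are bounded by Lemma \ref{triangleprop} term by term. Each carries at least two factors of $\theta$, and the resulting geometric series in $\|\theta\|_{\fzeronenu}$ converges to a constant times $\|\theta\|_{\fzeronenu}^2$ in the zeroth norm; in the $\fsonenu$ case the combinatorial factor $\bfcn(n,s)$ from \eqref{sproduct} is absorbed into an increasing function of $\|\theta\|_{\fzeronenu}$ (giving the constant $C_{14}$). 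For $\md_{\geq 2}(\omega)$ we apply Propositions \ref{Restimates.prop} and \ref{Smultiplierprop} together with \eqref{zeroproduct}--\eqref{sproduct} to the three pieces $\theta\cdot\mathcal{H}\omega_{\geq 1}$, $\imag\mathcal{R}(\omega_{\geq 1})$, and $\imag\mathcal{S}(\omega)$; each brings at least one factor of $\|\theta\|_{\fzeronenu}$ out front, which is exactly what appears in the right-hand sides of \eqref{omega2f0} and \eqref{omega2fss}.

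\textbf{Closing the implicit estimate — the main obstacle.} The only subtle point is that the initial estimate has the form
\[
\|\omega_{\geq 2}\|_{\fzeronenu}\leq \Phi_0(\|\theta\|_{\fzeronenu})\cdot(\text{data in }\theta) + \Psi(\|\theta\|_{\fzeronenu})\cdot\|\omega_{\geq 2}\|_{\fzeronenu},
\]
where $\Psi(x)\to 0$ as $x\to 0$ because every term in $\md_{\geq 2}(\omega_{\geq 1})$ that contains $\omega_{\geq 2}$ is multiplied by at least one power of $\theta$ (via the Leibniz-type estimates or via the linearity-in-$\omega$ of $\mathcal{R}$ and $\mathcal{S}$ together with a $\theta$ factor from the prefactor of $\md_{\geq 2}$). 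Under the smallness assumption that will be imposed in the main theorem — in particular that $\|\theta\|_{\fzeronenu}$ is smaller than some constant depending only on $|A_\mu|$ and $|A_\rho|R^2/A_\sigma$ — one has $\Psi<1/2$, say, and the term $\|\omega_{\geq 2}\|$ on the right can be absorbed into the left. Substituting the bounds \eqref{omega1f0}--\eqref{omega1fss} for $\omega_1$ into what remains produces \eqref{omega2f0} and \eqref{omega2fss} with the explicit constants $C_9, C_{10}$ (and $C_{13}, C_{14}$ for $s>0$), all of which are increasing functions of $\|\theta\|_{\fzeronenu}$ bounded on the admissible range of parameters. The bookkeeping of the combinatorial factors $\bfcn(n,s)$ from \eqref{sproduct} when iterating products of $\theta$ is the main calculational burden, but it is routine given the estimates already established in Sections \ref{IFTSection} and \ref{sec:FourierRandS}.
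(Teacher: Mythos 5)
Your overall architecture is the paper's: bound the surface-tension, gravity, and $\md$ pieces of \eqref{omegasplit} term by term via Lemma \ref{triangleprop} and Propositions \ref{Restimates.prop}--\ref{Smultiplierprop}, then close the implicit estimate for $\|\omega_{\geq2}\|$ by absorbing the $\|\omega_{\geq2}\|$ term on the right (the paper's absorption constants are $C_8$ and $\tilde C_8$, exactly your ``$\Psi<1/2$'' step), and finally substitute the $\omega_1$ bounds. For the nonlinear part and the interpolation bookkeeping (the paper uses $\thetasonenu\thetatwoonenu\leq\thetazeronenu\thetatwosonenu$ to reach the stated form) your plan is sound.

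There is, however, one concrete step that does not deliver the inequality as stated. You propose to control $\mathcal{R}(\omega_0)$ by Proposition \ref{Restimates.prop}, which gives $\|\mathcal{R}(\omega_0)\|_{\fzeronenu}\leq \crconstant\|\omega_0\|_{\fzeronenu}\thetazeronenu$ with $\crconstant=1+\tilde{\mathcal{C}}>3$. Feeding that into $\md_1(\omega_0)=-\tfrac{\pi}{L(t)}\bigl(\theta\,\mH\omega_0+\imag\mathcal{R}(\omega_0)\bigr)$ yields $\|\md_1(\omega_0)\|_{\fzeronenu}\leq(1+\crconstant)|A_\rho|e^{\nu(t)}\thetazeronenu$, hence a coefficient $\bigl(1+(1+\crconstant)|A_\mu|\bigr)$ in \eqref{omega1f0} rather than the stated $(1+2|A_\mu|)$. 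The paper obtains the coefficient $1$ (not $\crconstant$) for $\|\imag\,\mathcal{R}(\omega_0)\|$ by \emph{not} using the generic multiplier bound: since $\omega_0$ is a pure $\pm1$ Fourier mode, one can use the exact computation $\mathcal{F}\bigl(\imag\,\mathcal{R}(\sin(\alpha+\hat\vartheta(0)))\bigr)(k)$ in \eqref{fourierimag}, whose coefficients (evaluated via Lemma \ref{lemmaI1I2}) are bounded by $\tfrac12$ in modulus, giving $\|\imag\,\mathcal{R}(\omega_0)\|_{\fsonenu}\leq|A_\rho|\tfrac{L(t)}{\pi}\,2\bfcn(2,s)e^{\nu(t)}\thetasonenu$. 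Because every downstream constant ($C_9$, $C_{10}$, $C_{13}$, $C_{14}$, and ultimately the smallness threshold $K$ in \eqref{Kdef}) inherits this coefficient, the weaker constant is not cosmetic here; you need the explicit Fourier-side evaluation of $\mathcal{R}$ on the pure mode to prove \eqref{omega1f0}--\eqref{omega1fss} as written.
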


\begin{proof}
First, recalling \eqref{bfcn.def}, we note that 
\begin{equation}\label{cosine.calc.FT}
\begin{aligned}
    \|\cos{(\alpha\!+\!\hat{\vartheta}(0))}\theta(\alpha)\|_{\fzeronenu}&\leq e^{\nu(t)}\thetazeronenu,\\
    \|\cos{(\alpha\!+\!\hat{\vartheta}(0))}\theta(\alpha)\|_{\fsonenu}&\leq 2 \bfcn(2,s) e^{\nu(t)}\thetasonenu,\qquad s>0,
\end{aligned}    
\end{equation}
as similar to the calculations in the proof of Lemma \ref{triangleprop}.  We point out that the same calculations also hold with $\cos{(\alpha\!+\!\hat{\vartheta}(0))}$ replaced by $\sin{(\alpha\!+\!\hat{\vartheta}(0))}$.  
For example, to see the above in the case $0 < s \leq 1$, 
we have 
\begin{multline}\label{proof.angle.nu.ineq}
\|\cos{(\alpha\!+\!\widehat{\vartheta}(0))}\theta(\alpha)\|_{\fsonenu} 
= \sum_{k\in\mathbb{Z}} e^{\nu(t)|k|}|k|^{s}|((\cos{(\alpha\!+\!\widehat{\vartheta}(0))})^{\wedge}\ast \widehat{\theta}) (k)|
\\
= \sum_{k, k_{1}\in\mathbb{Z}} e^{\nu(t)|k|} |k|^{s} |\frac{1}{2}(\delta(1-k_{1})
e^{i\widehat{\vartheta}(0) }
+ \delta(1+k_{1})
e^{-i\widehat{\vartheta}(0)}
) \widehat{\theta} (k-k_{1})|\\
\leq  \sum_{k\in\mathbb{Z}} e^{\nu(t)|k|}|k|^{s}  \frac{1}{2} (|\widehat{\theta} (k-1)|+|\widehat{\theta} (k+1)|)\\
\leq  \frac{1}{2}\sum_{k\in\mathbb{Z}} e^{\nu(t)|k-1|}e^{\nu(t)}(|k-1|^{s}+1)   |\widehat{\theta} (k-1)|\\ \hspace{0.5in}+e^{\nu(t)|k+1|}e^{\nu(t)}(|k+1|^{s}+1) |\widehat{\theta} (k+1)|\\
= e^{\nu(t)}(\thetazeronenu + \thetasonenu) \leq 2e^{\nu(t)}\thetasonenu.
\end{multline}
This explains the difference between \eqref{omega1f0} and \eqref{omega1fss}.
We will explain the proof of \eqref{omega1fss} for $s>0$.  The proof of the other case, \eqref{omega1f0} when $s=0$, is analogous.  It follows from \eqref{omegasplit} and \eqref{cosine.calc.FT} that
\begin{equation*}
\|\omega_1\|_{\fsonenu}
\leq 
|A_\mu|\frac{L(t)}{\pi}\|\mathcal{D}_1(\omega_0)\|_{\fsonenu}
\!+\!
A_\sigma\frac{4\pi}{L(t)}\|\theta\|_{\dot{\mathcal{F}}^{2+s,1}_\nu}
\!+\!
|A_\rho| \frac{L(t)}{\pi}2\bfcn(2,s) e^{\nu(t)}\thetasonenu,
\end{equation*}
and from \eqref{mdsplit} with \eqref{cosine.calc.FT} and \eqref{hilbertTcalc} we have
\begin{equation*}
\begin{aligned}
\|\mathcal{D}_1(\omega_0)\|_{\fsonenu}\leq\frac{\pi}{L(t)}\Big(|A_\rho|\frac{L(t)}{\pi}2 \bfcn(2,s) e^{\nu(t)}\thetasonenu+\|\imag\hspace{0.05cm} \mathcal{R}(\omega_0)\|_{\fsonenu}\Big). 
\end{aligned}
\end{equation*}
Recalling  \eqref{fourierimag} together with Lemma  \eqref{cosine.calc.FT} gives the following estimate 
\begin{equation*}
    \|\imag\hspace{0.05cm} \mathcal{R}(\omega_0)\|_{\fsonenu}\leq |A_\rho|\frac{L(t)}{\pi}2 \bfcn(2,s) e^{\nu(t)}\thetasonenu.
\end{equation*}
Therefore,  
\begin{equation*}
\begin{aligned}
\|\mathcal{D}_1(\omega_0)\|_{\fsonenu}\leq 4|A_\rho|e^{\nu(t)}\bfcn(2,s) \thetasonenu, 
\end{aligned}
\end{equation*}
and thus the estimate \eqref{omega1fss} for $\omega_1$ is complete.  The estimate for \eqref{omega1f0} is proven in the same way.

Now we proceed to bound $\omega_{\geq 2}$ from \eqref{omegasplit} in $\fzeronenu$.  In this case we have
\begin{equation}\label{omega2aux}
\begin{aligned}
\|\omega_{\geq2}\|_{\fzeronenu}
&\leq 
|A_\mu|\frac{L(t)}{\pi}\|\mathcal{D}_{\geq2}(\omega)\|_{\fzeronenu}
+|A_\rho| \frac{L(t)}{\pi}e^{\nu(t)}\sum_{j\geq2}\frac{\|\theta\|_{\fzerone_{\nu}}^j}{j!}
\\
&=
|A_\mu|\frac{L(t)}{\pi}\|\mathcal{D}_{\geq2}(\omega)\|_{\fzeronenu}
+
|A_\rho| \frac{L(t)}{\pi}e^{\nu(t)}C_6\thetazeronenu^2,
\end{aligned}
\end{equation}
where
\begin{equation}\label{C6}
C_6=\frac{e^{\thetazeronenu}-1-\thetazeronenu}{\thetazeronenu^2}.
\end{equation}
From \eqref{mdsplit}, one has that
\begin{equation*}
\begin{aligned}
\|\mathcal{D}_{\geq2}\|_{\fzeronenu}&\leq \frac{\pi}{L(t)}\Big(\thetazeronenu\|\omega_{\geq1}\|_{\fzeronenu}+\|\mathcal{R}(\omega_{\geq1})\|_{\fzeronenu}+\|\mathcal{S}(\omega)\|_{\fzeronenu}\Big).
\end{aligned}
\end{equation*}
Using the estimates \eqref{Restimates} and \eqref{Sestimates} and splitting the vorticity terms as $\omega_{\geq1} = \omega_{1}+\omega_{\geq2}$, we obtain the estimate
\begin{equation*}
\begin{aligned}
\|\mathcal{D}_{\geq2}&\|_{\fzeronenu}\leq \frac{\pi}{L(t)}\Big(\thetazeronenu\|\omega_{1}\|_{\fzeronenu}+\thetazeronenu\|\omega_{\geq2}\|_{\fzeronenu}+
{\crconstant}\thetazeronenu\|\omega_{1}\|_{\fzeronenu}\\
&+{\crconstant}\thetazeronenu\|\omega_{\geq2}\|_{\fzeronenu}+|A_\rho|\frac{L(t)}{\pi}e^{\nu(t)}C_1\thetazeronenu^2+C_1\thetazeronenu^2\|\omega_{1}\|_{\fzeronenu}\\
&+C_1\thetazeronenu^2\|\omega_{\geq2}\|_{\fzeronenu}
\Big),
\end{aligned}
\end{equation*}
so, substituting back in \eqref{omega2aux} and solving for $\|\omega_{\geq2}\|_{\fzeronenu}$, we obtain that
\begin{equation*}
    \begin{aligned}
\|\omega_{\geq2}\|_{\fzeronenu}&\leq C_8\Big( |A_\mu|C_7\thetazeronenu\|\omega_{1}\|_{\fzeronenu}+|A_\mu||A_\rho|\frac{L(t)}{\pi}e^{\nu(t)}C_1\thetazeronenu^2\\
&\quad\quad+|A_\rho|\frac{L(t)}{\pi}e^{\nu(t)}C_6\thetazeronenu^2\Big),
    \end{aligned}
\end{equation*}
where we defined
\begin{equation}\label{C7C8}
    C_7=1+{\crconstant}+C_1\thetazeronenu,\qquad C_8=\frac{1}{1-|A_\mu|C_7\thetazeronenu},
\end{equation}
with $C_1$ given in \eqref{C1}.
Substituting in \eqref{omega1f0}  we conclude that
\begin{equation*}
    \begin{aligned}
    \|\omega_{\geq2}\|_{\fzeronenu}&\leq 2|A_\mu|A_\sigma \frac{2\pi}{L(t)}C_7 C_8\thetazeronenu\thetatwoonenu\\
    &+|A_\rho|\frac{L(t)}{\pi}e^{\nu(t)}C_8\Big(|A_\mu|(1+2|A_\mu|)C_7+|A_\mu|C_1+C_6\Big)\thetazeronenu^2,
    \end{aligned}
\end{equation*}
which gives the  estimate  \eqref{omega2f0} by defining
\begin{equation}\label{C9C10}
\begin{aligned}
C_9&=C_7C_8,\\
C_{10}&=|A_\mu|(1+2|A_\mu|)C_7C_8+|A_\mu|C_1C_8+C_6C_8,
\end{aligned}
\end{equation}
where $C_1$, $C_6$, $C_7$, and $C_8$ were previously defined in \eqref{C1}, \eqref{C6}, and \eqref{C7C8}.

We consider now the case $s>0$ in \eqref{omega2fss}. From \eqref{omegasplit}, also using \eqref{proof.angle.nu.ineq}, we have
\begin{multline}\label{omega2aux2}
\|\omega_{\geq2}\|_{\fsonenu}
\leq |A_\mu|\frac{L(t)}{\pi}\|\mathcal{D}_{\geq2}(\omega)\|_{\fsonenu}
+
|A_\rho| \frac{L(t)}{\pi}e^{\nu(t)}\sum_{j\geq2}\frac{\bfcn(j,s) \|\theta\|_{\fzerone_{\nu}}^{j}}{j!}
\\
\quad
+|A_\rho| \frac{L(t)}{\pi}e^{\nu(t)}\sum_{j\geq1}\frac{\bfcn(j,s) \|\theta\|_{\fzerone_{\nu}}^{j}}{j!}\thetasonenu
\\
\leq
|A_\mu|\frac{L(t)}{\pi}\|\mathcal{D}_{\geq2}(\omega)\|_{\fsonenu}
+
|A_\rho| \frac{L(t)}{\pi}e^{\nu(t)}C_{11}\thetazeronenu\thetasonenu,
\end{multline}
where 
\begin{equation}\label{C11}
C_{11}=\sum_{j\geq 2}\frac{\bfcn(j,s) \|\theta\|_{\fzerone_{\nu}}^{j-2}}{j!}+\sum_{j\geq1}\frac{\bfcn(j,s) \|\theta\|_{\fzerone_{\nu}}^{j-1}}{j!}.
\end{equation}
Recalling \eqref{mdsplit}, and \eqref{s.inequality}, one has that
\begin{equation*}
\begin{aligned}
\|\mathcal{D}_{\geq2}\|_{\fsonenu}&\leq \frac{\pi}{L(t)}\Big(\bfcn(2,s) \thetasonenu\|\omega_{\geq1}\|_{\fzeronenu}+ \bfcn(2,s)\thetazeronenu\|\omega_{\geq1}\|_{\fsonenu}
\\
&\qquad\qquad+\|\mathcal{R}(\omega_{\geq1})\|_{\fsonenu}+\|\mathcal{S}(\omega)\|_{\fsonenu}\Big).
\end{aligned}
\end{equation*}
Using the estimates \eqref{Restimates} and \eqref{Sestimates} with $s>0$ and splitting the vorticity terms again, we obtain
\begin{multline*}
\|\mathcal{D}_{\geq2}\|_{\fsonenu}\leq \frac{\pi}{L(t)}\bigg((1+b(2,s){\crconstant})\thetasonenu\|\omega_{1}\|_{\fzeronenu}+(1+b(2,s){\crconstant})\thetasonenu\|\omega_{\geq2}\|_{\fzeronenu}
\\
+
(1+b(2,s){\crconstant})\thetazeronenu\|\omega_{1}\|_{\fsonenu}+(1+b(2,s){\crconstant})\thetazeronenu\|\omega_{\geq2}\|_{\fsonenu}
\\
+
|A_\rho|\frac{L(t)}{\pi}e^{\nu(t)}(C_3+C_4)\thetazeronenu\thetasonenu+C_3\thetazeronenu^2\|\omega_{1}\|_{\fsonenu}
\\
+\!C_3\thetazeronenu^2\|\omega_{\geq2}\|_{\fsonenu}\!+\!C_4\thetazeronenu\thetasonenu\|\omega_{1}\|_{\fzeronenu}\!+\!C_4\thetazeronenu\thetasonenu\|\omega_{\geq2}\|_{\fzeronenu}\!\!\bigg),
\end{multline*}
which becomes 
\begin{equation*}
\begin{aligned}
\|\mathcal{D}_{\geq2}\|_{\fsonenu}&\leq \frac{\pi}{L(t)}\Big(|A_\rho|\frac{L(t)}{\pi}e^{\nu(t)}(C_3\!+\!C_4)\thetazeronenu\thetasonenu\!+\!C_{12}\thetasonenu\|\omega_{1}\|_{\fzeronenu}\\
&+\!C_2\thetazeronenu\|\omega_{1}\|_{\fsonenu}\!+\!C_{12}\thetasonenu\|\omega_{\geq2}\|_{\fzeronenu}\!+\!C_2\thetazeronenu\|\omega_{\geq2}\|_{\fsonenu}\Big),
\end{aligned}
\end{equation*}
where 
\begin{equation}\label{C12}
\begin{aligned}
C_2&=1+\bfcn(2,s){\crconstant}+C_3\thetazeronenu\\
C_{12}&=1+\bfcn(2,s){\crconstant}+C_4\thetazeronenu,
\end{aligned}
\end{equation}
with $C_\mathcal{R}$ and $C_4$ given in \eqref{CR} and \eqref{C3}, respectively.
Substituting back in \eqref{omega2aux2}, and solving for $\|\omega_{\geq2}\|_{\fsonenu}$, we have
\begin{equation*}
    \begin{aligned}
\|\omega_{\geq2}\|_{\fsonenu}&\leq |A_\mu|\tilde{C}_8 \Big( 
|A_\rho|\frac{L(t)}{\pi}e^{\nu(t)}(C_3+C_4)\thetazeronenu\thetasonenu\\
&+
C_{12}\thetasonenu\|\omega_{1}\|_{\fzeronenu}+C_2\thetazeronenu\|\omega_{1}\|_{\fsonenu}+C_{12}\thetasonenu\|\omega_{\geq2}\|_{\fzeronenu}\Big)\\
&+|A_\rho|\frac{L(t)}{\pi}e^{\nu(t)}C_{11}\tilde{C}_8\thetazeronenu\thetasonenu,
    \end{aligned}
\end{equation*}
where we defined
\begin{equation}\label{tildeC8}
\tilde{C}_8=\frac{1}{1-|A_\mu|C_2\thetazeronenu}.
\end{equation}
We introduce the estimates \eqref{omega2f0}, \eqref{omega1f0}, and \eqref{omega1fss} to obtain that
\begin{equation*}
    \begin{aligned}
\|\omega_{\geq2}&\|_{\fsonenu}\leq |A_\mu|\tilde{C}_8\bigg( 
|A_\rho|\frac{L(t)}{\pi}e^{\nu(t)}(C_3+C_4)\thetazeronenu\thetasonenu\\
&\hspace{-0.7cm}+
A_\sigma \frac{4\pi}{L(t)}C_{12}\thetasonenu\thetatwoonenu+(1+2|A_\mu|)|A_\rho|\frac{L(t)}{\pi}e^{\nu(t)}C_{12}\thetasonenu\thetazeronenu\\
&\hspace{-0.7cm}+A_\sigma\frac{4\pi}{L(t)} C_2\thetazeronenu\|\theta\|_{\dot{\mathcal{F}}^{2+s,1}}+(1\!+\!2|A_\mu|)|A_\rho|\frac{L(t)}{\pi}2\bfcn(2,s) e^{\nu(t)}C_2\thetazeronenu\thetasonenu\\
&\hspace{-0.7cm}+|A_\mu|A_\sigma\frac{4\pi}{L(t)}C_9C_{12}\thetazeronenu\thetasonenu\thetatwoonenu\\
&\hspace{-0.7cm}+|A_\rho|\frac{L(t)}{4\pi}e^{\nu(t)}C_{10}C_{12}\thetasonenu\thetazeronenu^2\!
\bigg)\!\!+\!|A_\rho|\frac{L(t)}{\pi}e^{\nu(t)}C_{11}\tilde{C}_8\thetazeronenu\thetasonenu.
    \end{aligned}
\end{equation*}
Next, we use the interpolation inequality \eqref{interpolation} to find that
\begin{equation}\label{interp}
    \thetasonenu\thetatwoonenu\leq \thetazeronenu\thetatwosonenu,
\end{equation}
and therefore
\begin{equation*}
\begin{aligned}
    \|\omega_{\geq2}\|_{\fsonenu}&\leq |A_\mu|A_\sigma \frac{4\pi}{L(t)}\tilde{C}_8\Big(C_2+C_{12}+|A_\mu|C_9C_{12}\thetazeronenu\Big)\thetazeronenu\thetatwosonenu\\
    &+|A_\rho|\frac{L(t)}{\pi}e^{\nu(t)}\tilde{C}_8\Big(|A_\mu|(C_3+C_4)+|A_\mu|(1+2|A_\mu|)(2C_2+C_{12})\\
    &+|A_\mu|C_{10}C_{12}\thetazeronenu+C_{11}\Big)\thetazeronenu\thetasonenu,
\end{aligned}
\end{equation*}
which gives the inequality \eqref{omega2fss} by defining
\begin{equation}\label{C13C14}
\begin{aligned}
C_{13}&=\tilde{C}_8\Big(C_2+C_{12}+|A_\mu|C_9C_{12}\thetazeronenu\Big),\\
C_{14}&=|A_\mu|\tilde{C}_8\Big(C_3\!+\!C_4\!+\!(1\!+\!2|A_\mu|)(2C_2\!+\!C_{12})\!+\!C_{10}C_{12}\thetazeronenu\Big)\!
\\
& \hspace{1cm}+\!C_{11}\tilde{C}_8,
\end{aligned}
\end{equation}
where $C_3$ and $C_4$ in \eqref{C3}, $C_2$ in \eqref{C12}, $\tilde{C}_8$ in \eqref{tildeC8}, $C_9$ and $C_{10}$ in\eqref{C9C10}, $C_{11}$ in \eqref{C11}, and $C_{12}$ in \eqref{C12} were previously defined.
\end{proof}

In the next chapter we put together all the previous results to prove the global existence and instant analyticity as in our main Theorem \ref{thm:global}.

\chapter{Global existence and instant analyticity}\label{secanalytic}

This chapter is dedicated to the proof of Theorem \ref{thm:global}. In Section \ref{sec:Lestimate} we prove the claimed bound for the length $L(t)$ from \eqref{Lfinalbound}.  Then Section \ref{subsecGlobal} proves the main \textit{a priori}  estimates for a solution $\theta$.  We prove the global energy inequality from \eqref{estimatef12}.  In particular we diagonalize the linearized operator in Proposition \ref{diagonalization}.  We prove operator bounds for the corresponding linear transformations in Lemma \ref{lemmaS}.   In Section \ref{sec:NonLinearEst} we prove the corresponding non-linear estimates that were used in the a priori estimates from the previous section.  In particular we prove Theorem \ref{thm:Nbound}.  Lastly, in Section \ref{subregular} we describe the scheme of the proof of our main theorem via a regularization argument.

\section{Estimate for $L(t)$}\label{sec:Lestimate}

In this section, we prove 
the bound for $L(t)$ from \eqref{Lfinalbound}.  Equation \eqref{Lequation}, together with the bound
\begin{equation}\label{lengthauxbound}
\begin{aligned}
    \imag\Big(\int_{-\pi}^\pi\int_0^\alpha e^{i(\alpha-\eta)} \sum_{n\geq1}\frac{i^n}{n!}(\theta(\alpha)-\theta(\eta))^n d\eta d\alpha\Big)&\leq \pi^2\sum_{n\geq1}\frac{2^n\|\theta\|_{\fzerone}^n}{n!}\\
    &\leq \pi^2\big(e^{2\thetazerone}-1\big),
\end{aligned}
\end{equation}
implies that
\begin{equation}\label{Lboundaux}
    \begin{aligned}
        \frac{R^2}{1+\frac{\pi}{2} \big(e^{2\thetazerone}-1\big)}\leq\Big(\frac{L(t)}{2\pi}\Big)^2\leq \frac{R^2}{1-\frac{\pi}{2}\big(e^{2\thetazerone}-1\big)},
    \end{aligned}
\end{equation}
and therefore
\begin{equation}\label{Lbound}
    \begin{aligned}
        2\pi R C_{37}\leq L(t)\leq 2\pi RC_{38},
    \end{aligned}
\end{equation}
where
\begin{equation}\label{C37C38}
    C_{37}=\frac{1}{\sqrt{1+\frac{\pi}{2} \big(e^{2\thetazerone}-1\big)}},\quad C_{38}=\frac{1}{\sqrt{1-\frac{\pi}{2}\big(e^{2\thetazerone}-1\big)}}.
\end{equation}
We have thus shown that the length of the curve is controlled  for all times. In particular, we also have the following estimates
\begin{equation}\label{Lestimates}
\begin{aligned}
    \Big|R^2\Big(\frac{2\pi}{L(t)}\Big)^2-1\Big|&\leq  \frac{\pi}{2}\big(e^{2\thetazerone}-1\big),
    \\
    \Big|R\Big(\frac{2\pi}{L(t)}\Big)-1\Big|&\leq  \sqrt{1+\frac{\pi}{2}\big(e^{2\thetazerone}-1\big)}-1=C_{39}\thetazerone,
    \\
    \Big|R^3\Big(\frac{2\pi}{L(t)}\Big)^3-1\Big|&\leq  \Big(1+\frac{\pi}{2}\big(e^{2\thetazerone}-1\big)\Big)^{3/2}-1=C_{40}\thetazerone,
    \end{aligned}
\end{equation}
with
\begin{equation}\label{C39C40}
    C_{39}=\frac{\sqrt{1+\frac{\pi}{2}\big(e^{2\thetazerone}-1\big)}-1}{\thetazerone},\quad C_{40}=\frac{\Big(1+\frac{\pi}{2}\big(e^{2\thetazerone}-1\big)\Big)^{3/2}-1}{\thetazerone}.
\end{equation}
These are the main estimates that we will use for $L(t)$.  We remark that all the estimates in this chapter hold with the same proof in the norms $\fzeronenu$ and $\fhonenu$.   In the next section we prove the main a priori estimates for $\theta$.

\section{\textit{A priori} Estimates for $\theta(t)$}\label{subsecGlobal}

In this section we obtain the \textit{a priori} estimates that guarantee the global existence of the solutions, the instant in time analyticity, and the exponentially fast convergence to a circle. In particular, the main goal is to show the energy inequality \eqref{estimatef12}.

The results of this section are ordered as follows: First, we write the system \eqref{system} using the linearization \eqref{linearfourier} of Section \ref{sec:linearization}; then, we diagonalize the system according to Proposition \ref{diagonalization} to show its dissipative character; the bounds for this change of frame are proven in Lemma \ref{lemmaS}; the estimates for the nonlinear terms are proven in Section \ref{sec:NonLinearEst}, together with the control of the length in Section \ref{sec:Lestimate}.

\begin{proof}[Proof of the global energy inequality from  \eqref{estimatef12}]
Equation \eqref{system} and  Proposition \ref{linearfourier} show that the equation for $\widehat{\theta}(k)$, $1\leq k\neq2$, is given by
\begin{equation*}
    \begin{aligned}
   \widehat{\theta}_t(k)&=-A_\sigma \Big(\frac{2\pi}{L(t)}\Big)^3k(k^2\!-\!1)\widehat{\theta}(k)\!-\!\big(1\!+\!A_\mu\big)A_\rho\frac{2\pi}{L(t)}\frac{(k^2\!-\!1)(k\!+\!1)}{k(k\!+\!2)}e^{-i\widehat{\vartheta}(0)}\widehat{\theta}(k\!+\!1)\\
   &\quad+\frac{2\pi}{L(t)}\widehat{N}(k),
    \end{aligned}
\end{equation*}
and for $k=2$, we have
\begin{equation*}
    \begin{aligned}
   \widehat{\theta}_t(2)&=-A_\sigma \Big(\frac{2\pi}{L(t)}\Big)^36\widehat{\theta}(2)\!-(1+A_\mu)A_\rho\frac{2\pi}{L(t)}\frac{9}{8}e^{-i\widehat{\vartheta}(0)}\widehat{\theta}(3)\\
   &\quad +(1-A_\mu)A_\rho\frac{2\pi}{L(t)}\frac{3}{2}\left(\frac34-\log{2}\right)e^{i\widehat{\vartheta}(0)}\widehat{\theta}(1) +\frac{2\pi}{L(t)}\widehat{N}(k).
    \end{aligned}
\end{equation*}
We notice in the equation above that the terms that are linear in $\widehat{\theta}(k)$ have time-dependent coefficients. However, this dependency happens only through $L(t)$. We will show that $L(t)$ is bounded from below and above (see Section \ref{sec:Lestimate}). In fact, it is not hard to see from \eqref{Lequation} that, to leading order, $L(t)$ equals $2\pi R$. Thus we rewrite the equation above as follows
\begin{equation}\label{thetaequation}
    \begin{aligned}
   \widehat{\theta}_t(k)&=-\frac{A_\sigma}{R^3} k(k^2-1)\widehat{\theta}(k)-\big(1+A_\mu\big)\frac{A_\rho}{R}\frac{(k^2-1)(k+1)}{k(k+2)}e^{-i\widehat{\vartheta}(0)}\widehat{\theta}(k+1)\\
   &\quad+\frac{2\pi}{L(t)}\widehat{N}(k)-\frac{A_\sigma}{R^3} k(k^2-1)\widehat{\theta}(k)\Big(R^3\Big(\frac{2\pi}{L(t)}\Big)^3-1\Big)\\
   &\quad-\big(1+A_\mu\big)\frac{A_\rho}{R}\frac{(k^2-1)(k+1)}{k(k+2)}e^{-i\widehat{\vartheta}(0)}\widehat{\theta}(k+1)\Big(R\frac{2\pi}{L(t)}-1\Big),
    \end{aligned}
\end{equation}
for $1\leq k\neq2$, and we decompose the equation analogously for $k=2$.

Next, we write the corresponding linear system as follows
\begin{equation}\label{linearZ}
    \begin{aligned}
    z_t(k)=\sum_{j\geq1}M_{k,j}z(j),\quad k\geq1,
    \end{aligned}
\end{equation}
where we denote
\begin{equation*}
    M_{k,j}=\left\{
    \begin{aligned}
    &-a(k),\quad j=k,\\
    &b(k),\hspace{0.8cm} j=k+1,\\
    &c(1),\hspace{0.8cm}j=1,k=2,\\
    &0,\hspace{1.2cm} \text{otherwise},
    \end{aligned}\right.
\end{equation*}
with 
\begin{equation}\label{coeffs}
    a(k)=\frac{A_\sigma}{R^3} k(k^2-1),\quad b(k)=-\big(1+A_\mu\big)\frac{A_\rho}{R}\frac{(k^2-1)(k+1)}{k(k+2)}e^{-i\widehat{\vartheta}(0)},
\end{equation}
\begin{equation*}
    c(1)=(1-A_\mu)\frac{A_\rho}{R}\frac{3}{2}\left(\frac34-\log{2}\right)e^{i\widehat{\vartheta}(0)}.
\end{equation*}
Notice that this is an upper triangular system except for the entry $k=2$,$j=1$.  This entry, $k=2$ with $j=1$, will require special attention.  The eigenvalues of this system are $-a(k)$.  This is given in the following proposition.

\begin{prop}[Diagonalization]\label{diagonalization}
Consider $z=(z(k))_{k\geq1},y=(y(k))_{k\geq1}\in \ell^1$ and the linear operator \begin{equation*}
    \begin{aligned}
    S^{-1}:\ell^1&\rightarrow \ell^1\\
    z&\mapsto y=S^{-1}z,
    \end{aligned}
\end{equation*}
defined by
    \begin{equation*}
        y(k)=\sum_{j\geq1}S^{-1}_{k,j}z(j),
    \end{equation*}
    with
    \begin{equation}\label{Sm1}
        S^{-1}_{k,j}=\left\{
        \begin{aligned}
        (&-1)^{j-k}\prod_{l=1}^{j-k} \frac{b(k-1+l)}{a(k)-a(k+l)},\hspace{0.4cm}j\geq k\geq2,\\
        &1,\hspace{1.2cm}j=k=1,\\
        -&\frac{c(1)}{a(2)},\hspace{1.2cm}k=2,j=1,\\
        &0,\hspace{1.72cm} \text{otherwise}.
        \end{aligned}\right.
    \end{equation}
Then, the inverse operator $S$ is given by
    \begin{equation*}
        S_{k,j}=\left\{
        \begin{aligned}
        &\prod_{l=1}^{j-k} \frac{b(k-1+l)}{a(k-1+l)-a(j)},\hspace{0.4cm}j\geq k\geq2,\\
        &1,\hspace{1.2cm}j=k=1,\\
        &\frac{c(1)}{a(2)},\hspace{1.72cm}k=2,j=1,\\
        &0,\hspace{1.72cm} \text{otherwise}.
        \end{aligned}\right.
    \end{equation*}
Moreover, the linear operator $S^{-1}$ diagonalizes the system \eqref{linearZ} as follows
\begin{equation*}
    y_t(k)=-a(k)y(k),\quad k\geq1.
\end{equation*}
\end{prop}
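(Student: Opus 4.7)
The strategy is to establish the two matrix identities $MS = S\Lambda$ and $S^{-1}S = \operatorname{Id}$, where $M = (M_{k,j})_{k,j\geq 1}$ is the infinite matrix encoding \eqref{linearZ} and $\Lambda = \operatorname{diag}(-a(k))_{k\geq 1}$. Together these give $S^{-1}MS = \Lambda$, so setting $y = S^{-1}z$ and differentiating yields $y_t = S^{-1}Mz = \Lambda y$, which is the claimed diagonalization. Three structural features make the computation tractable: $M$ is upper triangular except for the single entry $M_{2,1}=c(1)$; the diagonal entries are the pairwise distinct eigenvalues $\{-a(k)\}_{k\geq 1}$ since $a(1)=0$ and $a(k)=(A_\sigma/R^3)k(k^2-1)$ is strictly increasing on $k\geq 1$; and $b(1)=0$ decouples row $1$ from row $2$. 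Because the claimed formulas for $S$ and $S^{-1}$ have triangular support (with one special entry each for the column $j=1$ and the row $k=2$), all matrix products arising are finite sums and no convergence issue appears.

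For $MS = S\Lambda$, I would verify column by column that $S_{\cdot,j}$ is a right eigenvector of $M$ with eigenvalue $-a(j)$. For $j=1$, using $a(1)=b(1)=0$, the eigenvector equation collapses to $c(1)S_{1,1} = a(2)S_{2,1}$ together with $b(k)S_{k+1,1}=a(k)S_{k,1}$ for $k\geq 3$, whose unique normalized solution ($S_{1,1}=1$) is precisely the stated $S_{2,1}=c(1)/a(2)$, $S_{k,1}=0$ for $k\geq 3$. For $j\geq 2$, row $1$ of the equation forces $S_{1,j}=0$; each row $k\geq 2$ reduces to the recursion $b(k)S_{k+1,j} = (a(k)-a(j))S_{k,j}$, which at $k=j$ kills the tail, giving $S_{k,j}=0$ for $k>j$; normalizing $S_{j,j}=1$ and back-solving reproduces the product formula.

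For $S^{-1}S = \operatorname{Id}$, the cleanest approach is to verify that rows of the proposed $S^{-1}$ are left eigenvectors of $M$: row $1$ annihilates $M$ thanks to $b(1)=0$, while row $k\geq 2$ satisfies the analogous forward recursion $u_{j+1} = b(j)/(a(j+1)-a(k))\,u_j$ together with the boundary condition $u_{k-1}=0$ for $k\geq 3$ or $u_1 = -c(1)/a(2)$ for $k=2$, from which the alternating-sign product follows. Once this is in hand, the distinctness of eigenvalues gives $(S^{-1}S)_{k,j}=0$ for $k\neq j$ via the standard orthogonality of left and right eigenvectors for different eigenvalues, while the diagonal entries equal $1$ because the supports of row $k$ of $S^{-1}$ and column $k$ of $S$ overlap in exactly one index (at $k$ itself for $k\geq 2$, and at $1$ for $k=1$) where both equal $1$. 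The main bookkeeping that requires explicit care is the interaction between column $j=1$ and row $k=2$ through the extra entry $c(1)$; the relevant cancellation $S^{-1}_{2,1}S_{1,1} + S^{-1}_{2,2}S_{2,1} = -c(1)/a(2)+c(1)/a(2)=0$ is immediate, and once verified completes the proof.
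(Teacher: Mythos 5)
Your proposal is correct, and it reaches the inverse identity by a genuinely different route from the paper. The paper proves $\sum_{j}S^{-1}_{k,j}S_{j,m}=\delta_{k,m}$ by direct computation: it multiplies the two product formulas, pulls out the common factor $\prod b(k+n)$, and shows the remaining alternating sum telescopes to zero; the diagonalization is then obtained separately from the recursion $-a(j)S^{-1}_{k,j}+b(j-1)S^{-1}_{k,j-1}=-a(k)S^{-1}_{k,j}$, which is exactly your left-eigenvector identity $S^{-1}M=\Lambda S^{-1}$. You instead verify both $MS=S\Lambda$ and $S^{-1}M=\Lambda S^{-1}$ (each a one-step check on the product formulas plus the exceptional entries at $(2,1)$, using $a(1)=b(1)=0$), and then get $S^{-1}S=\operatorname{Id}$ essentially for free from orthogonality of left and right eigenvectors at the distinct eigenvalues $-a(k)$, plus the single-index support overlap on the diagonal; since each column of $S$ has finite support, all sums are finite and the associativity $(uM)v=u(Mv)$ underlying the orthogonality argument is unproblematic. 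This trades the paper's telescoping identity for an extra (easy) eigenvector verification and has the advantage of explaining where both formulas come from. Two small points to tighten: your ``forward recursion'' for the rows of $S^{-1}$ degenerates at $j=k$ (where $a(j)-a(k)=0$), so the correct statement there is that the eigenvector equation reduces to $b(k-1)S^{-1}_{k,k-1}=0$, which holds since $S^{-1}_{k,k-1}=0$ for $k\ge 3$ and $b(1)=0$ for $k=2$, with the product formula then generated from the normalization $S^{-1}_{k,k}=1$ for $j>k$; and to call $S$ \emph{the} inverse you should add, as the paper does, that $S^{-1}$ has trivial kernel, so the right inverse $S^{-1}S=\operatorname{Id}$ you establish is automatically two-sided.
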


We remark that since $\prod_{l=1}^{0}=1$ by definition then $S_{k,k} = S^{-1}_{k,k} =1$ when $j=k$ for all $k\ge 1$.   We also have the following lemma which gives uniform bounds for the operators $S$ and $S^{-1}$.

\begin{lemma}\label{lemmaS}
    The operator norms in $\ell^1$ of the linear operators $S$ and $S^{-1}$ satisfy the following bounds
    \begin{equation}\label{CSbounds}
        \begin{aligned}
        \|S^{-1}\|_{\ell^1\rightarrow \ell^1}\leq C_{S},\qquad  \|S\|_{\ell^1\rightarrow \ell^1}\leq C_{S},
        \end{aligned}
    \end{equation}
with $C_S = C_S(A_\mu,\frac{|A_\rho| R^2}{A_\sigma})$ where
\begin{equation*}
    C_S
    \eqdef \max\Big\{1+\frac{1}{4}\big(1\!-\!A_\mu\big)\frac{|A_\rho| R^2}{A_\sigma}\Big(\frac34-\log{2}\Big),6\frac{I_3\Big(2\sqrt{\big(1\!+\!A_\mu\big)\frac{|A_\rho| R^2}{A_\sigma}}\Big)}{\Big(\big(1\!+\!A_\mu\big)\frac{|A_\rho| R^2}{A_\sigma}\Big)^{3/2}}\Big\}.
\end{equation*}
\end{lemma}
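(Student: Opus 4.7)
The plan is to exploit the fact that the $\ell^1 \to \ell^1$ operator norm of a matrix $A$ equals the supremum over columns of the $\ell^1$ norms, $\|A\|_{\ell^1\to\ell^1} = \sup_j \sum_k |A_{k,j}|$. Since both $S$ and $S^{-1}$ are upper triangular away from the single entry at position $(2,1)$, the column $j=1$ will be handled separately from the columns $j \geq 2$. For $j=1$, direct substitution of $a(2) = 6A_\sigma/R^3$ and the formula for $c(1)$ from \eqref{coeffs} yields
\begin{equation*}
\sum_{k \geq 1} |S^{-1}_{k,1}| = 1 + \frac{|c(1)|}{a(2)} = 1 + \frac{1}{4}(1-A_\mu)\frac{|A_\rho|R^2}{A_\sigma}\left(\frac{3}{4} - \log 2\right),
\end{equation*}
and the same value arises for $S$ since $S_{2,1} = -S^{-1}_{2,1}$. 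This reproduces the first argument of the max defining $C_S$.

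For columns $j \geq 2$ and $2 \leq k \leq j$, I would use the factorization $a(m) - a(k) = (A_\sigma/R^3)(m-k)(m^2+mk+k^2-1)$ together with the explicit formula $|b(m-1)| = (1+A_\mu)|A_\rho|R^{-1} m^2(m-2)/((m-1)(m+1))$ to rewrite the product in Proposition \ref{diagonalization} as
\begin{equation*}
|S^{-1}_{k,j}| = \gamma^{j-k}\prod_{m=k+1}^{j} \frac{m^2(m-2)}{(m-1)(m+1)(m-k)(m^2+mk+k^2-1)},
\end{equation*}
where $\gamma := (1+A_\mu)|A_\rho|R^2/A_\sigma$. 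The telescoping identities $\prod_{m=k+1}^j m/(m-1) = j/k$, $\prod_{m=k+1}^j m/(m+1) = (k+1)/(j+1)$, and $\prod_{m=k+1}^j (m-k) = (j-k)!$ simplify the rational factors, and the lower bound $m^2+mk+k^2-1 \geq 3mk$ (from AM--GM) controls the remaining quadratic factor. The goal is then to prove the pointwise estimate
\begin{equation*}
|S^{-1}_{k,j}| \leq \frac{6\,\gamma^{j-k}}{(j-k)!\,(j-k+3)!},
\end{equation*}
and to sum over $k \in \{2,\ldots, j\}$ with the substitution $n = j-k$, obtaining
\begin{equation*}
\sum_{k=2}^{j} |S^{-1}_{k,j}| \leq \sum_{n=0}^{\infty} \frac{6\,\gamma^n}{n!\,(n+3)!} = \frac{6\, I_3(2\sqrt{\gamma})}{\gamma^{3/2}},
\end{equation*}
from the series expansion of the modified Bessel function $I_3$, matching the second argument of the max in $C_S$. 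The analysis of $S_{k,j}$ is parallel: its denominators become $a(j)-a(m) = (A_\sigma/R^3)(j-m)(j^2+jm+m^2-1)$ with $m = k,\ldots,j-1$, leading to the same pointwise bound and the same series sum.

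The main obstacle is proving the pointwise estimate $|S^{-1}_{k,j}| \leq 6\gamma^{j-k}/[(j-k)!(j-k+3)!]$ uniformly in $j \geq k \geq 2$. Direct computation verifies the diagonal case $j=k$ with equality and the cases $j-k \in \{1,2\}$ with substantial margin, and I expect to handle the general case by induction on $n = j-k$ using the recursion $S^{-1}_{k,j+1}/S^{-1}_{k,j} = -b(j)/(a(j+1)-a(k))$ combined with the telescoping identities and the AM--GM lower bound stated above. The appearance of the factor $6 = 3! = \Gamma(4)$ and the shift by $3$ in the Bessel index trace back to the fact that $a(k)$ grows cubically in $k$ while $b(k)$ grows only quadratically, so the off-diagonal contributions decay one full power faster than one might naively expect.
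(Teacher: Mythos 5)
Your overall architecture is exactly the paper's: compute the $\ell^1\to\ell^1$ norm via column sums, treat the exceptional column $j=1$ by the explicit value $1+|c(1)|/a(2)$, establish the pointwise bound $|S^{-1}_{k,j}|\le 6\gamma^{j-k}/[(j-k)!\,(j-k+3)!]$ with $\gamma=(1+A_\mu)|A_\rho|R^2/A_\sigma$, and sum the series to $6I_3(2\sqrt{\gamma})/\gamma^{3/2}$. The $j=1$ computation, the explicit product formula for $|S^{-1}_{k,j}|$, the telescoping identities, the Bessel summation, and the parallel treatment of $S$ all coincide with the paper (your column sums are the paper's $\lambda(l)$ and $\sigma(l)$).

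The gap is the lower bound $m^2+mk+k^2-1\ge 3mk$. It is true (it amounts to $(m-k)^2\ge1$), but it is too weak: for the worst row $k=2$ it bounds the quadratic factor below only by $6m$, linear in the running index $m$, whereas the factor genuinely grows like $m^2$; the product of denominators then grows like $(3k)^n n!$ instead of the $n!\,(n+3)!$ you need. Concretely, your chain of inequalities gives, for $k=2$ and $n=j-2$,
\begin{equation*}
|S^{-1}_{2,j}|\le\frac{\gamma^n}{n!}\cdot\frac{3}{(j^2-1)\,6^n},
\end{equation*}
and comparing with the target $6\gamma^n/[n!\,(n+3)!]$ requires $(n+2)!\le 2(n+1)6^n$, which fails for $n\ge 12$. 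The proposed induction fares no better with this tool: weakening the denominator by AM--GM, the induction step for $k=2$ reduces to $(j^2-1)/(6j)\le1$, which fails for $j\ge7$. (The induction with the exact ratio $|b(j)|/|a(j+1)-a(k)|$ does close, but that is not what you wrote.) The repair is the paper's bound \eqref{index.lower}: keep the quadratic growth in the larger index, i.e.\ use $m^2+mk+k^2-1\ge m^2$, so that this $m^2$ cancels the $m^2$ in the numerator of $|b(m-1)|$, leaving
\begin{equation*}
|S^{-1}_{k,j}|\le\gamma^{n}\prod_{m=k+1}^{j}\frac{m-2}{(m-1)(m+1)(m-k)}\le\gamma^{n}\prod_{i=1}^{n}\frac{1}{i(i+3)}=\frac{6\,\gamma^{n}}{n!\,(n+3)!},
\end{equation*}
where the last product bound uses $m-2\le m-1$ and $m+1\ge(m-k)+3$ for $k\ge2$. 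With that substitution the rest of your argument goes through.
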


In the constant $C_S$ above we used the modified Bessel function of the first kind of order three.  In general, for an integer $n\ge 0$, we define
\begin{equation}\label{bessel.In.def}
I_n(z) \eqdef \left(\frac{z}{2} \right)^n
 \sum_{j=0}^{\infty}
 \frac{(z/2)^{2j}}{j!(j+n)!}.
\end{equation}
We will prove Proposition \ref{diagonalization} and Lemma \ref{lemmaS} after we finish the proof of the main global energy inequality.

\begin{remark}\label{analytic.norm.remark.S}
The results in Proposition \ref{diagonalization} and Lemma \ref{lemmaS} also hold in the space $\ell^1$ with weight $e^{\nu(t)|k|}|k|^s$ for any $s\ge 0$ without any change to the proof; i.e.
\begin{equation*}
    \|z\|_{\fsonenu}
    \leq 
     C_S\|y\|_{\fsonenu},
     \quad 
         \|y\|_{\fsonenu}
    \leq 
     C_S\|z\|_{\fsonenu},
\end{equation*}
where $y(k)$ and $z(k)$ are defined in Proposition \ref{diagonalization}.
\end{remark}

According to Proposition \ref{diagonalization}, we apply the linear transformation $S^{-1}$ to \eqref{thetaequation} to rewrite the system with the linear part in diagonal form
\begin{equation}\label{thetaeqdiag}
    \begin{aligned}
   (S^{-1}\widehat{\theta})_t(k)&=-\frac{A_\sigma}{R^3} k(k^2-1)(S^{-1}\widehat{\theta})(k)+\frac{2\pi}{L(t)}(S^{-1}\widehat{N})(k)\\
   &\quad-\frac{A_\sigma}{R^3} (S^{-1}u)(k)\Big(R^3\Big(\frac{2\pi}{L(t)}\Big)^3-1\Big)\\
   &\quad-\big(1+A_\mu\big)\frac{A_\rho}{R}e^{-i\widehat{\vartheta}(0)}(S^{-1}v)(k)\Big(R\frac{2\pi}{L(t)}-1\Big)\\
   &\quad+(1-A_\mu)\frac{A_\rho}{R}\frac{3}{2}\left(\frac34-\log{2}\right)e^{i\widehat{\vartheta}(0)}\Big(R\frac{2\pi}{L(t)}-1\Big)(S^{-1}w)(k),
    \end{aligned}
\end{equation}
where we introduced the notation
\begin{equation*}
    \begin{aligned}
    u(k)=k(k^2-1)\widehat{\theta}(k),\quad v(k)=\frac{(k^2-1)(k+1)}{k(k+2)}\widehat{\theta}(k+1),\quad w(k)=1_{k=2}\widehat{\theta}(1).
    \end{aligned}
\end{equation*}
Taking into account that $\theta(\alpha)$ is a real-valued function, we can write the norm \eqref{fsonenorm} in terms of the positive frequencies alone
\begin{equation*}
\begin{aligned}
    \|\theta\|_{\fhonenu}=\sum_{k\in\mathbb{Z}}e^{\nu(t)|k|}|k|^{1/2}|\widehat{\theta}(k)|=2\sum_{k\geq1}e^{\nu(t)k}k^{1/2}|\widehat{\theta}(k)|.
\end{aligned}
\end{equation*}
Define
\begin{equation*}
    \begin{aligned}
    \widehat{y}(k)=(S^{-1}\widehat{\theta})(k),\quad \widehat{y}(-k)=\overline{\widehat{y}(k)},\quad k\geq1, \quad \widehat{y}(0)=0,\quad y=\mathcal{F}^{-1}\widehat{y},
    \end{aligned}
\end{equation*}
and consider the evolution of the quantity
\begin{equation*}
    \|y\|_{\fhonenu}=2\sum_{k\geq1} e^{\nu(t)k}k^{1/2}|\widehat{y}(k)|.
\end{equation*}
Taking the derivative in time we obtain that
\begin{equation*}
\begin{aligned}
    \frac{d}{dt}\|y\|_{\fhonenu}\!=\!2\!\sum_{k\geq1}\!\nu'(t) k^{3/2}e^{\nu(t)k}|\widehat{y}(k)|\!+\!2\sum_{k\geq1}e^{\nu(t)k}k^{1/2}\frac12\frac{\widehat{y}_t(k)\overline{\widehat{y}(k)}\!+\!\widehat{y}(k)\overline{\widehat{y}_t(k)}}{|\widehat{y}(k)|}.
\end{aligned}
\end{equation*}
Therefore, substituting \eqref{thetaeqdiag}, one finds the following equation
\begin{equation}\label{auxx}
\begin{aligned}
    \frac{d}{dt}\|y\|_{\fhonenu}\!&=\!2\!\sum_{k\geq1}\!\nu'(t) k^{3/2}e^{\nu(t)k}|\widehat{y}(k)|-2\frac{A_\sigma}{R^3}\!\!\sum_{k\geq1}\!e^{\nu(t)k}k^{3/2}(k^2-1)|\widehat{y}(k)|
    \\
    &\quad+2\frac{2\pi}{L(t)}\sum_{k\geq1}e^{\nu(t)k}k^{1/2}\frac12\frac{(S^{-1}\widehat{N})(k)\overline{\widehat{y}(k)}+\overline{(S^{-1}\widehat{N})(k)}\widehat{y}(k)}{|\widehat{y}(k)|}\\
    &\quad+Y_u+Y_v+Y_w,
\end{aligned}
\end{equation}
where
\begin{equation*}
    Y_u=-2\frac{A_\sigma}{R^3}\Big(R^3\Big(\frac{2\pi}{L(t)}\Big)^3\!-\!1\Big)\sum_{k\geq1}\!\!e^{\nu(t)k}k^{1/2}\frac{(S^{-1}u)(k)\overline{\widehat{y}(k)}+\overline{(S^{-1}u)(k)}\widehat{y}(k)}{2|\widehat{y}(k)|},
\end{equation*}
\begin{equation*}
\begin{aligned}
    Y_v&=-2(1\!+\!A_\mu)\frac{A_\rho}{R}\Big(R\frac{2\pi}{L(t)}\!-\!1\Big)
    \\
    &\quad\times\sum_{k\geq1}\!\!e^{\nu(t)k}k^{1/2}\frac{e^{-i\widehat{\vartheta}(0)}(S^{-1}v)(k)\overline{\widehat{y}(k)}\!+\!e^{i\widehat{\vartheta}(0)}\overline{(S^{-1}v)(k)}\widehat{y}(k)}{2|\widehat{y}(k)|},
\end{aligned}
\end{equation*}
\begin{equation*}
\begin{aligned}
    Y_w&=2(1\!-\!A_\mu)\frac{A_\rho}{R}\Big(R\frac{2\pi}{L(t)}\!-\!1\Big) \frac32\Big(\frac34\!-\!\log{2}\Big)
    \\
    &\quad\times\sum_{k\geq1}e^{\nu(t)k}k^{1/2}\frac12\frac{e^{+i\widehat{\vartheta}(0)}(S^{-1}w)(k)\overline{\widehat{y}(k)}\!+\!e^{-i\widehat{\vartheta}(0)}\overline{(S^{-1}w)(k)}\widehat{y}(k)}{|\widehat{y}(k)|}.
    \end{aligned}
\end{equation*}
We have the bounds
\begin{equation*}
    \begin{aligned}
    |Y_u|&\leq 2\frac{A_\sigma}{R^3}\Big|R^3\Big(\frac{2\pi}{L(t)}\Big)^3-1\Big|\sum_{k\geq1}\!\!e^{\nu(t)k}k^{1/2}|(S^{-1}u)(k)|,
    \\
    |Y_v|&\leq 2(1+A_\mu)\frac{|A_\rho|}{R}\Big|R\frac{2\pi}{L(t)}-1\Big|\!\!\sum_{k\geq1}\!\!e^{\nu(t)k}k^{1/2}|(S^{-1}v)(k)|,
    \\
    |Y_w|&\leq 2(1-A_\mu)\frac{|A_\rho|}{R}\Big|R\frac{2\pi}{L(t)}-1\Big| \frac32\Big(\frac34-\log{2}\Big)\sum_{k\geq1}e^{\nu(t)k}k^{1/2}|(S^{-1}w)(k)|.
    \end{aligned}
\end{equation*}
Noticing that $(S^{-1}w)(k)=w(k)=1_{k=2}\widehat{\theta}(1)$ and using the inequalities given by Lemma \ref{lemmaS}, we have
\begin{equation*}
    \begin{aligned}
    |Y_u|&\leq2\frac{A_\sigma}{R^3}\Big|R^3\Big(\frac{2\pi}{L(t)}\Big)^3-1\Big|C_{S}(A_\mu,\frac{|A_\rho| R^2}{A_\sigma})\sum_{k\geq2}e^{\nu(t)k}k^{1/2}|u(k)|,
    \\
    |Y_v|&\leq2(1+A_\mu)\frac{|A_\rho|}{R}\Big|R\frac{2\pi}{L(t)}-1\Big|C_{S}(A_\mu,\frac{|A_\rho| R^2}{A_\sigma})\sum_{k\geq2}e^{\nu(t)k}k^{1/2}|v(k)|,
    \\
    |Y_w|&\leq 2^{1/2}3(1-A_\mu)\frac{|A_\rho|}{R}\Big|R\frac{2\pi}{L(t)}-1\Big| \Big(\frac34-\log{2}\Big)e^{2\nu(t)}|\widehat{\theta}(1)|.
    \end{aligned}
\end{equation*}
Since 
\begin{equation*}
    k(k^2-1)\leq k^3,\quad \text{for }k\geq2,\qquad \frac{(k^2-1)(k+1)}{k^{1/2}(k+2)}\leq (k+1)^{3/2}, \quad \text{for }k\geq 1,
\end{equation*}
it holds that
\begin{equation*}
    |u(k)|\leq  k^3 \left| \widehat{\theta}(k)\right|, \qquad |v(k)|\leq (k+1)\widehat{\theta}(k+1).
\end{equation*}
Now we use the inequalities for $L(t)$ from  \eqref{Lestimates} (see Section \ref{sec:Lestimate}) 
to obtain that
\begin{equation}\label{Ybounds}
    \begin{aligned}
    |Y_u|&\leq 2\frac{A_\sigma}{R^3}C_{S}C_{40}\thetazeronenu \sum_{k\geq2}e^{\nu(t)k}k^{\frac72}|\widehat{\theta}(k)|,\\
    |Y_v|&\leq2(1+A_\mu)\frac{|A_\rho|}{R}
    C_{S}
    C_{39}\thetazeronenu  
    \sum_{k\geq2}e^{\nu(t)k}(k+1)^{3/2}|\widehat{\theta}(k+1)|,\\
    |Y_w|&\leq 2^{1/2}3(1-A_\mu)\frac{|A_\rho|}{R}C_{39}\thetazeronenu \Big(\frac34-\log{2}\Big)e^{2\nu(t)}|\widehat{\theta}(1)|.
    \end{aligned}
\end{equation}
Above we wrote $C_{S}=C_{S}(A_\mu,\frac{|A_\rho| R^2}{A_\sigma})$.
Going back to \eqref{auxx} with the bound \eqref{Lbound} and the inequality
\begin{equation*}
    k^{3/2}(k^2-1)\geq \frac34 k^{7/2},\quad \text{for }k\geq 2,
\end{equation*}
we find using also \eqref{CSbounds} that
\begin{equation}\label{auxx2}
    \begin{aligned}
    \frac{d}{dt}\|y\|_{\fhonenu}&\leq \nu'(t)\|y\|_{\fthreehonenu} -\frac32\frac{A_\sigma}{R^3}\sum_{k\geq2}e^{\nu(t)k}k^{7/2}|\widehat{y}(k)|
    \\
    &\quad+C_{S}(A_\mu,\frac{|A_\rho| R^2}{A_\sigma})C_{37}^{-1}\frac{1}{R}\|N\|_{\fhonenu}+|Y_u|+|Y_v|+|Y_w|
    .
    \end{aligned}
\end{equation}
We will next control the $k=1$ frequency in the dissipation part above.
To that end recall from Proposition \ref{IFTprop} that, if $\thetazerone < \frac12\log{\big(\frac54\big)}$, then we have 
\begin{equation*}
    2|\widehat{\theta}(1)|\leq 2C_I(\thetazeronenu)\thetazeronenu\sum_{k\geq2}|\widehat{\theta}(k)|,
\end{equation*}
which implies
\begin{equation*}
\begin{aligned}
    \thetasonenu &=2  e^{\nu(t)} |\widehat{\theta}(1)|+2\sum_{k\geq2}e^{\nu(t)k}k^s|\widehat{\theta}(k)|\\
    &\leq 2\left(C_I(\thetazeronenu)\thetazeronenu+1\right)\sum_{k\geq2}e^{\nu(t)k}k^s|\widehat{\theta}(k)|,\qquad s\geq0.
\end{aligned}
\end{equation*}  
We will find an analogous inequality in terms of $y$.  We remark that at the end of the calculation the constant $K$ in \eqref{condition} and in \eqref{Kdef} will be smaller than $\frac12\log{\big(\frac54\big)}$.

Now since $\widehat{y}(1)=\widehat{\theta}(1)$, for $s\geq0$ we have that
\begin{equation*}
\begin{aligned}
    \|y\|_{\dot{\mathcal{F}}^{s,1}_\nu}&=2  e^{\nu(t)} |\widehat{\theta}(1)|+2\sum_{k\geq2}e^{\nu(t)k}k^s|\widehat{y}(k)|\\
    &\leq 2C_I(\thetazeronenu)\thetazeronenu\sum_{k\geq2}e^{\nu(t)k}k^s|\widehat{\theta}(k)|+2\sum_{k\geq2}e^{\nu(t)k}k^s|\widehat{y}(k)|,
\end{aligned}
\end{equation*}
thus substituting $\widehat{\theta}(k)=(S\widehat{y})(k)$ and using \eqref{equivaux2} with \eqref{sigmabound} and \eqref{bessel.In.def}, we obtain
\begin{equation*}
    \begin{aligned}
    \|y\|_{\dot{\mathcal{F}}^{s,1}_\nu}&\leq 2C_I(\thetazeronenu)\thetazeronenu e^{2\nu(t)}2^s\Big|\frac{c(1)}{a(2)}\Big||\widehat{y}(1)|\\
    &\quad+2\bigg(C_I(\thetazeronenu)6\frac{I_3\Big(2\sqrt{\big|1\!+\!A_\mu\big|\frac{|A_\rho| R^2}{A_\sigma}}\Big)}{\Big(\big|1\!+\!A_\mu\big|\frac{|A_\rho| R^2}{A_\sigma}\Big)^{3/2}}\thetazeronenu+1\bigg)\sum_{k\geq2}e^{\nu(t)k}k^s|\widehat{y}(k)|.
    \end{aligned}
\end{equation*}
Subtracting the $\widehat{y}(1)$ term and using that  $e^{\nu(t)}\leq e^{\nu_0}$ for all $t\geq0$, we find
\begin{equation}\label{ybound.imlicit}
    \begin{aligned}
    \|y\|_{\dot{\mathcal{F}}^{s,1}_\nu}&\leq 2C_y\sum_{k\geq2}e^{\nu(t)k}k^s|\widehat{y}(k)|, \qquad s\geq0,
    \end{aligned}
\end{equation}
where using also \eqref{coeffs} we define
\begin{equation}\label{Cy}
    \begin{aligned}
    C_y&=\frac{C_I(\thetazeronenu)6\frac{I_3\Big(2\sqrt{\big|1\!+\!A_\mu\big|\frac{|A_\rho| R^2}{A_\sigma}}\Big)}{\Big(\big|1\!+\!A_\mu\big|\frac{|A_\rho| R^2}{A_\sigma}\Big)^{3/2}}\thetazeronenu+1}{1-C_I(\thetazeronenu) 2^s e^{\nu_0}\frac{1}{4}\big|1\!-\!A_\mu\big|\frac{|A_\rho| R^2}{A_\sigma}\Big(\frac34-\log{2}\Big)\thetazeronenu}.
    \end{aligned}
\end{equation}
This is the bound for $\|y\|_{\dot{\mathcal{F}}^{s,1}_\nu}$ that we will use in \eqref{auxx2}.

Notice that in \eqref{Ybounds}, regarding $Y_w$, that $3 \sqrt{2} \left(\frac{3}{4}-\log(2)\right) \le 1.91\le 2$.  Also in Lemma \ref{lemmaS} we have $C_S \ge 1$.  Thus, from \eqref{Ybounds}, we have 
$$
|Y_v|+|Y_w|
\leq
    2\left(1+|A_\mu|\right)\frac{|A_\rho|}{R}
    C_{S}
    C_{39}
    e^{\nu(t)}\thetazeronenu  
    \sum_{k\geq1}e^{\nu(t)k}k^{3/2}|\widehat{\theta}(k)|,
$$
Now we go back to \eqref{auxx2}, use \eqref{ybound.imlicit}, and substitute in the bounds for $Y_u, Y_v, Y_w$ \eqref{Ybounds} to obtain
\begin{equation*}
\begin{aligned}
    \frac{d}{dt}\|y\|_{\fhonenu}&\leq \nu'(t)\|y\|_{\fthreehonenu} -\frac34\frac{A_\sigma}{R^3}C_y^{-1}\|y\|_{\dot{\mathcal{F}}^{\frac72,1}_\nu}
    \\
    &\quad+C_{S} 
    C_{37}^{-1}\frac{1}{R}\|N\|_{\fhonenu}\\
    &\quad
    +2\frac{A_\sigma}{R^3}C_{S} 
    C_{40}\thetazeronenu \sum_{k\geq2}e^{\nu(t)k}k^{\frac72}|\widehat{\theta}(k)|
    \\
    &\quad 
    +
    2\left(1+|A_\mu|\right)\frac{|A_\rho|}{R}
    C_{S}
    C_{39}
    e^{\nu(t)}\thetazeronenu  
    \sum_{k\geq1}e^{\nu(t)k}k^{3/2}|\widehat{\theta}(k)|,
\end{aligned}
\end{equation*}    
where we have combined the bounds for $Y_v$ and $Y_w$ as above.
The reverse inequalities \eqref{CSbounds} and the embeddings \eqref{embed} give that
\begin{equation}\label{balance}
\begin{aligned}
    \frac{d}{dt}\|y\|_{\fhonenu}&\leq  \frac{A_\sigma}{R^3}\bigg(-\frac34C_y^{-1}+\nu'(t)\frac{R^3}{A_\sigma}+2\big(C_{S}\big)^2C_{40}\|y\|_{\dot{\mathcal{F}}^{0,1}_\nu} \\
    &\quad+2(1+|A_\mu|)\frac{|A_\rho|R^2}{A_\sigma} \big(C_{S}\big)^2C_{39}e^{\nu(t)}\|y\|_{\dot{\mathcal{F}}^{0,1}_\nu}\bigg)\|y\|_{\dot{\mathcal{F}}^{\frac72,1}_\nu}\\
    &+ 
    C_{S} C_{37}^{-1}\frac{1}{R}\|N\|_{\fhonenu}.
\end{aligned}
\end{equation}    
Next, we will use \eqref{Nbound} to control the nonlinear term $\|N\|_{\fhonenu}$. Together with \eqref{Lbound}, we obtain
\begin{equation*}
    \begin{aligned}
        \|N\|_{\fhonenu}&\leq \frac{A_\sigma}{R^2}C_{35}C_{37}^{-2}\thetahonenu\thetasevenhonenu
        +|A_\rho|e^{\nu(t)}C_{36}\thetahonenu\thetasevenhonenu.
    \end{aligned}
\end{equation*}
Thus using also \eqref{CSbounds} we have
\begin{equation}\label{nonlinearaux}
    \begin{aligned}
        \|N\|_{\fhonenu}&\leq \frac{A_\sigma}{R^2}\big(C_{S}\big)^2\Big(C_{35}C_{37}^{-2}
        +\frac{|A_\rho|R^2}{A_\sigma}e^{\nu(t)}C_{36}\Big)\|y\|_{\dot{\mathcal{F}}^{\frac12,1}_\nu}\|y\|_{\dot{\mathcal{F}}^{\frac72,1}_\nu}.
    \end{aligned}
\end{equation}
Substitution of \eqref{nonlinearaux}  into \eqref{balance}, and using once more \eqref{embed}, provides that
\begin{equation*}
\begin{aligned}
    \frac{d}{dt}\|y\|_{\fhonenu}&\leq - \frac{A_\sigma}{R^3}\bigg(\frac34C_y^{-1}-\nu'(t)\frac{R^3}{A_\sigma}-2\big(C_{S}\big)^2C_{40}\|y\|_{\dot{\mathcal{F}}^{\frac12,1}_\nu} \\
    &-2(1+|A_\mu|)\frac{|A_\rho|R^2}{A_\sigma} \big(C_{S}\big)^2C_{39}e^{\nu(t)}\|y\|_{\dot{\mathcal{F}}^{\frac12,1}_\nu} \\   &-\big(C_{S}\big)^3C_{37}^{-1}\Big(C_{35}C_{37}^{-2}\!+\!\frac{|A_\rho|R^2}{A_\sigma}e^{\nu(t)}C_{36}\Big)\|y\|_{\dot{\mathcal{F}}^{\frac12,1}_\nu}
    \bigg)\|y\|_{\dot{\mathcal{F}}^{\frac72,1}_\nu}.
\end{aligned}
\end{equation*}  
Since $C_y(\thetazeronenu)$ in \eqref{Cy}, $C_{35}(\thetazeronenu)$ and $C_{36}(\thetazeronenu)$ in \eqref{C35C36}, $\big(C_{37}(\thetazeronenu)\big)^{-1}$ in \eqref{C37C38}, $C_{39}(\thetazeronenu)$ and $C_{40}(\thetazeronenu)$ in \eqref{C39C40}, are increasing functions of the argument, we can bound all of them by evaluating at the bigger quantity $C_S\|y\|_{\dot{\mathcal{F}}^{\frac12,1}_\nu}$ since 
$\|\theta\|_{\dot{\mathcal{F}}^{0,1}_\nu} \le \|\theta\|_{\dot{\mathcal{F}}^{\frac12,1}_\nu}\le C_S\|y\|_{\dot{\mathcal{F}}^{\frac12,1}_\nu}$.
Here we suppress the dependency on $A_\mu$, $|A_\rho|R^2/A_\sigma$ from $C_S$. 

For clarity of notation, in formulas \eqref{dissipation} and \eqref{C41} below, we understand that all of these functions are evaluated at $C_S\|y\|_{\dot{\mathcal{F}}^{\frac12,1}_\nu}$.
We conclude that
\begin{equation}\label{balance2}
\begin{aligned}
    \frac{d}{dt}\|y\|_{\fhonenu}&\leq -\frac{A_\sigma}{R^3} \dissconstant \|y\|_{\dot{\mathcal{F}}^{\frac72,1}_\nu},
\end{aligned}
\end{equation}
with
\begin{equation}\label{dissipation}
    \begin{aligned}
       \dissconstant\Big(\|y\|_{\fhonenu},\frac{|A_\rho|R^2}{A_\sigma},
       A_\mu,\frac{A_\sigma}{R^3},\nu\Big)&=\frac34C_y^{-1}-\nu'(t)\frac{R^3}{A_\sigma}-C_{41}\|y\|_{\dot{\mathcal{F}}^{\frac12,1}_\nu},
    \end{aligned}
\end{equation}
where
\begin{multline}\label{C41}
    C_{41}=
    \\
    C_S^2\Big(
       2C_{40}+2(1+|A_\mu|)\frac{|A_\rho|R^2}{A_\sigma}C_{39}e^{\nu(t)}+C_{S}C_{37}^{-1}\Big(C_{35}C_{37}^{-2}\!+\!\frac{|A_\rho|R^2}{A_\sigma}e^{\nu(t)}C_{36}\Big)\Big).
\end{multline}
Note that we can initially choose $\nu_0>0$ in \eqref{nu} to be arbitrarily small, and that $\nu'(t) = \frac{\nu_0}{(1+t)^2}\le \nu_0$.  

Finally, suppose that the following condition holds initially
\begin{equation}\label{dissipationcond}
     \frac34C_y^{-1}-C_{41}\|y\|_{\dot{\mathcal{F}}^{\frac12,1}_\nu}>0,
\end{equation}
where $C_y$ was defined in \eqref{Cy}.
For $\nu_0$ small enough, using \eqref{balance2} and the fact that $C_{41}$ decreases as $\|y\|_{\dot{\mathcal{F}}^{\frac12,1}_\nu}$ decreases, then this condition will be propagated in time.   Thus it holds that
\begin{equation}\label{auxbal}
    	\|y\|_{\fhonenu}(t)+ \frac{A_\sigma}{R^3}  \dissconstant
       \!\int_0^t\! \|y\|_{\dot{\mathcal{F}}^{\frac72,1}_\nu}(\tau) d\tau \leq \|y_0\|_{\fhone}.
\end{equation}
Above $\dissconstant=\dissconstant\Big(\|y_0\|_{\fhonenu},\frac{|A_\rho|R^2}{A_\sigma}, A_\mu,\frac{A_\sigma}{R^3},\nu\Big)$.
Now since $C_y$ and $C_{41}$ are monotonically increasing in $\|y\|_{\dot{\mathcal{F}}^{\frac12,1}_\nu}$, the positivity condition \eqref{dissipationcond} is equivalent to a medium-size condition on the initial data
\begin{equation*}
    \|y_0\|_{\dot{\mathcal{F}}^{\frac12,1}}<h\Big(\frac{|A_\rho|R^2}{A_\sigma}, A_\mu\Big),
\end{equation*}
where the function $h$ is implicitly defined via \eqref{dissipationcond}. 
Recalling the bound for the inverse relation $\widehat{\theta}(k)=(S\widehat{y})(k)$ in \eqref{CSbounds}, and denoting \begin{equation}\label{Kdef}
    K\Big(\frac{|A_\rho|R^2}{A_\sigma}, A_\mu\Big)=C_S\big(A_\mu,\frac{|A_\rho|R^2}{A_\sigma}\big) h\Big(\frac{|A_\rho|R^2}{A_\sigma}, A_\mu\Big),
\end{equation}
we obtain the medium-size condition for $\theta_0$ given in \eqref{condition}.  This completes the proof of \eqref{condition} and \eqref{estimatef12}.  The proof of \eqref{decay} follows from applying \eqref{embed} to \eqref{balance2}.

To perform the estimate for $\widehat{\vartheta}(0)$ from \eqref{zeroBoundUniform}, we use \eqref{eqcurve3}$_1$ to obtain
\begin{equation*}
    \widehat{\vartheta}_t(0)=\frac{2\pi}{L(t)}\widehat{T}*\widehat{(1+\theta_\alpha)}(0).
\end{equation*}
Note that $\widehat{U_\alpha}(0)=0$.  We use the splitting for $T$ from \eqref{Tsplit}, and notice from \eqref{u0t0} with \eqref{T0} that 
\begin{equation*}
    \widehat{T_0}(k)=0 \text{ for }|k|\neq 1, \qquad |\widehat{T_0}(k)|\leq \frac{|A_\rho|}2 \text{ for }|k|= 1,
\end{equation*}
thus $\|T_0\|_{\fzerone}\leq A_\rho$. Then we have
\begin{equation*}
    |\widehat{\vartheta}(0)|
    \leq |\widehat{\vartheta}_0(0)|+
    \int_0^t \frac{2\pi}{L(\tau)}\Big(A_\rho\|\theta\|_{\foneone}+\big(\|T_1\|_{\fzerone}+\|T_{\geq2}\|_{\fzerone}\big)\big(1+\|\theta\|_{\foneone}\big)\Big) d \tau.
\end{equation*}
The estimates \eqref{T1aux} and \eqref{T2aux} with $s=0$ in Section \ref{sec:NonLinearEst}, together with the bounds for $L(t)$ in \eqref{Lbound}, give 
\begin{equation*}
    \begin{aligned}
    \|T_1\|_{\fzerone}&\leq \frac{A_\sigma}{R^2}\Big(2\frac{|A_\rho|R^2}{A_\sigma}e^{\nu(t)}
    \thetaoneone\!+\!C_{37}^{-2}\thetatwoone\!+\!2\frac{|A_\rho|R^2}{A_\sigma}\big((1\!+\! 2|A_\mu|)\!+\!2{\crconstant}\big)e^{\nu(t)}\thetazerone\Big),
    \\
    \|T_{\geq2}\|_{\fzerone}&\leq \frac{A_\sigma}{R^2}\Big(\frac{|A_\rho|R^2}{A_\sigma}C_{34}e^{\nu(t)}\thetazerone\thetaoneone\!+\!C_{37}^{-2}C_{33}\thetaoneone\thetatwoone\Big).
    \end{aligned}
\end{equation*}
Therefore, reordering and using \eqref{auxbal}, \eqref{interp} and \eqref{embed}, we conclude that
\begin{equation*}
    \begin{aligned}
    |\widehat{\vartheta}(0)|&\leq |\widehat{\vartheta}_0(0)|+C_{42}\|\theta_0\|_{\fhone},
    \end{aligned}
\end{equation*}
where 
\begin{multline}    \label{C42}
    C_{42}=
    \\ 
    \frac{1}{\dissconstant}C_S^2C_{37}^{-1}\Bigg(\frac{|A_\rho|R^2}{A_\sigma}+C_{37}^{-2}+2\frac{|A_\rho|R^2}{A_\sigma}e^{\nu(t)}\Big(1+\big((1+2|A_\mu|)+2{\crconstant}\big)\Big)
    \\+C_S^2\|\theta_0\|_{\fhone}\bigg(C_{37}^{-2}+C_{37}^{-2}C_{33}(1+C_S\|\theta_0\|_{\fhone})
    \\
    +
    2\frac{|A_\rho|R^2}{A_\sigma}e^{\nu(t)}\Big(1+\big((1+2|A_\mu|)+2{\crconstant}\big)+\frac{1}{2}C_{34}(1+C_S\|\theta_0\|_{\fhone})\Big)\bigg)\Bigg).
\end{multline} 
In $C_{42}$ above  the quantities ($\dissconstant$ in \eqref{dissipation}, $C_{37}$ in \eqref{C37C38}, $C_{33}$ and $C_{34}$ in \eqref{C33C34}) are evaluated at
$C_S\|\theta_0\|_{\fhone}$, with $C_S$ defined in \eqref{CSbounds}.
This completes the proof of \eqref{zeroBoundUniform}.
\end{proof}

We will now prove Proposition \ref{diagonalization}.

\begin{proof}[Proof of Proposition \ref{diagonalization}]
Because $S^{-1}$ is injective and surjective, it suffices to prove that $S$ is a right inverse of $S^{-1}$, i.e., that
\begin{equation}\label{rightinverse.proof}
    \sum_{j\geq1}S^{-1}_{k,j}S_{j,m}=\delta_{k,m},
\end{equation}
where $\delta_{k,m}$ is the Kronecker delta. 
The cases $k=1$ and $k=2, m=1$ are straightforward. For the rest of cases, we have that
\begin{equation*}
    \begin{aligned}
    \sum_{j\geq1}S^{-1}_{k,j}S_{j,m}&=\sum_{j\geq1}(-1)^{j-k} \delta_{j\geq k}\prod_{l=1}^{j-k} \frac{b(k-1+l)}{a(k)-a(k+l)}\delta_{m\geq j}\prod_{n=1}^{m-j}\frac{b(j-1+n)}{a(j-1+n)-a(m)}\\
    &=\sum_{j=k}^m (-1)^{j-k}\prod_{n=1}^{m-j}\prod_{l=1}^{j-k} \frac{b(k-1+l)b(j-1+n)}{(a(k)-a(k+l))(a(j-1+n)-a(m))}.
    \end{aligned}
\end{equation*}
If $m=k$, then clearly $\sum_{j\geq1}S^{-1}_{k,j}S_{j,m}=1$. Now, we notice that
\begin{equation*}
    \begin{aligned}
    \prod_{n=1}^{m-j}\prod_{l=1}^{j-k} b(k\!-\!1\!+\!l)b(j\!-\!1\!+\!n)&=b(k)b(k\!+\!1)\dots b(j\!-\!2)b(j\!-\!1)b(j)b(j\!+\!1)\dots b(m\!-\!1)\\
    &=\prod_{n=0}^{m-k-1}b(k+n)
    \end{aligned}
\end{equation*}
is independent of $j$, and thus comes out of the sum as
\begin{equation*}
    \begin{aligned}
    \sum_{j\geq1}S^{-1}_{k,j}S_{j,m}=\prod_{n=0}^{m-k-1}b(k+n) \sum_{j=k}^m (-1)^{j-k}A(j),
    \end{aligned}
\end{equation*}
where we define
\begin{equation*}
    A(j)=\prod_{n=1}^{m-j}\prod_{l=1}^{j-k} \frac{1}{(a(k)\!-\!a(k\!+\!l))(a(j\!-\!1\!+\!n)\!-\!a(m))}.
\end{equation*}
Changing $n\leftarrow j-k-1+n$, it becomes
\begin{equation*}
    A(j)=\prod_{n=j-k}^{m-k-1}\frac{1}{a(k+n)\!-\!a(m)}\prod_{l=1}^{j-k} \frac{1}{a(k)\!-\!a(k\!+\!l)}.
\end{equation*}
Thus, multiplying and dividing by the product of $a(k+n)-a(m)$ from $n=0$ to $n=j-k-1$, we rewrite it as follows
\begin{equation*}
    \begin{aligned}
    A(j)=\prod_{n=0}^{m-k-1}\frac{1}{a(k+n)-a(m)}\prod_{l=1}^{j-k}\frac{a(k+l-1)-a(m)}{a(k)-a(k+l)}.
    \end{aligned}
\end{equation*}
Since the first product does not depend on $j$, we obtain that
\begin{equation*}
    \begin{aligned}
    \sum_{j\geq1}S^{-1}_{k,j}S_{j,m}=\prod_{n=0}^{m-k-1}\frac{b(k+n)}{a(k+n)-a(m)}\sum_{j=k}^m (-1)^{j-k}\prod_{l=1}^{j-k}\frac{a(k+l-1)-a(m)}{a(k)-a(k+l)}.
    \end{aligned}
\end{equation*}
Changing the index $j\leftarrow j-k$ and denoting $\tilde{m}=m-k$, we are left to show that, for all $\tilde{m}\geq1$,
\begin{equation*}
    \begin{aligned}
    \sum_{j=0}^{\tilde{m}} (-1)^{j}\prod_{l=1}^{j}\frac{a(k+l-1)-a(\tilde{m}+k)}{a(k)-a(k+l)}=0.
    \end{aligned}
\end{equation*}
Writing the sum with a common denominator, the identity that we need to prove is
\begin{equation*}
    \begin{aligned}
    W=\sum_{j=0}^{\tilde{m}} (-1)^{j}\prod_{l=1}^{j}\big(a(k+l-1)-a(\tilde{m}+k)\big)\prod_{l=j+1}^{\tilde{m}}\big(a(k)-a(k+l)\big)=0.
    \end{aligned}
\end{equation*}
We add the first two terms in the sum by noticing that they only differ in one term, which partially cancels as
\begin{equation*}
    \begin{aligned}
    W&=-\big(a(k\!+\!1)\!-\!a(k\!+\!\tilde{m})\big)\prod_{l=2}^{\tilde{m}}\big(a(k)\!-\!a(k\!+\!l)\big)\\
    &+
    \sum_{j=2}^{\tilde{m}} (-1)^{j}\prod_{l=1}^{j}\big(a(k+l-1)-a(\tilde{m}+k)\big)\prod_{l=j+1}^{\tilde{m}}\big(a(k)-a(k+l)\big).
    \end{aligned}
\end{equation*}
We now realize that we can repeat the same step with the new first two terms to obtain that
\begin{equation*}
    \begin{aligned}
    W&=\big(a(k\!+\!1)\!-\!a(k\!+\!\tilde{m})\big)\big(a(k\!+\!2)\!-\!a(k\!+\!\tilde{m})\big)\prod_{l=3}^{\tilde{m}}\big(a(k)\!-\!a(k\!+\!l)\big)\\
    &+
    \sum_{j=3}^{\tilde{m}} (-1)^{j}\prod_{l=1}^{j}\big(a(k+l-1)-a(\tilde{m}+k)\big)\prod_{l=j+1}^{\tilde{m}}\big(a(k)-a(k+l)\big).
    \end{aligned}
\end{equation*}
We can continue the process to find that
\begin{equation*}
    \begin{aligned}
    &W=\!(-1)^r\big(a(k\!+\!1)\!-\!a(k\!+\!\tilde{m})\big)\big(a(k\!+\!2)\!-\!a(k\!+\!\tilde{m})\big)\\ &\hspace{3cm}\dots\big(a(k\!+\!r)\!-\!a(k\!+\!\tilde{m})\big)\!\!\!\!
    \prod_{l=r+1}^{\tilde{m}}\!\!\!\big(a(k)\!-\!a(k\!+\!l)\big)\\
    &\hspace{1cm}+
    \sum_{j=r+1}^{\tilde{m}} (-1)^{j}\prod_{l=1}^{j}\big(a(k+l-1)-a(\tilde{m}+k)\big)\prod_{l=j+1}^{\tilde{m}}\big(a(k)-a(k+l)\big).
    \end{aligned}
\end{equation*}
Thus for $r=\tilde{m}-1$, we conclude that
\begin{equation*}
    \begin{aligned}
    W
    &=\!(-1)^{\tilde{m}-1}\big(a(k\!+\!1)\!-\!a(k\!+\!\tilde{m})\big)\dots \big(a(k\!+\!\tilde{m}\!-\!1)\!-\!a(k\!+\!\tilde{m})\big)\big(a(k)\!-\!a(k\!+\!\tilde{m})\big)\\
    &+ (-1)^{\tilde{m}}\prod_{l=1}^{\tilde{m}}\big(a(k+l-1)-a(\tilde{m}+k)\big)=0.
    \end{aligned}
\end{equation*}
This proves \eqref{rightinverse.proof}.

We next prove that $S^{-1}$ in \eqref{Sm1} diagonalizes \eqref{linearZ}. Notice that $S^{-1}_{k,k}=1$ and $S^{-1}_{k,j}=0$ for $j<k$, $k\geq2$. Then, for $k\geq3$, we can write
\begin{equation*}
    \begin{aligned}
    y_t(k)=\sum_{j\geq k}S^{-1}_{k,j}z_t(j)=-a(k)S^{-1}_{k,k}z(k)+\sum_{j\geq k+1}\Big(-a(j)S^{-1}_{k,j}+b(j-1)S^{-1}_{k,j-1}\Big)z(j).
    \end{aligned}
\end{equation*}
Finally, notice that 
\begin{equation*}
    -a(j)S^{-1}_{k,j}+b(j-1)S^{-1}_{k,j-1}=-a(k)S^{-1}_{k,j},\qquad j-1\geq k\geq 2,
\end{equation*}
precisely when $S^{-1}_{k,j}$ is given by \eqref{Sm1}. The case $k=1$ is trivial, while for $k=2$ it holds that
\begin{equation*}
    \begin{aligned}
    y_t(2)&=\sum_{j\geq 2}S^{-1}_{2,j}z_t(j)=c(1)z(1)\!-\!a(2)z(2)\!+\!\sum_{j\geq 3}\!\Big(\!-a(j)S^{-1}_{2,j}\!+\!b(j\!-\!1)S^{-1}_{2,j-1}\Big)z(j)\\
    &=-a(2)\frac{c(1)}{a(2)}z(1)-a(2)\sum_{j\geq2}S^{-1}_{2,j}z(j)=-a(2)y(2).
    \end{aligned}
\end{equation*}
This completes the proof.
\end{proof}

We are now ready to prove Lemma \ref{lemmaS}.

\begin{proof}[Proof of Lemma \ref{lemmaS}]
Denote for $j \ge k \ge 2$
\begin{equation*}
\begin{aligned}
    \beta(k,j)=(-1)^{j-k}\prod_{m=1}^{j-k} \frac{b(k-1+m)}{a(k)-a(k+m)}.
\end{aligned}
\end{equation*}
Plugging in the expressions for $b(k)$ and $a(k)$  in \eqref{coeffs}, we find that
\begin{equation*}
    \begin{aligned}
    \beta(k,j)\!=\!\Big(\big(1\!+\!A_\mu\big)\frac{A_\rho R^2}{A_\sigma}\Big)^{j-k}\!\!\!\!e^{-i(j-k)\widehat{\vartheta}(0)}\!\!\prod_{m=1}^{j-k}\!\!\frac{(k\!+\!m)^2(k\!-\!2\!+\!m)}{(k\!-\!1\!+\!m)(k\!+\!1\!+\!m)}\frac{1}{k^3\!-\!(k\!+\!m)^3\!+\!m}.
    \end{aligned}
\end{equation*}
Then, we can write 
\begin{equation}\label{equivaux}
    \begin{aligned}
    \|y\|_{\ell^1}&=\sum_{k\geq1}|y(k)|\leq \Big(1+\Big|\frac{c(1)}{a(2)}\Big|\Big)|z(1)|+\sum_{k\geq2}\sum_{j\geq k}|\beta(k,j)||z(j)|\\
    &=\Big(1+\frac{1}{4}\big|1\!-\!A_\mu\big|\frac{|A_\rho| R^2}{A_\sigma}\Big(\frac34-\log{2}\Big)\Big)|z(1)|+\sum_{l\geq2}\sum_{j=0}^{l-2}|\beta(l-j,l)||z(l)|\\
    &\leq \Big(1+\frac{1}{4}\big|1\!-\!A_\mu\big|\frac{|A_\rho| R^2}{A_\sigma}\Big(\frac34-\log{2}\Big)\Big)|z(1)|+\sum_{l\geq2}\lambda(l)|z(l)|,
    \end{aligned}
\end{equation}
where
\begin{equation*}
    \lambda(l)=\sum_{j=0}^{l-2}|\beta(l-j,l)|.
\end{equation*}
In the following we will estimate $\lambda(l)$ to control $\beta$.

The identity $k^3-(k+m)^3+m=-m\big(m^2+3m k+3k^2-1\big)$ gives the bound
\begin{equation}\label{index.lower}
    |(l-j)^3-(l-j+m)^3+m|\geq m\big(m^2+2m(l-j)+(l-j)^2\big)=m(m+l-j)^2.
\end{equation}
Thus
\begin{equation}\label{betabound}
    \begin{aligned}
    |\beta(l-j,l)|&\leq \Big|\big(1\!+\!A_\mu\big)\frac{A_\rho R^2}{A_\sigma}\Big|^{j}\prod_{m=1}^{j}\frac{l-j\!-\!2\!+\!m}{m(l-j\!-\!1\!+\!m)(l-j\!+\!1\!+\!m)}\\
    &\leq \Big|\big(1\!+\!A_\mu\big)\frac{A_\rho R^2}{A_\sigma}\Big|^{j}\prod_{m=1}^{j}\frac{1}{m(l-j\!+\!1\!+\!m)}\\
    &\leq \Big(\big|\!+\!A_\mu\big|\frac{|A_\rho| R^2}{A_\sigma}\Big)^{j}\prod_{m=1}^{j}\frac{1}{m(m+3)},
    \end{aligned}
\end{equation}
where the last step is due to the fact that $j\leq l-2$. Therefore, we have that
\begin{equation*}
    \begin{aligned}
    \lambda(l)&\leq \sum_{j=0}^{l-2}\Big(\big|1\!+\!A_\mu\big|\frac{|A_\rho| R^2}{A_\sigma}\Big)^j\frac{6}{j!(j+3)!} \leq \sum_{j=0}^{\infty}\Big(\big|1\!+\!A_\mu\big|\frac{|A_\rho| R^2}{A_\sigma}\Big)^j\frac{6}{j!(j+3)!}\\
    &=6\frac{I_3\Big(2\sqrt{\big|1\!+\!A_\mu\big|\frac{|A_\rho| R^2}{A_\sigma}}\Big)}{\Big(\big|1\!+\!A_\mu\big|\frac{|A_\rho| R^2}{A_\sigma}\Big)^{3/2}},
    \end{aligned}
\end{equation*}
where $I_3$ is the modified Bessel function of the first kind of order three as in \eqref{bessel.In.def}.
We conclude thereby from \eqref{equivaux} that
\begin{equation*}
    \begin{aligned}
    \|y\|_{\ell^1}&\leq
    \Big(1\!+\!\frac{1}{4}\big|1\!-\!A_\mu\big|\frac{|A_\rho| R^2}{A_\sigma}\Big(\frac34-\log{2}\Big)\Big)|z(1)|+6\frac{I_3\Big(2\sqrt{\big|1\!+\!A_\mu\big|\frac{|A_\rho| R^2}{A_\sigma}}\Big)}{\Big(\big|1\!+\!A_\mu\big|\frac{|A_\rho| R^2}{A_\sigma}\Big)^{3/2}}\sum_{l\geq2}|z(l)|\\
    &\leq C_{S}(A_\mu,\frac{|A_\rho| R^2}{A_\sigma})\|z\|_{\ell^1},
    \end{aligned}
\end{equation*}
with $C_{S}$ given in \eqref{CSbounds}. 

We proceed to prove the reverse direction, $z=S y$. For $j \ge k \ge 2$, denote
\begin{equation*}
\begin{aligned}
    \alpha(k,j)=\prod_{m=1}^{j-k} \frac{b(k-1+m)}{a(k-1+m)-a(j)}
.
\end{aligned}
\end{equation*}
Recalling \eqref{coeffs}, it becomes
\begin{equation*}
    \begin{aligned}
    \alpha(k,j)&=(-1)^{j-k}\Big(\big|1\!+\!A_\mu\big|\frac{|A_\rho| R^2}{A_\sigma}\Big)^{j-k}e^{-i(j-k)\widehat{\vartheta}(0)}\\
    &\quad\cdot\prod_{m=1}^{j-k}\frac{(k\!+\!m)^2(k\!-\!2\!+\!m)}{(k\!-\!1\!+\!m)(k\!+\!1\!+\!m)}\frac{1}{(k\!-\!1\!+\!m)^3\!-\!j^3\!+\!j\!-\!k\!+\!1\!-\!m}.
    \end{aligned}
\end{equation*}
Next, proceeding as in \eqref{equivaux}, we have
\begin{equation}\label{equivaux2}
    \begin{aligned}
    \|z\|_{\ell^1}&=\sum_{k\geq1}|z(k)|\leq \big(1+\Big|\frac{c(1)}{a(2)}\Big|\big)|y(1)|+\sum_{k\geq2}\sum_{j\geq k}|\alpha(k,j)||y(j)|\\
    &\leq \Big(1+\frac14\big|1\!-\!A_\mu\big|\frac{|A_\rho| R^2}{A_\sigma}\Big(\frac34-\log{2}\Big)\Big)|y(1)|+\sum_{l\geq2}\sigma(l)|y(l)|,
    \end{aligned}
\end{equation}
where
\begin{equation*}
    \sigma(l)=\sum_{j=0}^{l-2}|\alpha(l-j,l)|.
\end{equation*}
The inequality below follows from \eqref{index.lower} with $M+L-J = j$ and $L-J=m+k-1$
\begin{equation*}
    \begin{aligned}
    |(k-1+m)^3-j^3+j-k+1-m|\geq (j-k+1-m)j^2.
    \end{aligned}
\end{equation*}
Using the inequality above,  and $m \le j \le l -2$, we obtain that
\begin{equation*}
    \begin{aligned}
    |\alpha(l-j,l)|&\leq \Big|\big(1\!+\!A_\mu\big)\frac{A_\rho R^2}{A_\sigma}\Big|^{j}\prod_{m=1}^{j}\frac{(l-j\!+\!m)^2(l-j-2+m)}{(l-j\!-\!1\!+\!m)(l-j\!+\!1\!+\!m)}\frac{1}{l^2(j\!+\!1\!-\!m)}\\
    &\leq \Big|\big(1\!+\!A_\mu\big)\frac{A_\rho R^2}{A_\sigma}\Big|^{j}\prod_{m=1}^{j}\frac{1}{m(l-j\!+\!1\!+\!m)}\\
    &\leq \Big(\big|1\!+\!A_\mu\big|\frac{|A_\rho| R^2}{A_\sigma}\Big)^{j}\prod_{m=1}^{j}\frac{1}{m(m+3)},
    \end{aligned}
\end{equation*}
which coincides with the bound found for $\beta$ in \eqref{betabound}. Therefore, 
\begin{equation}\label{sigmabound}
    \begin{aligned}
    \sigma(l)&\leq6\frac{I_3\Big(2\sqrt{\big|1\!+\!A_\mu\big|\frac{|A_\rho| R^2}{A_\sigma}}\Big)}{\Big(\big|1\!+\!A_\mu\big|\frac{|A_\rho| R^2}{A_\sigma}\Big)^{3/2}}.
    \end{aligned}
\end{equation}
Above we recall $I_3$ from \eqref{bessel.In.def}.  Then going back to \eqref{equivaux2} we have
\begin{equation*}
    \begin{aligned}
    \|z\|_{\ell^1}&\leq 
     C_S\left(A_\mu,\frac{|A_\rho| R^2}{A_\sigma}\right)\|y\|_{\ell^1},
    \end{aligned}
\end{equation*}
with $C_S$ defined in \eqref{CSbounds}.   This completes the proof.
\end{proof}

\section{Nonlinear Estimates in $\fsonenu$} \label{sec:NonLinearEst}
We recall that the nonlinear terms \eqref{system} are given by
\begin{equation*}
    N(\alpha)=N_1+N_2+N_3,
\end{equation*}
with
\begin{equation}\label{N1N2N3}
\begin{aligned}
    N_1=(U_{\geq 2})_\alpha(\alpha),\quad N_2=T_{\geq 2}(\alpha)(1+\theta_\alpha(\alpha)),\quad
    N_3=T_1(\alpha)\theta_\alpha(\alpha),
\end{aligned}
\end{equation}
where $U$ and $T$ are described in \eqref{Usplit} and \eqref{Tsplit}.

We will first perform the estimates for $N_1$ for $s\ge 0$.  From \eqref{Usplit}, we have
\begin{multline}\label{N1f0}
    \|N_1\|_{\fsonenu}=\|U_{\geq2}\|_{\fonesonenu}
    \\
    \!\leq\! \frac{\pi}{L(t)}\Big(\|\omega_{\geq2}\|_{\fonesonenu}\!+\| \mathcal{R}(\omega_{\geq1})\|_{\fonesonenu}\!+\| \mathcal{S}(\omega)\|_{\fonesonenu}\Big).
\end{multline}
Using the bound \eqref{Restimates}$_2$ and splitting $\omega_{\geq1}=\omega_{1}+\omega_{\geq2}$ gives that
\begin{equation*}
    \begin{aligned}
        \| \mathcal{R}(\omega_{\geq1})\|_{\fonesonenu}&\leq   \bfcn(2,s){\crconstant}\thetazeronenu\|\omega_1\|_{\fonesonenu}+\bfcn(2,s){\crconstant}\thetazeronenu\|\omega_{\geq2}\|_{\fonesonenu}\\
        &\quad+\bfcn(2,s){\crconstant}\|\theta\|_{\fonesonenu}\|\omega_1\|_{\fzeronenu}
        +\bfcn(2,s){\crconstant}\|\theta\|_{\fonesonenu}\|\omega_{\geq2}\|_{\fzeronenu}.
    \end{aligned}
\end{equation*}
Then introducing the vorticity estimates \eqref{omega1f0}, \eqref{omega1fss}, \eqref{omega2f0}, and \eqref{omega2fss}, and ordering the terms in $A_\rho$ and $A_\sigma$, it follows that
\begin{equation*}
    \begin{aligned}
        \| \mathcal{R}&(\omega_{\geq1})\|_{\fonesonenu}\leq A_\sigma\frac{4\pi}{L(t)}\bfcn(2,s){\crconstant}\Big(\thetazeronenu\thetathreesonenu
        +|A_\mu|C_{13}\thetazeronenu^2\thetathreesonenu\\
        &\quad+\thetaonesonenu\thetatwoonenu+|A_\mu|C_{9}\thetaonesonenu\thetazeronenu\thetatwoonenu\Big)\\
        &\quad+|A_\rho|\frac{L(t)}{\pi}e^{\nu(t)}\bfcn(2,s){\crconstant}\Big(2(1+2|A_\mu|)\thetazeronenu\thetaonesonenu+ C_{14}\thetazeronenu^2\thetaonesonenu\\
        &\quad +(1+2|A_\mu|)\thetaonesonenu \thetazeronenu+C_{10}\thetaonesonenu\thetazeronenu^2\Big).
    \end{aligned}
\end{equation*}
Interpolation as in \eqref{interpolation} yields that
\begin{equation}\label{est1}
    \begin{aligned}
        \| \mathcal{R}(\omega_{\geq1})\|_{\fonesonenu}&\leq A_\sigma\frac{4\pi}{L(t)}C_{21}\thetazeronenu\thetathreesonenu
        \\
        &\quad +|A_\rho|\frac{L(t)}{\pi}e^{\nu(t)}C_{22}\thetazeronenu\thetaonesonenu,
    \end{aligned}
\end{equation}
where
\begin{equation}\label{C21C22}
\begin{aligned}
    C_{21}&=2\bfcn(2,s){\crconstant}+|A_\mu|\bfcn(2,s){\crconstant}(C_{9}+C_{13})\thetazeronenu,\\
    C_{22}&=3\bfcn(2,s){\crconstant}(1+2|A_\mu|)+ \bfcn(2,s){\crconstant}(C_{10}+C_{14})\thetazeronenu,
\end{aligned}
\end{equation}
and $C_\mathcal{R}$ in \eqref{CR}, $C_9$ and $C_{10}$ in \eqref{C9C10}, $C_{13}$ and $C_{14}$ in \eqref{C13C14} were defined previously.

Next, we split $\omega=\omega_0+\omega_1+\omega_{\geq2}$ as in \eqref{omegasplit} and use the bound \eqref{Sestimates}$_2$ to obtain 
\begin{equation*}
    \begin{aligned}
        \|\mathcal{S}(\omega)\|_{\fonesonenu}&\leq |A_\rho|\frac{L(t)}{\pi}(C_3+C_4)e^{\nu(t)}\thetazeronenu\thetaonesonenu
        +
        C_3\thetazeronenu^2\|\omega_1\|_{\fonesonenu}\\
        &\quad+C_3\thetazeronenu^2\|\omega_{\geq2}\|_{\fonesonenu}
        +C_4\thetazeronenu\thetaonesonenu\|\omega_1\|_{\fzeronenu}\\
        &\quad+C_4\thetazeronenu\thetaonesonenu\|\omega_{\geq2}\|_{\fzeronenu}.
    \end{aligned}
\end{equation*}
So again we introduce the vorticity estimates \eqref{omega1f0}, \eqref{omega1fss}, \eqref{omega2f0}, and \eqref{omega2fss}, and order the terms in $A_\rho$ and $A_\sigma$, to obtain 
\begin{equation*}
    \begin{aligned}
        \|\mathcal{S}&(\omega)\|_{\fonesonenu}\leq A_\sigma\frac{4\pi}{L(t)}\Big( C_3\thetazeronenu^2\thetathreesonenu+|A_\mu|C_3C_{13}\thetazeronenu^3\thetathreesonenu \\
        &\quad+C_4\thetazeronenu\thetaonesonenu\thetatwoonenu+|A_\mu|C_9C_4\thetazeronenu^2\thetaonesonenu \thetatwoonenu\Big)\\
        &\quad+|A_\rho|\frac{L(t)}{\pi}e^{\nu(t)}\Big( (C_3\!+\!C_4)\thetazeronenu\thetaonesonenu\\
        &\quad+2\bfcn(2,s) (1\!+\!2|A_\mu|)C_3\thetazeronenu^2\thetaonesonenu+C_3C_{14}\thetazeronenu^3\thetaonesonenu\\
        &\quad+C_4(1+2|A_\mu|)\thetazeronenu^2\thetaonesonenu+C_4C_{10}\thetazeronenu^3\thetaonesonenu\Big).
    \end{aligned}
\end{equation*}
Applying interpolation \eqref{interpolation} and the  embedding \eqref{embed}, 
we obtain that
\begin{multline}\label{est2}
        \|\mathcal{S}(\omega)\|_{\fonesonenu}\leq A_\sigma\frac{4\pi}{L(t)}C_{23}\thetazeronenu^2\thetathreesonenu
        \\
          +|A_\rho|\frac{L(t)}{\pi}e^{\nu(t)}C_{24}\thetazeronenu\thetaonesonenu
\end{multline}
where
\begin{equation}\label{C23C24}
    \begin{aligned}
        C_{23}&=C_3+C_4+|A_\mu|(C_3C_{13}+C_4C_9)\thetazeronenu,\\
        C_{24}&=C_3+C_4+(1+2|A_\mu|)(2\bfcn(2,s)C_3+C_4)\thetazeronenu
        \\
        & \quad +(C_3C_{14} +C_4C_{10})\thetazeronenu^2,
    \end{aligned}
\end{equation}
with $C_3$ and $C_4$ in \eqref{C3}, $C_9$ and $C_{10}$ in \eqref{C9C10}, $C_{13}$ and $C_{14}$ in \eqref{C13C14} previously defined.     

Finally, we combine estimates \eqref{est1} and \eqref{est2} together with vorticity estimate \eqref{omega2fss} to close the bound \eqref{N1f0}.  We obtain
\begin{equation}\label{N1final}
        \|N_1\|_{\fsonenu}\leq 
        A_\sigma\Big(\frac{2\pi}{L(t)}\Big)^2
        C_{25}\thetazeronenu\thetathreesonenu+|A_\rho|e^{\nu(t)}
        C_{26}\thetazeronenu\thetaonesonenu,
\end{equation}
where
\begin{equation}\label{C25C26}
    \begin{aligned}
        C_{25}=|A_\mu| C_{13}+C_{21}+C_{23}\thetazeronenu,\qquad       C_{26}=C_{14}+C_{22}+C_{24}.
    \end{aligned}
\end{equation}
Above  $C_{13}$ and $C_{14}$ are defined in \eqref{C13C14}, $C_{21}$ and $C_{22}$ in \eqref{C21C22}, and $C_{23}$ and $C_{24}$ in \eqref{C23C24}.  This completes the estimate for $N_1$.

We proceed now to estimate the term $N_2$ from \eqref{N1N2N3}.  For simplicity in this case we perform the estimate for $0\le s \le 1$.  The case $s>1$ would add several complications to the notation, but can also be proven similarly, as we will see in the following.  For $s\ge 0$ we use \eqref{bfcn.def} and \eqref{s.inequality} to obtain
\begin{equation}\label{N2aux}
\begin{aligned}
    \|N_2\|_{\fsonenu}\leq \|T_{\geq2}\|_{\fsonenu}(1+\bfcn(2,s) \thetaonenu)+\bfcn(2,s)\|T_{\geq2}\|_{\fzeronenu}\thetaonesonenu.
\end{aligned}
\end{equation}
We recall that for a function $f(\alpha)$ one has \eqref{fourierCalcAlpha} and then for $s\ge 0$ we have
\begin{equation}\label{negative.norm.bound}
    \begin{aligned}
        \left|\left|\int_0^\alpha f(\eta)d\eta-\frac{\alpha}{2\pi}\int_{-\pi}^\pi f(\eta)d\eta\right|\right|_{\fsonenu}&\leq \left( 1 + 1_{s=0}\right)\|f-\widehat{f}(0)\|_{\dot{\mathcal{F}}^{-1+s,1}_\nu} \\
        &\leq C(s)\|f\|_{\fsoneplusnu},
    \end{aligned}
\end{equation}
where $(s-1)^+ = s-1$ if $s\ge 1$ and $(s-1)^+ = 0$ if $s \le 1$ as usual.  We also define
\begin{equation}\label{def.CS}
    C(s) = \left( 1 + 1_{s=0}\right).
\end{equation}
If we performed the estimate below for $s>1$ we would need work with the norm of $\fsoneplusnu$ instead of the simpler $\fzeronenu$.  Thus we only do $0 \le s \le 1$ for the $N_2$ term.

Therefore, for $0 \le s \le 1$ recalling the expression for $T_{\geq2}$ in \eqref{Tsplit}, we have
\begin{equation*}
    \begin{aligned}
    \|T_{\geq2}\|_{\fsonenu}&\leq C(s)\|(1+\theta_\alpha)U_{\geq2}\|_{\fzeronenu}+C(s)\|\theta_\alpha U_{1}\|_{\fzeronenu}\\
    &\leq C(s)(1+\thetaonenu)\|U_{\geq2}\|_{\fzeronenu}+C(s)\thetaonenu\|U_1\|_{\fzeronenu}.
    \end{aligned}
\end{equation*}
Then introducing $U_{\geq2}$ and $U_1$  from\eqref{Usplit} we obtain that
\begin{equation*}
    \begin{aligned}
    \|T_{\geq2}\|_{\fsonenu}&\leq  C(s)\frac{\pi}{L(t)}(1+\thetaonenu)\big(\|\omega_{\geq2}\|_{\fzeronenu}+\|\mathcal{R}(\omega_{\geq1})\|_{\fzeronenu}+\|\mathcal{S}(\omega)\|_{\fzeronenu}\big)
    \\
    &\quad+C(s)\frac{\pi}{L(t)}\thetaonenu\big(\|\omega_{1}\|_{\fzeronenu}+\|\mathcal{R}(\omega_{0})\|_{\fzeronenu}\big).
    \end{aligned}
\end{equation*}
We split the vorticity terms, $\omega_{\geq1} = \omega_{1}+ \omega_{\geq2}$, and use $\omega_0$ in \eqref{omegasplit}.  We further use  the estimates for $\mathcal{R}$ and $\mathcal{S}$ in \eqref{Restimates} and \eqref{Sestimates} and use \eqref{cosine.calc.FT} to obtain
\begin{equation*}
    \begin{aligned}
    \|T_{\geq2}\|_{\fsonenu}&\leq  C(s)\frac{\pi}{L(t)}(1+\thetaonenu)\Big((1+{\crconstant}\thetazeronenu+C_1\thetazeronenu^2)\|\omega_{\geq2}\|_{\fzeronenu}\\
    &\quad+({\crconstant}+C_1\thetazeronenu)\thetazeronenu\|\omega_{1}\|_{\fzeronenu}+A_\rho\frac{L(t)}{\pi}e^{\nu(t)}C_1\thetazeronenu^2\Big)
    \\
    &\quad+C(s)\frac{\pi}{L(t)}\thetaonenu\Big(\|\omega_{1}\|_{\fzeronenu}+A_\rho\frac{L(t)}{\pi}e^{\nu(t)}{\crconstant}\thetazeronenu\Big).
    \end{aligned}
\end{equation*}
Then we introduce the vorticity estimates \eqref{vorticityestimatess} and \eqref{omega2f0} to obtain 
\begin{equation*}
    \begin{aligned}    \|T_{\geq2}&\|_{\fsonenu}\leq  
    A_\sigma\Big(\frac{2\pi}{L(t)}\Big)^2C(s)
\Big(|A_\mu|C_9\big(1+C_\mathcal{R}\thetazeronenu+C_1\thetazeronenu^2\big)\\
&\quad+(C_\mathcal{R}+C_1\thetazeronenu)\Big)(1+\thetaonenu)\thetazeronenu\thetatwoonenu
\\
&\quad+A_\sigma\Big(\frac{2\pi}{L(t)}\Big)^2 C(s)\thetaonenu\thetatwoonenu \!+\!|A_\rho|C(s)\Big(C_{10}(1\!+\!C_\mathcal{R}\thetazeronenu\!+\!C_1\thetazeronenu^2)\\
&\quad+(1+2|A_\mu|)(C_\mathcal{R}+C_1\thetazeronenu)+C_1\Big)e^{\nu(t)}(1+\thetaonenu)\thetazeronenu^2\\
&\quad
+|A_\rho|C_\mathcal{R}C(s)e^{\nu(t)}\thetaonenu\thetazeronenu
+(1+2|A_\mu|)|A_\rho|C(s)e^{\nu(t)}\thetaonenu\thetazeronenu.
    \end{aligned}
\end{equation*}
The above expression reduces to
\begin{equation}\label{T2aux}
    \begin{aligned}    \|T_{\geq2}\|_{\fsonenu}&\leq  
    A_\sigma\Big(\frac{2\pi}{L(t)}\Big)^2C_{33}\thetaonenu\thetatwoonenu+|A_\rho|e^{\nu(t)}C_{34}\thetazeronenu\thetaonenu,
    \end{aligned}
\end{equation}
where
\begin{equation}\label{C33C34}
    \begin{aligned}    
    C_{33}&=C(s)+C(s)\Big(|A_\mu|C_9\big(1+C_\mathcal{R}\thetazeronenu+C_1\thetazeronenu^2\big)\\
    &\qquad+(C_\mathcal{R}+C_1\thetazeronenu) \Big)(1+\thetazeronenu),\\
    C_{34}&=C(s)\Big(C_{10}(1+C_\mathcal{R}\thetazeronenu+C_1\thetazeronenu^2)\\
    &\quad+(1+2|A_\mu|)(C_\mathcal{R}+C_1\thetazeronenu)+C_1\Big)(1+\thetazeronenu)
    \\
    &\quad+C(s)C_\mathcal{R}+C(s)(1+2|A_\mu|),
    \end{aligned}
\end{equation}
and $C_\mathcal{R}$ is defined in \eqref{CR}, $C(s)$ in \eqref{def.CS}, $C_1$ in \eqref{C1}, $C_9$ and  $C_{10}$ \eqref{C9C10}.

Then, going back to \eqref{N2aux}, we find that 
\begin{equation}\notag
    \begin{aligned}
        \|N_2\|_{\fsonenu}&\leq \Big(A_\sigma\Big(\frac{2\pi}{L(t)}\Big)^2C_{33}\thetaonenu\thetatwoonenu\\
        &\quad+|A_\rho|e^{\nu(t)}C_{34}\thetazeronenu\thetaonenu\Big)(1+\thetaonenu+\thetaonesonenu)\\
        &\leq A_\sigma\Big(\frac{2\pi}{L(t)}\Big)^2C_{33}\thetaonenu \thetatwoonenu
        \left( 2\thetaonesonenu+ 1\right)\\
        &\quad+|A_\rho|e^{\nu(t)}C_{34}\thetazeronenu\thetaonenu\left( 2\thetaonesonenu+ 1\right).
    \end{aligned}
\end{equation}
Finally, interpolation \eqref{interpolation} gives that
\begin{equation*}
    \begin{aligned}
        \thetaonenu\thetaonesonenu\thetatwoonenu\leq \thetasonenu^{\frac{5+2s}{3}}\thetathreesonenu^{\frac{4-2s}{3}},
    \end{aligned}
\end{equation*}
Thus we have
\begin{equation}\label{N2bound}
    \begin{aligned}
        \|N_2\|_{\fsonenu}
        &\leq A_\sigma\Big(\frac{2\pi}{L(t)}\Big)^2C_{33}\left( 2\thetasonenu^{\frac{5+2s}{3}}\thetathreesonenu^{\frac{4-2s}{3}}+ \thetasonenu\thetathreesonenu \right)\\
        &\quad+|A_\rho|e^{\nu(t)}C_{34}\ \left( 2\thetazeronenu\thetaonenu\thetaonesonenu+ \thetazeronenu\thetaonenu\right).
    \end{aligned}
\end{equation}
This completes our estimates for $N_2$.

Lastly, the estimate for $N_3$ in \eqref{N1N2N3} for $s\ge 0$ is obtained with \eqref{bfcn.def} as follows
\begin{equation*}
    \begin{aligned}
        \|N_3\|_{\fsonenu}\leq \bfcn(2,s) \left(\|T_1\|_{\fsonenu}\thetaonenu+\|T_1\|_{\fzeronenu}\thetaonesonenu \right).
    \end{aligned}
\end{equation*}
The bound for $T_1$ in \eqref{Tsplit} for $s\ge 0$ from \eqref{negative.norm.bound} using \eqref{u0t0} and \eqref{cosine.calc.FT} is 
\begin{equation}\label{T1aux}
    \begin{aligned}
        \|T_1\|_{\fsonenu}&\leq \|U_1\|_{\fsoneplusnu}+\|\theta_\alpha U_0\|_{\fsoneplusnu}
        \\
        &\leq \|U_1\|_{\fsoneplusnu}+2|A_{\rho}| \bfcn(2,s) e^{\nu(t)} \|\theta_\alpha\|_{\fsoneplusnu}
        \\
        &\leq \frac{\pi}{L(t)}\big(\|\omega_1\|_{\fsoneplusnu}
        +\|\mathcal{R}(\omega_0)\|_{\fsoneplusnu}\big)
        \\
        &\quad +2|A_{\rho}| \bfcn(2,s) e^{\nu(t)} \|\theta_\alpha\|_{\fsoneplusnu}.
    \end{aligned}
\end{equation}
We use \eqref{omega1f0}, \eqref{omega1fss} and \eqref{Restimates} with \eqref{omegasplit} and  \eqref{cosine.calc.FT} to obtain
\begin{multline*}
\frac{\pi}{L(t)}\big(\|\omega_1\|_{\fsoneplusnu}
        +\|\mathcal{R}(\omega_0)\|_{\fsoneplusnu}\big)
        \\
        \leq 
        A_\sigma\Big(\frac{2\pi}{L(t)}\Big)^2
        \|\theta\|_{\dot{\mathcal{F}}^{2+(s-1)^+,1}_\nu}
        +2\bfcn(2,s)|A_\rho|\Big((1+2|A_\mu|)+2 C_\mathcal{R}\Big)e^{\nu(t)}
        \|\theta\|_{\fsoneplusnu}.
\end{multline*}
Thus
\begin{equation}\label{N3boud}
    \begin{aligned}
        \|N_3\|_{\fsonenu}&\leq A_\sigma\Big(\frac{2\pi}{L(t)}\Big)^2 
        \bfcn(2,s)\|\theta\|_{\dot{\mathcal{F}}^{2+(s-1)^+,1}_\nu}\thetaonesonenu\\
        &\quad+|A_\rho|e^{\nu(t)}C_3^{\prime}\|\theta\|_{\fsoneplusnu}\thetaonesonenu.
    \end{aligned}
\end{equation}
Here
\begin{equation}\label{C3prime.prime}
        C_3^{\prime}= 2\bfcn(2,s)^2\left(1+(1+2|A_\mu|)+2 C_\mathcal{R} \right).
\end{equation}
This completes our estimates for $N_3$.  

In summary, we can combine the bounds \eqref{N1final}, \eqref{N2bound}, and \eqref{N3boud} to prove the following theorem.

\begin{thm}\label{thm:nonLinear}
For $0\le s \le 1$ we have the estimate for $N$ from \eqref{system} as
\begin{equation}\label{thm:Nbound}
    \begin{aligned}
        \|N\|_{\fsonenu}&\leq A_\sigma\Big(\frac{2\pi}{L(t)}\Big)^2C_1(N)\thetasonenu\thetathreesonenu\\
        &\quad+
        A_\sigma\Big(\frac{2\pi}{L(t)}\Big)^2 2C_{33}\thetasonenu^{\frac{5+2s}{3}}\thetathreesonenu^{\frac{4-2s}{3}}\\
        &\quad+|A_\rho|e^{\nu(t)}C_2(N)\thetazeronenu\thetaonesonenu,
    \end{aligned}
\end{equation}
where 
\begin{equation}\label{C1NC2N}
    \begin{aligned}
        C_1(N)&=C_{25}+C_{33}+b(2,s),\\
        C_2(N)&=C_{26}+C_{34}(1+2\thetazeronenu)+C_{3}^\prime.
    \end{aligned}
\end{equation}
Further $C_{25}$ and $C_{26}$ are defined in \eqref{C25C26}, $C_{33}$ and $C_{34}$ in \eqref{C33C34}, and $C_{3}^\prime$ is previously defined in \eqref{C3prime.prime}.
\end{thm}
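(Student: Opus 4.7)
The proof of Theorem \ref{thm:nonLinear} should amount to assembling the three nonlinear sub-estimates already derived in Subsection \ref{sec:NonLinearEst}, so the plan is mostly bookkeeping rather than new analysis. First I would recall the decomposition $N = N_1 + N_2 + N_3$ from \eqref{N1N2N3} and apply the triangle inequality in $\fsonenu$, so that $\|N\|_{\fsonenu} \leq \|N_1\|_{\fsonenu} + \|N_2\|_{\fsonenu} + \|N_3\|_{\fsonenu}$. Then I would substitute the bound \eqref{N1final} for $N_1$, the bound \eqref{N2bound} for $N_2$ (which already contains both an interpolated piece in $\thetasonenu^{(5+2s)/3}\thetathreesonenu^{(4-2s)/3}$ and a non-interpolated piece of the form $\thetasonenu\thetathreesonenu$), and the bound \eqref{N3boud} for $N_3$.

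The next step is to rewrite every resulting term in the two canonical shapes on the right-hand side of \eqref{thm:Nbound}: either $\thetasonenu\thetathreesonenu$ (weighted by $A_\sigma(2\pi/L(t))^2$) or $\thetazeronenu\thetaonesonenu$ (weighted by $|A_\rho|e^{\nu(t)}$), except for the genuinely interpolated middle term coming from $N_2$. For the $A_\sigma$ contributions, I would use the embedding \eqref{embed} (in particular $\thetazeronenu \leq \thetasonenu$ when $s \geq 0$, noting that $\hat{\theta}(0)=0$) to upgrade the $N_1$ piece $\thetazeronenu\thetathreesonenu$ to $\thetasonenu\thetathreesonenu$, and I would recognize that for $0\leq s \leq 1$ the auxiliary $N_3$ bound \eqref{N3boud} involves $\|\theta\|_{\dot{\mathcal{F}}^{2+(s-1)^+,1}_\nu} = \thetatwoonenu$ and $\thetaonesonenu$, which by interpolation \eqref{interpolation} can be combined into $\bfcn(2,s)\thetasonenu\thetathreesonenu$. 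For the $A_\rho$ contributions, the same embedding handles $\thetaonenu \leq \thetaonesonenu$ and $\thetazeronenu \leq \thetaonesonenu$, which lets the $N_2$ piece $C_{34}(2\thetazeronenu\thetaonenu\thetaonesonenu + \thetazeronenu\thetaonenu)$ collapse into $C_{34}(1+2\thetazeronenu)\thetazeronenu\thetaonesonenu$ (the extra factor of $\thetaonenu$ in the trilinear term being absorbed by the embedding together with a $\thetazeronenu$ factor pulled out so as to expose the $(1+2\thetazeronenu)$ coefficient appearing in $C_2(N)$).

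Finally, I would collect all constants according to the definitions in \eqref{C1NC2N}: the $A_\sigma$-constant $C_1(N) = C_{25}+C_{33}+\bfcn(2,s)$ gathers the contributions of $N_1$ (which supplies $C_{25}$), the non-interpolated part of $N_2$ (which supplies $C_{33}$), and $N_3$ (which supplies $\bfcn(2,s)$), while the $A_\rho$-constant $C_2(N) = C_{26} + C_{34}(1+2\thetazeronenu) + C_3^\prime$ gathers the analogous contributions from $N_1$, $N_2$, and $N_3$. The purely interpolated middle term $2C_{33}\thetasonenu^{(5+2s)/3}\thetathreesonenu^{(4-2s)/3}$ is transcribed directly from \eqref{N2bound} without further manipulation.

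I do not expect any essential obstacle: all the hard analysis (the Fourier multiplier bounds for $\mathcal{R}$ and $\mathcal{S}$ in Section \ref{sec:FourierRandS}, the vorticity estimates in Proposition \ref{vorticityestimatess}, and the term-by-term control of $T_1$, $T_{\geq 2}$, $U_{\geq 2}$) has already been absorbed into the intermediate inequalities \eqref{N1final}, \eqref{N2bound}, \eqref{N3boud}. The only mildly delicate point is verifying that the exponents in the interpolation used for $N_2$ are consistent (i.e., that $\tfrac{5+2s}{3}\cdot\tfrac{1}{2} + \tfrac{4-2s}{3}\cdot\tfrac{7}{2}$ lies along the correct affine combination of $\tfrac{1}{2}$ and $\tfrac{7}{2}$ matching the total regularity $1 + 1 + (1+s)$), which is routine but should be written out explicitly to justify the power $\thetasonenu^{(5+2s)/3}\thetathreesonenu^{(4-2s)/3}$.
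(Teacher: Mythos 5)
Your proposal is correct and coincides with the paper's proof: the argument for Theorem \ref{thm:nonLinear} is precisely the assembly of \eqref{N1final}, \eqref{N2bound}, and \eqref{N3boud} via the triangle inequality, with the embedding \eqref{embed} and the interpolation \eqref{interpolation} used exactly as you describe to put each term into one of the two canonical shapes (noting $(s-1)^+=0$ for $0\le s\le 1$ so the $N_3$ bound reads in terms of $\thetatwoonenu$ and $\thetazeronenu$) and to collect the constants into $C_1(N)$ and $C_2(N)$. The one vague point in your write-up --- absorbing the extra factor $\thetaonenu$ of the trilinear $A_\rho$-term into the coefficient $(1+2\thetazeronenu)$, which is not literally an embedding since $\thetaonenu\not\leq\thetazeronenu$ --- is identical to the step the paper itself takes between \eqref{N2bound} and \eqref{C1NC2N}, so it is inherited from the statement rather than introduced by your argument.
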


Plugging in $s=1/2$ in the bound for the nonlinear term in Theorem \ref{thm:nonLinear} in  \eqref{thm:Nbound},
we find that
\begin{equation}\label{Nbound}
    \begin{aligned}
        \|N\|_{\fhonenu}&\leq A_\sigma\Big(\frac{2\pi}{L(t)}\Big)^2C_{35}\thetahonenu\thetasevenhonenu
        +|A_\rho|e^{\nu(t)}C_{36}\thetahonenu\thetasevenhonenu,
    \end{aligned}
\end{equation}
where
\begin{equation}\label{C35C36}
    \begin{aligned}
        C_{35}&=C_{35}(\thetahonenu)=C_1(N)+2C_{33}\thetahonenu,\\
        C_{36}&=C_{36}(\thetahonenu)=C_2(N),
    \end{aligned}
\end{equation}
and $C_1(N)$ and $C_2(N)$ are defined in \eqref{C1NC2N}, and  $C_{33}$ is defined in \eqref{C33C34}. Notice that in the definition of $C_{35}$ and $C_{36}$ we can evaluated all the previous functions $C_i$ in the norm $\thetahonenu$ instead of $\thetazeronenu$, which could be done due to \eqref{embed} and the fact that these $C_i$ are increasing functions of the norm.

\section[Regularization scheme and completion of the proof]{Regularization scheme and completion of the proof of Theorem \ref{thm:global}}\label{subregular}

We will now put all the pieces together to complete the proof of Theorem \ref{thm:global}.

\begin{proof}[Proof of Theorem \ref{thm:global}]
With all of the previous developments, the proof then follows a standard regularization argument. Recall the high-frequency cut-off operator $\mathcal{J}_N$ by \eqref{CutOffHigh}.  Denote $f_N=\mathcal{J}_N f$ and consider the regularized version of system \eqref{finalsystem}:
\begin{equation*}
    \begin{aligned}
     (\vartheta_N)_t&=\mathcal{J}_N\big(\frac{2\pi}{L_N(t)} (U_N)_\alpha+\frac{2\pi}{L_N(t)}T_N(1+(\vartheta_N)_\alpha)\Big),\\
     \frac{L_N(t)}{2\pi}&=R\Big(1+\frac{1}{2\pi}\imag \int_{-\pi}^\pi\int_0^\alpha e^{i(\alpha-\eta)} \sum_{n\geq1}\frac{i^n}{n!}(\theta_N(\alpha)-\theta_N(\eta))^n d\eta d\alpha\Big)^{-\frac12},\\
     0&=\int_{-\pi}^{\pi} e^{i(\alpha+\theta_N(\alpha))}d\alpha.
    \end{aligned}
\end{equation*}
We abused notation in the definition of $L_N(t)$ above since we are not using $\mathcal{J}_N$ from \eqref{CutOffHigh}. Solving the last constraint by the implicit function theorem (see Proposition \ref{IFTprop} in Chapter \ref{IFTSection}) gives $F(\theta_N)=(\real\hat{\theta}(1), \imag \hat{\theta}(1))$ which can be solved for $\widehat{\theta}(\pm 1)$.   
Thus substituting in as well the expression for $L_N(t)$, we obtain one equation for $\varphi_N=\widehat{\vartheta}(0)+\mathcal{F}^{-1}(1_{|k|\neq1}\widehat{\theta_N}(k))$. We thereby have the system written as an ODE of the form 
\begin{equation*}
    \dot{\varphi}_N=\mathcal{J}_N \mathcal{G}\big(\varphi_N\big),\qquad \varphi_N(0)=\widehat{\vartheta}_0(0)+\mathcal{F}^{-1}(1_{|k|\neq1}\widehat{\theta_{N,0}}(k)),
\end{equation*}
for a certain nonlinear function $\mathcal{G}$.  Here $\theta_{N,0}=\mathcal{J}_N\theta_{0}$ is the initial condition.  Therefore, Picard's theorem on Banach spaces yields the local existence of regularized solutions $\varphi_N\in C^1([0,T_N);H_N^m)$, where the space $H_N^m$ is defined by $H_N^m=\{f\in H^m(\mathbb{T}): \text{supp}(\widehat{f})\subset [-N,N]\}.$ Furthermore the \textit{a priori} estimates in Section \ref{subsecGlobal}, in particular the energy balance \eqref{estimatef12} and \eqref{zeroBoundUniform}, hold for the regularized system, which provides uniform bounds for $\varphi_N$ in the space $L^\infty(\mathbb{R}_+; \dot{\mathcal{F}}^{\frac12,1}_\nu)\cap L^1(\mathbb{R}_+; \dot{\mathcal{F}}^{\frac72,1}_\nu)$. We will next use the following  version of the Aubin-Lions lemma \cite[Corollary 6]{MR916688}:
\begin{lemma}[Aubin-Lions' Lemma] 
Let $X_0$, $X$, and $X_1$ be Banach spaces such that
$$X_0\subset X\subset X_1,$$
with compact embedding $X_0\hookrightarrow X$, and let $p\in(1,\infty]$. Let $G$ be bounded in $L^p([0,T];X)\cap L^1_{\text{loc}}([0,T];X_0)$,
and  $\partial_t G$ be  bounded in $L^1_{\text{loc}}([0,T];X_1)$.
 Then $G$ is relatively compact in $L^q([0,T];X)$, $q\in[1,p)$.
\end{lemma}
Letting $X_0=\dot{\mathcal{F}}^{\frac72,1}_\nu$, $X_1=\mathcal{F}^{0,1}_\nu$, and $X=\dot{\mathcal{F}}^{\frac12,1}_\nu$, we get the strong convergence to the full system, up to a subsequence, of the approximated problems in $L^2([0,T];\dot{\mathcal{F}}^{\frac12,1}_\nu)$. Next, since $\widehat{\varphi}_N(n,t)\rightarrow \widehat{\varphi}(n,t)$ as $N\rightarrow \infty$ for all $n\in\mathbb{Z}$ and almost every $t$, recalling that $\widehat{\varphi}(n,t)=\widehat{\theta}(n,t)$ for $|n| \geq2$, Fatou's lemma gives that
\begin{equation*}
    \begin{aligned}
        	M(t)&=\thetahonenu(t)+ \frac{A_\sigma}{R^3}  \dissconstant \int_0^t \thetasevenhonenu(\tau) d\tau \\
        	&\leq\liminf_{N\rightarrow +\infty}\Big(\|\theta_N\|_{\dot{\mathcal{F}}^{\frac12,1}_\nu}(t)+ \frac{A_\sigma}{R^3}  \dissconstant \int_0^t \|\theta_N\|_{\dot{\mathcal{F}}^{\frac72,1}}(\tau) d\tau\Big) \\
        	&\leq C_S^2\|\theta_0\|_{\fhone}.
        	\end{aligned}
\end{equation*}
Thus we obtain that the limit function $\theta$ satisfies
\begin{equation*}
    \theta \in L^\infty([0,T];\dot{\mathcal{F}}^{\frac12,1}_\nu)\cap L^1([0,T];\dot{\mathcal{F}}^{\frac72,1}_\nu).
\end{equation*}
Since the equation for $\widehat{\varphi}(0)$ is decoupled from the rest and its right-hand side only depends on $\theta$, we conclude the strong convergence of $\varphi$ in $L^\infty([0,T];\dot{\mathcal{F}}^{\frac12,1}_\nu)\cap L^1([0,T];\dot{\mathcal{F}}^{\frac72,1}_\nu)$. 
 We refer to Section 5 of \cite{GG-BS2019} and \cite{2009.03360} for further details of such an approximation argument, in particular for the instant generation of analyticity and the continuity in time.
\end{proof}

\chapter{Uniqueness}\label{sec:uniqueness}

In this last chapter we will prove uniqueness in $\fhone$ of solutions to \eqref{finalsystem} with initial data of the size given by the constraint in \eqref{condition}.  In particular the main result of this chapter is Theorem \ref{uniquenessproposition} just below.  To prove this theorem, in Section \ref{sec:unique.length} we prove the required estimates for the differences of the lengths.  Then in Section \ref{subsec:uniq.vort} we prove the estimates on the differences of the vorticity strength.  After that in Section \ref{subsec:uniq.nonlinear} we prove the main estimates on the differences of the non-linear terms.  Lastly in Section \ref{subsec:uniq.proof} we collect all the previous estimates to prove the uniqueness of the solutions to \eqref{system} as in Theorem \ref{uniquenessproposition}.

Throughout the proof of Theorem \ref{uniquenessproposition}, we define coefficients that will be used in the rest of this chapter.

\begin{defn}\label{uniquenessimplicitdef}
We use the symbol $\timeintE>0$ to denote any coefficient that is integrable in time and may depend upon $\|\theta_{1},\theta_{2}\|_{\fhone}$, recalling \eqref{max.fcns}, which is bounded and  $\|\theta_{1},\theta_{2}\|_{\fsevenhone}$ which is time integrable.

The symbol $\diffusint>0$ will denote any coefficient that is bounded and can depend upon $\|\theta_{1},\theta_{2}\|_{\fhone}$.  

The symbol $\diffusint_{s}>0$ will denote any coefficient that is bounded and can depend upon $\|\theta_{1},\theta_{2}\|_{\fsone}$.  

The symbol $\timeintE_{s}>0$ will denote any coefficient that may depend upon $\diffusint\|\theta_{1},\theta_{2}\|_{\dot{\mathcal{F}}^{2+s,1}}$.
\end{defn}

\begin{thm}
\label{uniquenessproposition}
Consider two solutions $\vartheta_{1}$ and $\vartheta_{2}$ of \eqref{system} with the same initial data satisfying the medium size condition, as in Theorem \ref{thm:global}.   Then these solutions satisfy the following differential inequality
\begin{multline}\label{uniqineq}
\frac{d}{dt}\Big(|\widehat{\vartheta}_{1}(0) - \widehat{\vartheta}_{2}(0)| + \|\theta_{1}-\theta_{2}\|_{\fhone}\Big) 
\\
\leq (\diffusint+\timeintE)(|\widehat{\vartheta}_{1}(0) - \widehat{\vartheta}_{2}(0)| + \|\theta_{1}-\theta_{2}\|_{\fhone}).
\end{multline}
\end{thm}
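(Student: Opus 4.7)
The plan is to control the quantity
\[
X(t) \eqdef |\widehat{\vartheta}_1(0,t) - \widehat{\vartheta}_2(0,t)| + \|\theta_1-\theta_2\|_{\fhone}
\]
by deriving an evolution equation for each piece. Setting $\tilde\theta \eqdef \theta_1 - \theta_2$, $\tilde L \eqdef L_1-L_2$, $\tilde\omega \eqdef \omega_1 - \omega_2$, and analogously $\tilde U$, $\tilde T$, $\tilde N$, I would subtract the two copies of \eqref{finalsystem}. The heart of the argument is that the linear term of Proposition \ref{linearfourier}, when applied to $\tilde\theta$, produces the same dissipative structure $-\frac{A_\sigma}{R^3}k(k^2-1)$ that drove the global existence argument in Subsection \ref{subsecGlobal}; every remaining contribution must be bounded either by a small constant (absorbed) times $\|\tilde\theta\|_{\fsevenhone}$ or by a time-integrable coefficient times $X(t)$, which is exactly what the symbols $\diffusint$ and $\timeintE$ from Definition \ref{uniquenessimplicitdef} encode.

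First I would apply the diagonalizing operator $S^{-1}$ from Proposition \ref{diagonalization}, built with $a(k), b(k)$ from \eqref{coeffs} using the first solution's $\widehat{\vartheta}_1(0)$, to the equation for $\widehat{\tilde\theta}$. By Lemma \ref{lemmaS} and Remark \ref{analytic.norm.remark.S}, the norms $\|\tilde\theta\|_{\fhone}$ and $\|S^{-1}\tilde\theta\|_{\fhone}$ are equivalent up to the uniform factor $C_S$, so monitoring the diagonalized quantity suffices. Differentiating $\|S^{-1}\tilde\theta\|_{\fhone}$ in time, as was done in the derivation of \eqref{auxx}--\eqref{balance2}, yields the clean diagonal dissipation $-\frac{A_\sigma}{R^3}\sum_{k\geq 2} k^{3/2}(k^2-1)|\widehat{S^{-1}\tilde\theta}(k)|$, which after controlling the $\pm 1$ modes via the implicit function theorem of Proposition \ref{IFTprop} (applied to the difference of the constraints \eqref{constraint}) recovers a full $\fsevenhone$-dissipation on $\tilde\theta$. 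Because $S^{-1}$ was constructed with $\widehat{\vartheta}_1(0)$ but is being applied to the linear block governed by $\widehat{\vartheta}_2(0)$ as well, an additional commutator term of the form $(e^{-i\widehat{\vartheta}_1(0)} - e^{-i\widehat{\vartheta}_2(0)})\widehat{\theta}_2(k+1)$ appears; it is pointwise bounded by $|\widehat{\tilde\vartheta}(0)|$ times the $\fthreehone$-norm of $\theta_2$, and hence takes the form $\timeintE\, X(t)$ after using \eqref{estimatef12}.

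Next I would estimate the remainder, which splits into three sources: the difference of the prefactors $\tfrac{2\pi}{L_i}$ coming from \eqref{Lequation} (yielding $|\tilde L| \leq \diffusint\,\|\tilde\theta\|_{\fhone}$ via a direct expansion identical to \eqref{lengthauxbound}), the difference $\tilde\omega$ of the vorticity strengths obtained by re-running Proposition \ref{vorticityestimatess} in difference form, and the difference $\tilde N$ of the nonlinear terms obtained by re-running Theorem \ref{thm:nonLinear}. The key observation is that each occurrence of $\mathcal{R}$ or $\mathcal{S}$ can be split via $\mathcal{R}_1(\omega_1)-\mathcal{R}_2(\omega_2) = \mathcal{R}_1(\tilde\omega) + (\mathcal{R}_1-\mathcal{R}_2)(\omega_2)$, where the operator difference $\mathcal{R}_1-\mathcal{R}_2$ is linear in $\tilde\theta$ by \eqref{R}, and each Taylor summand in \eqref{S} produces exactly one $\tilde\theta$ factor upon differencing with all remaining $\theta_i$'s controlled uniformly in $\fzerone$. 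This yields schematically
\begin{equation*}
    \|\tilde N\|_{\fhone} \leq \diffusint\,\|\theta_1,\theta_2\|_{\fhone}\,\|\tilde\theta\|_{\fsevenhone} + \timeintE\, X(t),
\end{equation*}
with the first term absorbed by half of the linear dissipation thanks to the medium-size condition \eqref{condition}, and the second collecting all contributions proportional to $\|\theta_1,\theta_2\|_{\fsevenhone}$, which is time-integrable by \eqref{estimatef12}. The zero mode $\widehat{\tilde\vartheta}(0)$ is finally estimated from \eqref{eqcurve3}$_1$ at $k=0$, using the same splittings as in \eqref{T1aux} and \eqref{T2aux} applied to differences of $T$, which contribute only $\diffusint\,X(t) + \timeintE\,X(t)$.

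The main obstacle will be the bookkeeping of the $\theta$-dependent multilinear operators $\mathcal{R}$ and $\mathcal{S}$: one has to verify that the infinite series in \eqref{S}, once differenced, still converges under the same smallness condition that governs Proposition \ref{Smultiplierprop}, and that each resulting summand carries a $\tilde\theta$ factor in exactly one of its entries. A secondary delicate point is that the diagonalization $S^{-1}$ is not the same for $\theta_1$ and $\theta_2$, so that applying it to the equation for $\tilde\theta$ does not fully diagonalize the coupled linear block; handling the resulting low-mode commutator via Proposition \ref{IFTprop} together with the bound $|e^{-i\widehat{\vartheta}_1(0)}-e^{-i\widehat{\vartheta}_2(0)}| \leq |\widehat{\tilde\vartheta}(0)|$ is what produces the $|\widehat{\tilde\vartheta}(0)|$ contribution on the right-hand side of \eqref{uniqineq}. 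Once assembled, the resulting differential inequality is precisely \eqref{uniqineq}, and Gr\"onwall combined with $X(0)=0$ closes the uniqueness argument.
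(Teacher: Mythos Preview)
Your proposal is essentially correct and matches the paper's approach: write the evolution of $\tilde\theta$, diagonalize via $S^{-1}$ as in Section \ref{subsecGlobal}, control the length difference (the paper's Proposition \ref{lengthdifferenceprop}), the vorticity differences (Proposition \ref{prop:diff.vorticity.ests}), and the nonlinear differences (Proposition \ref{nonlineardiffprop}) by the $\mathcal{R}_1-\mathcal{R}_2$, $\mathcal{S}_1-\mathcal{S}_2$ splitting you describe, then absorb the residual $\|\tilde\theta\|_{\fsevenhone}$ coefficient into the linear dissipation under \eqref{condition}; the commutator you flag is exactly the paper's $\tilde L_2(k)$ term in \eqref{lineardifference}. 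One small correction of phrasing: controlling $\widehat{\tilde\theta}(\pm1)$ in terms of the higher modes of $\tilde\theta$ uses the Lipschitz continuity of the implicit map $F$ (i.e.\ the derivative bounds \eqref{Dugbound}--\eqref{detandTbound} from the proof of Proposition \ref{IFTprop}), not a direct reapplication of the implicit function theorem to a ``difference of constraints.''
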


With Theorem \ref{uniquenessproposition}, we can conclude by Gronwall's inequality that for any $T>0$, we have
\begin{multline*}
 \Big(|\widehat{\vartheta}_{1}(0) - \widehat{\vartheta}_{2}(0)| + \|\theta_{1}-\theta_{2}\|_{\fhone}\Big) \Big|_{t=T} 
 \\
 \leq \exp\left(\int_{0}^{T} (\diffusint+\timeintE) dt\right)
 \Big(|\widehat{\vartheta}_{1}(0) - \widehat{\vartheta}_{2}(0)| + \|\theta_{1}-\theta_{2}\|_{\fhone}\Big)\Big|_{t=0}= 0.
 \end{multline*}
This holds since $\timeintE=\diffusint\|\theta_{1},\theta_{2}\|_{\fsevenhone}$ with \eqref{max.fcns}, is time integrable by Theorem \ref{thm:global}.

The rest of this chapter is devoted to proving Theorem \ref{uniquenessproposition}.  Given two solutions $\vartheta_{1}(\alpha,t)$ and $\vartheta_{2}(\alpha,t)$ with initial data $\vartheta_{1}(\alpha,0) = \vartheta_{2}(\alpha,0)$ that satisfies \eqref{condition}; their respective evolution equations are given by \eqref{system} as follows
$$(\vartheta_{i})_t(\alpha)=\frac{2\pi}{L_{i}(t)}\Big(\mathcal{L}_{i}(\alpha)+N_{i}(\alpha)\Big),$$
where the vorticity terms are denoted by $\omega_{1}$ and $\omega_{2}$ respectively. The evolution of $\vartheta_{1} - \vartheta_{2}$ is then given by
\begin{align}\notag
 (\vartheta_{1}-\vartheta_{2})_t(\alpha) 
 &= \Big(\frac{2\pi}{L_{1}(t)} -\frac{2\pi}{L_{2}(t)}\Big) \Big(\mathcal{L}_{1}(\alpha)+N_{1}(\alpha)\Big) + \frac{2\pi}{L_{2}(t)}(\mathcal{L}_{1}(\alpha) - \mathcal{L}_{2}(\alpha))  \\&\hspace{2in}+ \frac{2\pi}{L_{2}(t)}(N_{1}(\alpha) - N_{2}(\alpha)). \label{theta1theta2}
\end{align}
Using the evolution equation \eqref{theta1theta2}, 
we will prove \eqref{uniqineq}. In Proposition \ref{lengthdifferenceprop}, we give an estimate to control the length difference in the first term on the right hand side of \eqref{theta1theta2}. By the estimates from Section \ref{sec:NonLinearEst}, the coefficient, $\mathcal{L}_{1}(\alpha)+N_{1}(\alpha)$, of the length difference in the first term is bounded by $\diffusint+\timeintE$.  The bound on the length difference is shown in in Proposition \ref{lengthdifferenceprop}. The second term on the RHS of \eqref{theta1theta2} gives a linear coercive estimate for the time evolution. Lastly, Section \ref{subsec:uniq.nonlinear} is dedicated to controlling the third terms on the RHS of \eqref{theta1theta2} using the idea of Proposition \ref{uniquenessprop} and the non-linear estimates as in Section \ref{sec:NonLinearEst}.

Additionally, we will use the following idea repeatedly:
\begin{prop}\label{uniquenessprop}
Consider two functions $f$ and $g$ in $\fsone$ for some $s \geq 0$.  We also consider some operator $T$. Then, for any $n \in \mathbb{N}$ we have
\begin{multline}\label{uniqueoperator}
\|   f(\alpha)^{n}Tf(\alpha)  - g(\alpha)^{n} Tg(\alpha)\|_{\fzerone} \\ \leq \|f\|_{\fzerone}^{n}\|Tf-Tg\|_{\fzerone} + \Big(\sum_{k=0}^{n-1}\|f\|_{\fzerone}^{n-k-1}\|g\|_{\fzerone}^{k}\Big)\|f-g\|_{\fzerone}\|Tg\|_{\fzerone}.
\end{multline}
For $s>0$, we have
\begin{multline}\label{uniqueoperatorsnorm}
\|   f(\alpha)^{n}Tf(\alpha)  - g(\alpha)^{n} Tg(\alpha)\|_{\fsone} \\ \leq b(n+1,s)\Big(\|f\|_{\fzerone}^{n}\|Tf-Tg\|_{\fsone}  + n\|f\|_{\fzerone}^{n-1}\|f\|_{\fsone}\|Tf-Tg\|_{\fzerone} \\+  n\|f,g\|_{\fzerone}^{n-1}\|f-g\|_{\fsone}\|Tg\|_{\fzerone} + n\|f,g\|_{\fzerone}^{n-1}\|f-g\|_{\fzerone}\|Tg\|_{\fsone} \\+ n(n-1)\|f,g\|_{\fzerone}^{n-2}\|f,g\|_{\fsone}\|f-g\|_{\fzerone}\|Tg\|_{\fzerone}\Big),
\end{multline}
where we recall the definition \eqref{max.fcns}.  In the special case where $T = \frac{d}{d\alpha^{j}}$ for some $j\in \mathbb{N}$, we obtain
\begin{multline}\label{uniquenessestimate}
\|   f(\alpha)^{n}\frac{d^{j}}{d\alpha^{j}}f(\alpha)  - g(\alpha)^{n} \frac{d^{j}}{d\alpha^{j}}g(\alpha)\|_{\fzerone} \\ \leq \|f\|_{\fzerone}^{n}\|f-g\|_{\mathcal{F}^{j,1}} + \Big(\sum_{k=0}^{n-1}\|f\|_{\fzerone}^{n-k-1}\|g\|_{\fzerone}^{k}\Big)\|f-g\|_{\fzerone}\|g\|_{\mathcal{F}^{j,1}}.
\end{multline}
\end{prop}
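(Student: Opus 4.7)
The plan is to establish all three bounds by a single telescoping identity, differing only in which product estimate from Lemma \ref{triangleprop} we invoke. The key identity is
\[
f^{n} Tf - g^{n} Tg = f^{n}(Tf - Tg) + (f^{n} - g^{n}) Tg,
\]
combined with the standard factorization
\[
f^{n} - g^{n} = (f-g)\sum_{k=0}^{n-1} f^{n-k-1} g^{k}.
\]
Substituting the second identity into the first yields a sum of $n+1$-fold products in which the ``difference'' factor appears linearly and explicitly.

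For the $\fzerone$ bound \eqref{uniqueoperator}, I would apply \eqref{zeroproduct} term by term: since $\fzerone$ is a Banach algebra, each product $(f-g) f^{n-k-1} g^{k} Tg$ is bounded by $\|f-g\|_{\fzerone}\|f\|_{\fzerone}^{n-k-1}\|g\|_{\fzerone}^{k}\|Tg\|_{\fzerone}$, and $f^{n}(Tf-Tg)$ by $\|f\|_{\fzerone}^{n}\|Tf-Tg\|_{\fzerone}$. Summing in $k$ gives exactly the claimed estimate. The specialization \eqref{uniquenessestimate} is then immediate from \eqref{uniqueoperator} by setting $T = \partial_\alpha^{j}$ and noting that $\|Tf - Tg\|_{\fzerone} = \|f-g\|_{\mathcal{F}^{j,1}}$.

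For the $\fsone$ bound \eqref{uniqueoperatorsnorm} with $s>0$, the same decomposition is used, but Lemma \ref{triangleprop}'s multi-factor inequality \eqref{sproduct} now applies. Since each product involves $n+1$ factors, the prefactor $\bfcn(n+1,s)$ appears, and the $|k|^{s}$ weight can fall on any one of the $n+1$ slots. For $f^{n}(Tf-Tg)$ this produces the two terms $\|f\|_{\fzerone}^{n}\|Tf-Tg\|_{\fsone}$ and $n\|f\|_{\fzerone}^{n-1}\|f\|_{\fsone}\|Tf-Tg\|_{\fzerone}$. For each term $(f-g) f^{n-k-1} g^{k} Tg$, the weight can fall on $(f-g)$, on some copy of $f$, on some copy of $g$, or on $Tg$; after summing over $k$ and using the shorthand \eqref{max.fcns} to combine the $f$ and $g$ contributions into $\|f,g\|_{\fzerone}$ and $\|f,g\|_{\fsone}$, the four remaining summands in \eqref{uniqueoperatorsnorm} emerge.

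The main obstacle, such as it is, is purely combinatorial: keeping careful track of where the $\fsone$-weight is placed and collecting like terms correctly so that the mixed $f/g$ factors consolidate into the $\|f,g\|_{\fzerone}^{n-1}$ and $\|f,g\|_{\fzerone}^{n-2}\|f,g\|_{\fsone}$ groupings advertised in \eqref{uniqueoperatorsnorm}. No additional analytic input beyond Lemma \ref{triangleprop} is required.
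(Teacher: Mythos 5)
Your proposal is correct and follows essentially the same route as the paper's proof: the telescoping identity $f^{n}Tf-g^{n}Tg=f^{n}(Tf-Tg)+(f^{n}-g^{n})Tg$ together with the factorization of $f^{n}-g^{n}$ and the product estimates of Lemma \ref{triangleprop}. The paper only writes out the $\fzerone$ case and dismisses \eqref{uniqueoperatorsnorm} as ``proven similarly,'' so your accounting of where the $|k|^{s}$ weight lands is, if anything, slightly more explicit than the original.
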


\begin{proof}
Since
\begin{align*}
 f(\alpha)^{n}Tf(\alpha)  - g(\alpha)^{n} Tg(\alpha) &=   f(\alpha)^{n}Tf(\alpha)  - f(\alpha)^{n} Tg(\alpha) \\&\hspace{0.5in}+  f(\alpha)^{n}Tg(\alpha)  - g(\alpha)^{n} Tg(\alpha)\\ &= f(\alpha)^{n}(Tf(\alpha)  -  Tg(\alpha)) +  (f(\alpha)^{n}-g(\alpha)^{n})Tg(\alpha).
\end{align*}
We obtain
\begin{multline*}
\| f(\alpha)^{n}Tf(\alpha)  - g(\alpha)^{n} Tg(\alpha)\|_{\fzerone} \\ \leq \|f\|_{\fzerone}^{n}\|Tf-Tg\|_{\fzerone} + \|f(\alpha)^{n} - g(\alpha)^{n}\|_{\fzerone}\|Tg\|_{\fzerone}.
\end{multline*}
Next, we have
\begin{align*}
\|f(\alpha)^{n} - g(\alpha)^{n}\|_{\fzerone} &= \Big\|\sum_{k=0}^{n-1}f(\alpha)^{n-k} g(\alpha)^{k} -f(\alpha)^{n-k-1} g(\alpha)^{k+1} \Big\|_{\fzerone}\\ &\leq \sum_{k=0}^{n-1}\|f\|_{\fzerone}^{n-k-1}\|g\|_{\fzerone}^{k}\|f-g\|_{\fzerone}.
\end{align*}
This yields \eqref{uniqueoperator}.  Then \eqref{uniqueoperatorsnorm} is proven similarly.
\end{proof}

\section{Estimates for the differences of the lengths}\label{sec:unique.length}

We need to control the difference in the lengths $L_{1}(t)$ and $L_{2}(t)$, for example, to control the first term on the right hand side of \eqref{theta1theta2}. In this section, we prove the following proposition on the differences of the lengths.

\begin{prop}\label{lengthdifferenceprop}
Consider the lengths, $L_{1}(t)$ and $L_{2}(t)$, of two solutions, $\vartheta_{1}$ and $\vartheta_{2}$ respectively, to \eqref{system} as defined by \eqref{modul}. Then, we have
\begin{equation}\label{lengthdifference}
\left|\frac{L_{1}(t)}{2\pi}-\frac{L_{2}(t)}{2\pi}\right| \leq  C_L \|\theta_{1}-\theta_{2}\|_{\fzerone}
\end{equation}
with $C_{L}$ defined by \eqref{CL}.  
\end{prop}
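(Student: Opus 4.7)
The plan is to factor the difference of lengths through the difference of their squares, which is the natural quantity controlled by equation \eqref{Lequation}. Writing
\begin{equation*}
\left|\frac{L_1(t)}{2\pi}-\frac{L_2(t)}{2\pi}\right| = \frac{\big|(L_1(t)/2\pi)^2-(L_2(t)/2\pi)^2\big|}{L_1(t)/2\pi+L_2(t)/2\pi},
\end{equation*}
I first use the uniform lower bound from \eqref{Lboundaux} (see also \eqref{Lbound}) applied to each $\theta_i$ to control the denominator from below by $2RC_{37}$, where $C_{37}$ is evaluated at $\|\theta_1,\theta_2\|_{\fzerone}$. This bound is available because both initial data satisfy the medium-size condition \eqref{condition}, and the estimates are propagated in time by Theorem \ref{thm:global}.

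Next, I would introduce the shorthand
\begin{equation*}
A_i \eqdef 1+\frac{1}{2\pi}\imag\int_{-\pi}^{\pi}\!\!\int_0^\alpha e^{i(\alpha-\eta)}\sum_{n\geq 1}\frac{i^n}{n!}(\theta_i(\alpha)-\theta_i(\eta))^n\,d\eta\, d\alpha,
\end{equation*}
so that $(L_i(t)/2\pi)^2=R^2/A_i$, and therefore
\begin{equation*}
\Big(\tfrac{L_1}{2\pi}\Big)^2-\Big(\tfrac{L_2}{2\pi}\Big)^2=R^2\,\frac{A_2-A_1}{A_1A_2}.
\end{equation*}
The denominators $A_i$ are bounded from below by $1-\tfrac{\pi}{2}(e^{2\|\theta_i\|_{\fzerone}}-1)>0$, again thanks to \eqref{condition} and \eqref{lengthauxbound}.

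The core estimate is then the control of $|A_1-A_2|$ by $\|\theta_1-\theta_2\|_{\fzerone}$. For this I would use the telescoping identity
\begin{equation*}
a^n-b^n=(a-b)\sum_{k=0}^{n-1}a^{n-1-k}b^k,
\end{equation*}
applied with $a=\theta_1(\alpha)-\theta_1(\eta)$ and $b=\theta_2(\alpha)-\theta_2(\eta)$, together with the elementary bounds $|\theta_i(\alpha)-\theta_i(\eta)|\le 2\|\theta_i\|_{L^\infty}\le 2\|\theta_i\|_{\fzerone}$ and $|a-b|\le 2\|\theta_1-\theta_2\|_{\fzerone}$. Pulling out $\|\theta_1-\theta_2\|_{\fzerone}$ and estimating the remaining area-integral (bounded in modulus by $2\pi^2$), I obtain
\begin{equation*}
|A_1-A_2|\le 2\pi\,e^{2\max(\|\theta_1\|_{\fzerone},\|\theta_2\|_{\fzerone})}\,\|\theta_1-\theta_2\|_{\fzerone},
\end{equation*}
after summing the series $\sum_{n\ge1}\tfrac{n(2M)^{n-1}}{n!}=e^{2M}$.

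Combining the three ingredients gives \eqref{lengthdifference} with an explicit constant
\begin{equation*}
C_L=\frac{\pi R\, e^{2\max(\|\theta_1\|_{\fzerone},\|\theta_2\|_{\fzerone})}}{C_{37}\,\big(1-\tfrac{\pi}{2}(e^{2\|\theta_1\|_{\fzerone}}-1)\big)\big(1-\tfrac{\pi}{2}(e^{2\|\theta_2\|_{\fzerone}}-1)\big)}.
\end{equation*}
No step poses a genuine obstacle here; the calculation is essentially algebraic once one sees that the right variable to difference is $(L/2\pi)^2$ rather than $L/2\pi$ itself. The only care required is to ensure that the smallness of $\|\theta_i\|_{\fzerone}$ guaranteed by Theorem \ref{thm:global} keeps all the denominators uniformly away from zero, which follows directly from \eqref{condition}.
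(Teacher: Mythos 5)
Your proof is correct and follows essentially the same route as the paper: factor the difference through the squares $(L_i(t)/2\pi)^2$ given by \eqref{Lequation}, lower-bound the sum $L_1+L_2$ via \eqref{Lbound}, and telescope the difference of $n$-th powers in the numerator. The only deviation is cosmetic: you bound the divided difference $\frac{e^{2\|\theta_1\|_{\fzerone}}-e^{2\|\theta_2\|_{\fzerone}}}{2\|\theta_1\|_{\fzerone}-2\|\theta_2\|_{\fzerone}}$ crudely by $e^{2\max(\|\theta_1\|_{\fzerone},\|\theta_2\|_{\fzerone})}$ (and the area integral by $2\pi^2$ rather than $\pi^2$), so your explicit constant is a harmless overestimate of the paper's $C_L$ from \eqref{CL} but of exactly the same form.
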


\begin{proof}
We recall equation \eqref{Lequation}.  Thus, denoting $\Delta f=f(\alpha)-f(\eta)$, we have that 
\begin{multline}\label{lenghtaux}
    \Big(\frac{L_1(t)}{2\pi}\Big)^2-\Big(\frac{L_2(t)}{2\pi}\Big)^2
    \\
    =\frac{\frac{R^2}{2\pi}\Big(\imag \int\limits_{-\pi}\limits^\pi\!\int\limits_{0}\limits^\alpha e^{i(\alpha-\eta)} \sum\limits_{n\geq1}\frac{i^n}{n!}(\Delta\theta_2)^n d\eta d\alpha\!-\!\imag \int\limits_{-\pi}\limits^\pi\!\int\limits_{0}\limits^\alpha e^{i(\alpha-\eta)} \sum\limits_{n\geq1}\frac{i^n}{n!}(\Delta\theta_1)^n d\eta d\alpha\Big)}{\Big(1\!+\!\imag \!\int\limits_{-\pi}\limits^\pi\!\int\limits_{0}\limits^\alpha e^{i(\alpha-\eta)}\!\! \sum\limits_{n\geq1}\frac{i^n}{n!}(\Delta\theta_1)^n d\eta \frac{d\alpha}{2\pi}\!\Big)\!\Big(1\!+\!\imag \!\int\limits_{-\pi}\limits^\pi\!\int\limits_{0}\limits^\alpha e^{i(\alpha-\eta)}\!\! \sum\limits_{n\geq1}\frac{i^n}{n!}(\Delta\theta_2)^n d\eta \frac{d\alpha}{2\pi}\!\Big)}.
\end{multline}
Recalling the estimates in \eqref{lengthauxbound} - \eqref{C37C38}, the denominator is bounded by
\begin{multline*}
\bigg(\prod_{m=1}^2\Big(\!1\!+\!\imag \!\int\limits_{-\pi}\limits^\pi\!\int\limits_{0}\limits^\alpha e^{i(\alpha-\eta)}\!\! \sum\limits_{n\geq1}\frac{i^n}{n!}(\Delta\theta_m)^n d\eta \frac{d\alpha}{2\pi}\!\Big)\!\bigg)^{-1}
\\
\leq 
\bigg(\prod_{m=1}^2
\Big(1-\frac{\pi}{2}\big(e^{2\|\theta_m\|_{\fzerone}}-1\big)\Big)
\bigg)^{-1}.
\end{multline*}
Further the numerator has the upper bound
\begin{equation*}
\begin{aligned}
&\imag \int\limits_{-\pi}\limits^\pi\!\int\limits_{0}\limits^\alpha e^{i(\alpha-\eta)} \sum\limits_{n\geq1}\frac{i^n}{n!}(\Delta\theta_2)^n d\eta d\alpha\!-\!\imag \int\limits_{-\pi}\limits^\pi\!\int\limits_{0}\limits^\alpha e^{i(\alpha-\eta)} \sum\limits_{n\geq1}\frac{i^n}{n!}(\Delta\theta_1)^n d\eta d\alpha\\
&=\imag \int\limits_{-\pi}\limits^\pi\!\int\limits_{0}\limits^\alpha e^{i(\alpha-\eta)} \sum\limits_{n\geq1}\frac{i^n}{n!}(\Delta\theta_2-\Delta\theta_1)\sum\limits_{m=0}\limits^{n-1}\big(\Delta\theta_1\big)^m\big(\Delta\theta_2\big)^{n-m-1} d\eta d\alpha\\
&\leq  \pi^2  \sum\limits_{n\geq1}\frac{2\|\theta_1-\theta_2\|_{\fzerone}}{n!}\sum\limits_{m=0}\limits^{n-1}\big(2\|\theta_1\|_{\fzerone}\big)^m\big(2\|\theta_2\|_{\fzerone}\big)^{n-m-1}.
\end{aligned}
\end{equation*}
Thus we further obtain the estimate
\begin{equation*}
\begin{aligned}
&
\left| \imag \int\limits_{-\pi}\limits^\pi\!\int\limits_{0}\limits^\alpha e^{i(\alpha-\eta)} \sum\limits_{n\geq1}\frac{i^n}{n!}(\Delta\theta_2)^n d\eta d\alpha\!-\!\imag \int\limits_{-\pi}\limits^\pi\!\int\limits_{0}\limits^\alpha e^{i(\alpha-\eta)} \sum\limits_{n\geq1}\frac{i^n}{n!}(\Delta\theta_1)^n d\eta d\alpha
\right|
\\
&\leq \frac{2\pi^2\|\theta_1-\theta_2\|_{\fzerone}}{2\|\theta_1\|_{\fzerone}-2\|\theta_2\|_{\fzerone}}\sum\limits_{n\geq1}\frac{1}{n!}\Big(\big(2\|\theta_1\|_{\fzerone}\big)^n-\big(2\|\theta_2\|_{\fzerone}\big)^n\Big)\\
&=2\pi^2\|\theta_1-\theta_2\|_{\fzerone}\frac{e^{2\|\theta_1\|_{\fzerone}}-e^{2\|\theta_2\|_{\fzerone}}}{2\|\theta_1\|_{\fzerone}-2\|\theta_2\|_{\fzerone}}.
\end{aligned}
\end{equation*}
We substitute this back into \eqref{lenghtaux} to obtain that
\begin{equation}\label{diflength}
\begin{aligned}
\Big|\Big(\frac{L_1(t)}{2\pi}\Big)^2-\Big(\frac{L_2(t)}{2\pi}\Big)^2\Big|\leq  R^2 C_{L,2} \|\theta_1-\theta_2\|_{\fzerone},
\end{aligned}
\end{equation}
where
\begin{equation}\notag
    \begin{aligned}
    C_{L,2}=
    \pi\frac{e^{2\|\theta_1\|_{\fzerone}}\!-\!e^{2\|\theta_2\|_{\fzerone}}}{2\|\theta_1\|_{\fzerone}\!-\!2\|\theta_2\|_{\fzerone}}
    \bigg(\prod_{m=1}^2
\Big(1-\frac\pi2\big(e^{2\|\theta_m\|_{\fzerone}}-1\big)\Big)
\bigg)^{-1}.
    \end{aligned}
\end{equation}
The estimate \eqref{diflength} allows to easily bound terms like $L_1(t)-L_2(t)$ or $L_1(t)^{-1}-L_2(t)^{-1}$. In fact, using \eqref{Lbound}, we obtain that
\begin{equation*}
    \begin{aligned}
    \Big|\frac{L_1(t)}{2\pi}-\frac{L_2(t)}{2\pi}\Big|=\frac{1}{\frac{L_1(t)}{2\pi}+\frac{L_2(t)}{2\pi}}\Big|\Big(\frac{L_1(t)}{2\pi}\Big)^2-\Big(\frac{L_2(t)}{2\pi}\Big)^2\Big|\leq  C_L\|\theta_1-\theta_2\|_{\fzerone},
    \end{aligned}
\end{equation*}
where, with $C_{37}$ is defined in \eqref{C37C38}, we have
\begin{equation}\label{CL}
    \begin{aligned}
    C_{L}=R\frac{C_{L,2}}{2C_{37}}.
    \end{aligned}
\end{equation}
This completes the proof of \eqref{lengthdifference}.
\end{proof}

\section{Estimates for the differences of the vorticity strength}\label{subsec:uniq.vort}
In this section we will estimate the differences of the vorticity strength terms.
We use the splitting in \eqref{omegasplit} as
\begin{multline*}
\omega_{1}(\alpha)-\omega_{2}(\alpha) 
\\
=
(\omega_{1})_{0}(\alpha)-(\omega_{2})_{0}(\alpha) + (\omega_{1})_{1}(\alpha)-(\omega_{2})_{1}(\alpha) + (\omega_{1})_{\geq 2}(\alpha)-(\omega_{2})_{\geq 2}(\alpha).
\end{multline*}
Above $(\omega_{i})_{0}(\alpha)$ is the zero component, as defined in \eqref{omegasplit}, of the vorticity term $\omega_{i}$ for $i=1,2$ etc.  In this section, we prove the following estimates on each difference in the vorticity decomposition.

\begin{prop}\label{prop:diff.vorticity.ests}
For $s \ge 0$, we have the estimates
\begin{equation}\label{omega0diffest}
\|(\omega_{1})_{0}-(\omega_{2})_{0}\|_{\fsone} \leq   2 \left|A_\rho \right| C_{L}\|\theta_{1}-\theta_{2}\|_{\fzerone} + \frac{A_\rho}{\pi}L_{2}(t)|\widehat{\vartheta}_{1}(0)-\widehat{\vartheta}_{2}(0)|,
\end{equation}
and
\begin{multline}\label{linearomegadiffnorm}
\|(\omega_{1})_{1}-(\omega_{2})_{1}\|_{\fsone}\leq 
\diffusint_{s}(\|\theta_{1}-\theta_{2}\|_{\fzerone} +|\widehat{\vartheta}_{1}(0)-\widehat{\vartheta}_{2}(0)|)\\ + \timeintE \|\theta_{1}-\theta_{2}\|_{\fsone}  + \frac{4A_{\sigma}\pi}{L_{2}(t)}\|\theta_{1} - \theta_{2}\|_{\mathcal{F}^{s+2,1}}.
\end{multline}
For $s>0$ 
we further have
\begin{multline}\label{omeganonlinearest}
\|(\omega_{1})_{\geq 2}-(\omega_{2})_{\geq 2}\|_{\fsone} \leq \timeintE_{s}(\|\theta_{1}-\theta_{2}\|_{\fzerone} +|\hat{\vartheta_{1}}(0)-\hat{\vartheta_{2}}(0)|) \\+ \timeintE_{0}\|\theta_{1}-\theta_{2}\|_{\fsone} + \diffusint_{s}\|\theta_{1}-\theta_{2}\|_{\dot{\mathcal{F}}^{2,1}} + \tilde{\Gamma}\|\theta_{1}-\theta_{2}\|_{\dot{\mathcal{F}}^{2+s,1}}
\end{multline}
where $\tilde{\Gamma}$ is given by \eqref{tildeGamma}.
\end{prop}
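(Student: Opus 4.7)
The plan is to attack the three estimates one by one, working from the explicit formula for $\omega_{0}$ upward through the linear and nonlinear parts of the decomposition \eqref{omegasplit}. Throughout I will add and subtract intermediate terms so that each difference splits into (i) a length difference, (ii) an angle difference in $\hat{\vartheta}(0)$, and (iii) a difference of $\theta_{1}-\theta_{2}$ itself, and then invoke Proposition \ref{lengthdifferenceprop}, a Lipschitz bound, and the multiplier estimates from Section \ref{sec:FourierRandS} respectively.

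For \eqref{omega0diffest}, using $(\omega_{i})_{0}(\alpha)=-A_{\rho}\frac{L_{i}(t)}{\pi}\sin(\alpha+\hat{\vartheta}_{i}(0))$, I add and subtract $-A_{\rho}\frac{L_{2}(t)}{\pi}\sin(\alpha+\hat{\vartheta}_{1}(0))$ to obtain a length term and an angle term. The length term is controlled by Proposition \ref{lengthdifferenceprop}, and the angle term by the Lipschitz bound $|\sin(\alpha+\hat{\vartheta}_{1}(0))-\sin(\alpha+\hat{\vartheta}_{2}(0))|\le|\hat{\vartheta}_{1}(0)-\hat{\vartheta}_{2}(0)|$. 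Since $\sin(\alpha+c)$ has Fourier support concentrated at $\pm 1$, the $\fsone$ norm only adds a bounded factor (essentially $2$), which is how the $s$-independent factor $2|A_{\rho}|C_{L}$ appears.

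For \eqref{linearomegadiffnorm}, I expand $\omega_{1}$ via \eqref{omegasplit} into three pieces: $A_{\mu}\frac{L(t)}{\pi}\mathcal{D}_{1}(\omega_{0})$, $2A_{\sigma}\frac{2\pi}{L(t)}\theta_{\alpha\alpha}$, and $-A_{\rho}\frac{L(t)}{\pi}\cos(\alpha+\hat{\vartheta}(0))\theta(\alpha)$. For each I split the difference using the same add/subtract strategy. The curvature piece produces the principal contribution $\frac{4A_{\sigma}\pi}{L_{2}(t)}\|\theta_{1}-\theta_{2}\|_{\mathcal{F}^{s+2,1}}$ directly, the factor $\bigl|\tfrac{2\pi}{L_{1}(t)}-\tfrac{2\pi}{L_{2}(t)}\bigr|\|\theta_{1}\|_{\mathcal{F}^{s+2,1}}$ then gets absorbed into $\timeintE\|\theta_{1}-\theta_{2}\|_{\fzerone}$ by using Proposition \ref{lengthdifferenceprop} and the fact that $\|\theta_{1}\|_{\mathcal{F}^{s+2,1}}$ is time-integrable by Theorem \ref{thm:global}. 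For the $\mathcal{D}_{1}$ term I unwrap \eqref{mdsplit} and use the bilinear bounds of Proposition \ref{Restimates.prop} on $\mathcal{R}$, noting that all inputs are either length/angle differences (giving the $\diffusint_{s}$ bound) or reduce to differences of $\omega_{0}$, which are already handled by \eqref{omega0diffest}. The pointwise multiplication by $\cos(\alpha+\hat{\vartheta}(0))\theta$ is treated by Proposition \ref{uniquenessprop} together with Lemma \ref{triangleprop}.

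The third estimate \eqref{omeganonlinearest} is the main obstacle and will consume most of the work. From \eqref{omegasplit} I must take differences of the Taylor series $\sum_{j\ge 1}\frac{(-1)^{j}(\theta)^{2j}}{(2j)!}$ and $\sum_{j\ge 1}\frac{(-1)^{j}(\theta)^{1+2j}}{(1+2j)!}$, for which Proposition \ref{uniquenessprop} gives the required telescoping bound. The delicate piece is $A_{\mu}\frac{L(t)}{\pi}\mathcal{D}_{\geq 2}(\omega)$, which through \eqref{mdsplit} chains together the operators $\mathcal{R}$ and $\mathcal{S}$ acting on $\omega_{\ge 1}$ (which itself contains the highest derivatives via $\omega_{1}$). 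I will mirror the proof of Proposition \ref{vorticityestimatess}, but at each step I must derive a \emph{difference} version of the $\mathcal{R}$-- and $\mathcal{S}$--estimates: for example, $\|\mathcal{R}_{1}(f)-\mathcal{R}_{2}(f)\|_{\fsone}$ where the subscript indicates the dependence on $\theta_{i}$, which I obtain by inspecting the Fourier multiplier expression \eqref{I.function.def} and applying Proposition \ref{uniquenessprop} to the integrand's Taylor-type expansion, and similarly for $\mathcal{S}$ via \eqref{Snfourier}. After this, solving the implicit inequality (as in \eqref{omega2aux2}) for $\|(\omega_{1})_{\ge 2}-(\omega_{2})_{\ge 2}\|_{\fsone}$ and keeping track of which terms carry the highest derivative produces the coefficient $\tilde{\Gamma}$ multiplying $\|\theta_{1}-\theta_{2}\|_{\dot{\mathcal{F}}^{2+s,1}}$, whose explicit form will be recorded in \eqref{tildeGamma}. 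The main technical care required is to ensure that whenever $\|\theta_{1}-\theta_{2}\|_{\dot{\mathcal{F}}^{2+s,1}}$ appears it carries only a bounded coefficient $\tilde{\Gamma}$, while all occurrences with growing factors (e.g.\ from interpolation with $\theta_{1,2}\in\dot{\mathcal{F}}^{7/2,1}$) are shunted into the time-integrable $\timeintE$ terms.
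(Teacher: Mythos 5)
Your proposal follows essentially the same route as the paper's proof: the same add-and-subtract decompositions isolating length differences, $\widehat{\vartheta}(0)$ differences, and $\theta_{1}-\theta_{2}$ differences; the same use of Proposition \ref{lengthdifferenceprop}, the telescoping bound of Proposition \ref{uniquenessprop}, and difference versions of the $\mathcal{R}$ and $\mathcal{S}$ multiplier estimates; and the same final step of solving the implicit inequality for $\|(\omega_{1})_{\geq 2}-(\omega_{2})_{\geq 2}\|_{\fsone}$ to extract $\tilde{\Gamma}$. The only cosmetic caveat is that the pointwise Lipschitz bound on the sine difference does not by itself control the Wiener norm, but your follow-up observation that the difference is supported on the $\pm 1$ Fourier modes supplies exactly the bound the paper obtains via the product formula, so the argument is sound.
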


We further give the estimate of the form \eqref{omeganonlinearest} when $s=0$ in \eqref{omega.diff.zero.est}.  It is important to notice that $\tilde{\Gamma}$ given by \eqref{tildeGamma} is smaller than the corresponding coefficient of $\|\theta\|_{\dot{\mathcal{F}}^{s+2,1}}$ in the estimate of \eqref{omega2fss}.

\begin{proof}
For the zero-th order term in the splitting we have
\begin{align*}
(\omega_{1})_{0}(\alpha)-(\omega_{2})_{0}(\alpha) &= -2A_\rho\Big(\frac{L_{1}(t)}{2\pi}-\frac{L_{2}(t)}{2\pi} \Big)\sin{(\alpha+\widehat{\vartheta}_{1}(0))}\\ &\hspace{0.5in}- 2A_\rho\frac{L_{2}(t)}{2\pi}(\sin{(\alpha+\widehat{\vartheta}_{1}(0))}-\sin{(\alpha+\widehat{\vartheta}_{2}(0))}).
\end{align*}
Hence, for $s \ge 0$, 
we have the estimate
\begin{align*}
\|(\omega_{1})_{0}-(\omega_{2})_{0}\|_{\fsone} &\leq \frac{\left| A_\rho \right| }{\pi} |L_{1}(t)-L_{2}(t)|\|\sin{(\alpha+\widehat{\vartheta}_{1}(0))}\|_{\fsone} \\&\hspace{0.25in}+  \frac{\left| A_\rho \right|}{\pi}L_{2}(t)\|\sin{(\alpha+\widehat{\vartheta}_{1}(0))}-\sin{(\alpha+\widehat{\vartheta}_{2}(0))}\|_{\fsone}.
\end{align*}
We have
$\|\sin{(\alpha+\widehat{\vartheta}_{1}(0))}\|_{\fsone} \leq 1$
and
\begin{multline*}
\|\sin{(\alpha+\widehat{\vartheta}_{1}(0))}-\sin{(\alpha+\widehat{\vartheta}_{2}(0))}\|_{\fsone} \leq 2\Big|\sin\Big(\frac{\widehat{\vartheta}_{1}(0)-\widehat{\vartheta}_{2}(0)}{2} \Big)\Big|\\ \cdot \Big\|\cos\Big(\alpha +
\frac{\widehat{\vartheta}_{1}(0)+\widehat{\vartheta}_{2}(0)}{2} \Big)\Big\|_{\fsone}
\end{multline*}
We have 
$\Big\|\cos\Big(\alpha +\frac{\widehat{\vartheta}_{1}(0)+\widehat{\vartheta}_{2}(0)}{2} \Big)\Big\|_{\fsone} \leq 1$
and since we assume the difference is small we have
\begin{equation}\label{sinezero}
\Big|\sin\Big(\frac{\widehat{\vartheta}_{1}(0)-\widehat{\vartheta}_{2}(0)}{2} \Big)\Big| \leq \frac{1}{2} |\widehat{\vartheta}_{1}(0)-\widehat{\vartheta}_{2}(0)|.
\end{equation}
Hence, we obtain \eqref{omega0diffest}, which completes the difference estimates for the zero order vorticity strength terms.

Next, the linear difference in the vorticity strength terms from \eqref{omegasplit} is
\begin{equation}\label{linearomegadiff}
(\omega_{1})_{1}(\alpha)-(\omega_{2})_{1}(\alpha) = \frac{A_\mu}{\pi}W_{1} + 4A_\sigma\pi W_{2} -\frac{A_\rho}{\pi}  W_{3}
\end{equation}
where
\begin{equation}\notag
W_{1} =L_{1}(t)\md_1((\omega_{1})_0)(\alpha) - L_{2}(t)\md_1((\omega_{2})_0)(\alpha)
\end{equation}
and
\begin{equation}\notag
W_{2} =\Big(\frac{1}{L_{1}(t)} -\frac{1}{L_{2}(t)}\Big) (\theta_{1})_{\alpha\alpha} + \frac{1}{L_{2}(t)}((\theta_{1})_{\alpha\alpha} - (\theta_{2})_{\alpha\alpha})
\end{equation}
and
\begin{multline}\notag
W_{3} =(L_{1}(t)-L_{2}(t))\cos{(\alpha+\widehat{\vartheta}_{1}(0))}\theta_{1}(\alpha) \\+ L_{2}(t)[\cos{(\alpha+\widehat{\vartheta}_{1}(0))}-\cos{(\alpha+\widehat{\vartheta}_{2}(0))}]\theta_{1}(\alpha)\\ + L_{2}(t)\cos{(\alpha+\widehat{\vartheta}_{2}(0))}[\theta_{1}(\alpha)-\theta_{2}(\alpha)].
\end{multline}
We will estimate each of the terms  $W_{1}$,  $W_{2}$ and  $W_{3}$ in the following.

For $W_{1}$ we have using \eqref{mdsplit} that
\begin{multline*}
W_{1} = \theta_{1}(\alpha)\mH ( (\omega_{1})_0)(\alpha)+\imag\hspace{0.05cm}\mR((\omega_{1})_0)(\alpha)- \theta_{2}(\alpha)\mH ((\omega_{2})_0)(\alpha)+\imag\hspace{0.05cm}\mR((\omega_{2})_0)(\alpha).
\end{multline*}
It can be shown by the estimates in $\mR$ and $L_{1}-L_{2}$ that for $s \ge 0$ we have
\begin{equation}\label{D1est}
\|W_{1}\|_{\fsone} \leq \diffusint_{s}(\|\theta_{1}-\theta_{2}\|_{\fzerone} + |\widehat{\vartheta}_{1}(0)-\widehat{\vartheta}_{2}(0)|)+ \timeintE\|\theta_{1}-\theta_{2}\|_{\fsone}.
\end{equation}
For $W_{2}$, we have
\begin{equation}\label{W2est}
\|W_{2}\|_{\fsone} \leq \frac{1}{L_{1}(t)L_{2}(t)}C_{L}\|\theta_{1}-\theta_{2}\|_{\fzerone}\|\theta_{1}\|_{\mathcal{F}^{s+2,1}}+ \frac{1}{L_{2}(t)}\|\theta_{1} - \theta_{2}\|_{\mathcal{F}^{s+2,1}}.
\end{equation}
The important term in $W_{2}$ for the purposes of the uniqueness argument is the second term that has the difference $\|\theta_{1} - \theta_{2}\|_{\mathcal{F}^{s+2,1}}$.  For $W_{3}$, using \eqref{lengthdifference}, we obtain that
\begin{equation}\label{W3est2}
    \|W_{3}\|_{\fsone} \leq \diffusint_{s}(\|\theta_{1}-\theta_{2}\|_{\fzerone}+|\widehat{\vartheta}_{1}(0)-\widehat{\vartheta}_{2}(0)|)+ \timeintE\|\theta_{1}-\theta_{2}\|_{\fsone}.
\end{equation}
Hence, from \eqref{D1est}, \eqref{W2est} and \eqref{W3est2}, we obtain from \eqref{linearomegadiff} that
\begin{multline*}
\|(\omega_{1})_{1}-(\omega_{2})_{1}\|_{\fsone}\leq 
\diffusint_{s}(\|\theta_{1}-\theta_{2}\|_{\fzerone} +|\widehat{\vartheta}_{1}(0)-\widehat{\vartheta}_{2}(0)|)\\ + \timeintE \|\theta_{1}-\theta_{2}\|_{\fsone}  + \frac{4A_{\sigma}\pi}{L_{2}(t)}\|\theta_{1} - \theta_{2}\|_{\mathcal{F}^{s+2,1}}.
\end{multline*}
Note that the coefficient in front of $\|\theta_{1} - \theta_{2}\|_{\mathcal{F}^{s+2,1}}$ is the same as that in \eqref{omega1fss} in front of $\|\theta\|_{\mathcal{F}^{s+2,1}_{\nu}}$.
This completes the difference estimates for the linear terms in the vorticity strength given by \eqref{linearomegadiffnorm}.

Next we estimate differences of the nonlinear terms in the vorticity strength from \eqref{omegasplit}.  We decompose the terms as
\begin{equation}\label{omegadiff2}
(\omega_{1})_{\geq 2}-(\omega_{2})_{\geq 2} = W_{21} + W_{22} + W_{23}
\end{equation}
where the terms $W_{21}$, $W_{22}$, and $W_{23}$ are given by
\begin{equation}\notag
W_{21} = A_\mu\frac{L_{1}(t)}{\pi}\md_{\geq2}(\omega_{1})(\alpha) - A_\mu\frac{L_{2}(t)}{\pi}\md_{\geq2}(\omega_{2})(\alpha),
\end{equation}
and
\begin{multline}\notag
W_{22} = A_\rho\frac{L_{2}(t)}{\pi}\sin{(\alpha\!+\!\widehat{\vartheta}_{2}(0))}\sum_{j\geq1}\!\!\frac{(-1)^j(\theta_{2}(\alpha))^{2j}}{(2j)!}\\ - A_\rho\frac{L_{1}(t)}{\pi}\sin{(\alpha\!+\!\widehat{\vartheta}_{1}(0))}\sum_{j\geq1}\!\!\frac{(-1)^j(\theta_{1}(\alpha))^{2j}}{(2j)!},
\end{multline}
and
\begin{multline}\notag
W_{23} = A_\rho\frac{L_{2}(t)}{\pi}\cos{(\alpha+\widehat{\vartheta}_{2}(0))}\sum_{j\geq1}\frac{(-1)^j(\theta_{2}(\alpha))^{1+2j}}{(1+2j)!}\\-A_\rho\frac{L_{1}(t)}{\pi}\cos{(\alpha+\widehat{\vartheta}_{1}(0))}\sum_{j\geq1}\frac{(-1)^j(\theta_{1}(\alpha))^{1+2j}}{(1+2j)!}.
\end{multline}
First, we use \eqref{mdsplit} to observe that
\begin{multline}\label{W21expansion}
W_{21} = -A_{\mu}\Big(\theta_{1}(\alpha)\mH ( (\omega_{1})_{\geq1})(\alpha)+\imag\hspace{0.05cm} \mR((\omega_{1})_{\geq1})(\alpha)+\imag\hspace{0.05cm}\mS(\omega_{1})(\alpha)\\ -\theta_{2}(\alpha)\mH ( (\omega_{2})_{\geq1})(\alpha)-\imag\hspace{0.05cm} \mR((\omega_{2})_{\geq1})(\alpha)-\imag\hspace{0.05cm}\mS(\omega_{2})(\alpha) \Big)\\
= -A_{\mu}(W_{211}+W_{212}+W_{213}),
\end{multline}
where the differences of like terms with either subscript $1$ or $2$ are combined in each $W_{21j}$. Then for $s \ge 0$ we have
\begin{multline}\notag
\|W_{211}\|_{\fsone} \leq \bfcn(2,s) \|\theta_{1}-\theta_{2}\|_{\fsone}\|(\omega_{1})_{\geq 1}\|_{\fzerone} + \bfcn(2,s)\|\theta_{1}-\theta_{2}\|_{\fzerone}\|(\omega_{1})_{\geq 1}\|_{\fsone}\\ + \bfcn(2,s)\|\theta_{2}\|_{\fsone}\|(\omega_{1})_{1} -(\omega_{2})_{1}\|_{\fzerone} +  \bfcn(2,s)\|\theta_{2}\|_{\fzerone}\|(\omega_{1})_{1} -(\omega_{2})_{1}\|_{\fsone} \\ + \bfcn(2,s)\|\theta_{2}\|_{\fzerone}\|(\omega_{1})_{\geq 2} -(\omega_{2})_{\geq 2}\|_{\fsone} +\bfcn(2,s)\|\theta_{2}\|_{\fsone}\|(\omega_{1})_{\geq 2} -(\omega_{2})_{\geq 2}\|_{\fzerone}.
\end{multline}
Thus, using \eqref{omega1f0}, \eqref{omega1fss}, \eqref{omega2f0}, \eqref{omega2fss}  and \eqref{linearomegadiffnorm} we have
\begin{multline}\label{W211est}
\|W_{211}\|_{\fsone} \leq
\timeintE_{s}(\|\theta_{1}-\theta_{2}\|_{\fzerone}+|\widehat{\vartheta}_{1}(0)-\widehat{\vartheta}_{2}(0)|) +\timeintE_{0}\|\theta_{1}-\theta_{2}\|_{\fsone}
\\
+\diffusint_{s}\|\theta_{1}-\theta_{2}\|_{\mathcal{F}^{2,1}} 
+
\frac{4A_{\sigma}\pi}{L_{2}(t)}\bfcn(2,s)\|\theta_{1}-\theta_{2}\|_{\mathcal{F}^{2+s,1}} 
\\
+ 
\bfcn(2,s)\|\theta_{2}\|_{\fzerone}\|(\omega_{1})_{\geq 2} -(\omega_{2})_{\geq 2}\|_{\fsone}
+\bfcn(2,s)\|\theta_{2}\|_{\fsone}\|(\omega_{1})_{\geq 2} -(\omega_{2})_{\geq 2}\|_{\fzerone},
\end{multline}
where $\timeintE_{s} = \|\theta_{1},\theta_{2}\|_{\dot{\mathcal{F}}^{2+s,1}}\timeintE$  and some bounded constant $\timeintE$ and $C$ is a bounded constant depending on $\|\theta_{1},\theta_{2}\|_{\fzerone}$.

We now consider the term $W_{212}$.  We have from \eqref{Restimates} that
\begin{multline}\label{Rdiffgeq1}
\|\mR((\omega_{1})_{\geq 1}) - \mR((\omega_{2})_{\geq 1})\|_{\fsone}\\ \leq \bfcn(2,s){\crconstant}(\|(\omega_{1})_{\geq 1}-(\omega_{2})_{\geq 1}\|_{\fsone} \|\theta_{1}\|_{\fzerone} + \|(\omega_{1})_{\geq 1}-(\omega_{2})_{\geq 1}\|_{\fzerone} \|\theta_{1}\|_{\fsone}\\+\|(\omega_{2})_{\geq 1}\|_{\fsone} \|\theta_{1}-\theta_{2}\|_{\fzerone}+\|(\omega_{2})_{\geq 1}\|_{\fzerone} \|\theta_{1}-\theta_{2}\|_{\fsone}).
\end{multline}
Hence, using \eqref{omega1f0}, \eqref{omega1fss}, \eqref{omega2f0}, \eqref{omega2fss} and \eqref{linearomegadiffnorm}, we obtain
\begin{multline}\label{W212est}
\|W_{212}\|_{\fsone}\leq  \timeintE_{s}(\|\theta_{1}-\theta_{2}\|_{\fzerone}+|\widehat{\vartheta}_{1}(0)-\widehat{\vartheta}_{2}(0)|) +\timeintE_{0}\|\theta_{1}-\theta_{2}\|_{\fsone}\\
+ \timeintE\|\theta_{1},\theta_{2}\|_{\fsone}\|\theta_{1}-\theta_{2}\|_{\mathcal{F}^{2,1}}
+\frac{4A_{\sigma}{\crconstant} \pi}{L_{2}(t)}\bfcn(2,s)\|\theta_{1}-\theta_{2}\|_{\mathcal{F}^{2+s,1}}\|\theta_{1},\theta_{2}\|_{\fzerone}
\\ 
+ 
\bfcn(2,s){\crconstant} (\|\theta_{1}\|_{\fzerone}\|(\omega_{1})_{\geq 2} -(\omega_{2})_{\geq 2}\|_{\fsone} + \|\theta_{1}\|_{\fsone}\|(\omega_{1})_{\geq 2} -(\omega_{2})_{\geq 2}\|_{\fzerone}).
\end{multline}
This is the estimate for $W_{212}$.

Next for $W_{213}$ containing the difference in $\mathcal{S}$, we actually have to consider two differences.
We recall the splitting of $\mathcal{S}$ from \eqref{Ssum} and we will use $f_i = \omega_i$ below. First, it can be shown from  \eqref{tildeS} that
\begin{align}\label{Boperatordiff}
\|{\stwosplit}(f_{1}) - {\stwosplit}(f_{2})\|_{\fsone}
&\leq  B_{1} + B_{2}
\end{align}
where
\begin{equation}\notag
B_{1} = \Big\||k|^{s}\sum_{\substack{n,l\geq 0 \\ n+l\geq 2}}\frac{(-1)^ni^{l+n+1}(\ast^{l}\widehat{\theta}_{1}(k))-\ast^{l}\widehat{\theta_{2}}(k))}{l!} \ast \widehat{\mathcal{S}_{n}(f_{1})}(k) \Big\|_{\ell^{1}}
\end{equation}
and
\begin{equation}\notag
B_{2} = \Big\||k|^{s}\sum_{\substack{n,l\geq 0 \\ n+l\geq 2}}\frac{(-1)^{n} i^{l+n+1}\ast^{l}\widehat{\theta}_{2}(k)}{l!} \ast (\widehat{\mathcal{S}_{n}(f_{1})}(k)-\widehat{\mathcal{S}_{n}(f_{2})}(k)) \Big\|_{\ell^{1}}.
\end{equation}
For $B_{1}$, we obtain using similar arguments to Proposition \ref{uniquenessprop} that
\begin{multline}\notag
B_{1} \leq 
\sum_{\substack{n,l\geq 0 \\ n+l\geq 2}} \frac{b(l+1,s)}{(l-1)!} \Big( 
 \|\theta_{1},\theta_{2}\|_{\fzerone}^{l-1}\|\mathcal{S}_{n}(f_{1})\|_{\fsone} \|\theta_{1}-\theta_{2}\|_{\fzerone}
 \\
+ 
(l-1)\|\theta_{1},\theta_{2}\|_{\fzerone}^{l-2}\|\theta_{1},\theta_{2}\|_{\fsone}\|\mathcal{S}_{n}(f_{1})\|_{\fzerone}
\|\theta_{1}-\theta_{2}\|_{\fzerone}
\\ 
+ \|\theta_{1},\theta_{2}\|_{\fzerone}^{l-1}\|\mathcal{S}_{n}(f_{1})\|_{\fzerone}\|\theta_{1}-\theta_{2}\|_{\fsone}\Big).
\end{multline}
For $B_{2}$, we first consider the difference in the operator $\mathcal{S}_{n}$. By \eqref{Snfourier}, we have for two functions $f_{1}$ and $f_{2}$, with $a_{n}$ given by \eqref{an.def}, that
\begin{align*}
|i^{n}(\widehat{\mathcal{S}_{n}(f_{1})}-\widehat{\mathcal{S}_{n}(f_{2})})(k)| &= \sum_{k_{2},\ldots, k_{n+1}\in \mathbb{Z}}|I(k_{1},\ldots,k_{n+1})|\\
&\cdot\Big(|\hat{f_{1}}(k_{n+1})\prod_{j=1}^{n}P_{1}(k_{j}-k_{j+1}) - \hat{f_{2}}(k_{n+1})\prod_{j=1}^{n}P_{2}(k_{j}-k_{j+1})|\Big)\\
&\leq a_{n} |\hat{f}_{1}-\hat{f}_{2}|\ast^{n} |P_{1}| + a_{n} |\hat{f}_{2}|\ast|\ast^{n} P_{1}-\ast^{n} P_{2}|.
\end{align*}
Hence using the estimates as in \eqref{S1S2S3} we have
\begin{align}\label{B2est}
\begin{split}
B_{2} &\leq \sum_{\substack{n,l\geq 0 \\ n+l\geq 2}} a_{n} \Big\||k|^{s}\frac{\ast^{l}\widehat{\theta}_{2}(k)}{l!} \ast(|\hat{f}_{1}-\hat{f}_{2}|\ast^{n} |P_{1}|) \Big\|_{\ell^{1}} + \tilde{B}_{2}\\
&\leq \tilde{C}_3\|\theta_{2}\|_{\fzerone}^2\|f_{1}-f_{2}\|_{\fsone}+\tilde{C}_4\|\theta_{2}\|_{\fzerone}\|\theta_{2}\|_{\fsone}\|f_{1}-f_{2}\|_{\fzerone} + \tilde{B}_{2}
\end{split}
\end{align}
where
\begin{align}\notag
\tilde{B}_{2} = \sum_{\substack{n,l\geq 0 \\ n+l\geq 2}} a_{n} \Big\||k|^{s}\frac{\ast^{l}\widehat{\theta}_{2}(k)}{l!} \ast(|\hat{f}_{2}|\ast|\ast^{n} P_{1}-\ast^{n} P_{2}|) \Big\|_{\ell^{1}}.
\end{align}
We have for $s > 0$ that
\begin{multline}\notag
\||k|^{s}(P_{1}-P_{2})\|_{\ell^{1}} \leq \sum_{m\geq 1} \frac{\bfcn(m,s)}{(m-1)!}(\|\theta_{1},\theta_{2}\|_{\fzerone}^{m-1}\|\theta_{1}-\theta_{2}\|_{\fsone}\\ + (m-1)\|\theta_{1},\theta_{2}\|_{\fzerone}^{m-2}\|\theta_{1},\theta_{2}\|_{\fsone}\|\theta_{1}-\theta_{2}\|_{\fzerone})
\end{multline}
with an analogous estimate holding in the case $s=0$.
Hence we have
\begin{multline}\notag
\tilde{B}_{2} \leq \timeintE\Big[(\|\theta_{1},\theta_{2}\|_{\fsone}+\|f_{i}\|_{\fsone})\|\theta_{1}-\theta_{2}\|_{\fzerone} + (\|\theta_{1},\theta_{2}\|_{\fzerone}+\|f_{i}\|_{\fzerone})\|\theta_{1}-\theta_{2}\|_{\fsone}\Big].
\end{multline}
We also similarly estimate ${\sonesplit}$ as in \eqref{breveSsbound} to obtain
\begin{multline}\notag
\|{\sonesplit}(f_{1})(\alpha) -	{\sonesplit}(f_{2})(\alpha)\|_{\fsone} \leq \timeintE\Big[\|f_{i}\|_{\fsone}\|\theta_{1}-\theta_{2}\|_{\fzerone} + \|f_{i}\|_{\fzerone}\|\theta_{1}-\theta_{2}\|_{\fsone}\Big]\\ + \mathcal{C}_{\mR} C_{4}\|\theta_{1},\theta_{2}\|_{\fsone}\|\theta_{1},\theta_{2}\|_{\fzerone}\|f_{1}-f_{2}\|_{\fzerone} + \mathcal{C}_{\mR} C_{3}\|\theta_{1},\theta_{2}\|_{\fzerone}^{2}\|f_{1}-f_{2}\|_{\fsone}.
\end{multline}
In summary, for $s>0$, using \eqref{omega0diffest} and \eqref{linearomegadiffnorm}, we obtain
\begin{multline}\label{W213est}
    \|W_{213}\|_{\fsone} \leq \timeintE_{s}(\|\theta_{1}-\theta_{2}\|_{\fzerone} +|\hat{\vartheta_{1}}(0)-\hat{\vartheta_{2}}(0)|)+ \timeintE_{0}\|\theta_{1}-\theta_{2}\|_{\fsone}\\ + \diffusint_{s}\|\theta_{1}-\theta_{2}\|_{\dot{\mathcal{F}}^{2,1}} + C_{3}\|\theta_{1},\theta_{2}\|_{\fzerone}^{2}\frac{4A_{\sigma}\pi}{L_{2}(t)}\|\theta_{1}-\theta_{2}\|_{\dot{\mathcal{F}}^{2+s,1}}\\ + C_{4}\|\theta_{1},\theta_{2}\|_{\fzerone}\|\theta_{1},\theta_{2}\|_{\fsone}\|(\omega_{1})_{\geq 2}-(\omega_{1})_{\geq 2}\|_{\fzerone} + C_{3}\|\theta_{1},\theta_{2}\|_{\fzerone}^{2}\|(\omega_{1})_{\geq 2}-(\omega_{2})_{\geq 2}\|_{\fsone},
\end{multline}
where $C_{3}$ and $C_{4}$ are given by \eqref{C3} and ${\crconstant}$ is given by \eqref{CR}.

Further using Proposition \ref{uniquenessprop}, then $W_{22}$ and $W_{23}$ can be estimated by a bound of the type:
\begin{equation}\label{W22W23}
\|W_{2j}\|_{\fsone} \leq \diffusint_{s}(\|\theta_{1}-\theta_{2}\|_{\fzerone} +|\widehat{\vartheta}_{1}(0)-\widehat{\vartheta}_{2}(0)|)+\timeintE\|\theta_{1}-\theta_{2}\|_{\fsone}
\end{equation}
for $j=2,3$.
Hence, using \eqref{W211est}, \eqref{W212est}, \eqref{W213est}, we obtain from \eqref{omegadiff2} that
\begin{multline}\label{omeganonlinearestinitial}
\|(\omega_{1})_{\geq 2}-(\omega_{2})_{\geq 2}\|_{\fsone} \leq \timeintE_{s}(\|\theta_{1}-\theta_{2}\|_{\fzerone} +|\hat{\vartheta_{1}}(0)-\hat{\vartheta_{2}}(0)|) \\+ \timeintE_{0}\|\theta_{1}-\theta_{2}\|_{\fsone} + \diffusint_{s}\|\theta_{1}-\theta_{2}\|_{\dot{\mathcal{F}}^{2,1}} + \tilde{\Gamma}\|\theta_{1}-\theta_{2}\|_{\dot{\mathcal{F}}^{2+s,1}} \\
+|A_{\mu}|C_{12}\|\theta_{1},\theta_{2}\|_{\fsone}\|(\omega_{1})_{\geq 2}-(\omega_{2})_{\geq 2}\|_{\fzerone} +|A_{\mu}|C_{2}\|\theta_{1},\theta_{2}\|_{\fzerone}\|(\omega_{1})_{\geq 2}-(\omega_{2})_{\geq 2}\|_{\fsone}
\end{multline}
where
\begin{equation}\label{tildeGamma}
\tilde{\Gamma} =|A_{\mu}| \frac{4A_{\sigma}\pi}{L_{2}(t)} C_{2}\|\theta_{1},\theta_{2}\|_{\fzerone}
\end{equation}
with $C_{2}$ given by \eqref{C12}.

Computing an estimate analogous to \eqref{omeganonlinearestinitial} for $s=0$ yields the estimate
\begin{multline}\label{omega.diff.zero.est}
\|(\omega_{1})_{\geq 2}-(\omega_{2})_{\geq 2}\|_{\fsone} \leq \timeintE_{s}(\|\theta_{1}-\theta_{2}\|_{\fzerone} +|\hat{\vartheta_{1}}(0)-\hat{\vartheta_{2}}(0)|) \\+ \timeintE_{0}\|\theta_{1}-\theta_{2}\|_{\fsone} + \diffusint_{s}\|\theta_{1}-\theta_{2}\|_{\dot{\mathcal{F}}^{2,1}} + \Gamma\|\theta_{1}-\theta_{2}\|_{\dot{\mathcal{F}}^{2+s,1}}
\end{multline}
where
\begin{equation}\label{Gamma}
\Gamma = |A_{\mu}|\tilde{C}_{8}\frac{4A_{\sigma}\pi}{L_{2}(t)}
\end{equation}
for $\tilde{C}_{8}$ given by \eqref{tildeC8} and the other constants given by Definition \ref{uniquenessimplicitdef}. 
\end{proof}

\section{Estimates for the differences of the main nonlinear term}\label{subsec:uniq.nonlinear}
In this section, we show the following bound on the nonlinear difference term of \eqref{theta1theta2}.

\begin{prop}\label{nonlineardiffprop}
We have the following estimate for $\delta>0$ given by \eqref{delta} and for $\epsilon(\|\theta_{1},\theta_{2}\|_{\fhone})$ that can be chosen to be arbitrarily small:
\begin{equation*}
\|N_{1}-N_{2}\|_{\fhone} \leq {\timeintE}(\|\theta_{1}-\theta_{2}\|_{\fhone} + |\hat{\vartheta}_{1}(0)-\hat{\vartheta}_{2}(0)|) + (\delta+\epsilon) \|\theta_{1}-\theta_{2}\|_{\fsevenhone}
\end{equation*}
where ${\timeintE}>0$ is a time integrable coefficient depending on $\|\theta_{1},\theta_{2}\|_{\fhone}$ and $\|\theta_{1},\theta_{2}\|_{\fsevenhone}$.  Further $\epsilon = \epsilon(\|\theta_{1},\theta_{2}\|_{\fhone})>0$ can be chosen arbitrarily small.
\end{prop}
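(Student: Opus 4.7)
The plan is to mirror the nonlinear estimates of Section \ref{sec:NonLinearEst}, but for differences. Writing $N^{(i)} = N^{(i)}_1 + N^{(i)}_2 + N^{(i)}_3$ for the three pieces from \eqref{N1N2N3} associated to each solution $\theta_i$ ($i=1,2$), I would estimate
\begin{equation*}
\|N^{(1)}-N^{(2)}\|_{\fhone} \leq \sum_{j=1}^{3} \|N^{(1)}_j - N^{(2)}_j\|_{\fhone}
\end{equation*}
piece by piece. Each $N_j$ is a sum of products of quantities that are either (i) linear expressions in $\theta$, its derivatives, and $L(t)$, or (ii) outputs of the operators $\mR$ and $\mS$ applied to the vorticity components $\omega_0$, $\omega_1$, $\omega_{\geq 2}$. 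Consequently, all differences can be telescoped using the product identity of Proposition \ref{uniquenessprop} into a sum of terms, each of which carries exactly one ``difference factor'' (either $\theta_1 - \theta_2$, or $L_1 - L_2$, or $\widehat{\vartheta}_1(0)-\widehat{\vartheta}_2(0)$, or a difference $\omega_1^{(i)} - \omega_2^{(i)}$, $\mR(f_1)-\mR(f_2)$, $\mS(f_1)-\mS(f_2)$) multiplied by a product of quantities already bounded uniformly in time.

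First I would handle $N_1 = (U_{\geq 2})_\alpha$. Using the decomposition in \eqref{Usplit} and taking one extra $\alpha$-derivative (which replaces $\fhone$ by $\fthreehone$ on the coefficient side), I would combine the vorticity difference bounds \eqref{omega0diffest}--\eqref{omeganonlinearest}, the bound \eqref{Rdiffgeq1} for $\mR$-differences, and the corresponding $\mS$-difference bound derived in the proof of Proposition \ref{prop:diff.vorticity.ests} (through $B_1$, $B_2$, and the ${\sonesplit}$ estimate). The only source of $\|\theta_1-\theta_2\|_{\fsevenhone}$ comes from the $\Lambda^3\theta$ contribution inside $\omega_1$; by the same interpolation argument as in Section \ref{sec:NonLinearEst} (the inequality \eqref{interp} and the trilinear interpolation that produced $\thetasonenu^{(5+2s)/3}\thetathreesonenu^{(4-2s)/3}$) these contributions come multiplied by a factor of $\|\theta_1,\theta_2\|_{\fhone}$, yielding a coefficient $\epsilon$ that is arbitrarily small thanks to the medium-size condition \eqref{condition}. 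The remaining top-order piece $\tilde\Gamma \|\theta_1-\theta_2\|_{\fsevenhone}$ from \eqref{omeganonlinearest} has the fixed, quadratic-in-$\|\theta\|$ size identified in \eqref{tildeGamma} and contributes to $\delta$.

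Next, for $N_2 = T_{\geq 2}(1+\theta_\alpha)$ and $N_3 = T_1 \theta_\alpha$, using the integral expressions in \eqref{Tsplit}, the antiderivative structure lowers the derivative count by one (as exploited in \eqref{negative.norm.bound}), so these pieces contribute lower-order, time-integrable terms of type $\timeintE$; the arguments are just the product-rule splitting of Proposition \ref{uniquenessprop} combined with the already-proved $T_1$ and $T_{\geq 2}$ bounds \eqref{T1aux}, \eqref{T2aux}, their difference analogs, and the length difference estimate of Proposition \ref{lengthdifferenceprop}. Collecting everything and grouping: every term with at most $\|\theta_1-\theta_2\|_{\fhone}$, $|\widehat{\vartheta}_1(0)-\widehat{\vartheta}_2(0)|$, or $\|\theta_1-\theta_2\|_{\fthreehone}$ gets absorbed into $\timeintE(\|\theta_1-\theta_2\|_{\fhone}+|\widehat{\vartheta}_1(0)-\widehat{\vartheta}_2(0)|)$ (using $\|\theta_1-\theta_2\|_{\fthreehone}\leq \|\theta_1-\theta_2\|_{\fhone}^{2/3}\|\theta_1-\theta_2\|_{\fsevenhone}^{1/3}$ together with Young's inequality to throw the $\fsevenhone$-part into the $\epsilon$-coefficient), while the remaining top-order contributions add up to the advertised $(\delta+\epsilon)\|\theta_1-\theta_2\|_{\fsevenhone}$.

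The main obstacle is isolating the precise constant $\delta$ in \eqref{delta}: because there is no spectral gap beyond the $\Lambda^3$ dissipation coming from $\mathcal{L}_\sigma$, the coefficient multiplying $\|\theta_1-\theta_2\|_{\fsevenhone}$ must be tracked quantitatively rather than merely bounded, in order to later be absorbed by the linear dissipation produced by the $\frac{2\pi}{L_2}(\mathcal{L}_1-\mathcal{L}_2)$ term of \eqref{theta1theta2} in the proof of Theorem \ref{uniquenessproposition}. The bookkeeping is heavy but straightforward: the only fixed (non-small) top-order contribution is $\tilde\Gamma$ from \eqref{tildeGamma}, arising from the $A_\mu\mH$-part of the $\mathcal{D}_{\geq 2}$ operator, and it is this quantity, after multiplication by the $\fhone$-to-$\fhone$ operator norms occurring in $N_1, N_2, N_3$, that I would identify as $\delta$, the remainder falling into the $\epsilon$-term that can be made as small as desired by the size condition \eqref{condition}.
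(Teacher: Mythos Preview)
Your overall strategy matches the paper's: decompose $N$ into the three pieces of \eqref{N1N2N3}, telescope the differences via Proposition \ref{uniquenessprop}, and observe that only the $(U_{\geq 2})_\alpha$ piece produces terms at the top order $\|\theta_1-\theta_2\|_{\fsevenhone}$ while $N_2,N_3$ are genuinely lower order thanks to the antiderivative structure in \eqref{Tsplit}. The gap is in how you separate $\delta$ from $\epsilon$.

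The $\epsilon$ in the statement is a \emph{free Young's-inequality parameter}, not a quantity made small by the medium-size condition. In the paper it arises when intermediate differences such as $\|\theta_1-\theta_2\|_{\fthreehone}$ or $\|\theta_1-\theta_2\|_{\ftwoone}$ are interpolated between $\fhone$ and $\fsevenhone$ and then split: the $\fhone$-piece acquires a (large but time-integrable) coefficient $\timeintE$, while the $\fsevenhone$-piece carries an arbitrarily small $\epsilon$. It has nothing to do with $\|\theta_1,\theta_2\|_{\fhone}$ being small; see the computation around \eqref{samplediff1}.

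Conversely, $\delta$ is the sum of \emph{all} the remaining top-order coefficients, not just the one coming from $\tilde\Gamma$. The paper isolates four such contributions inside $\|(U^{(1)}_{\geq2})_\alpha-(U^{(2)}_{\geq2})_\alpha\|_{\fhone}$: the $\mR$-term applied to the $\theta_{\alpha\alpha}$ part of $\omega_1$ (the $\sqrt{2}\,\crconstant$ piece in \eqref{delta}), and the $\mH$, $\mR$, $\mS$ operators applied to $(\omega_i)_{\geq2}$, each producing a factor of $\Gamma$ from \eqref{Gamma} via the vorticity-difference estimate. These four sum to \eqref{delta}. Your claim that the $\omega_1$-contributions are ``arbitrarily small'' and belong in $\epsilon$ is incorrect: they carry a factor of $\|\theta_1,\theta_2\|_{\fzerone}$, which is bounded but not a free parameter, so they must stay in $\delta$ and be compared quantitatively to the linear dissipation in the proof of Theorem \ref{uniquenessproposition}. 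Note also that $\tilde\Gamma$ is linear in $\|\theta\|_{\fzerone}$ (not quadratic), and that the paper works with $\Gamma$ from \eqref{Gamma} rather than $\tilde\Gamma$ when assembling $\delta$.
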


We remark that all the terms involving the upper bound of ${\timeintE}(\|\theta_{1}-\theta_{2}\|_{\fhone} + |\hat{\vartheta}_{1}(0)-\hat{\vartheta}_{2}(0)|)$ follow similarly to the estimates from Chapter \ref{secanalytic} also using the idea of Proposition \ref{uniquenessprop} and the vorticity estimates in Proposition \ref{prop:diff.vorticity.ests}.  Our proof below is focused on the estimates of the term $\|\theta_{1}-\theta_{2}\|_{\fsevenhone}$ and the constant $\delta>0$. In the proof of Proposition \ref{nonlineardiffprop}, when we compute the difference of nonlinear terms in $\fhone$, any terms where the difference occurs as $\|\theta_{1}-\theta_{2}\|_{\fsone}$ for $s<7/2$ can be absorbed by interpolation and the Young's inequality into the term ${\timeintE}\|\theta_{1}-\theta_{2}\|_{\fhone}$ and contributes an arbitrarily small term $\epsilon \|\theta_{1}-\theta_{2}\|_{\fsevenhone}$ to be absorbed into the linear decay, e.g. see the estimate to obtain \eqref{samplediff1}. This term is then taken care of by the Gronwall argument described in the comment below Theorem \ref{uniquenessproposition}. In this way, most of the nonlinear terms can be easily estimated and only the few terms of order $\|\theta_{1}-\theta_{2}\|_{\fsevenhone}$ need be computed.

\begin{proof}
For the nonlinear terms, we will denote the decomposition given in \eqref{N1N2N3} of $N_1$ for $\theta_{1}$ and $N_2$ for $\theta_{2}$ respectively by 
$$ N_{1} = N_{11}+N_{12}+N_{13},
\quad
N_{2} = N_{21}+N_{22}+N_{23}.$$
We now consider the differences of $N_{1j}- N_{2j}$ for $j=1,2,3$. We will only explicitly compute the constant in front of terms with difference $\| \theta_{1}-\theta_{2}\|_{\fsevenhone}$.  We make this idea clear in the following. Denoting $(U_{i})_{\geq 2}$ for the term containing $\theta_{i}$, we have
\begin{multline}\label{N1diff}
\|N_{11}-N_{21}\|_{\fhone} \leq \Big|\frac{\pi}{L_{1}(t)}-\frac{\pi}{L_{2}(t)}\Big|\|\frac{L_{1}(t)}{\pi}(U_{1})_{\geq 2}\|_{\fthreehone} \\+ \frac{1}{L_{2}(t)}\|\frac{L_{1}(t)}{\pi}(U_{1})_{\geq 2}-\frac{L_{2}(t)}{\pi}(U_{2})_{\geq 2}\|_{\fthreehone}.
\end{multline}
In the latter term of \eqref{N1diff}, we have a term of the form
\begin{multline}\label{Romegadiff}
    \|\mR_{1}((\omega_{1})_{\geq 1})-\mR_{2}((\omega_{2})_{\geq 1})\|_{\fthreehone} 
    \\
    \leq \ldots + \frac{4A_{\sigma}\pi}{L_{2}(t)}\|\mR_{1}((\theta_{1})_{\alpha\alpha})-\mR_{2}((\theta_{2})_{\alpha\alpha})\|_{\fthreehone}
    + \|\mR_{1}((\omega_{1})_{\geq 2})-\mR_{2}((\omega_{2})_{\geq 2})\|_{\fthreehone}.
\end{multline}
Above the dots ``$\ldots$'' represent that other terms are present which turn out to be lower order.  We similarly denote $\mR_{i}$ for the term \eqref{R} which contains $\theta_{i}$.   Then for the first term present in the upper bound above we have the estimate
\begin{align}\notag
\begin{split}
\|\mR_{1}((\theta_{1})_{\alpha\alpha})-\mR_{2}((\theta_{2})_{\alpha\alpha})\|_{\fthreehone} &\leq  \|\mR_{1}((\theta_{1})_{\alpha\alpha}-(\theta_{2})_{\alpha\alpha})\|_{\fthreehone}\\ &\hspace{0.5in}+ \|\mR_{1}((\theta_{2})_{\alpha\alpha})-\mR_{2}((\theta_{2})_{\alpha\alpha})\|_{\fthreehone}\\
\leq \sqrt{2}{\crconstant}&(\|\theta_{1}-\theta_{2}\|_{\fsevenhone}\|\theta_{1}\|_{\fzerone} + \|\theta_{1}-\theta_{2}\|_{\mathcal{F}^{2,1}}\|\theta_{1}\|_{\fthreehone}\\ &+\|\theta_{2}\|_{\fsevenhone}\|\theta_{1}-\theta_{2}\|_{\fzerone} + \|\theta_{2}\|_{\mathcal{F}^{2,1}}\|\theta_{1}-\theta_{2}\|_{\fthreehone}).
\end{split}
\end{align}
We therefore have
\begin{multline}\label{r1r2diffaa}
\|\mR_{1}((\theta_{1})_{\alpha\alpha})-\mR_{2}((\theta_{2})_{\alpha\alpha})\|_{\fthreehone}
\\
\leq \sqrt{2}{\crconstant}(\|\theta_{1}-\theta_{2}\|_{\fsevenhone}\|\theta_{1}\|_{\fzerone}+\|\theta_{1}\|_{\fhone}^{2/3} \|\theta_{1}\|_{\fsevenhone}^{1/3}\|\theta_{1}-\theta_{2}\|_{\fsevenhone}^{1/2}\|\theta_{1}-\theta_{2}\|_{\fhone}^{1/2})
\\
+\sqrt{2}{\crconstant}(\|\theta_{2}\|_{\fsevenhone}\|\theta_{1}-\theta_{2}\|_{\fzerone} +\|\theta_{2}\|_{\fhone}^{1/2} \|\theta_{2}\|_{\fsevenhone}^{1/2}\|\theta_{1}-\theta_{2}\|_{\fsevenhone}^{1/3}\|\theta_{1}-\theta_{2}\|_{\fhone}^{2/3}).
\end{multline}
Hence, if we apply Young's inequality, e.g.
\begin{multline*}
\|\theta_{1}\|_{\fhone}^{2/3} \|\theta_{1}\|_{\fsevenhone}^{1/3}\|\theta_{1}-\theta_{2}\|_{\fsevenhone}^{1/2}\|\theta_{1}-\theta_{2}\|_{\fhone}^{1/2}\\ \leq \frac{1}{4\epsilon}\|\theta_{1}\|_{\fsevenhone}^{2/3}\|\theta_{1}-\theta_{2}\|_{\fhone} + \epsilon \|\theta_{1}\|_{\fhone}^{4/3} \|\theta_{1}-\theta_{2}\|_{\fsevenhone}
\end{multline*}
we obtain
\begin{multline}\label{samplediff1}
\|\mR_{1}((\theta_{1})_{\alpha\alpha})-\mR_{2}((\theta_{2})_{\alpha\alpha})\|_{\fthreehone} \\ \leq \timeintE(\|\theta_{1},\theta_{2}\|_{_{\fsevenhone}}\|\theta_{1}-\theta_{2}\|_{\mathcal{F}^{0,1}} +c_{\epsilon}(\|\theta_{1},\theta_{2}\|_{\fsevenhone}^{2/3}+\|\theta_{1},\theta_{2}\|_{\fsevenhone}^{3/4})\|\theta_{1}-\theta_{2}\|_{\fhone}) \\+\epsilon (\|\theta_{1}\|_{\fhone}^{4/3}+\|\theta_{1}\|_{\fhone}^{3/2})\|\theta_{1}-\theta_{2}\|_{\fsevenhone}+ \sqrt{2}{\crconstant}\|\theta_{1}-\theta_{2}\|_{\fsevenhone}\|\theta_{1}\|_{\fzerone}
\end{multline}
for a constant $c_{\epsilon}$. The first two terms in \eqref{samplediff1} are linear in $\|\theta_{1}-\theta_{2}\|_{\fsone}$ for $0 \leq s \leq 1/2$ and the constants are time integrable on $[0,T]$ for any $T>0$ since $\|\theta_{1},\theta_{2}\|_{\fsevenhone}$ is integrable in time. The final two terms with the difference $\|\theta_{1}-\theta_{2}\|_{\fsevenhone}$ needs to be absorbed in the linear decay coming from $\mathcal{L}_{1}-\mathcal{L}_{2}$. For the other term in \eqref{Romegadiff}, we have similarly,
\begin{multline}
\|\mR_{1}((\omega_{1})_{\geq 2})-\mR_{2}((\omega_{2})_{\geq 2})\|_{\fthreehone} \leq \ldots + \sqrt{2}{\crconstant}\|\theta_{1},\theta_{2}\|_{\fzerone}\Gamma\|\theta_{1}-\theta_{2}\|_{\fsevenhone}\\ + \epsilon\|\theta_{1}-\theta_{2}\|_{\fsevenhone},
\end{multline}
where we use \eqref{Gamma} and $\epsilon=\epsilon(\|\theta_{1},\theta_{2}\|_{\fhone})$ is a constant that can be chosen arbitrarily small.

The only other terms containing a term like $\|\theta_{1}-\theta_{2}\|_{\fsevenhone}$ are the other two terms that also come from $N_{11}-N_{12}$, as can be observed from the terms which contain $\|\theta\|_{\fsevenhone}$ in the estimates of Chapter \ref{secanalytic}. The first term is
\begin{align}
\frac{\pi}{L_{2}(t)}\|\mH ( (\omega_{1})_{\geq2} )-\mH ((\omega_{2})_{\geq2})\|_{\fthreehone} &= \frac{\pi}{L_{2}(t)}\| (\omega_{1})_{\geq2}-(\omega_{1})_{\geq2}\|_{\fthreehone} \nonumber \\
&\leq \ldots + (\frac{\pi\Gamma}{L_{2}(t)} + \epsilon)\|\theta_{1}-\theta_{2}\|_{\fsevenhone}, \label{Hdiff}
\end{align}
where the unwritten terms are lower order due to being linear in $\|\theta_{1}-\theta_{2}\|_{\fsone}$ for $0 \leq s \leq 1/2$ with time integrable coefficients as done in \eqref{samplediff1}. Again, $\epsilon=\epsilon(\|\theta_{1},\theta_{2}\|_{\fhone})$ is a constant that can be chosen arbitrarily small. Similarly, the final term that we need to compute is
\begin{align}
\frac{\pi}{L_{2}(t)}\|\mathcal{S}(\omega_{1})-\mathcal{S}(\omega_{2})\|_{\fthreehone} &\leq \ldots +\frac{\pi}{L_{2}(t)}\|\mathcal{S}((\omega_{1})_{1})-\mathcal{S}((\omega_{2})_{1})\|_{\fthreehone} \nonumber\\
&\hspace{0.5in}+\frac{\pi}{L_{2}(t)}\|\mathcal{S}((\omega_{1})_{\geq 2})-\mathcal{S}((\omega_{2})_{\geq 2})\|_{\fthreehone} \nonumber \\
\leq \ldots& + (\frac{\pi\Gamma C_{3}\|\theta_{1},\theta_{2}\|_{\fzerone}^{2}}{L_{2}(t)}+\epsilon(\|\theta_{1},\theta_{2}\|_{\fhone}))\|\theta_{1}-\theta_{2}\|_{\fsevenhone}, \label{Sneededdiff}
\end{align}
where we use the estimate on $\mathcal{S}(f_{1})-\mathcal{S}(f_{2})$ computed to give \eqref{W213est} and so $C_{3}$ is given by \eqref{C3} and $\Gamma$ is given by \eqref{Gamma}.

All remaining nonlinear terms in the estimates of the evolution of $\theta_{1}-\theta_{2}$ are lower order due to having no futher upper bounds in terms of the highest order difference $\|\theta_{1}-\theta_{2}\|_{\fsevenhone}$.  Hence, in total the difference of nonlinear terms yields
\begin{equation}\notag
\|N_{1}-N_{2}\|_{\fhone} \leq {\timeintE}(\|\theta_{1}-\theta_{2}\|_{\fhone} + |\hat{\vartheta}_{1}(0)-\hat{\vartheta}_{2}(0)|) + (\tilde{\delta}+\epsilon) \|\theta_{1}-\theta_{2}\|_{\fsevenhone},
\end{equation}
where $\epsilon=\epsilon(\|\theta_{1},\theta_{2}\|_{\fhone})$ is a constant that can be chosen arbitrarily small; and for $\Gamma= |A_{\mu}|\tilde{C}_{8}\frac{4A_{\sigma}\pi}{L_{2}(t)}$ given by \eqref{Gamma}, we have $\tilde{\delta}$ given by
\begin{equation*}
\tilde{\delta} = \frac{\pi}{L_{2}(t)}(\sqrt{2}{\crconstant}\frac{4A_{\sigma}\pi}{L_{2}(t)} + b(2,3/2){\crconstant}\|\theta_{1}, \theta_{2}\|_{\fzerone}\Gamma + \Gamma + \Gamma C_{3}\|\theta_{1}, \theta_{2}\|_{\fzerone}^{2})
\end{equation*}
and so by \eqref{Lbound}, setting
\begin{multline}\label{delta}
\delta =
\\
\frac{A_{\sigma}}{C_{37}^{2}R^{2}}(\sqrt{2}{\crconstant}+ b(2,3/2){\crconstant}\|\theta_{1}, \theta_{2}\|_{\fzerone}|A_{\mu}|\tilde{C}_{8}+ |A_{\mu}|\tilde{C}_{8} +|A_{\mu}|\tilde{C}_{8} C_{3}\|\theta_{1}, \theta_{2}\|_{\fzerone}^{2}),
\end{multline}
we obtain the result of Proposition \ref{nonlineardiffprop}.
\end{proof}

\section{Proof of uniqueness}\label{subsec:uniq.proof}
We are now ready to prove Theorem \ref{uniquenessproposition}.

\begin{proof}[Proof of Theorem \ref{uniquenessproposition}]
From Proposition \ref{linearfourier}, the difference of the linear terms is
\begin{multline}\label{lineardifference}
\widehat{\mathcal{L}}_{1}(k) - \widehat{\mathcal{L}}_{2}(k) 
=-A_\sigma \frac{4\pi^2}{L_{2}(t)^2}k(k^2-1)(\widehat{\theta}_{1}(k)- \widehat{\theta}_{2}(k))
\\
-(1+A_\mu)A_\rho\frac{(k^2-1)(k+1)}{k(k+2)}e^{-i\widehat{\vartheta}_{2}(0)} (\widehat{\theta}_{1}(k+1)- \widehat{\theta}_{2}(k+1)) 
\\
+ \tilde{L}_{1}(k) 
+ \tilde{L}_{2}(k),
\end{multline}
where
\begin{equation*}
\tilde{L}_{1}(k) = -4\pi^2A_\sigma k(k^2-1)\widehat{\theta}_{1}(k)\Big(\frac{1}{L_{1}(t)^2}-\frac{1}{L_{2}(t)^2}\Big)
\end{equation*}
and 
\begin{equation*}
\tilde{L}_{2}(k) = -(1+A_\mu)A_\rho\frac{(k^2-1)(k+1)}{k(k+2)}(e^{-i\widehat{\vartheta}_{1}(0)}-e^{-i\widehat{\vartheta}_{2}(0)})\widehat{\theta}_{1}(k+1).
\end{equation*}
And similarly for $k=2$.
For $\tilde{L}_{1}$, 
we have
\begin{equation}\label{L1unique}
\|\tilde{L}_{1}\|_{\fhone}\leq 4\pi^{2}A_\sigma \Big|\frac{1}{L_{1}(t)^{2}} - \frac{1}{L_{2}(t)^{2}}\Big|\||k|^{3/2}(k^{2}-1)\widehat{\theta}_{1}(k)\|_{\ell^{1}}.
\end{equation}
For $\tilde{L}_{2}$, we have
\begin{multline}\label{L2unique}
\|\tilde{L}_{2}\|_{\fhone}
\\
\leq |(1+A_\mu)A_\rho| |e^{-i\widehat{\vartheta}_{1}(0)}-e^{-i\widehat{\vartheta}_{2}(0)}|\Big\|\frac{|k|^{1/2}|k^2-1||k+1|}{2|k||k+2|}|\widehat{\theta}_{1}(k+1)|  \Big\|_{\ell^{1}}.
\end{multline}
Using similar arguments as earlier, we obtain from \eqref{L1unique} and \eqref{L2unique} that
\begin{equation}\label{linearnonlinearparts}
\|\tilde{L}_{1},\tilde{L}_{2}\|_{\fhone} \leq {\timeintE}(\|\theta_{1}-\theta_{2}\|_{\fhone} + |\hat{\vartheta}_{1}(0)-\hat{\vartheta}_{2}(0)|)
\end{equation}
where ${\timeintE}$ is a time integrable coefficient. Hence, the new quantities from \eqref{linearnonlinearparts} do not need to be absorbed by the linear decay. The coefficient $\delta$ of the norm $\|\theta_{1}-\theta_{2}\|_{\fsevenhone}$ is less than the coefficients in \eqref{Nbound}, and hence, $\|\theta_{1},\theta_{2}\|_{\fhone}$ satisfying \eqref{condition} and taking $\epsilon$ arbitrarily small is sufficient for $(\delta+\epsilon)\|\theta_{1}-\theta_{2}\|_{\fsevenhone}$ from Proposition \ref{nonlineardiffprop} to be absorbed into the linear decay terms of \eqref{lineardifference} by following the similar procedure to Section \ref{subsecGlobal}.
\end{proof}

\appendix

\backmatter


\providecommand{\bysame}{\leavevmode\hbox to3em{\hrulefill}\thinspace}
\providecommand{\href}[2]{#2}


\end{document}